\documentclass[11pt]{article}
\usepackage{titlesec,minitoc,ulem}
\usepackage[latin1]{inputenc}
\usepackage[frenchb]{babel}
\usepackage{amssymb,amsthm,verbatim,latexsym,dsfont,amsmath}
\usepackage[all]{xy}
\usepackage[tight]{shorttoc}

\newcommand{\sommaire}{\shorttoc{Sommaire}{1}}


\numberwithin{equation}{subsubsection}

\newcommand{\0}{\mathcal{O}}

\newcommand{\G}{\mathrm{GL}}
\newcommand{\g}{\mathfrak{{gl}}}

\newcommand{\D}{\mathrm{diag}}

\newcommand{\ra}{\rightarrow}

\newcommand{\mg}{\mathds{G}}
\newcommand{\ud}{\underline{d}}

\newcommand{\lsi}{\mathcal{L}_{\psi}}
\newcommand{\lze}{\mathcal{L}_{\zeta}}
\newcommand{\fsi}{\mathfrak{F}_{\psi}}
\newcommand{\tG}{\widetilde{\mathrm{ GL}}}

\newcommand{\fp}{F_{\varpi}}
\newcommand{\can}{\buildrel can \over{\simeq}}
\newcommand{\op}{\mathcal{O}_{\varpi}}
\newcommand{\va}{\varpi}
\newcommand{\m}{\mathrm{Mat}}

\newcommand{\aS}{\mathbb{A}^S}

\newcommand{\n}{\mathfrak{n}}

\newtheorem*{thmA}{TH\'EOR\`EME A}
\newtheorem*{thmB}{TH\'EOR\`EME B}

\newtheorem*{thmA'}{TH\'EOR\`EME A$'$}

\theoremstyle{definition}
\newtheorem{theorem}{Th\'eor\`eme}[subsubsection]
\makeatother
\newtheorem{conj}[theorem]{Conjecture}
\newtheorem{proposition}[theorem]{Proposition}
\newtheorem{corollary}[theorem]{Corollaire}
\newtheorem{lemma}[theorem]{Lemme}
\newtheorem{defn}[theorem]{D\'efinition}

\newtheorem{const}[theorem]{Construction}
\newtheorem{rmk}[theorem]{Remarque}
\title{Le lemme fondamental métaplectique de Jacquet et Mao en égales caractéristiques}
\author{DO Viet Cuong}
\begin{document}
\maketitle
\sommaire
\section{Introduction}
\setcounter{section}{1}
\setcounter{subsection}{1}
\setcounter{subsubsection}{1}
Soit $\mathbb{A}$ l'anneau des adèles d'un corps global et $\tG_r(\mathbb{A})$ le revêtement métapl\-ectique de $\G_r(\mathbb{A})$ (c'est un revêtement à deux feuillets de $\G_r(\mathbb{A})$, qui est une extension centrale par $\{\pm 1\}$, cf \cite{KP}). Pour $r=2$, Jacquet \cite{J1} a montré qu'on pouvait utiliser une formule des traces relative pour caractériser l'image de l'application de relèvement automorphe de $\tG_r(\mathbb{A})$ à $\G_r(\mathbb{A})$ comme ensemble des représentations distinguées par le groupe orthogonal. Pour $r$ arbitraire, Mao \cite{M} a écrit la formule des traces relative correspondante. Pour achever la caractérisation de l'image du relèvement il reste entre autres un énoncé local à démontrer (le ``lemme fondamental métaplectique de Jacquet-Mao").
L'objet de ce travail est de donner une démonstration de cet énoncé dans le cas de caractéristique positive.

On va d'abord énoncer ce lemme fondamental. Soient $F$ un corps local non-archimédien, $\0$ son anneau des entiers et $k$ son corps résiduel, que l'on suppose de caractéristique $p$ impaire et de cardinal $q$. On fixe $\varpi$ une uniformisante de $F$. Soit $\psi :k\to \mathbb{C}^*$ un caractère additif. On note $\Psi(x)=\psi({\rm res}\,x\,{\rm d}\varpi)$; c'est un caractère additif de $F$ dans $\mathbb{C}^*$. On note $v(x)$ la valuation de l'élément $x\in F$ et $|x|=q^{-v(x)}$. On note $N_r$ le sous-groupe de $\G_r$ formé des matrices triangulaires supérieures unipotentes, $T_r$ celui formé des matrices diagonales et $S_r$ le sous-schéma de $\G_r$ formé des matrices symétriques. Soit $\theta :N_r(F)\to \mathbb{C}^*$ le caractère défini par $\theta(n)=\Psi(\frac{1}{2}\sum_{i=2}^rn_{i-1,i})$.

Le revêtement de Kazhdan-Patterson local est (canoniquement) scindé au-dessus de $N_r(F)$ ainsi qu'au-dessus de $\G_r(\0)$. On peut écrire les éléments de $\tG_r(F)$ sous la forme $\tilde{g}=(g,z)$, avec $g\in \G_r(F)$ et $z\in\{\pm1\}$ et la multiplication est alors définie par
$$(g,z).(g',z')=(gg',\chi(g,g')zz'),$$
où $\chi$ est un certain cocycle, pour la description duquel on renvoie à Kazhdan-Patterson \cite{KP}. Ces notations étant fixées, le scindage $\sigma$ au-dessus de $N_r(F)$ est simplement défini par $\sigma(n)=(n,1)$ alors que le scindage $\kappa^*$ au-dessus de $\G_r(\0)$ est défini par $\kappa^*(g)=(g,\kappa(g))$ - la fonction $\kappa :\G_r(F)\to\{\pm1\}$ n'est vraiment pas facile à calculer et son calcul dans la situation géométrique qu'on envisagera est l'un des résultats importants de ce travail.


Le groupe $N_r(F)$ agit sur $S_r(F)$ par $s\mapsto {}^tnsn$ et le groupe $N_r(F)\times N_r(F)$ agit sur $\G_r(F)$ par $g\mapsto n^{-1}gn'$. On appelle {\it pertinentes} les orbites $\dot{s}$ sous l'action de $N_r(F)$ (resp. les orbites $\dot{g}$ sous l'action de $N_r(F)\times N_r(F)$) telles que pour tout $n$ appartenant au fixateur $(N_r(F))_s$ de $s$ on ait $\theta^2(n)=1$ (resp. telles que pour toute couple $(n,n')$ appartenant au fixateur $(N_r(F)\times N_r(F))_g$ de $g$ on ait $\theta(n^{-1}n')=1$). 
Dans cet article, on s'intéresse au cas où ces fixateurs sont triviaux, qui est en fait le cas fondamental \cite{J}. Il existe alors un représentant dans l'orbite sous l'action de $N_r(F)$ qui est une matrice diagonale $t$ (resp. un représentant de la forme $w_0t$, où $w_0$ est la matrice de la permutation $(r,\dots,1)$ et $t$ est une matrice diagonale, dans l'orbite sous l'action de $N_r(F)\times N_r(F)$).

Pour chaque matrice diagonale $t\in T_r(F)$, on introduit les deux intégrales orbitales
$$I(t)=\int_{N_r(F)}\phi_0({}^tntn)\theta^2(n)dn$$
et
$$ J(t)=\int_{N_r(F)\times N_r(F)}f_0(n^{-1}w_0tn')\theta(n^{-1}n')dndn',$$
où $f_0$ est la fonction définie par
$$f_0(g)=\begin{cases}\kappa(g),&\textrm{si $g\in \G_r(\0)$}\\
0,&\textrm{sinon,}\end{cases}$$
et
$$\phi_0(g)=\begin{cases}1,&\textrm{si $g\in \G_r(\0)\cap S_r(F)$}\\
0,&\textrm{sinon,}\end{cases},$$
les mesures de Haar de $N_r(F)$ et $N_r(F)\times N_r(F)$ étant normalisées de sorte qu'elles attribuent la mesure $1$ aux sous-groupes compacts ouverts formés des matrices à coefficients dans $\0$.

Soit $\zeta :k^*\to\{\pm1\}$ le caractère quadratique non trivial ($\zeta(\lambda)=\lambda^{\frac{q-1}{2}},\,\lambda\in k^*$). On note $\gamma(a,\Psi)$ la constante de Weil, qui est définie par la formule
$$\int\Phi^{\vee}(x)\Psi\left(\frac{1}{2}ax^2\right)dx=|a|^{-1/2}\gamma(a,\Psi)\int\Phi(x)\Psi\left(-\frac{1}{2}a^{-1}x^2\right)dx,$$
où $\Psi :F\to \mathbb{C}^*$ est un caractère additif, $\Phi$ est une fonction de Schwartz sur $F$ et $\Phi^{\vee}$ est sa transformée de Fourier ($\Phi^{\vee}(x)=\int\Phi(y)\Psi(xy)dy$).
\begin{conj} (Jacquet et Mao). Soit $t=\D(t_1,\dots,t_r)$, on note $a_i=\prod_{j=1}^it_j$. On a alors $t=\D(a_1,a_1^{-1}a_2,\dots,a_{r-1}^{-1}a_r)$ et
$$J(t)=\begin{cases}\mathfrak{t}(t)I(t)\\
\mathfrak{t'}(t)I(t)
\end{cases},$$
o\`u $\mathfrak{t}(t)=|\prod_{i=1}^{r-1}a_i|^{-1/2}{\zeta(-1)}^{\sum_{j \not \equiv r({\rm mod} 2)}v(a_j)}\prod_{j\not \equiv r ({\rm mod} 2)}\gamma(a_ja_{j-1}^{-1},\Psi)$ et o\`u
$\mathfrak{t'}(t)=|\prod_{i=1}^{r-1}a_i|^{-1/2}{\zeta(-1)}^{\sum_{j \equiv r({\rm mod} 2)}v(a_j)}\prod_{j\equiv r ({\rm mod} 2)}\gamma(a_ja_{j-1}^{-1},\Psi)$ (en convenant que $a_0=1$).
De plus si $\mathfrak{t}(t)\neq\mathfrak{t'}(t)$, les deux int\'egrales $I(t)$ et $J(t)$ sont nulles.
\end{conj}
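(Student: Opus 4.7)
Le plan est d'exploiter le cadre de la caractéristique égale pour géométriser les deux intégrales. On écrit $F = k((\varpi))$, $\0 = k[[\varpi]]$, et pour $t \in T_r(F)$ fixée, les intégrales $I(t)$ et $J(t)$ s'interprètent comme des sommes pondérées sur les points rationnels d'ind-schémas affines sur $k$ paramétrant respectivement les écritures ${}^tntn$ (avec contrainte d'appartenance à $\G_r(\0) \cap S_r(F)$) et $n^{-1}w_0tn'$ (avec contrainte d'appartenance à $\G_r(\0)$). Les caractères $\theta$, $\theta^2$ deviennent des traces de faisceaux d'Artin-Schreier, et $\kappa$ une trace d'un faisceau de Kummer à construire. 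Par cette traduction, l'identité cherchée devient une identité entre traces de Frobenius sur des complexes de faisceaux $\ell$-adiques, où l'on peut espérer faire intervenir des outils géométriques (transformation de Fourier-Deligne, descente à un groupe plus petit par récurrence sur $r$).

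La première étape cruciale est le calcul géométrique de la fonction de scindage $\kappa$. Je procéderais par récurrence sur $r$, en exploitant la décomposition d'Iwasawa et la description de Kazhdan-Patterson \cite{KP} du cocycle $\chi$ en termes des mineurs principaux. L'objectif est d'exprimer $\kappa(g)$, pour $g \in \G_r(\0)$, comme un produit de symboles quadratiques de Hilbert locaux appliqués aux pivots de Gauss de $g$. Une telle formule fournit directement l'interprétation faisceautique voulue, puisque chaque symbole $(a,b)_F$ est la trace d'un faisceau de Kummer naturel associé au caractère quadratique $\zeta$. C'est à mon avis l'obstacle principal du travail, étant donné la nature notoirement délicate du cocycle $\chi$ et le fait que la formule obtenue doit s'articuler proprement avec les contributions gaussiennes qui apparaîtront plus bas.

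L'évaluation de $I(t)$ est alors essentiellement élémentaire : $\theta^2$ est trivial sur $N_r(\0)$, et un comptage direct des matrices symétriques modulo conjugaison unipotente donne $I(t)$ comme un produit explicite de volumes. Pour $J(t)$, je procéderais par intégration en escalier sur les coefficients $n_{ij}, n'_{ij}$ par ordre décroissant de $|i-j|$. À chaque étape, après avoir utilisé la décomposition d'Iwasawa pour écrire $n^{-1} w_0 t n' \in \G_r(\0)$, la variable courante satisfait soit une condition triviale sur $\0$ soit fournit une intégrale gaussienne $\int_F \Psi(\frac{1}{2}\alpha x^2)\,dx = |\alpha|^{-1/2}\gamma(\alpha, \Psi)$ par définition même de la constante de Weil. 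Les $\alpha$ qui apparaissent se lisent sur les $a_j a_{j-1}^{-1}$ et l'ordre d'intégration force à séparer les indices pairs et impairs, expliquant la dichotomie $\mathfrak{t}$ / $\mathfrak{t'}$. Les signes $\zeta(-1)$ proviennent à la fois des changements de variables quadratiques et de la formule pour $\kappa$ établie à l'étape précédente.

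Enfin, dans le cas où $\mathfrak{t}(t) \neq \mathfrak{t'}(t)$, on doit montrer que $I(t)$ et $J(t)$ sont toutes deux nulles. Je procéderais en exhibant, dans chacune des intégrandes, un caractère non trivial d'un sous-groupe compact ouvert auquel l'intégrande doit être invariante : l'égalité $\mathfrak{t}(t) = \mathfrak{t'}(t)$ équivaut alors précisément à la trivialité de ce caractère, via la formule produit pour $\gamma$. Cette dernière étape devrait découler sans difficulté supplémentaire du calcul précédent, appliqué à un paramètre $t$ légèrement déformé.
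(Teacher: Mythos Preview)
Your proposal has a genuine gap: the direct computational strategy you outline is essentially the one Jacquet and Mao carried out for $r=2,3$, and the paper explicitly notes that this combinatorial approach becomes intractable for general $r$ (``le calcul est d\'ej\`a tr\`es complexe dans le cas $\G_3$'' and ``il n'est pas clair qu'elles permettent d'obtenir l'\'enonc\'e pour tout rang $r$''). In particular, your claim that ``l'\'evaluation de $I(t)$ est alors essentiellement \'el\'ementaire'' via a direct volume count is incorrect: the paper emphasizes that the varieties $X(t)$ and $Y(t)$ are neither smooth nor irreducible and have large dimension, so that the cohomology complexes $\mathcal{I}(t)$, $\mathcal{J}(t)$ are \emph{not} explicitly computable in general. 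Likewise, the staircase integration you propose for $J(t)$, producing Gaussian integrals one variable at a time, is exactly what fails to close for $r\geq 4$: the interaction between $\kappa$ and the successive integrations does not decouple in the way you suggest.

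The paper's method is fundamentally different. First, rather than computing $\kappa$ recursively from the Kazhdan--Patterson cocycle, it identifies the extension with one built from the determinant line of Arbarello--De Concini--Kac, giving a direct geometric description of $\underline{\kappa}$ as a morphism to $\mathbb{G}_m$; the key formula is $\underline{\kappa}(w_0 g)\underline{\kappa}(w_0{}^t g)=\pm\mathrm{result}(a_{r-1},a_r)$. Second --- and this is the decisive idea, borrowed from Ng\^o's proof of the Jacquet--Ye fundamental lemma --- the paper does not compute $\mathcal{I}(t)$ or $\mathcal{J}(t)$ at all. Instead it globalizes over $k[\varpi]$, places $(X(t),Y(t))$ in families $X_{\underline d}, Y_{\underline d}$ over a parameter space $V_{\underline d}\times\mathbb{G}_m^{r-1}$, and for the special multidegree $\underline d=(1,2,\dots,r)$ proves that the direct-image complexes are perverse sheaves equal to the intermediate extension of their restriction to a generic open $U_{\underline d}$. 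Over $U_{\underline d}$ the fibres factor as tensor products of the rank-$2$ local complexes of the elementary $r=2$, $v(t_1)=1$ case, where the identity is checked by hand. The perversity argument (via Fourier--Deligne and an equivariance trick under a double cover $G_{i-1}$ of $\G_{i-1}$, enabled precisely by the resultant formula for $\underline\kappa$) then forces the identity $\mathcal{J}=\mathcal{T}\otimes\mathcal{I}$ over the whole base. A final deformation argument recovers the local statement from this global one. Your outline mentions Fourier--Deligne and recursion on $r$ in passing, but the actual proof you sketch does not use them; the missing ingredient is the entire globalization/perversity machinery that replaces direct evaluation.
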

Jacquet et Mao ont démontré leur conjecture pour $\G_2$ (voir \cite{J}) et $\G_3$ (voir \cite{M}) pour tout corps local de caractéristique résiduelle $\neq 2$ (la formule de \cite{M} $\mu(a,b,c)=|a|^{-1}|b|^{-1/2}\gamma(a,\Psi)\gamma(-c,\Psi)$ doit en fait \^etre corrig\'ee en $\mu(a,b,c)=|a|^{-1}|b|^{-1/2}\gamma(-a,\Psi)\gamma(c,\Psi)$, comme on le voit en la comparant avec la formule de Jacquet \cite{J} pour $\G_2$).
\begin{thmA}\label{intro1} Si le corps local $F$ est de caractéristique positive, alors la conjecture de Jacquet et Mao est vérifiée.
\end{thmA}
Les méthodes qui sont utilisée par Jacquet et Mao dans leurs démonstrat\-ions pour $\G_2$ et $\G_3$ sont combinatoires (le calcul est déjà très complexe dans le cas $\G_3$) et il n'est pas clair qu'elles permettent d'obtenir l'énoncé pour tout rang $r$. On va donc utiliser une autre approche, en s'inspirant de la  
démonstration géométrique de B. C. Ngo pour le lemme fondamental de Jacquet et Ye (voir \cite{N}). 

On va en fait démontrer un énoncé un peu plus général que le théorème $A$, obtenu en remplaçant $\theta^2(n)$ par $\theta_{\underline{\alpha}}^2(n)$ dans la définition de l'intégrale $I$ et $\theta(n^{-1}n')$ par $\theta_{\overline{\alpha}}(n^{-1})\theta_{\underline{\alpha}}(n')$ dans la définition de l'intégrale $J$, où
$\underline{\alpha}=(\alpha_2,\dots,\alpha_r)\in (k^*)^{r-1}$, $\overline{\alpha}=(\alpha_r,\dots,\alpha_2)$ et $\theta_{\underline{\alpha}}(n)=\psi\circ\underline{h}_{\underline{\alpha}}$ avec $\underline{h}_{\underline{\alpha}}={\rm res} (\frac{1}{2}\sum_{i=2}^r\alpha_in_{i-1,i}{\rm d}\varpi)$. On notera $I(t,\underline{\alpha})$, $J(t,\underline{\alpha})$ les intégrales ainsi obtenues; bien sûr si $\underline{\alpha}=(1,\dots,1)$ on retrouve bien les intégrales $I$ et $J$ ci-dessus.

Les intégrales orbitales $I(t,\underline{\alpha})$ et $J(t,\underline{\alpha})$ portent sur des fonctions localement constantes à support compact : de ce fait elles se réduisent à des sommes finies. Lorsque le corps $F$ est de caractéristique positive, les ensembles d'indices de ces sommes finies s'interprètent naturellement comme ensembles de points à valeurs dans $k$ de variétés algébriques définies sur ce corps (qu'on appellera respectivement $X(t)$ et $Y(t)$). Plus précisément, on a :
$$I(t,\underline{\alpha})=\sum_{n\in X(t)(k)}\theta_{\underline{\alpha}}^2(n)\ \text{et }J(t,\underline{\alpha})=\sum_{(n,n')\in Y(t)(k)}\kappa(w_0{}^tntn')\theta_{\overline{\alpha}}(w_0{}^tnw_0)\theta_{\underline{\alpha}}(n'),$$
où $X(t)(k)=\{n\in N_r(F)/N_r(\0)|{}^tntn\in \G_r(\0)\cap S_r(F)\}$ et où $Y(t)(k)=\{(n,n')\in (N_r(F)/N_r(\0))^2|{}^tntn'\in\G_r(\0) \}$ ; pour la deuxième somme, on a en fait effectué le changement de variables $n^{-1}\mapsto w_0{}^tnw_0$ (pour que $X(t)(k)$ et $Y(t)(k)$ soient non vides, il faut que $a_1,\dots,a_{r-1}$ appartiennent à $\0$ et $a_r$ appartiennent à $\0^*$). 
Il reste alors à donner aussi aux fonctions une interprétation géométrique.


Du côté de l'intégrale $I$, on géométrise facilement 
le caractère $\psi$ à l'aide d'un revêtement d'Artin-Schreier (en considérant $\psi$ comme un caractère à valeurs dans $\overline{\mathbb{Q}}_{\ell}$, où $\ell$ est un nombre premier distinct de $p=\mathrm{car}(k)$). En revanche, du côté de l'intégrale $J$, il faut aussi géométriser la fonction $\kappa$. Pour cela, la construction de Kazhdan-Patterson \cite{KP} n'est pas très commode à utiliser directement et j'ai préféré une méthode plus indirecte, consistant à comparer l'extension de Kazhdan-Patterson à celle d'Arabello-De Concini-Kac, qui est de nature plus géométrique.


Arbarello, De Concini et Kac associent {\`a} chaque $g\in \G_{r}(F)$ une droite
$$D_g=(\bigwedge g\0^r/g\0^r\cap\0^r)\otimes(\bigwedge \0^r/g\0^r\cap\0^r)^{\otimes (-1)}$$
o{\`u} $\bigwedge V$ d{\'e}signe la puissance ext{\'e}rieure maximale $\bigwedge^{\dim V}V$ d'un $k$-espace vectoriel $V$ (en particulier, pour $g\in \G_r(\0)$ cette droite est canoniquement trivialisée). Cette construction fournit une extension centrale $\tG'_{r}(F)$ de $\G_{r}(F)$ par $k^*$. On utilisera plut\^ot la droite $\Delta_g=D_{\det(g)}\otimes D_g^{\otimes -1}$, ce qui revient \`a consid\'erer l'extension $\tG_{r,{\rm geo}}(F)=\det^*(\tG'_{1}(F))-\tG'_{r}(F)$ (la somme de Baer des extensions \'etant ici not\'ee additivement). On construit, \`a l'aide de la d\'ecomposition de Bruhat, une base $\delta(g)$ de $\Delta_g$, ce qui fournit une section $s_{\rm geo}$ de $\tG_{r,{\rm geo}}(F)$.
On rappelle qu'on note $\zeta :k^*\to \{\pm 1\}$ le caractère quadratique non-trivial.
\begin{proposition}On a
$\chi(g_1,g_2)=\zeta\left(\delta_{g_1}\otimes \delta_{g_2}\otimes \delta_{g_1g_2}^{\otimes -1}\right),$ o\`u $\chi$ est le $2$-cocycle de Kazhdan-Patterson \cite{KP}. En particulier, l'extension de Kazhdan-Patterson s'obtient \`a partir de notre extension $\tG_{r,{\rm geo}}(F)$ en la poussant par $\zeta$ 
\end{proposition}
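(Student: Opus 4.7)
The plan is to identify the two central extensions of $\G_r(F)$ by $\{\pm 1\}$: on the one hand, the Kazhdan-Patterson extension $\tG_r(F)$ defined by $\chi$, and on the other, the pushforward by $\zeta$ of the geometric extension $\tG_{r,{\rm geo}}(F)$. The claimed cocycle equality will then follow from identifying the two extensions together with their distinguished sections $(g,1)$ and $s_{\rm geo}$, rather than merely identifying them as abstract extensions.

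First I would verify that both extensions split compatibly over $N_r(F)$. On the geometric side, for $n \in N_r(F)$ one has $\det n = 1$ and the lattices $n\0^r$ and $\0^r$ share the same determinant, giving a canonical trivialisation of $\Delta_n$, and the section $\delta$ is normalised so that $\delta_n$ coincides with this canonical basis. On the Kazhdan-Patterson side, the splitting over $N_r(F)$ is $\sigma(n)=(n,1)$. The two sections therefore agree on $N_r(F)$ after applying $\zeta$, which kills both cocycles on pairs of unipotent elements and forces them to be bi-equivariant under $N_r(F)$.

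Second, I would compare the Steinberg symbols on the torus. The classical Arbarello-De Concini-Kac computation gives the tame symbol $\{a,b\}=(-1)^{v(a)v(b)}a^{v(b)}b^{-v(a)} \bmod \varpi$ as the commutator pairing of $\tG_{r,{\rm geo}}(F)$ on $T_r(F)\times T_r(F)$; pushing by $\zeta$ yields the Hilbert symbol $(a,b)_2$, via the standard identity valid in residue characteristic $\neq 2$. The explicit formulas of \cite{KP} show that the Kazhdan-Patterson cocycle also gives the Hilbert symbol on the torus. A Matsumoto-Moore type rigidity argument --- central extensions of $\G_r(F)$ split over $N_r(F)$ and with prescribed Steinberg symbol on the torus are determined up to unique isomorphism preserving the $N_r(F)$-splitting --- then identifies the two extensions with their preferred sections, which yields the pointwise cocycle identity.

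The main obstacle will be the combinatorial verification of the normalisation of $\delta$ on the Bruhat cells $N_r(F) w T_r(F) N_r(F)$: for each Weyl representative $w$ one must check that the Pl\"ucker-type data defining the line $\Delta_{wt}$ produces a section compatible with the Kazhdan-Patterson one modulo squares in $k^*$. Since $\zeta$ only detects the class of $\delta_{g_1}\otimes\delta_{g_2}\otimes\delta_{g_1g_2}^{\otimes -1}$ in $k^*/(k^*)^2$, many of the explicit $k^*$-valued contributions collapse to $\pm 1$, but identifying which ones survive requires careful Bruhat-cell analysis. The case of the long Weyl element $w_0$ is particularly delicate, since the Bruhat data then involves all the principal minors of $g$ and their valuations simultaneously, and this is where the bulk of the real work should lie.
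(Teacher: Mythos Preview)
Your proposal leans on a Matsumoto--Moore rigidity principle to identify the two extensions, but this is where the approach has a genuine gap. Matsumoto's theorem classifies central extensions of \emph{simple, simply connected} groups over $F$; $\G_r$ is neither. The paper's geometric extension is deliberately built as a difference $\det^*(\tG'_1(F))-\tG'_r(F)$, precisely because the raw ACK extension $\tG'_r(F)$ and the Kazhdan--Patterson extension do \emph{not} have the same Steinberg symbol on the full torus of $\G_r$ (the determinant direction contributes). So ``matching the Steinberg symbol on $T_r$ plus the $N_r$-splitting'' is not enough data to pin down an extension of $\G_r(F)$: you would still need to control the centre/determinant direction separately, and your proposal does not address this.

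Even if one grants an isomorphism of extensions, the cocycle identity $\chi=\zeta\circ\chi_{\rm geo}$ requires that the sections correspond, which---as you yourself note in your last paragraph---forces the Bruhat-cell computation anyway. The paper simply does that computation from the outset and skips the rigidity layer entirely. Concretely, it proves the sharper $k^*$-valued identity $\chi_{\rm geo}(g,g')=\chi(g,g')$ (with $\chi$ the tame-symbol version of the KP cocycle) by first computing an auxiliary cocycle $\chi'$ attached to the ACK section $s'$ on the pairs $(t,t')$, $(w,w')$, $(t,w)$, $(\alpha,t)$, $(ng,g'n')$, $(t,g)$ via explicit lattice/wedge calculations, then correcting by the determinant factor to obtain $\chi_{\rm geo}$, and finally checking the seven relations that characterise $\chi$ in the KP presentation. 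The most delicate step is not the long Weyl element but the simple-reflection relation $\chi_{\rm geo}(\alpha,g)=\chi_{\rm geo}(B(\alpha g)B(g)^{-1},B(g))$, which requires a careful case split according to the relative position of $\alpha$ and the Bruhat cell of $g$ and an explicit computation of $\chi'(e^{1/u}_{-\alpha},w)$. The $\{\pm1\}$-valued statement of the proposition then follows by composing with $\zeta$; working modulo squares from the start, as you suggest, would actually obscure rather than simplify these identities.
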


La fonction $\kappa :\G_r(\0)\to\{\pm 1\}$ est donc $\zeta\circ\underline{\kappa}$ où $\underline{\kappa}$ est le quotient ${\rm triv}/\delta$ de la section triviale 
par la section $s_{\rm geo}$. 
Cette construction définit un morphisme $\underline{\kappa}: Y(t)\ra\mg_m$, qui induit une application $\kappa=\zeta\circ\underline{\kappa} : Y(t)(k)\to \{\pm1\}$.

Soit $\overline{k}$ une clôture algébrique de $k$. Soient $\mathcal{L}_{\psi}$ le faisceau d'Artin-Schreier sur $\mathbb{G}_a$ associé au caractère $\psi$ et $\mathcal{L}_{\zeta}$ le faisceau de Kummer sur $\mathbb{G}_m$ associé au revêtement $\mathbb{G}_m\ra\mathbb{G}_m,\, x\mapsto x^2$ et à $\{\pm1\}\subset \overline{\mathbb{Q}}_{\ell}$. D'après la formule des traces de Grothendieck-Lefschetz, on a :
$$I(t,\underline{\alpha})=\mathrm{Tr}(\mathrm{Fr},\mathrm{R}\Gamma_c(X(t)\otimes_k \overline{k},h^*_{\underline{\alpha}}\mathcal{L}_{\psi}));$$
$$ J(t,\underline{\alpha})=\mathrm{Tr}(\mathrm{Fr},\mathrm{R}\Gamma_c(Y(t)\otimes_k \overline{k},{h'}^*_{\underline{\alpha}}\mathcal{L}_{\psi}\otimes\underline{\kappa}^*\mathcal{L}_{\zeta})),$$
où $h_{\underline{\alpha}}$ (resp. $h'_{\underline{\alpha}}$) sont des morphismes de $X(t)$ et $Y(t)$ dans $\mg_a$ (provenant du morphisme $\underline{h}_{\underline{\alpha}}:N_r(F)\to k$ ci-dessus).

Le th\'eor\`eme $A$ est alors une cons\'equence de l'\'enonc\'e g\'eom\'etrique suivant, o\`u on note ${\cal I}(t)=\mathrm{R}\Gamma_c(X(t)\otimes_k \overline{k},h^*_{\underline{\alpha}}\mathcal{L}_{\psi})$ et ${\cal J}(t)=\mathrm{R}\Gamma_c(Y(t)\otimes_k \overline{k},{h'}^*_{\underline{\alpha}}\mathcal{L}_{\psi}\otimes\underline{\kappa}^*\mathcal{L}_{\zeta})$.
\begin{thmA'}
${\cal J}(t)\simeq{\cal T}(t)\otimes{\cal I}(t)\simeq
{\cal T'}(t)\otimes{\cal I}(t),$ o\`u ${\cal T}(t)$ et ${\cal T'}(t)$ sont des $\overline{\mathbb{Q}}_{\ell}$-espaces vectoriels de rang $1$ plac\'es en degr\'e $v(\prod_{i=1}^{r-1}a_i)$ tels que $\mathrm{Tr}(\mathrm{Fr},{\cal T}(t))=\mathfrak{t}(t)$ et  $\mathrm{Tr}(\mathrm{Fr},{\cal T'}(t))=\mathfrak{t'}(t)$.
\end{thmA'}
En géométrisant l'argument de Jacquet \cite[p. 145]{J}, on a en fait une démonstration directe du théorème $A'$ dans le cas particulier où $r=2$ et $t=\D(t_1,t_2)$ avec $v(t_1)=1,\ v(t_2)=-1$.
\begin{proposition}\label{intro2}
Le th\'eor\`eme $A'$ est vrai dans le cas particulier ci-dessus; de plus ${\cal I}(t)$ et
${\cal J}(t)$ sont alors des $\overline{{\mathbb{Q}}}_{\ell}$-espaces vectoriels de rang $2$ plac\'es respectivement en degr\'e $0$ et $1$.
\end{proposition}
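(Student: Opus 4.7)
Je propose la d\'emarche suivante : calculer explicitement tous les objets g\'eom\'etriques dans ce cas particulier et utiliser la cohomologie d'un faisceau de Sali\'e classique. \'Ecrivons $n=\begin{pmatrix}1 & x \\ 0 & 1\end{pmatrix}$, $n'=\begin{pmatrix}1 & y \\ 0 & 1\end{pmatrix}$, posons $t_1=\varpi\tau_1$, $t_2=\varpi^{-1}\tau_2$ avec $\tau_1,\tau_2\in\0^*$, et $c=-\bar\tau_1^{-1}\bar\tau_2\in k^*$. Un calcul direct de ${}^tntn$ et ${}^tntn'$ montre que les conditions d'appartenance \`a $\G_2(\0)$ forcent $v(x)=-1$ (resp. $v(x)=v(y)=-1$); en \'ecrivant $x=u/\varpi$, $y=v/\varpi$ avec $u,v\in k^*$, on obtiendra
$$X(t)\simeq\spec(k[u]/(u^2-c)),\qquad Y(t)\simeq\{(u,v)\in \mg_m^2 \mid uv=c\}\simeq \mg_m.$$
Comme $w_0{}^tnw_0=n$ pour $r=2$, les morphismes g\'eom\'etriques associ\'es seront $h_{\underline\alpha}(u)=\alpha_2 u$ sur $X(t)$ (pour $\theta^2_{\underline\alpha}$) et $h'_{\underline\alpha}(u)=\tfrac{\alpha_2}{2}(u+c/u)$ sur $Y(t)$.

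L'\'enonc\'e sur ${\cal I}(t)$ sera alors imm\'ediat : $X(t)\otimes\overline k$ est la r\'eunion des deux racines carr\'ees $\pm u_0$ de $c$, donc ${\cal I}(t)$ est concentr\'e en degr\'e $0$ et de rang $2$. Pour d\'eterminer ${\cal J}(t)$, il faudra d'abord calculer le morphisme $\underline\kappa :Y(t)\to\mg_m$. On utilisera pour cela la d\'ecomposition de Bruhat $g=w_0{}^tntn'=nw_0tn'$ (\'egalit\'e imm\'ediate pour $r=2$) pour expliciter la base $\delta(g)$ de $\Delta_g=D_{\det g}\otimes D_g^{\otimes -1}$ associ\'ee \`a la grande cellule. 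Comme $g\in\G_2(\0)$, $\Delta_g$ admet par ailleurs une trivialisation canonique $\tau(g)$, et $\underline\kappa(g)=\tau(g)/\delta(g)\in\mg_m$. Un calcul direct utilisant les trivialisations explicites pour les \'el\'ements du tore, pour $w_0$ et pour les matrices unipotentes donnera $\underline\kappa$, \`a une constante pr\`es, comme un mon\^ome en $u$ sur $Y(t)\simeq\mg_m$; par suite $\underline\kappa^*\lze$ sera, \`a torsion constante pr\`es, le faisceau de Kummer $\lze$ standard sur $\mg_m$.

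Le faisceau ${h'}^{*}_{\underline\alpha}\lsi\otimes\underline\kappa^*\lze$ sur $\mg_m$ s'identifiera alors \`a un faisceau de Kloosterman tordu par un caract\`ere de Kummer quadratique (la somme de Sali\'e classique). Sa cohomologie est pure, concentr\'ee en degr\'e $1$, et de rang $2$ (par un calcul d'Euler-Poincar\'e utilisant les conducteurs de Swan aux bouts $0$ et $\infty$ de $\mg_m$), ce qui donnera la deuxi\`eme assertion. L'isomorphisme ${\cal J}(t)\simeq{\cal T}(t)\otimes{\cal I}(t)$ proviendra ensuite de la g\'eom\'etrisation de l'identit\'e classique de Sali\'e
$$\sum_{u\in k^*}\zeta(u)\psi\bigl(\tfrac{\alpha_2}{2}(u+c/u)\bigr)=\mathfrak{t}(t)\cdot\bigl(\psi(\alpha_2 u_0)+\psi(-\alpha_2 u_0)\bigr),$$
o\`u $u_0^2=c$. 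Concr\`etement, on tire en arri\`ere le faisceau par le rev\^etement de carr\'e $\pi :\mg_m\to\mg_m$, $w\mapsto w^2$, et on d\'ecompose $\pi_*\pi^*$ selon les caract\`eres de $\mu_2$ (c'est-\`a-dire selon la parit\'e en $w$); la composante impaire r\'ealisera l'isomorphisme voulu, la droite ${\cal T}(t)$ \'etant la droite de Gauss quadratique de rang $1$ plac\'ee en degr\'e $1$ et de trace de Frobenius $\mathfrak{t}(t)$. L'\'egalit\'e avec ${\cal T'}(t)$ se d\'eduira par sym\'etrie.

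L'\'etape la plus d\'elicate sera le calcul explicite de $\underline\kappa$, qui obligera \`a manipuler soigneusement les sections $\delta$ associ\'ees aux divers facteurs de la d\'ecomposition de Bruhat, en tenant compte des cocycles qui interviennent dans la construction d'Arbarello-De Concini-Kac.
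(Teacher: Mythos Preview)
Your strategy matches the paper's: compute $X(t)\simeq\spec k[\epsilon]/(\epsilon^{2}-c)$ and $Y(t)\simeq\spec k[v,v']/(vv'-c)$ explicitly, obtain $\underline\kappa$ from the geometric (ACK) picture --- the paper does this via its Kubota formula, getting $\underline\kappa(v,v')=-v\,t_{1}[1]$ --- and then geometrize Jacquet's Sali\'e identity. The difference lies in that last step. The paper isolates a clean lemma: for $X'_{1}=\{\epsilon^{2}=c\}$ and $Y'_{1}=\{v^{2}-uv+c=0\}$ over $\mg_{a}\times\mg_{m}$, one has $R\pr_{Y,!}\bigl(j_{!}\lze(v)\bigr)\simeq R\pr_{X,!}\bigl(j_{!}\lze(u+2\epsilon)\bigr)$; it proves this by passing to further double covers $Y'_{2}$ (adjoining $w=\sqrt{v}$, $w'=\sqrt{v'}$) and $X'_{2}$ (adjoining $\xi=\sqrt{u+2\epsilon}$, $\xi'=\sqrt{u-2\epsilon}$) and writing down the explicit isomorphism $\epsilon=ww'$, $\xi=w+w'$, $\xi'=w-w'$, after which a short chain of projection--formula manipulations gives ${\cal J}\simeq{\cal T}\otimes{\cal I}$ directly, with the rank and degree assertions as a byproduct. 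Your square map $\pi:w\mapsto w^{2}=v$ is precisely one leg of the paper's $Y'_{2}\to Y'_{1}$, but the $\pi_{*}\pi^{*}$ decomposition by itself only yields $R\Gamma_{c}(\pi^{*}\mathcal F)\simeq(\text{Sali\'e})\oplus(\text{Kloosterman})$; to isolate ${\cal J}$ and identify it with ${\cal T}\otimes{\cal I}$ you still need the substitution $\xi=w+u_{0}/w$ (equivalently $w'=u_{0}/w$, i.e.\ the paper's relation $ww'=\epsilon$), which is where the actual work sits. Note also that in your decomposition the Sali\'e piece is the $w\mapsto -w$--\emph{invariant} (even) component, not the odd one: the odd part is the Kloosterman contribution.
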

Dans le cas général, on ne connaît pas explicitement ${\cal I}(t)$ (resp. ${\cal J}(t)$) car la variété ${X}(t)$ (resp. ${Y}(t)$) n'est ni lisse ni irréductible et sa dimension est très grande. Suivant une idée due à B.C. Ngo \cite{N} on va les déformer pour se ramener à une situation plus simple.

Comme dans loc. cit., on obtient ces déformations en considérant plutôt des sommes sur un corps global de caractéristique positive. Changeant de notations, on note maintenant ${\0}={k}[\varpi]$ l'anneau des polynômes en une variable $\varpi$ à coefficients dans ${k}$, et ${F}$ son corps des fractions (dans l'énoncé local ci-dessus, on va noter plutôt respectivement $\fp$, $\op$ et $k_{\varpi}$ le corps, son anneau des entiers et son corps résiduel). Pour tout $x \in{F}$, on note ${\rm sres}(x{\rm d}\varpi)=\sum_{v\in {\rm Spm}(\0)}{\rm Tr}_{k_v/k}{\rm res}_v(x{\rm d}\varpi)$ la somme des résidus en tous ses pôles à distance finie (où $k_v$ est le corps résiduel de $v$ et ${\rm res}_v$ est le résidu en $v$). Soit $\Psi: F\to\overline{\mathbb{Q}}^*_{\ell}$ le caractère défini par $\Psi(x)=\psi({\rm sres}(x\,{\rm d}\varpi))$. Soit $\underline{\alpha}=(\alpha_2,\dots,\alpha_r)\in \mg_m^{r-1}$. On note $\overline{\alpha}:=(\alpha_r,\dots,\alpha_2)$. Soit $\theta_{\underline{\alpha}} : N_r(F)\to \overline{\mathbb{Q}}^*_\ell$ le caractère défini par $\theta_{\underline{\alpha}}(n)=\Psi(\frac{1}{2}\sum_{i=2}^r\alpha_in_{i-1,i})$. Pour tout idéal maximal $v$ de $\0$, on note $\0_v$ la complété de $\0$ selon $v$, $F_v$ son corps des fractions, et $k_v$ son corps résiduel.

Du côté de l'intégrale $I$, pour toute place $v$, on modifie un peu l'intégrale $I$ locale en remplaçant la fonction caractéristique de $\G_r(\0_v)$ par celle de $\mathfrak{gl}_r(\0_v)$ (cf. \cite{N}), de sorte qu'on obtient la somme
$$I_v(t,\alpha)=\sum_{{\tiny\begin{matrix}n\in N_r(F_v)/N_r(\0_v)\\{}^tntn\in S_r(\0_v)\end{matrix}}}\theta_{\underline{\alpha},v}^2(n),$$
où $\theta_{\underline{\alpha},v}(n)=\psi(\frac{1}{2}\sum_{i=2}^r{\rm tr}_{k_v/k}{\rm res}(\alpha_in_{i-1,i}{\rm d\varpi}))$ (quand $v=\varpi$ et $a_r\in \op^*$, on retrouve bien la somme locale $I$ (qu'on va noter $I_\varpi$) dans l'énoncé du théorème $A$ ci-dessus). Ensuite, pour toute matrice diagonale $t$ à coefficients dans $F$, on définit
\begin{equation}\label{iintro} I(t,\underline{\alpha})=\prod_{v\in {\rm Spm}(\0)}I_v(t,\underline{\alpha}),
\end{equation}
 où $I_v(t,\underline{\alpha})=1$ sauf si $v$ divise $\prod_{i=1}^{r-1}a_i$. Ceci se réduit à une somme finie
$$I(t,\underline{\alpha})=\sum_{{\tiny\begin{matrix}n\in N_r(F)/N_r(\0)\\{}^tntn\in S_r(\0)\end{matrix}}}\theta_{\underline{\alpha}}^2(n)$$
laquelle est non-nulle seulement si $a_1,\dots,a_r$ sont des polynômes. 
En utilisant la même interprétation que pour la somme locale $I_\varpi$ ci-dessus, on obtient une donnée géométrique $(X(t),h_{\underline{\alpha}})$ associée à $I(t,\underline{\alpha})$ (resp. des données géométriques $(X_v(t),h_{\underline{\alpha},v})$ associées à $I_v(t,\underline{\alpha})$) et on a l'interprétation géom\-étrique suivante de l'identité (\ref{iintro})
\begin{proposition}
$$\mathrm{R}\Gamma_c(X(t)\otimes_k\overline{k},h_{\underline{\alpha}}^*\mathcal{L}_{\psi})=\bigotimes_{\lambda\in {\rm supp}(t)}\mathrm{R}\Gamma_c({X}_{v}(t)\otimes_k\overline{k},h_{\underline{\alpha},v}^*\mathcal{L}_{\psi}),$$
o\`u ${\rm supp}(t)$ est l'ensemble des racines de $\prod_{i=1}^{r-1}a_i$ dans $\overline{k}$.
\end{proposition}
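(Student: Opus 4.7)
The plan is to establish, after base change to $\overline{k}$, a product decomposition of the scheme $X(t)$ whose factors are indexed by $\supp(t)$, and then to invoke the K\"unneth formula for compactly supported \'etale cohomology.

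The starting point is a strong approximation / partial-fractions statement for the unipotent group $N_r$ over $\mathcal{O}=k[\varpi]$: every class in $N_r(F)/N_r(\mathcal{O})$ admits a unique representative whose upper-triangular entries are sums of principal parts at the closed points of $\mathrm{Spm}(\mathcal{O})$. For a single coordinate, this is the classical decomposition $F = \mathcal{O} \oplus \bigoplus_v F_v/\mathcal{O}_v$; for the non-abelian $N_r$ one proceeds by induction on the distance from the diagonal, using at each stage that any super-diagonal entry can be put in principal-parts form by multiplying on the right by a suitable element of $N_r(\mathcal{O})$ that affects only entries strictly closer to the diagonal. This yields a canonical isomorphism of $k$-ind-schemes between $N_r(F)/N_r(\mathcal{O})$ and the restricted direct sum of the local quotients $N_r(F_v)/N_r(\mathcal{O}_v)$.

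I would then verify that the closed condition ${}^tntn \in S_r(\mathcal{O})$ splits into local conditions. At a place $v \notin \supp(t)$, the matrix $t$ is a unit in $T_r(\mathcal{O}_v)$, and a direct computation on the diagonal entries of ${}^t n_v\, t\, n_v$ shows by induction on the row index that the local integrality condition forces $n_v = I$; at a place $v \in \supp(t)$, since conjugation by $N_r(\mathcal{O}_v)$ preserves $S_r(\mathcal{O}_v)$, one recovers exactly the defining condition of $X_v(t)$. Passing to $\overline{k}$, this produces a scheme-theoretic isomorphism $X(t)\otimes_k \overline{k} \simeq \prod_{\lambda \in \supp(t)} (X_v(t)\otimes_k \overline{k})_\lambda$, where the subscript $\lambda$ denotes the component of the Weil restriction lying over the geometric point $\lambda$.

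The function side is then immediate: by construction $\underline{h}_{\underline{\alpha}} = \sum_v h_{\underline{\alpha},v}$ because $\mathrm{sres} = \sum_v \mathrm{Tr}_{k_v/k} \circ \mathrm{res}_v$, and the Artin-Schreier sheaf converts sums into tensor products, $(f+g)^*\mathcal{L}_\psi \simeq f^*\mathcal{L}_\psi \otimes g^*\mathcal{L}_\psi$. Consequently, under the product decomposition $h^*_{\underline{\alpha}}\mathcal{L}_\psi$ is the external tensor product of the local sheaves $h^*_{\underline{\alpha},v}\mathcal{L}_\psi$, and the K\"unneth formula for $\mathrm{R}\Gamma_c$ on a product of $\overline{k}$-schemes yields the stated identity. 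The one delicate point will be the rigorous scheme-theoretic formulation of the partial-fractions decomposition for the non-abelian $N_r$; once that and the place-by-place factorization of the integrality constraint ${}^tntn \in S_r(\mathcal{O})$ are pinned down, the remaining ingredients — additivity of residues, multiplicativity of Artin-Schreier, and K\"unneth — are formal.
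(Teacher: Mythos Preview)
Your proposal is correct and follows essentially the same approach as the paper, which does not spell out a proof but simply refers to \cite[proposition~1.3.2 and corollaire~3.2.3]{N}: there the argument is precisely the one you outline --- a product decomposition $X(t)\simeq\prod_{v\in\supp(t)}X_v(t)$ coming from partial fractions for $N_r(F)/N_r(\mathcal{O})$ (together with the fact, cited here as \cite[corollaire~1.1.5]{N}, that $X_v(t)$ is reduced to a point when $v\notin\supp(t)$), compatibility of $h_{\underline{\alpha}}$ with the additive decomposition of $\mathrm{sres}$, and the K\"unneth formula.
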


Du côté de l'intégrale $J$, le problème qui se présente lorsqu'on tente d'adapter la démarche utilisée pour l'intégrale $I$ ci-dessus est que la fonction $\underline{\kappa}_{\mathrm{loc}}$  n'est définie que sur le sous-groupe compact maximal standard de $\G_{r,\mathrm{loc}}$. Pour contourner ce problème, on considère (en remarquant que  ${}^tN_r(F_v)tN_r(F_v)\cap \mathfrak{gl}_r(\0_v)={}^tN_r(F_v)tN_r(F_v)\cap \G_r(\0_v)$ quand $a_r\in \0_v^*$)
$$J_v(t,\underline{\alpha})=\begin{cases}\sum_{{\footnotesize \begin{smallmatrix}n,n'\in N_r(F_v)/N_r(\0_v)\\{}^tntn\in\mathfrak{gl}_r(\0_v)\end{smallmatrix}}}\kappa_v(w_0{}^tntn')\theta_{\overline{\alpha},v}(w_0{}^tnw_0)\theta_{\underline{\alpha},v}(n')&\text{ si $v\not|a_r$},\\
\sum_{{\footnotesize \begin{smallmatrix}n,n'\in N_r(F_v)/N_r(\0_v)\\{}^tntn\in \mathfrak{gl}_r(\0_v)\end{smallmatrix}}}\theta_{\overline{\alpha},v}(w_0{}^tnw_0)\theta_{\underline{\alpha},v}(n')&\text{ si $v |a_r$}
,\end{cases}$$
où $\kappa_v: \G_r(\0_v)\to \{\pm1\}$ provient du scindage au-dessus de $\G_r(\0_v)$ du revêtement de Kazhdan-Patterson local en $v$ 
 (quand $v=\varpi$ et $a_r\in \op^*$, on retrouve bien la somme locale $J$ (notée dorénavant $J_\varpi$) qui figure dans l'énoncé du théorème $A$ ci-dessus). Pour toute matrice diagonale $t$ à coefficients dans $F$, la somme globale en $t$ est en fait définie par
\begin{equation}\label{iintro2} J(t,\underline{\alpha})=\prod_{v\in {\rm Spm}(\0)}J_v(t,\underline{\alpha}),
\end{equation}
 où $J_v(t,\underline{\alpha})=1$ sauf si $v$ divise $\prod_{i=1}^{r-1}a_i$.

En adaptant la géométrisation de la somme local $J_{\varpi}$, on obtient des données géométriques associées à $J_v(t,\alpha)$ qui sont $(Y_v(t),h'_{\underline{\alpha},v},\underline{\kappa}_v)$ quand $v\nmid a_r$ (dans ce cas, on a que $\kappa_v=\zeta\circ\underline{\kappa}_v$) et $(Y_v(t),h'_{\underline{\alpha},v})$ quand $v|a_r$. Il reste à interpréter géométriquement la somme globale $J(t,\alpha)$. Pour cela, on introduit la variété $Y(t)$ dont l'ensemble de $k$ points est
$$Y(t)(k)=\{(n,n')\in (N_r(F)/N_r(\0))^2|{}^tntn \in \mathfrak{gl}_r(\0)\}.$$
Cette variété est munie d'un morphisme vers $\mg_a$ défini par $$h'_{\underline{\alpha}}(n,n')=\frac{1}{2}\sum_{i=2}^r\alpha_i\mathrm{sres}((n_{i-1,i}+n'_{i-1,i})\mathrm{d}\varpi).$$
La fonction $\kappa=\prod_{v\nshortmid a_r}\kappa_v$ est interprétée à l'aide du point de vue géométrique sur l'extension métaplectique évoqué ci-dessus. Plus précisément, en associant à chaque $g\in G_r(F)$ la droite $D_{\det(g)}(a_r^{-1})\otimes D_g(a_r^{-1})^{\otimes -1}$, où
$D_g(a_r^{-1}):=(\bigwedge \0[a_r^{-1}]^r/\0[a_r^{-1}]^r\cap g\0[a_r^{-1}]^r)^*\otimes(g\0[a_r^{-1}]^r/\0[a_r^{-1}]^r\cap g\0[a_r^{-1}]^r),$ on obtient une extension centrale $\tG_{r,{\rm geo}}(F)$ de
$\G_r(F)$ par $k^*$. \`A l'aide de la décomposition de Bruhat, on définit une section $s_{\rm geo}$ de cette extension, de sorte qu'on définit une fonction $\underline{\kappa}$ globale analogue à celle introduite ci-dessus (cette fonction est le produit des fonctions $\underline{\kappa}_v$ locales, i.e $\underline{\kappa}=\prod_{v\nshortmid a_r}N_{k_v/k}\underline{\kappa}_v$). De cette manière, on obtient le morphisme $\underline{\kappa}: Y(t)\to \mg_m$ (comme ${}^tN_r(F)tN_r(F)\cap \mathfrak{gl}_r(\0)\subset {}^tN_r(F)tN_r(F)\cap \G_r(\0[a_r^{-1}])$).

En utilisant la formule des traces de Grothendieck et Leftschetz, on obtient l'interprétation suivante de l'identité \ref{iintro2}
\begin{proposition}\label{intro3}
$$\mathrm{R}\Gamma_c({Y}(t)\otimes_k\overline{k},{h'}_{\underline{\alpha}}^{*}\mathcal{L}_{\psi}\otimes\underline{\kappa}^{*}\mathcal{L}_{\zeta})=\bigotimes_{\lambda\in {\rm supp}(t)}\mathrm{R}\Gamma_c({Y}_{v}(t)\otimes_k\overline{k},{h'}_{\underline{\alpha},v}^{*}\mathcal{L}_{\psi}\otimes\underline{\kappa}_{v}^*\mathcal{L}_{\zeta}),$$
o\`u ${\rm supp}(t)$ est l'ensemble des racines de $\prod_{i=1}^{r-1}a_i$ dans $\overline{k}$.
\end{proposition}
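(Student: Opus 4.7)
Le plan est de r\'eduire l'\'enonc\'e \`a la formule de K\"unneth en cohomologie \'etale \`a support compact, apr\`es avoir \'etabli trois points : (i) une factorisation g\'eom\'etrique de la $k$-vari\'et\'e $Y(t)\otimes_k\overline{k}$ comme produit, indic\'e par les points de ${\rm supp}(t)$, des vari\'et\'es locales $Y_v(t)\otimes_k\overline{k}$ (apr\`es \'eventuelle restriction de Weil pour les places non $k$-rationnelles) ; (ii) la d\'ecomposition du morphisme $h'_{\underline{\alpha}} : Y(t)\to \mg_a$ comme somme des $h'_{\underline{\alpha},v}$ ; et (iii) la d\'ecomposition du morphisme $\underline{\kappa} : Y(t)\to \mg_m$ comme produit des $N_{k_v/k}\underline{\kappa}_v$ pour $v\in{\rm supp}(t)$ ne divisant pas $a_r$. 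Combin\'es, ces trois points identifient le faisceau ${h'}_{\underline{\alpha}}^*\lsi \otimes \underline{\kappa}^*\lze$ au produit tensoriel externe des faisceaux locaux ${h'}_{\underline{\alpha},v}^*\lsi \otimes \underline{\kappa}_v^*\lze$, et K\"unneth permet de conclure.

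Le point (i) se traite de la m\^eme mani\`ere que pour la proposition analogue concernant $X(t)$ : la d\'ecomposition en fractions partielles d'un \'el\'ement de $F/\0$ \`a p\^oles finis identifie $N_r(F)/N_r(\0)$ au produit $\prod_{v\in{\rm Spm}(\0)}N_r(F_v)/N_r(\0_v)$, et la condition ${}^tntn'\in \mathfrak{gl}_r(\0)$ se scinde en les conditions locales analogues, chacune trivialement v\'erifi\'ee en les places $v$ ne divisant aucun des $a_i$. Le point (ii) est imm\'ediat puisque la fonction ${\rm sres}$ est par d\'efinition la somme des r\'esidus sur toutes les places \`a distance finie, ce qui donne $h'_{\underline{\alpha}}=\sum_{v\in{\rm supp}(t)}h'_{\underline{\alpha},v}$ sur la factorisation ci-dessus.

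Le point (iii) est le c\oe{}ur de la d\'emonstration et constitue la principale difficult\'e. Il faut \'etablir la compatibilit\'e locale-globale de la construction d'Arbarello-De Concini-Kac. Je voudrais montrer que les droites $D_{\det(g)}(a_r^{-1})$ et $D_g(a_r^{-1})$ se d\'ecomposent canoniquement en produit tensoriel indic\'e par les places $v\nmid a_r$ de droites locales analogues : ceci tient \`a ce que le quotient $g\0[a_r^{-1}]^r/(g\0[a_r^{-1}]^r\cap \0[a_r^{-1}]^r)$ se d\'ecompose en somme directe de ses $v$-composantes, celles-ci \'etant triviales sauf aux places $v$ o\`u $g$ n'est pas dans $\G_r(\0_v)$ (c'est essentiellement un argument de type Beauville-Laszlo appliqu\'e aux $\0$-r\'eseaux). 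La norme $N_{k_v/k}$ intervient pour rabattre sur $k$ les droites naturellement d\'efinies sur les corps r\'esiduels $k_v$. Il reste alors \`a v\'erifier que la section $s_{\rm geo}$, construite via la d\'ecomposition de Bruhat, respecte cette factorisation : cela devrait provenir de ce que les calculs associ\'es \`a la factorisation de Bruhat se font composante par composante sous la d\'ecomposition locale des r\'eseaux.

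Une fois ces trois ingr\'edients r\'eunis, les isomorphismes $\lsi(x+y)\simeq \lsi(x)\otimes \lsi(y)$ et $\lze(xy)\simeq \lze(x)\otimes \lze(y)$ permettent d'\'ecrire le faisceau global comme produit tensoriel externe des faisceaux locaux sur $Y(t)\otimes_k\overline{k}\simeq \prod_{\lambda\in{\rm supp}(t)}Y_v(t)\otimes_k\overline{k}$. La formule de K\"unneth en cohomologie \'etale \`a support compact livre alors l'identification annonc\'ee.
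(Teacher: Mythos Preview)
Votre approche est correcte et co\"incide essentiellement avec celle du papier : celui-ci \'etablit d'abord la multiplicativit\'e $\underline{\kappa}(g)=\prod_{v\not\in S}\underline{\kappa}_v(g_v)$ (Proposition~\ref{glo4}, via la d\'ecomposition des r\'eseaux par le lemme chinois, Lemme~\ref{ka3}, plut\^ot que par Beauville--Laszlo), puis renvoie \`a \cite[corollaire 3.2.3]{N} pour la factorisation g\'eom\'etrique de $Y(t)$ et l'argument de K\"unneth, exactement comme vous le proposez. Notons seulement que sur $\overline{k}$ toutes les places de ${\rm supp}(t)$ sont rationnelles, de sorte que les normes $N_{k_v/k}$ deviennent triviales et que la condition ${\rm pgcd}(a_r,\prod_{i=1}^{r-1}a_i)=1$ est implicitement requise pour que $\underline{\kappa}$ soit d\'efinie.
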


Soient $Q_{d_i}$ la vari\'et\'e affine sur ${k}$ des polyn\^omes unitaires de degr\'e $d_i$ et $V_{\ud}=\{(a_1,\dots,a_r)\in \prod_{i=1}^rQ_{d_i}|{\rm pgcd}(\prod_{i=1}^{r-1}a_i,a_r)=1\}$ avec $\ud=(d_1,\dots,d_r)$ (la raison pour laquelle on considère cet ouvert est que les sommes locales en les places divisant $a_r$ y sont triviales, du côté de l'intégrale $I$ comme du côté de l'intégrale $J$). Soit $t=\D(a_1,a_2/a_1,\dots,a_r/a_{r-1})$ tel que $(a_1,\dots,a_r)\in V_{\ud}$ et $\underline{\alpha}=(\alpha_2,\dots,\alpha_r)\in (k^*)^{r-1}$. Le couple $({X}(t),h_{\underline{\alpha}})$ et le triple $({Y}(t),h'_{\underline{\alpha}},\underline{\kappa})$ se mettent en familles  de  sorte  qu'on  obtient  des  vari\'et\'es ${X}_{\ud}$ et ${Y}_{\ud}$   de  type  fini  sur
 ${k}$ munies de morphismes $f_{\ud}^X:{X}_{\ud}\times\mg_m^{r-1}\ra  V_{\ud}\times\mg_m^{r-1}$, $f_{\ud}^Y:{Y}_{\ud}\times\mg_m^{r-1}\ra V_{\ud}\times\mg_m^{r-1}$, $h_{\ud}:{X}_{\ud}\times\mg_m^{r-1}\ra\mg_a$, $h'_{\ud}:{Y}_{\ud}\times\mg_m^{r-1}\ra\mg_a$ et $\underline{\kappa}_{\ud}:{Y}_{\ud}\times\mg_m^{r-1}\ra\mg_m$ tels que ${X}(t)$ et $ \mathrm{R}\Gamma_c({X}(t),h_{\underline{\alpha}}^*\mathcal{L}_{\psi})$ (resp.  ${Y}(t)$ et $ \mathrm{R}\Gamma_c({Y}(t),{h'}_{\underline{\alpha}}^{*}\mathcal{L}_{\psi}\otimes\underline{\kappa}^{*}\mathcal{L}_{\zeta})$) sont respectivement les fibres en $t$ de $f_{\ud}^X$ et de $\mathrm{R}f^X_{\ud,!}h_{\ud}^*\mathcal{L}_{\psi}$ (resp. de $f_{\ud}^Y$ et de $\mathrm{R}f^Y_{\ud,!}({h'}_{\ud}^{*}\mathcal{L}_{\psi}\otimes\underline{\kappa}_{\ud}^{*}\mathcal{L}_{\zeta}$)).

Les fibres de ${\cal I}_{\ud}=\mathrm{R}f^X_{\ud,!}h_{\ud}^*\mathcal{L}_{\psi}$ (resp. ${\cal J}_{\ud}=\mathrm{R}f^Y_{\ud,!}({h'}_{\ud}^{*}\mathcal{L}_{\psi}\otimes\underline{\kappa}_{\ud}^{*}\mathcal{L}_{\zeta})$ au-dessus des points $(t,\underline{\alpha})$ tels que le polynôme $\prod_{i=1}^ra_i$ n'a pas de racine multiples se factorisent en produits tensoriels des complexes locaux ``simples" considérés dans la proposition~\ref{intro2}, lesquels sont des $\overline{\mathbb{Q}}_\ell$-espaces vectoriels de rang $2$ placée en degré $0$ (resp. en degré $1$). De plus, les points $(t,\underline{\alpha})$ de cette espèce forment un ouvert $U_{\ud}$ de $\prod_{i=1}^rQ_i\times (\mg_m)^{r-1}$, et les restrictions de ${\cal I}_{\ud}$ et ${\cal J}_{\ud}$ à cet ouvert sont des systèmes locaux de rang $2^{\deg(\prod_{i=1}^ra_i)}$ placés respectivement en degré $0$ et $\deg(\prod_{i=1}^ra_i)$. Les deux restrictions sont reliées par un système local ${\cal T}_{\ud}$ de rang $1$, placé en degré $\deg(\prod_{i=1}^ra_i)$ au-dessus de $U_{\ud}$, géométriquement constant et provenant d'un certain caractère $\tau$ de ${\rm Gal}_{\overline{k}/k}$.
\begin{proposition}\label{intro4}
Pour $\ud=(d_1,\dots,d_r)$, ${\cal T}_{\ud}$ est géométriquement constant et est défini par le caractère $\tau$ de $\mathrm{Gal}_{\overline{k}/k}$ :
$$\tau(\mathrm{Fr}_q)=\left\{\begin{smallmatrix}
(-1)^{\sum_{i=1}^r d_i}q^{\sum_{i=1}^rd_i/2}\zeta(-1)^{\sum_{i=0}^{s-1}d_{2i+1}}\gamma_\infty(\varpi,\Psi_\infty)^{-\sum_{i=1}^{s-1}p(d_{2i}-d_{2i+1})}\,
\text{si $r=2s$},\\
(-1)^{\sum_{i=1}^r d_i}q^{\sum_{i=1}^rd_i/2}\zeta(-1)^{\sum_{i=1}^{s}d_{2i}}\gamma_\infty(\varpi,\Psi_\infty)^{-\sum_{i=1}^{s}p(d_{2i}-d_{2i-1})}\,
\text{si $r=2s+1$},
\end{smallmatrix}\right.$$
où $p(x)=\begin{cases} 1&\text{si $x$ est impair}\\
0&\text{si $x$ est pair}.\end{cases}$

\end{proposition}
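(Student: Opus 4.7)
The plan is to prove the proposition in two steps: first, the geometric constancy of $\mathcal{T}_{\underline{d}}$ on $U_{\underline{d}}$; second, the computation of $\tau(\mathrm{Fr}_q)$ at a single well-chosen $k$-rational fibre.

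For the constancy, I invoke the factorization propositions for $\mathcal{I}_{\underline{d}}$ and $\mathcal{J}_{\underline{d}}$ (Proposition~\ref{intro3} and its analogue on the $I$-side), which identify at each geometric point $(t,\underline{\alpha})\in U_{\underline{d}}$ the fibre of $\mathcal{T}_{\underline{d}}$ with a tensor product $\bigotimes_\lambda T_\lambda$ over the roots $\lambda$ of $\prod_{i=1}^{r-1}a_i$. The condition defining $U_{\underline{d}}$, together with the coprimality defining $V_{\underline{d}}$, forces each such $\lambda$ to be a simple root of exactly one $a_j$ with $j<r$, so the local factor at $\lambda$ is of the simple $r=2$, $v(t_1)=1$, $v(t_2)=-1$ shape treated in Proposition~\ref{intro2}. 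That local $T_\lambda$ is a pullback of $\mathcal{L}_\psi\otimes\mathcal{L}_\zeta$ along an algebraic map depending on $\alpha_j$ and $\lambda$. Passing to the $\mathfrak{S}$-Galois étale cover of $U_{\underline{d}}$ that orders the roots of $\prod_{i<r}a_i$, the pullback of $\mathcal{T}_{\underline{d}}$ becomes a tensor product of geometrically constant rank-$1$ sheaves (Kummer and Artin-Schreier sheaves evaluated at functions whose relevant monodromy trivializes after the cover); by symmetric-group descent the system on $U_{\underline{d}}$ is itself geometrically constant.

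For the trace computation, by geometric constancy $\tau(\mathrm{Fr}_q)$ can be read off any $k$-rational fibre. Choose $(t,\underline{\alpha})\in U_{\underline{d}}(k)$ with $\prod_{i=1}^{r-1}a_i$ totally split over $k$ with pairwise distinct roots, so the Frobenius trace factors as a product of local traces $\mathrm{Tr}(\mathrm{Fr},T_\lambda)$. Proposition~\ref{intro2} delivers each of these as $q^{1/2}\zeta(-1)^{\epsilon_j}\gamma((\varpi-\lambda)\alpha_j,\Psi_v)$, with the sign $\epsilon_j$ determined by whether $j\equiv r\pmod 2$. Multiplying over all finite roots yields the $q^{\sum d_i/2}$ Tate twist, the accumulated $\zeta(-1)$-exponent of the claimed shape, the overall sign $(-1)^{\sum d_i}$ coming from the cohomological degree $\sum d_i$, and a product of Weil constants at the finite places of $k(\varpi)$. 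The product formula $\prod_v\gamma(\varpi,\Psi_v)=1$ (over \emph{all} places, including infinity) then converts this finite product into $\gamma_\infty(\varpi,\Psi_\infty)^{-N}$, where $N=\sum p(d_{2i}-d_{2i+1})$ in the even case and $N=\sum p(d_{2i}-d_{2i-1})$ in the odd case records how many local $\gamma$-factors survive after pairing up contributions of opposite parity.

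The principal obstacle is the combinatorial parity bookkeeping. The local signs $\epsilon_j$ from Proposition~\ref{intro2} must be summed across all places to recover precisely $\sum_{j\not\equiv r(2)}d_j$ (resp. $\sum_{j\equiv r(2)}d_j$), and the finite $\gamma$-factors must be regrouped so that the exponent of the residual $\gamma_\infty$ is exactly $\sum p(d_{2i}-d_{2i+1})$ (resp. $\sum p(d_{2i}-d_{2i-1})$). The distinction between the two cases $r=2s$ and $r=2s+1$ arises because the pairing of consecutive $a_j$'s through the $\gamma$-factor product shifts by one unit of parity, which swaps "even $j$" and "odd $j$" in the two regimes. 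A secondary but more formal point is the rigorous descent of the tensor-product identification across the symmetric-group cover, for which one needs the local $T_\lambda$ to be constructed as the pullback of a single sheaf on the universal one-root stratum in a way compatible with the $\mathfrak{S}$-action on roots.
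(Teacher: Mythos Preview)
Your trace computation in step two is essentially the paper's argument: factor over the simple roots via Proposition~\ref{ssp}, plug in the local $r=2$ values from Proposition~\ref{intro2}, and convert the finite product of Weil constants into $\gamma_\infty(\varpi,\Psi_\infty)^{-N}$ via Weil's product formula. That part is fine, modulo the parity bookkeeping you already flag.

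The gap is in step one. Your claim that, after passing to the root-ordering cover, $\mathcal{T}_{\underline d}$ becomes a tensor product of \emph{geometrically constant} rank-$1$ sheaves is not justified and is in fact false factor by factor. In the notation of Proposition~\ref{intro2} (see the proof of Proposition~\ref{fatf} in the paper), the local factor at a root $\lambda$ is $\mathcal L_\zeta(-d)\otimes[\text{Gauss}]$ with $d=t_1[1]$ depending algebraically on $\lambda$; the Kummer part $\mathcal L_\zeta(-d)$ has nontrivial geometric monodromy as $\lambda$ varies. So the individual tensor factors are genuinely non-constant local systems on the cover, and symmetric-group descent alone cannot produce geometric constancy downstairs. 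What makes the \emph{product} geometrically constant is precisely the global cancellation encoded in the Weil product formula $\prod_v\gamma_v(a,\Psi_v)=1$, which you only invoke in step two.

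The paper's order of operations is therefore inverted relative to yours: it computes the Frobenius trace of $\mathcal T_{\underline d}$ at an \emph{arbitrary} fibre over $U_{\underline d}(k)$ (and implicitly over $U_{\underline d}(k^n)$ for all $n$), observes via the product formula that the answer is independent of the fibre, and then concludes geometric constancy by Chebotarev density for rank-$1$ local systems. Your argument becomes correct if you reorganize it this way: drop the direct constancy claim, carry out the trace computation at every closed point rather than a single one (the same calculation works verbatim since the result does not depend on the $a_i$), and then invoke Chebotarev.
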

La formule de $\tau$ vient du fait que le produit des constantes de Weil en toutes les places du corps global $F$ est trivial \cite{W}. 
\`A l'aide du théorème de Chebotarev, le système local $\mathcal{T}_{\ud}$ est géométriquement constant et se prolonge alors de manière évidente à $V_{\ud}\times \mg_m^{r-1}$. 


Dans le cas où $\ud=(1,2,\dots,r)$, d'après \cite{N}, on trouve des présentations simples pour $(X_{\ud},h_{\ud})$  et pour $(Y_{\ud},h'_{\ud},\underline{\kappa}_{\ud})$ (voir les propositions \ref{ngo1} et \ref{ngo2}) qui vont nous simplifier les calculs. On obtient alors le théorème suivant (qui peut être considéré comme un analogue global du lemme fondamental de Jacquet-Mao) :
\begin{thmB}\label{intro7} Pour $\ud=(1,2,\dots,r)$,
$\mathcal{J}_{\ud}={\cal T}_{\ud}\otimes \mathcal{I}_{\ud}$. Les deux membres de cette \'egalit\'e sont, \`a d\'ecalage pr\`es, des faisceaux pervers isomorphes au prolongement interm\'edi\-aire de leur restriction \`a $U_{\ud}$.
\end{thmB}

Ce théorème résulte immédiatement du suivant :

\begin{theorem}\label{intro5}\begin{enumerate}
\item Le complexe de faisceaux
 ${\cal I}_{\ud}[\frac{r(r+1)}{2}+r-1]$ est un faisceau pervers, prolongement       interm\'ediaire de sa restriction \`a l'ouvert $U_{\ud}$.
\item Le complexe de faisceaux
${\cal J}_{\ud}[r^2+r-1]$ est un faisceau pervers, prolonge\-ment interm\'ediaire de sa restriction \`a l'ouvert $U_{\ud}$.
\end{enumerate}
\end{theorem}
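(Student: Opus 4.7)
La strat\'egie que nous proposons suit celle de Ng\^o \cite{N} pour le lemme fondamental de Jacquet-Ye, en exploitant les pr\'esentations explicites de $(X_{\ud}, h_{\ud})$ et $(Y_{\ud}, h'_{\ud}, \underline{\kappa}_{\ud})$ pour $\ud = (1, 2, \ldots, r)$ fournies par les propositions~\ref{ngo1} et~\ref{ngo2}. La vertu de ce choix particulier de $\ud$ est que ces pr\'esentations r\'ealisent $X_{\ud}\times\mg_m^{r-1}$ et $Y_{\ud}\times\mg_m^{r-1}$ comme des vari\'et\'es lisses, par tours de fibr\'es affines, de dimensions respectives $\frac{r(r+1)}{2}+r-1$ et $r^2+r-1$ (qui sont pr\'ecis\'ement les d\'ecalages annonc\'es).

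Premi\`erement, on d\'eduit de la lissit\'e des espaces totaux et du caract\`ere lisse de rang 1 des faisceaux d'Artin-Schreier et de Kummer que $h_{\ud}^*\mathcal{L}_{\psi}$ et ${h'}_{\ud}^*\mathcal{L}_{\psi}\otimes\underline{\kappa}_{\ud}^*\mathcal{L}_{\zeta}$ sont lisses de rang $1$ ; d\'ecal\'es par la dimension de l'espace total, ce sont donc des faisceaux pervers simples. L'\'enonc\'e se ram\`ene alors \`a v\'erifier que les pouss\'es directs $\mathrm{R}f^X_{\ud,!}$ et $\mathrm{R}f^Y_{\ud,!}$ de ces faisceaux pervers sont encore pervers sur la cible $V_{\ud}\times\mg_m^{r-1}$, et s'identifient au prolongement interm\'ediaire de leur restriction \`a $U_{\ud}$.

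Deuxi\`emement, on stratifie $V_{\ud}\times\mg_m^{r-1}$ par les sous-sch\'emas localement ferm\'es index\'es par les sch\'emas de co\"incidence des racines de $\prod_{i=1}^r a_i$, et on contr\^ole la dimension des fibres de $f^X_{\ud}$ et $f^Y_{\ud}$ au-dessus de chaque strate. Pour $f^X_{\ud}$, qui est g\'en\'eriquement fini (source et cible ayant m\^eme dimension), l'\'enonc\'e est une cons\'equence directe de la propri\'et\'e de petitesse au sens de Goresky-MacPherson : sur toute strate de codimension $c$, la dimension des fibres doit \^etre strictement inf\'erieure \`a $c/2$. Cette estim\'ee combinatoire se v\'erifie par comptage direct des param\`etres libres dans la pr\'esentation ng\^onienne. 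Pour $f^Y_{\ud}$, dont les fibres g\'en\'eriques sont de dimension positive $\frac{r(r-1)}{2}$, il faut en outre exploiter la non-trivialit\'e des faisceaux $\mathcal{L}_{\psi}$ et $\mathcal{L}_{\zeta}$ restreints aux fibres, afin d'obtenir la concentration cohomologique correspondant au d\'ecalage annonc\'e $r^2+r-1$.

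L'obstacle principal sera la v\'erification d\'etaill\'ee pour $f^Y_{\ud}$. Par rapport au morphisme $f^X_{\ud}$ (qui reprend essentiellement le calcul de Ng\^o), le morphisme $f^Y_{\ud}$ fait intervenir une seconde copie de $N_r$ ainsi que le facteur $\underline{\kappa}_{\ud}^*\mathcal{L}_{\zeta}$ provenant de la construction d'Arbarello-De Concini-Kac via la d\'ecomposition de Bruhat. Il faudra donc, sur la pr\'esentation explicite, d\'ecrire la restriction de $\underline{\kappa}_{\ud}$ aux fibres, et contr\^oler simultan\'ement la dimension des fibres, l'annulation cohomologique associ\'ee aux faisceaux non-triviaux, et la codimension des strates de co\"incidence dans la base. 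Cette analyse constitue le c\oe{}ur technique de l'argument.
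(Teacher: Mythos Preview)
Votre approche diff\`ere substantiellement de celle de l'article, et comporte pour le point~(2) une lacune s\'erieuse.

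L'article ne proc\`ede pas par petitesse ni par estimation directe des dimensions de fibres au-dessus d'une stratification. Il reprend plut\^ot le \og pas de r\'ecurrence \fg{} de Ng\^o~\cite[p.~515]{N} : on factorise $f^X_{\ud}$ (resp.~$f^Y_{\ud}$) en une suite de morphismes $f_i : \mg_m \times S_i \times R_i \to S_{i-1} \times R_{i-1}$ (resp.~avec $\mathfrak{gl}_i$ \`a la place de $S_i$), et on montre par r\'ecurrence descendante sur~$i$ que chaque $\mathcal{I}_i$ (resp.~$\mathcal{J}_i$) est pervers, \'equivariant sous un groupe convenable, et prolongement interm\'ediaire. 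Le pas de r\'ecurrence repose sur la transformation de Fourier--Deligne : on r\'e\'ecrit $\mathcal{I}_{i-1}$ comme restriction \`a une sous-vari\'et\'e $Z$ de $V^\vee$ de la transform\'ee de Fourier d'un faisceau pervers \'equivariant, puis on exploite que l'orbite de $Z$ sous l'action du groupe est ouverte dans $V^\vee$ pour descendre la perversit\'e via un morphisme lisse. Le groupe d'\'equivariance est $\mathrm{O}_{i-1}$ pour $\mathcal{I}$, et une extension $G_{i-1}$ de $\mathrm{GL}_{i-1}$ (obtenue en extrayant une racine carr\'ee du d\'eterminant) pour $\mathcal{J}$.

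C'est pr\'ecis\'ement ici que votre proposition bute sur le point~(2). Vous reconnaissez qu'il faut \og d\'ecrire la restriction de $\underline{\kappa}_{\ud}$ aux fibres \fg{}, mais ne donnez aucune indication sur la mani\`ere de le faire. Or la difficult\'e est r\'eelle : $\underline{\kappa}_{\ud}$ n'est \emph{pas} invariante sous l'action adjointe de $\mathrm{GL}_{r-1}$, ce qui emp\^eche d'adapter na\"ivement l'argument de Ng\^o. L'article r\'esout ce probl\`eme par le th\'eor\`eme~\ref{intro6} : la formule $\underline{\kappa}(y)\,\underline{\kappa}({}^ty) = \pm\,\mathrm{result}(a_{r-1}(y), a_r(y))$ montre que $\underline{\kappa}$ est un produit de facteurs irr\'eductibles du r\'esultant, lequel est invariant sous $\mathrm{GL}_{r-1}$. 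Un argument d'irr\'eductibilit\'e donne alors que $\underline{\kappa}(h\cdot y) = \det(h)^s\,\underline{\kappa}(y)$ pour un certain $s\in\mathbb{Z}$, d'o\`u l'\'equivariance de $\underline{\kappa}^*\mathcal{L}_\zeta$ sous le rev\^etement double $G_{r-1}$. Sans cet ingr\'edient, il n'y a pas de raison que la cohomologie des fibres de $f^Y_{\ud}$ \`a coefficients dans $\underline{\kappa}^*\mathcal{L}_\zeta$ se laisse contr\^oler par de simples bornes de dimension, et votre \og annulation cohomologique \fg{} reste un v\oe{}u pieux.

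Pour le point~(1), votre id\'ee de petitesse n'est pas celle de Ng\^o (contrairement \`a ce que vous sugg\'erez), et il n'est d'ailleurs pas \'evident que $f^X_{\ud}$ soit petit : il faudrait le v\'erifier strate par strate, ce que vous ne faites pas. L'approche par Fourier--Deligne et \'equivariance orthogonale est plus conceptuelle et \'evite ce comptage.
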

Le point $(1)$ du th\'eor\`eme~\ref{intro5} s'obtient assez facilement par l'argument de \cite[``le pas de récurence", p.515]{N} en rempla\c cant $\G_i$ par le groupe orthogonal associ\'e \`a la forme quadratique $(x_1,\dots,x_i)\mapsto\sum_{j=1}^ix_j^2$. En revanche, on ne peut pas adapter directement l'argument de loc. cit. pour démontrer le point $(2)$ car la fonction 
$\kappa_{\ud}$ n'est pas invariante sous l'action de $\G_{r-1}$. Le théorème suivant est le point crucial pour résoudre cette difficulté (en remarquant que $Y_{\ud} \simeq \{y+\varpi{\rm Id}_r,\, y\in \mathfrak{gl}_r|{\rm pgcd}(\prod_{i=1}^{r-1}a_i(y),a_r(y))\break=1\}$, où $a_i(y)=\det(s_i(y)+\varpi{\rm Id}_i)$, et où $s_i(y)$ est la sous-matrice de $y$ constituée des $i$ premières lignes et des $i$ premières colonnes, cf. \ref{ngo1} et \ref{ngo2}).
\begin{theorem}\label{intro6} $\underline{\kappa}_{\ud}(w_0(y+\varpi{\rm Id}_r))$ est en fait un polyn\^ome en les coefficients de la matrice $y$. De plus on a : $\underline{\kappa}_{\ud}(w_0(y+\varpi{\rm Id}_r))\underline{\kappa}_{\ud}(w_0{}^t(y+\varpi{\rm Id}_r))=(-1)^{\sum_{i=1}^{r-1}(i+i(i+1))}{\rm result}(a_{r-1}(y),a_r(y))$.
\end{theorem}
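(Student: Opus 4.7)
The plan is to compute $\underline{\kappa}_{\ud}(w_0(y+\varpi\mathrm{Id}_r))$ explicitly via the Bruhat decomposition of $w_0(y+\varpi\mathrm{Id}_r)$, and then to combine the resulting formula with its transpose-analog to recover $\mathrm{result}(a_{r-1},a_r)$. The computation is made possible by a crucial simplification: since $\det(y+\varpi\mathrm{Id}_r) = a_r(y)$, over the ring $\mathcal{O}[a_r^{-1}]$ the matrix $A := y + \varpi\mathrm{Id}_r$ becomes an invertible lattice automorphism, so $(w_0 A)\cdot\mathcal{O}[a_r^{-1}]^r = \mathcal{O}[a_r^{-1}]^r$. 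Consequently both quotients appearing in the definition of $D_{w_0 A}(a_r^{-1})$ vanish and the line $\Delta_{w_0 A}$ is canonically trivial; the function $\underline{\kappa}_{\ud}(w_0 A)$ is then simply the scalar expressing the Bruhat section $s_{\rm geo}(w_0 A)$ in this tautological trivialization.

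\textbf{Explicit computation.} On the open locus where $a_1(y),\ldots,a_{r-1}(y)$ are all nonzero, $A$ admits its $LDU$-decomposition $A=LDU$, with $L$ lower- and $U$ upper-triangular unipotent and $D=\mathrm{diag}(a_1, a_2/a_1,\ldots,a_r/a_{r-1})$. Thus
\[
w_0 A = (w_0 L w_0^{-1})\cdot (w_0 D)\cdot U
\]
is the Bruhat decomposition of $w_0 A$ in the big cell. Because $s_{\rm geo}$ is compatible with the canonical splitting of $\tG_{r,\mathrm{geo}}$ on $N_r(F)$, the two unipotent factors do not contribute and the task reduces to computing $s_{\rm geo}(w_0 D)$. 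This last step is carried out by writing down the tautological generator of the line attached to $w_0 D$ via explicit Smith normal forms of the lattices $\mathcal{O}[a_r^{-1}]^r\cap (w_0 D)\mathcal{O}[a_r^{-1}]^r$: the resulting expression is a monomial $\varepsilon\prod_{i=1}^{r-1} a_i(y)^{e_i}$ with explicit non-negative integer exponents $e_i$, the exponent of $a_r$ vanishing thanks to the $a_r^{-1}$-localization. This proves the polynomiality claim.

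\textbf{Product with the transpose and identification with the resultant.} For ${}^tA$ the same procedure applies, with the same $D$ (as $a_i({}^ty)=a_i(y)$, principal minors being transpose-invariant) but with the roles of $L$ and $U$ interchanged, producing an analogous monomial $\varepsilon'\prod_i a_i(y)^{e_i'}$. Multiplying, one obtains
\[
\underline{\kappa}_{\ud}(w_0 A)\cdot \underline{\kappa}_{\ud}(w_0{}^tA) \;=\; (\varepsilon\varepsilon')\prod_{i=1}^{r-1} a_i(y)^{e_i+e_i'}.
\]
To identify this with $(-1)^{r(r-1)/2}\mathrm{result}(a_{r-1},a_r)$, I view $a_{r-1}$ and $a_r$ as polynomials in $\varpi$ over $k[y]$ and recognize the right-hand side above as the expansion of a Sylvester-type determinant. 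The sign $(-1)^{\sum_{i=1}^{r-1}(i+i(i+1))} = (-1)^{r(r-1)/2} = \mathrm{sgn}(w_0)$ arises from the parallel action of $w_0$ on two copies of the top exterior power in the tensor product $\Delta_{w_0 A}\otimes\Delta_{w_0{}^tA}$.

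\textbf{Main obstacle.} The delicate point will be the second step, namely the explicit evaluation of $s_{\rm geo}(w_0 D)$ and the precise sign bookkeeping in the Arbarello-De Concini-Kac extension; this amounts to choosing canonical bases for the two exterior powers and tracking how the permutation $w_0$ acts on them in terms of the $a_i$'s. Once this is in hand, the polynomiality claim is automatic and the identification with the resultant in Step 3 is a direct algebraic manipulation of Sylvester-like determinants.
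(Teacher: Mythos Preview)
Your proposal has a genuine gap in the ``Explicit computation'' step. You correctly note that $s_{\rm geo}$ is split over $N_r(F)$, so that $\delta_{w_0A}$ is obtained from $\delta_{w_0D}$ by transport along the multiplication isomorphism $\Delta_{w_0D}\to\Delta_{w_0A}$. But $\underline{\kappa}(w_0A)=\mathrm{triv}/\delta_{w_0A}$, and the tautological trivialization $\mathrm{triv}$ of $\Delta_{w_0A}$ (coming from $w_0A\in\G_r(\mathcal{O}[a_r^{-1}])$) depends on the full matrix $A$, not only on $D$. The matrix $w_0D$ itself is \emph{not} in $\G_r(\mathcal{O}[a_r^{-1}])$ (its entries involve $a_1^{-1},\dots,a_{r-1}^{-1}$), so $\Delta_{w_0D}$ carries no canonical trivialization to which one could ``reduce''. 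Concretely, when you pull $\mathrm{triv}$ back along the isomorphism $\Delta_{w_0D}\to\Delta_{w_0A}$, the unipotent factors $L,U$ enter essentially; this is where all the actual content lies, and your sketch skips it.

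This gap is what produces your incorrect ``monomial'' claim. Note that a product $\prod a_i(y)^{e_i}$ is a polynomial in $\varpi$, not a scalar, so it cannot be the value of $\underline{\kappa}$; and $\mathrm{result}(a_{r-1},a_r)$ is certainly not of this shape either. The paper's explicit formula for $\underline{\kappa}(y)$ is the determinant of an $r\times r$ matrix whose $(i,j)$-entry is the coefficient of $\varpi^{j-1}$ in the $(r+1-i,r)$-cofactor of $y+\varpi\mathrm{Id}_r$ --- a genuine polynomial in the $y_{ij}$ that does \emph{not} factor through the principal minors alone. The paper obtains this formula by working with the global extension over $\mathbb{P}^1$: on the dense open where $a_r$ has simple roots, it uses the triviality of the extension over $\G_r(F)$ to trade the places dividing $\prod_{i<r}a_i$ (where $\underline{\kappa}$ lives) for the places dividing $a_r$ together with $\infty$; at those places the Bruhat factors $n,n'$ are already integral, and one computes directly. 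For the product identity, the paper proceeds separately (its Theorem~\ref{ka7}): working place by place over the simple roots $\lambda_{i,j}$ of $a_i$ ($i<r$), it reduces each local factor to a $\G_2$-computation via Kubota's formula, obtaining $\underline{\kappa}_{\varpi-\lambda_{i,j}}(w_0g)\,\underline{\kappa}_{\varpi-\lambda_{i,j}}(w_0{}^tg)=-\,a_{i+1}(\lambda_{i,j})/a_{i-1}(\lambda_{i,j})$, and the product over all $i,j$ telescopes to the resultant.
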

La d\'emonstration de ce th\'eor\`eme repose sur l'interprétation géométrique de l'extension de Kazhdan-Patterson \'evoqu\'ee ci-dessus. Ce th\'eor\`eme implique que $\underline{\kappa}(y)=\underline{\kappa}_{\ud}(w_0(y+\varpi{\rm Id}_r))$ est alors un produit de facteurs irr\'educ\-tibles de ${\rm result}(a_{r-1}(y),a_r(y))$. En faisant agit $g\in \G_{r-1}$ par $$y\mapsto \D(g,1)^{-1}\,y\,\D(g,1),$$ $g$ transforme alors $\underline{\kappa}$ en la multipliant par une puissance de $\det(g)$ (pour l'argument détaillé, on renvoie à la démonstration de la perversité de $\mathcal{J}_{\ud}$ à la fin de la section 4); l'extension $G_{r-1}$ de $\G_{r-1}$ obtenue en extrayant une racine carr\'ee de $\det(g)$ laisse alors invariant le faisceau $\underline{\kappa}^*{\cal L}_{\zeta}$ et l'argument de loc. cit. s'adapte alors  en rempla\c cant $\G_i$ par l'extension $G_i$ (le plongement $G_i\hookrightarrow G_{i+1}$ est fourni aussi par $g\mapsto \D(g,1)$ puisque les deux ont m\^eme d\'eterminant).

On verra ensuite comment déduire le théorème $A'$ du théorème $B$. Les complexes locaux obtenus par les factorisations de la proposition \ref{intro3} sont d'autant plus compliqués que les multiplicités des racines du polynôme $\prod_{i=1}^{r-1}a_i$ sont grandes. D'après B.C. Ngo \cite{N}, tous les complexes locaux peuvent être obtenu en considérant seulement ${\cal I}_{\ud}$ et ${\cal J}_{\ud}$, où $\ud=(1,2,\dots,r)$ quitte à remplacer $r$ par $r+r'$ pour $r'$ assez grand. Plus précisé\-ment on considère la situation suivante (
pour simplifier les notations, désormais on va supprimer l'indice $\ud$) : 

Soit $t'\in T_s(F_{\varpi})$. D'après B.C. Ngo \cite[prop 3.5.1, p. 505]{N}, pour $r$ assez grand, il existe $t^{\circ}=\D(a_1^{\circ},a_2^{\circ}/a_1^{\circ}, \dots,a_r^{\circ}/a_{r-1}^{\circ})$, avec $(a_1^{\circ},\dots,a_r^{\circ})\in V_{(1,2,\dots,r)}(k)$ tel que ${\cal I}_\varpi(t^{\circ})\simeq {\cal I}_\varpi(t')$ et ${\cal J}_\varpi(t^{\circ})\simeq {\cal J}_\varpi(t')$.
On a $a^{\circ}_i={a'}^{\circ}_i{a''}^{\circ}_i$, o\`u ${a'}^{\circ}_i$ est \`a racines simples différentes de $0$ et o\`u  ${a''}^{\circ}_i$ sont des puissances de $\varpi$.
Soient alors $\ud'=(\deg({a'}_i^{\circ}))_i$ et $\ud''=(\deg({a''}_i^{\circ}))_i$. On fait varier $(a'_i)_i$ et $(a''_i)_i$ en introduisant l'ouvert $(V_{\ud'}\times V_{\ud''})^{{\rm dist}}$ de $V_{\ud'}\times V_{\ud''}$ au-dessus duquel ${\rm pgcd}(\prod_{i=1}^ra'_i,\prod_{i=1}^ra''_i)=1$ et les $a'_i$ sont \`a racines simples.  On a alors un morphisme \'etale $\mu:(V_{{\ud}^{ '}}\times V_{{\ud}^{''}})^{{\rm dist}}\ra V_{\ud}$.

Pour $(t',t'')=(\D(a'_1,a'_2/a'_1,\dots,a'_r/a'_{r-1}),\D(a''_1,a''_2/a''_1,\dots,a''_r/a''_{r-1}))$ où $((a'_i)_i,(a''_i)_i)\in (V_{\ud'}\times V_{\ud''})^{\rm dist}$, on g\'en\'eralise les sommes globales $I$ et $J$ ci-dessus en introduisant $$X_1(t',t'')=\prod_{w|a'_1\dots a'_{r-1}}{\rm Res}_{k_w/k}X_w(t)\text{ et }X_2(t',t'')=\prod_{w|a''_1\dots a''_{r-1}}{\rm Res}_{k_w/k}X_w(t) $$
($Y_1(t',t'')$ et $Y_2(t',t'')$ sont d\'efinies par des formules analogues). Celles-ci se mettent en familles quand $\ud'$ et $\ud''$ sont fixées.
On d\'efinit de cette mani\`ere des complexes ${\cal I}_1,{\cal I}_2,{\cal J}_1$ et ${\cal J}_2$ v\'erifiant $\mu^*{\cal I}={\cal I}_1\otimes^{{\mathbb L}}{\cal I}_2$ et $\mu^*{\cal J}={\cal J}_1\otimes^{{\mathbb L}}{\cal J}_2$.
En fait ${\cal I}_1$ et ${\cal J}_1$ sont des syst\`emes locaux. De plus, en utilisant la formule du produit des constantes de Weil, on définit un système local $\mathcal{T}_1$ (ce système local n'est plus géométriquement constant) tel que ${\cal J}_1={\cal T}_1\otimes{\cal I}_1$ ; on pose ${\cal T}_2={\cal T}\otimes {\cal T}_1^{\otimes -1}$.

 \`A l'aide du théorème B et des propriétés des faisceaux pervers et du prolongement intermédiaire,
on obtient que ${\cal I}_2$ et ${\cal J}_2$ sont pervers et prolongement interm\'ediaire de leur restriction \`a l'ouvert $\mu^*U_{\ud}$. De plus, on a ${{\cal J}_2}_{|\mu^*U_{\ud}}={{\cal T}_2}_{|\mu^*U_{\ud}}\otimes{{\cal I}_2}{|\mu^*U_{\ud}}$, de sorte qu'on a
${\cal J}_2={\cal T}_2\otimes{\cal I}_2$.

En sp\'ecialisant en $t=t^{\circ}$ on obtient alors le th\'eor\`eme $A'$.


\vskip 5mm
Voici maintenant le plan détaillé de ce travail. La section 2 contient la définition de la somme locale $I_\varpi(t,\underline{\alpha})$ et de la somme globale $I(t,\underline{\alpha})$ pour tout $\underline{\alpha}\in (k^*)^{r-1}$. On démontre que la somme globale est le produit des sommes locales
$$I(t,\underline{\alpha})=\prod_{v\in {\rm Spm}(\0)}I_v(t,\underline{\alpha}).$$ On expose l'interprétation géométrique de la somme locale et de la somme globale et exprime géométriquement la formule de produit ci-dessus.
On construit aussi le complexe $\mathrm{R}f^X_{\ud,!}h_{\ud}^*\mathcal{L}_{\psi}$ et on démontre qu'il est pervers quand $\ud=(1,2\dots,r)$ (voir le point $(1)$ du théorème~\ref{intro5}).

Dans la section 3, on rappelle la définition du groupe métaplectique local de Kazhdan-Patterson. L'extension de ACK et son scindage au-dessus $N_r(F_\varpi)$ et au-dessus de $\G_r(\0_\varpi)$ sont introduits en 3.1.2 . La construction géométrique du groupe métaplectique à l'aide de l'extension de ACK figure dans 3.2.3. On exprime la fonction $\underline{\kappa}$ locale géométriquement dans 3.1.4. Le 3.2 contient la définition du groupe métaplectique global et de la fonction $\kappa$ globale. On montre dans 3.2.2 que le fonction $\underline{\kappa}$ globale est un produit de fonctions $\underline{\kappa}$ locales.

La section 4 contient la définition de la somme locale $J_\varpi(t,\underline{\alpha})$ et de la somme globale $J(t,\underline{\alpha})$ pour tout $\underline{\alpha}\in (k^*)^{r-1}$. On démontre que la somme globale est le produit des sommes locales
$$J(t,\underline{\alpha})=\prod_{v\in {\rm Spm}(\0)}J_v(t,\underline{\alpha}).$$ On expose l'interprétation géométrique de la somme locale et de la somme globale et exprime géométriquement la formule de produit ci-dessus.
On construit aussi le complexe
$\mathrm{R}f^Y_{\ud,!}({h'}_{\ud}^{*}\mathcal{L}_{\psi}\otimes\underline{\kappa}_{\ud}^{*}\mathcal{L}_{\zeta})$. La sous-section 4.3 contient l'étude de ce complexe dans le cas particulier où $\ud=(1,2,\dots,r)$. Dans cette sous-section figure le calcul de la fonction $\underline{\kappa}$ dans ce cas particulier, ainsi que la formule cruciale $\underline{\kappa}_{\ud}(w_0(y+\varpi{\rm Id}_r))\underline{\kappa}_{\ud}(w_0{}^t(y+\varpi{\rm Id}_r))=(-1)^{\sum_{i=1}^{r-1}(i+i(i+1))}{\rm rest}(a_r(y),a_{r-1}(y))$. La perversité de $\mathrm{R}f^Y_{\ud,!}({h'}_{\ud}^{*}\mathcal{L}_{\psi}\otimes\underline{\kappa}_{\ud}^{*}\mathcal{L}_{\zeta})$ est montrée à la fin de la section 4.

Dans la section 5, on donne la formule du facteur de transfert; dans la section 6, on déduit l'énoncé local de l'énoncé global (dans cette introduction on a déjà donné une idée de leur contenu).

\textbf{Remerciements :}    J'adresse mes plus sincères remerciements à Alain Genestier pour son soutien constant durant la préparation de ce travail. Je tiens à exprimer aussi ma profonde gratitude à Bao Chau Ngo pour ses remarques. Une très belle formule de ce travail (cf. \ref{ka7}) été obtenue lorsque j'ai eu la chance de travailler avec lui à Chicago.

\section{Intégrale $I$}
\setcounter{theorem}{0}
\subsection{Somme locale}
\quad Soit $k$ un corps fini de caractéristique $p$ et à $q$ éléments. On fixe un caractère additif non trivial $\psi: k \rightarrow \overline{\mathbb{Q}}^{*}_{\ell}$, où $\ell$ est un nombre premier différent de $p$ et où $\overline{\mathbb{Q}}_{\ell}$ est une clôture algébrique de $\mathbb{Q}_{\ell}$. On note $\0_{\varpi}=k[[\varpi]]$ l'anneau des séries formelles en une indéterminée $\varpi$ et à coefficients dans $k$, $F_{\varpi}=k((\varpi))$ son corps des fractions. On notera $\Psi$ le caractère de $\fp$ défini par $\Psi(x)=\psi(\mathrm{res}(x\mathrm{d}\varpi))$.

Pour tout $\underline{\alpha}=(\alpha_2,\dots,\alpha_r)\in {(k^*)}^{r-1}$, on note $\theta_{\underline{\alpha}} : N_r(\fp)\rightarrow \overline{\mathbb{Q}}^{*}_{\ell}$ le caractère défini par $\theta_{\underline{\alpha}}(n)=\Psi(\sum^{r}_{i=2}\alpha_i n_{i-1,i})$. Sa restriction à $N_r(\op)$ étant triviale, il induit une fonction $\theta_{\underline{\alpha}}$ sur $N_r(\fp)/N_r(\op)$ à valeurs dans $\overline{\mathbb{Q}}^{*}_{\ell}$.

Pour chaque $t=\D(a_1,a_2/a_1,\dots,a_r/a_{r-1})\in T_r(\fp)$, on considère l'ensemble fini (cf. \cite[proposition 1.1.2]{N})
$$X_{\varpi}(t)(k)=\{n\in N_r(\fp)/N_r(\op) | {}^t ntn \in S_r(\op) \}.$$
L'intégrale orbitale $I$ de Jacquet-Mao peut s'écrire (cf. \cite[p. 484]{N})
$$I_{\varpi}(t,\alpha)=\sum_{n \in X_{\varpi}(t)(k)}\theta_{\underline{\alpha}}(n).$$

L'ensemble $X_{\varpi}(t)(k)$ est de manière naturelle l'ensemble des points à valeurs dans $k$ d'une variété algébrique $X_{\varpi}(t)$ de type fini sur $k$. Cette variété est munie d'un morphisme $h_{\underline{\alpha}}:X_{\varpi}(t)\ra \mathbb{G}_a$ défini par $h_{\underline{\alpha}}(n)=\mathrm{res}(\sum_{i=2}^{r}\alpha_in_{i-1,i}\mathrm{d}\varpi)$.

Soit $\overline{k}$ une clôture algébrique de $k$. On note $\overline{X}_{\varpi}(t)=X_{\varpi}(t)\otimes_k\overline{k}$. Soient $\mathcal{L}_{\psi}$ le faisceau d'Artin-Schreier sur $\mathbb{G}_a$ associé au caractère $\psi$. D'après la formule des traces de Grothendieck-Lefschetz, on a :
$$I_{\varpi}(t,\alpha)=\mathrm{Tr}(\mathrm{Fr},\mathrm{R}\Gamma_c(\overline{X}_{\varpi}(t),h_{\underline{\alpha}}^*\mathcal{L}_{\psi})).$$
\subsection{Somme globale}
\setcounter{subsubsection}{1}
Dans le cas général, on ne connaît pas explicitement $\mathrm{R}\Gamma_c(\overline{X}_{\varpi}(t),h_{\underline{\alpha}}^*\mathcal{L}_{\psi})$ car la variété $\overline{X}_{\varpi}(t)$ est trop compliquée.

D'après Ngo Bao Chau, on va introduire une somme sur un corps global qui permettra de se ramener par déformation à une situation plus simple. Soient ${\0}={k}[\varpi]$ l'anneau des polynômes en une variable $\varpi$ à coefficients dans ${k}$ et ${F}$ son corps des fractions. Pour tout $x \in {F}$, on note $\mathrm{sres}(x\mathrm{d}\varpi)$ la somme des résidus en tous ses pôles à distance finie (c.à.d.  en les points de $\mathbb{A}^1=\mathrm{Spec}(\0)$).
On notera $\Psi$ le caractère de $F$ défini par $\Psi(x)=\psi(\mathrm{sres}(x\mathrm{d}\varpi))$. Pour tout $\underline{\alpha}=(\alpha_2,\dots,\alpha_{r})\in {(k^*)}^{r-1}$, on note $\theta_{\underline{\alpha}} : N_r(F)\rightarrow \overline{\mathbb{Q}}^{*}_{\ell}$ le caractère défini par $\theta_{\underline{\alpha}}(n)=\Psi(\sum^{r}_{i=2}\alpha_i n_{i-1,i})$. Sa restriction à $N_r(\0)$ étant triviale, il induit une fonction $\theta_{\underline{\alpha}}$ sur $N_r(F)/N_r(\0)$ à valeurs dans $\overline{\mathbb{Q}}^{*}_{\ell}$. Pour tous $t\in T_r({F})$ et $\underline{\alpha}\in({k}^*)^{r-1}$, on construit un couple  $({X}(t),h_{\underline{\alpha}}) $ de manière analogue à \cite[proposition 3.1.1]{N} où ${X}(t)$ est une variété de type fini sur ${k}$ dont l'ensemble des ${k}$-points est
$${X}(t)({k})=\{n\in N_r({F})/N_r({\0}) | {}^t ntn \in S_r(\0) \},$$
munie d'un morphisme $h_{\underline{\alpha}}(n)=\sum_{i=2}^{r}\alpha_i\mathrm{sres}(n_{i-1,i}\mathrm{d}\varpi)$. L'intégrale orbitale $I$ globale est alors :
$$I(t,\alpha)=\sum_{n \in X(t)(k)}\theta_{\underline{\alpha}}(n).$$

Pour toute place $v$, on note ${\0}_v$ le complété de ${\0}$ en $v$, ${F}_v$ son corps des fractions, et $k_v$ son corps résiduel. On note $n_v$ l'image de $n$ dans $N_r(F_v)/N_r(\0_v)$. Pour $t=\D(a_1,\frac{a_2}{a_1},\dots,\frac{a_r}{a_{r-1}})\in T_r({F}_v)$, on introduit la variété de type fini dont l'ensemble de $k$ points est
$$X_v(t)(k)=\{n\in N_r({F_v})/N_r({\0_v}) | {}^t ntn \in S_r(\0_v) \}.$$ Cette variété est munie d'un morphisme $h_{\underline{\alpha},v}(n)=\sum_{i=2}^{r}\alpha_i \mathrm{tr}_{k_v/k}(\mathrm{res}(n_{i-1,i}))$. Lorsque $v=\varpi$ et $t\in T_r(\fp)$ on retrouve bien le couple $(X_{\va}(t),h_{\underline{\alpha},\va})$. L'intégrale orbitale $I$ de Jacquet-Mao en la place $v$ est :
 $$I_v(t,\underline{\alpha})=\sum_{n_v\in X_v(t)(k)}\theta_{\underline{\alpha}}(n_v),$$
où $\theta_{\underline{\alpha}}(n_v)=\psi(\sum_{i=2}^{r}\alpha_i\mathrm{tr}_{k_v/k}(\mathrm{res}(n_{i-1,i})))$.

On note $\mathrm{supp}(t)=\mathrm{Spm}(\0/\prod_{i=1}^{r-1} a_i)$.
\begin{lemma}
Si $v\not \in \mathrm{supp}(t)$, alors $I_v(t,\underline{\alpha})=1$.
\end{lemma}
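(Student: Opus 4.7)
My plan is to show directly that $X_v(t)(k)$ reduces to the single trivial class $e$, from which $I_v(t,\underline{\alpha}) = 1$ is immediate since $\theta_{\underline{\alpha}}(e) = \psi(0) = 1$. The hypothesis $v \notin \mathrm{supp}(t)$ means that $v$ does not divide $\prod_{i=1}^{r-1} a_i$, so $v(a_i) = 0$ for $i = 1, \ldots, r-1$. Writing $t_i = a_i/a_{i-1}$ with $a_0 = 1$, this yields $v(t_i) = 0$ for $i \leq r-1$, i.e.\ $t_1, \ldots, t_{r-1}$ are units in $\0_v$. In the situation relevant for the product formula $a_r$ is moreover a polynomial, so $t \in T_r(\0_v)$ and the class of $e$ does lie in $X_v(t)(k)$.

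The key step is to show conversely that any $n$ representing a class in $X_v(t)(k)$ automatically lies in $N_r(\0_v)$. For this I would write out the strictly upper-triangular entries of $A = {}^t n t n$: for $i < j$,
$$A_{ij} = t_i n_{ij} + \sum_{k=1}^{i-1} n_{ki} t_k n_{kj}.$$
I would then proceed by induction on the row index $i$ from $1$ to $r-1$. Assuming $n_{k\ell} \in \0_v$ for all $k < i$ and all $\ell > k$, the sum above lies in $\0_v$ (using $t_k \in \0_v$ for $k < r$), so the condition $A_{ij} \in \0_v$ forces $t_i n_{ij} \in \0_v$, and $n_{ij} \in \0_v$ follows since $t_i \in \0_v^{*}$. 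At the end of this induction (the last relevant row being $i = r-1$, which yields $n_{r-1,r} \in \0_v$) every entry of $n$ above the diagonal is integral, so $n \in N_r(\0_v)$.

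Combining the two directions gives $X_v(t)(k) = \{e\}$ and the sum defining $I_v(t,\underline{\alpha})$ reduces to the single term $\theta_{\underline{\alpha}}(e) = 1$. I do not anticipate any serious obstacle: the argument is a triangular elimination exploiting only that $t_1, \ldots, t_{r-1}$ are units in $\0_v$. The induction would break precisely at the first row $i$ for which $t_i$ is non-integral, i.e.\ as soon as $v \in \mathrm{supp}(t)$, which is consistent with $X_v(t)$ being genuinely nontrivial in that case.
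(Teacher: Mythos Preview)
Your argument is correct and follows exactly the same route as the paper: both reduce to showing that $X_v(t)(k)$ consists of the single class of $\mathrm{Id}_r$ when $a_1,\dots,a_{r-1}\in\0_v^*$, from which $I_v(t,\underline{\alpha})=\theta_{\underline{\alpha}}(\mathrm{Id}_r)=1$. The only difference is that the paper simply invokes \cite[corollaire~1.1.5]{N} for this fact, whereas you spell out the triangular induction on the rows of ${}^tntn$ that underlies it; your computation of $A_{ij}=t_i n_{ij}+\sum_{k<i} n_{ki}t_k n_{kj}$ and the row-by-row elimination using $t_i\in\0_v^*$ for $i\le r-1$ is precisely the content of that corollary.
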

\begin{proof}[Démonstration] D'après \cite[corollaire 1.1.5]{N}, $X_v(t)(k)$ est réduit à l'élément $n={\rm Id_r}$ (car $a_i\in \0_v^*\ \forall i\in\{1,\dots,r-1\}$), de sorte qu'on a $I_v(t,\underline{\alpha})=1$.
\end{proof}La somme globale est reliée aux sommes locales par la formule de produit suivante (cf. \cite[proposition 1.3.2]{N})
\begin{equation}\label{i1}
I(t,\underline{\alpha})=\prod_{v\in \mathrm{supp}(t)}I_v(t,\underline{\alpha}).
\end{equation}

On ajoute une barre pour indiquer le changement de corps de $k$ à $\overline{k}$. On peut définir les couples $(\overline{X}(t),h_{\underline{\alpha}})$ et $(\overline{X}_v(t),h_{\underline{\alpha},v})$, où $v\in \mathrm{Spm}(\overline{\0})$. On note encore $\mathrm{supp}(t)=\mathrm{Spm}(\overline{\0}/\prod_{i=1}^{r-1}a_i\overline{\0})$.On a alors la forme cohomologique suivante de~\ref{i1} (cf. \cite[corollaire 3.2.3]{N}) :
$$\mathrm{R}\Gamma_c(\overline{X}(t),h_{\underline{\alpha}}^*\mathcal{L}_{\psi})=\bigotimes_{v\in \mathrm{supp}(t)}\mathrm{R}\Gamma_c(\overline{X}_{v}(t),h_{\underline{\alpha},v}^*\mathcal{L}_{\psi}).$$

Supposons $t=\D(a_1,\frac{a_2}{a_1},\dots,\frac{a_r}{a_{r-1}})$, où les $a_i$ sont des polynôme unitaires dont on fixera les degrés $d_i=\deg(a_i)$. Soient $Q_{d_i}$, la variété affine sur ${k}$ des polynômes unitaires de degré $d_i$ et $Q_{\underline{d}}=\prod_{i=1}^rQ_{d_i}$ avec $\ud=(d_1,\dots,d_r)$. Le couple $({X}(t),h_{\underline{\alpha}})$ se mettent en famille  de  sorte  qu'on  obtient  une  variété ${X}_{\ud}$ de  type  fini  sur
${k}$ munie de deux morphismes $f_{\ud}^X:{X}_{\ud}\times\mathbb{G}_m^{r-1}\ra Q_{\ud}\times \mathbb{G}_m^{r-1}$ et $h_{\ud}:{X}_{\ud}\times\mathbb{G}_m^{r-1}\ra\mathbb{G}_a$ tels que $\overline{X}(t)$ et $ \mathrm{R}\Gamma_c(\overline{X}(t),h_{\underline{\alpha}}^*\mathcal{L}_{\psi})$ sont respectivement les fibres en $(t,\underline{\alpha})\in (Q_{\ud}\times \mg_m^{r-1})(\overline{k})$ de $f_{\ud}^X$ et de $\mathrm{R}f^X_{\ud,!}h_{\ud}^*\mathcal{L}_{\psi}$. Plus pr\'ecis\'ement :
\begin{lemma}(cf. \cite[proposition 3.3.1]{N})
\begin{enumerate}
\item Pour tout $\underline{d}\in \mathbb{N}^r$ le foncteur $X_{\underline{d}}$ qui associe à toute $k$-algèbre $R$ l'ensemble
    $$X_{\underline{d}}(R)=\{g\in S_r(\0\otimes_k R)|\det(g_i)\in Q_{d_i}(R)\}/N_r(\0\otimes_k R),$$
    où $g_i$ est la sous-matrice de $g$ faite des $i$-premières lignes et des $i$-premières colonnes de $g$, est représenté par une variété affine de type fini sur $k$, qu'on note aussi $X_{\underline{d}}$.
    Soit
    $$f^{X}_{\underline{d}}: X_{\underline{d}}\times \mg_m^{r-1}\ra Q_{\underline{d}}\times \mg_m^{r-1}$$
    le morphisme défini par $f^{X}_{\underline{d}}(g,\underline{\alpha})=((a_i)_{1\leq i\leq r},\underline{\alpha})$ où $a_i=\det(g_i)$.
\item Pour tout $i$ avec $2\leq i \leq r$, l'application $h_{i}:S_r(\0\otimes_kR)\times (R^*)^{r-1}\ra R$ définie par
    $$    h_{i}=\mathrm{res}\left(a_{i-1}^{-1}\left((g_{i,1},\dots,g_{i,i-1})a_{i-1}g_{i-1}^{-1}\left(\begin{matrix}0\\\vdots\\0\\\alpha_i\end{matrix}\right)\right)\right),
    $$
    où $a_ig_i^{-1}$ est la matrice des cofacteurs de $g_i$, lesquels sont dans $\0_{R}:=\0\otimes_kR$ et où $\mathrm{res}(a_{i-1}^{-1} b)$ est le coefficient de $\varpi^{d_{i-1}-1}$ dans l'expression polynomiale en la variable $\varpi$ du reste de la
    division euclidienne de $b$ par $a_{i-1}$ (cette division euclidienne a un sens puisque le coefficient dominant de $a_{i-1}$ est égal à $1$), induit un morphisme $h_{i} : X_{\underline{d}}\times \mg_m^{r-1}\ra\mg_a$.
\item Soit $h_{\underline{d}}=\sum_{i=2}^rh_{i}$. Alors pour tous $t\in Q_{\underline{d}}(k)$ et $\underline{\alpha} \in \mg_m^{r-1}$, le couple $(X(t),h_{\underline{\alpha}})$ est isomorphe à la fibre $(f^{X}_{\underline{d}})^{-1}(t,\underline{\alpha})$ munie de la restriction de $h_{\underline{d}}$ à cette fibre.
\end{enumerate}
\end{lemma}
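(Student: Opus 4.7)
La strat\'egie consiste \`a adapter la d\'emonstration de \cite[proposition 3.3.1]{N}, o\`u Ngo traite l'analogue pour $\G_r$ muni de l'action bilat\'erale de $N_r\times N_r$, au cas pr\'esent o\`u $N_r$ agit unilat\'eralement sur $S_r$ par $g\mapsto{}^tngn$.

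Pour la repr\'esentabilit\'e (point 1), je partirais de la factorisation de Cholesky : tout $g\in S_r(\0\otimes_kR)$ avec $\det(g_i)=a_i$ unitaire de degr\'e $d_i$ s'\'ecrit uniquement modulo $N_r(\0\otimes_kR)$ sous la forme $g={}^tntn$ avec $n\in N_r(F\otimes_kR)$ et $t=\D(a_1,a_2/a_1,\dots,a_r/a_{r-1})$. L'int\'egralit\'e de $g$ impose des bornes explicites sur les d\'enominateurs des entr\'ees de $n$ (en particulier $a_{i-1}n_{i-1,i}\in\0_R$), ce qui permet de repr\'esenter $X_{\ud}$ comme ferm\'e d'un produit fini d'espaces de jets tronqu\'es, donc comme vari\'et\'e affine de type fini sur $k$. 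Le morphisme $f_{\ud}^X$ est alors la projection \'evidente sur les mineurs principaux.

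Pour les points (2) et (3), le calcul-cl\'e est le suivant. Comme $a_{i-1}g_{i-1}^{-1}$ est la matrice des cofacteurs de $g_{i-1}$, elle est \`a coefficients dans $\0_R$, donc
$$b:=(g_{i,1},\dots,g_{i,i-1})\cdot a_{i-1}g_{i-1}^{-1}\cdot{}^t(0,\dots,0,\alpha_i)\in\0_R.$$
En \'ecrivant la d\'ecomposition par blocs $n_i=\begin{pmatrix}n_{i-1}&v\\0&1\end{pmatrix}$ avec $v={}^t(n_{1,i},\dots,n_{i-1,i})$ et $t_{i-1}=\D(t_1,\dots,t_{i-1})$, l'identit\'e $g_i={}^tn_it_in_i$ fournit $(g_{i,1},\dots,g_{i,i-1})={}^tv\,t_{i-1}\,n_{i-1}$ ; en utilisant $g_{i-1}^{-1}=n_{i-1}^{-1}t_{i-1}^{-1}\,{}^tn_{i-1}^{-1}$ et le fait que ${}^tn_{i-1}^{-1}$ est triangulaire inf\'erieure unipotente, on obtient $b=a_{i-1}\alpha_in_{i-1,i}$. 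La division euclidienne de $b$ par le polyn\^ome unitaire $a_{i-1}$ d\'efinit alors $h_i$ ; l'invariance sous $\nu\in N_r(\0_R)$ en d\'ecoule, car l'action $g\mapsto{}^t\nu g\nu$ correspond \`a $n\mapsto n\nu$, ce qui modifie $n_{i-1,i}$ par $\nu_{i-1,i}\in\0_R$ et donc $b$ par un multiple de $a_{i-1}$. Enfin, par la formule des r\'esidus sur $\mathbb{P}^1$, le coefficient de $\varpi^{d_{i-1}-1}$ dans le reste de $b$ par $a_{i-1}$ \'egale $\mathrm{sres}(\alpha_in_{i-1,i}\mathrm{d}\varpi)$ (puisque la somme des r\'esidus aux p\^oles finis vaut $-\mathrm{res}_\infty(b/a_{i-1}\,\mathrm{d}\varpi)$, qui est pr\'ecis\'ement ce coefficient). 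En sommant sur $i$, on trouve bien $h_{\ud}=h_{\underline{\alpha}}$ sur la fibre.

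Le point principalement d\'elicat est donc purement calculatoire : suivre avec soin les multiplications de matrices triangulaires par blocs pour obtenir l'\'egalit\'e $b=a_{i-1}\alpha_in_{i-1,i}$, et rapprocher la d\'efinition alg\'ebrique de $h_i$ via la division euclidienne avec l'expression analytique en termes de r\'esidus. La repr\'esentabilit\'e \'etant essentiellement celle de \cite[\S 3.1]{N}, aucune difficult\'e nouvelle n'y appara\^\i t.
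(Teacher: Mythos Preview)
Your proposal is correct and is in fact more detailed than what the paper provides: the paper gives no proof of this lemma at all, simply stating it with the reference ``cf.\ \cite[proposition 3.3.1]{N}'' and moving on. Your sketch is exactly the adaptation of Ngo's argument that the citation invokes; the block computation yielding $b=a_{i-1}\alpha_i n_{i-1,i}$ and the residue identification via $-\mathrm{res}_\infty$ are both right.
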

\subsection{Le cas $\emph{d}=(1,2,\dots,r)$}
\setcounter{theorem}{0}
\setcounter{subsubsection}{1}
Soit $U_{\ud}$ l'ouvert de $\prod_{i=1}^rQ_i\times (\mg_m)^{r-1}$ form\'e des couples $(t,\underline{\alpha})$ tels que le polyn\^ome $\prod_{i=1}^ra_i$ n'ait pas de racines multiples.
\begin{theorem}\label{px}
Pour $\ud=(1,2,\dots,r)$ le complexe de faisceaux \\$\mathrm{R}f^X_{\ud,!}h_{\ud}^*\mathcal{L}_{\psi}[\frac{r(r+1)}{2}+r-1]$ est un faisceau pervers, prolongement intermédiaire de sa restriction à l'ouvert $U_{\ud}$.
\end{theorem}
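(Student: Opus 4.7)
Le plan est de proc\'eder par r\'ecurrence sur $r$, en s'inspirant directement du ``pas de r\'ecurrence'' de Ngo \cite[p.~515]{N}, qu'on adapte au cadre sym\'etrique. Le cas $r=1$ est trivial puisque $X_{(1)}\simeq \mathbb{A}^1$ et $h_{(1)}=0$, de sorte que $\mathrm{R}f^X_{(1),!}h_{(1)}^*\mathcal{L}_\psi[1]$ est le faisceau constant d\'ecal\'e, qui est bien pervers et \'egal \`a son prolongement interm\'ediaire.

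Pour l'h\'er\'edit\'e, soit $\ud'=(1,\dots,r-1)$. On consid\`erera le morphisme d'oubli $\pi : X_{\ud}\times\mg_m^{r-1}\to X_{\ud'}\times\mg_m^{r-2}$ consistant \`a prendre la sous-matrice principale $g_{r-1}$ et \`a oublier la derni\`ere coordonn\'ee $\alpha_r$. Le point cl\'e est l'utilisation d'une action de groupe pour rendre ce morphisme transparent : l\`a o\`u Ngo utilisait l'action de $\G_{r-1}$ par conjugaison via $g\mapsto\D(g,1)$, on doit ici employer le groupe orthogonal $\mathrm{O}_{r-1}$ associ\'e \`a la forme quadratique $\sum_{j=1}^{r-1}x_j^2$, agissant par $s\mapsto \D(g,1)^t s\D(g,1)$, action qui pr\'eserve la sym\'etrie ainsi que les invariants $a_i$ pour $i\leq r-1$. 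Combin\'ee avec l'int\'egration de Fourier en $\alpha_r$ (possible car la composante $h_r$ de $h_{\ud}$ est lin\'eaire en $\alpha_r$), cette action permettra de ramener le calcul de $\mathrm{R}\pi_!(h_{\ud}^*\mathcal{L}_\psi)$ \`a celui de $h_{\ud'}^*\mathcal{L}_\psi$, \`a torsion et d\'ecalage pr\`es.

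La perversit\'e et le caract\`ere de prolongement interm\'ediaire pour $\ud$ s'en d\'eduiront alors par hypoth\`ese de r\'ecurrence, en utilisant la stabilit\'e de ces propri\'et\'es par image r\'eciproque par un morphisme lisse, le d\'ecalage $\frac{r(r+1)}{2}+r-1$ s'obtenant de proche en proche \`a partir du d\'ecalage pour $\ud'$ et de la dimension relative de $\pi$.

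L'obstacle principal sera de contr\^oler la dimension et la g\'eom\'etrie des strates de $X_{\ud}$ au-dessus du compl\'ementaire de $U_{\ud}$ dans $Q_{\ud}\times\mg_m^{r-1}$, afin d'\'etablir que ni $\mathrm{R}f^X_{\ud,!}h_{\ud}^*\mathcal{L}_\psi$ ni son dual de Verdier n'a de composante support\'ee sur ce compl\'ementaire. Ce contr\^ole repose sur une analyse pr\'ecise des d\'eg\'en\'erescences de l'action de $\mathrm{O}_{r-1}$ aux points o\`u la forme quadratique standard devient singuli\`ere modulo $\varpi$, ce qui n\'ecessite une adaptation fine des estimations dimensionnelles de \emph{loc.\ cit.}
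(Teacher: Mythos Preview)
Votre architecture g\'en\'erale est celle du papier : r\'ecurrence descendante via l'action du groupe orthogonal $\mathrm{O}_{i-1}$ (rempla\c cant le $\G_{i-1}$ de Ngo), et passage de $\mathcal{I}_i$ \`a $\mathcal{I}_{i-1}$ par image directe tordue par $h_i^*\mathcal{L}_\psi$. C'est exactement la d\'emarche retenue.

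En revanche, votre dernier paragraphe anticipe le mauvais obstacle technique. Vous pr\'evoyez de contr\^oler directement la dimension des strates au-dessus du compl\'ementaire de $U_{\ud}$ et d'analyser les d\'eg\'en\'erescences de l'action orthogonale. Ce n'est pas ce que fait le papier, et ce serait nettement plus p\'enible. L'outil cl\'e est la \emph{transformation de Fourier--Deligne} : on plonge $S_i\times R_i$ dans un fibr\'e trivial $V=E\times\mathbb{A}^{i-1}$ par l'immersion ferm\'ee $\iota$ (les coordonn\'ees $\mathbb{A}^{i-1}$ sont la derni\`ere colonne hors diagonale de la matrice sym\'etrique, pas $\alpha_r$), et l'on identifie $\mathcal{I}'$ \`a $\epsilon^*\mathfrak{F}_\psi(\iota_*\mathcal{I})[2-i]$ o\`u $\epsilon$ est l'immersion de $E\times\mg_m$ dans le dual $V^\vee$ envoyant $\alpha_i$ sur $(0,\dots,0,\alpha_i)$. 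La perversit\'e et le caract\`ere de prolongement interm\'ediaire de $\mathfrak{F}_\psi(\iota_*\mathcal{I})$ sont alors \emph{automatiques} (Laumon), puisque $\iota_*$ et $\mathfrak{F}_\psi$ pr\'eservent ces propri\'et\'es.

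Il reste seulement \`a transf\'erer ces propri\'et\'es \`a la restriction par $\epsilon^*$ : c'est l\`a qu'intervient la $\mathrm{O}_{i-1}$-\'equivariance, via un lemme de lissit\'e affirmant que le morphisme compos\'e $\mathrm{O}_{i-1}\times E\times\mg_m\hookrightarrow \mathrm{O}_{i-1}\times V^\vee\to V^\vee$ est lisse de dimension relative $\frac{i(i-1)}{2}+1-(i-1)$. Ce lemme se ram\`ene au fait \'el\'ementaire que l'orbite de $(0,\dots,0,1)$ sous $\mathrm{O}_{i-1}\times\mg_m$ est l'ouvert $\{{}^tyy\neq 0\}$ de $\mathbb{A}^{i-1}$. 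Aucune analyse de strates n'est requise : la transformation de Fourier absorbe toute la difficult\'e.
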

On rappelle que $S_r$ désigne l'espace affine des matrices symétriques de taille $r$.
\begin{proposition} (cf. \cite[proposition 4.2.1]{N})\label{ngo1}
Pour $\underline{d}=(1,2,\dots,r)$ le triplet $(X_{\underline{d}},f^{X}_{\underline{d}},h_{\underline{d}})$ est isomorphe au triplet $(S_r,f^X,h)$ où le morphisme $f^X:S_r\times \mg_m^{r-1}\ra\prod_{i=1}^rQ_i\times\mg_m^{r-1}$ est défini par
$$f^X(x,\underline{\alpha})=(a_1(x),\dots,a_r(x),\underline{\alpha}),$$ où $a_i(x)=\det(s_i(x)+\varpi {\rm Id}_i)$, $s_i(x)$ étant la sous-matrice faite des $i$ premières lignes et des $i$ premières colonnes de $x$, et où le morphisme $h:S_r\times \mg_m^{r-1}\ra \mg_a$ est défini par
$$h(x,\underline{\alpha})=\sum_{i=2}^r\alpha_ix_{i-1,i}.$$
\end{proposition}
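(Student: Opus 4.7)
Le plan est de construire directement le morphisme $\phi: S_r \to X_{\underline{d}}$ donné sur les $R$-points par $x\mapsto [x+\varpi\mathrm{Id}_r]$, puis de vérifier qu'il est un isomorphisme compatible avec $f^X$ et $h$. La compatibilité avec $f^X$ est immédiate par la définition $a_i(x)=\det(s_i(x)+\varpi\mathrm{Id}_i)$, puisque $s_i(x+\varpi\mathrm{Id}_r)=s_i(x)+\varpi\mathrm{Id}_i$.

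Pour montrer que $\phi$ est un isomorphisme, je raisonnerai par récurrence sur $r$, le cas $r=1$ étant évident ($N_1$ est trivial). Pour le pas de récurrence, j'écrirai $g\in S_r(\0\otimes_kR)$ sous forme de blocs $g=\begin{pmatrix}g_{r-1}&v\\{}^tv&g_{rr}\end{pmatrix}$. Par hypothèse de récurrence, une conjugaison par $\D(n',1)$ avec $n'\in N_{r-1}(\0\otimes_kR)$ ramènera $g_{r-1}$ à la forme $x_{r-1}+\varpi\mathrm{Id}_{r-1}$ pour un unique $x_{r-1}\in S_{r-1}(R)$. Il restera ensuite à absorber $v$ par une conjugaison $n=\begin{pmatrix}\mathrm{Id}_{r-1}&u\\0&1\end{pmatrix}$, ce qui remplace $v$ par $g_{r-1}u+v$.

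Le point clé sera la décomposition de $R$-modules
$$(\0\otimes_kR)^{r-1}=(x_{r-1}+\varpi\mathrm{Id}_{r-1})\cdot(\0\otimes_kR)^{r-1}\oplus R^{r-1},$$
qui résulte du fait que dans le quotient de gauche l'action de $\varpi$ s'identifie à celle de $-x_{r-1}$ (d'où un isomorphisme de $R$-modules avec $R^{r-1}$), des considérations de degré en $\varpi$ donnant la directitude de la somme. Cela fournira l'existence et l'unicité d'un $u$ tel que $c:=g_{r-1}u+v\in R^{r-1}$. Le fait que le nouveau $g_{rr}$ soit alors automatiquement de la forme $x_{rr}+\varpi$ pour un unique $x_{rr}\in R$ s'obtiendra par la formule du complément de Schur $\det(g)=g_{rr}\det(g_{r-1})-{}^tc\,\mathrm{adj}(g_{r-1})\,c$ en comparant les degrés : $\det(g)=a_r$ est unitaire de degré $r$, $\det(g_{r-1})=a_{r-1}$ est unitaire de degré $r-1$, et ${}^tc\,\mathrm{adj}(g_{r-1})\,c$ est de degré au plus $r-2$, ce qui force la division euclidienne à être exacte avec quotient unitaire de degré $1$.

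Pour la compatibilité avec $h$, je calculerai $h_i$ directement sur $g=x+\varpi\mathrm{Id}_r$ : la ligne $(g_{i,1},\dots,g_{i,i-1})=(x_{i,1},\dots,x_{i,i-1})$ est constante, et dans la dernière colonne de $\mathrm{adj}(x_{i-1}+\varpi\mathrm{Id}_{i-1})$ seul le coefficient $(i-1,i-1)$, égal à $\det(x_{i-2}+\varpi\mathrm{Id}_{i-2})$, atteint le degré $i-2$ en $\varpi$ avec coefficient dominant $1$, les autres étant de degré $\leq i-3$. Le coefficient de $\varpi^{i-2}$ du produit vaudra donc $x_{i,i-1}=x_{i-1,i}$, d'où $h_i(g,\underline{\alpha})=\alpha_ix_{i-1,i}$ et $h_{\underline{d}}=\sum_{i=2}^r\alpha_ix_{i-1,i}=h$. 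La difficulté principale est d'établir proprement la décomposition en somme directe ci-dessus et de vérifier qu'elle persiste pour une $k$-algèbre $R$ arbitraire (pas seulement un corps); le reste de l'argument est algébriquement routinier et suit la démarche géométrique de \cite[Proposition 4.2.1]{N} transposée du cas des matrices quelconques à celui des matrices symétriques.
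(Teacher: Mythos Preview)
Votre démonstration est correcte et suit essentiellement la même démarche que celle du papier : récurrence sur $r$, réduction du bloc $(r-1)\times(r-1)$ par hypothèse de récurrence, puis absorption de la dernière colonne à l'aide de la décomposition $(\0\otimes_k R)^{r-1}=(x_{r-1}+\varpi\mathrm{Id}_{r-1})(\0\otimes_k R)^{r-1}\oplus R^{r-1}$ (que le papier cite comme \cite[lemme 4.1.2]{N}). Vous ajoutez deux précisions utiles que le papier laisse implicites : l'argument par complément de Schur justifiant que le coefficient $(r,r)$ est bien de la forme $x_{rr}+\varpi$, et le calcul explicite de $h_i$ via le degré des cofacteurs de la dernière colonne de $\mathrm{adj}(x_{i-1}+\varpi\mathrm{Id}_{i-1})$.
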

\begin{proof}[Démonstration] Soit $R$ une $k$-algèbre. On note $\0_R:=\0\otimes_k R=R[\varpi]$.

On montre d'abord que l'on peut réduire toute matrice $g\in S_r(\0_R)$ dont le déterminant de la sous-matrice $s_i(g)$ est un polynôme unitaire de degré $i$ à une matrice  de la forme $x+\va {\rm Id}_r$, avec $x$ une matrice symétrique à coefficients dans $R$, par récurrence sur $r$. L'assertion est évidente pour $r=1$. D'après l'hypothèse de récurrence, on peut supposer que $s_{r-1}(g)=x_{r-1}+\va {\rm Id}_{r-1}$ (à l'aide de l'action du groupe $N_{r-1}$, qu'on voit comme le sous-groupe de $N_r$  formé des matrices unipotentes dont les coefficients non diagonaux de la $r$-ième colonne sont nuls). On écrit
$$g=\begin{pmatrix}
s_{r-1}(g)&y\\
{}^ty& z
\end{pmatrix},$$
où $y$ est un vecteur colonne appartenant à ${\0_R}^{r-1}$ et $z$ un élément de ${\0_R}$.
Comme pour toute matrice $x$ à coefficient dans $R$ le $R$-module ${\0_R}^r$ se décompose en une somme directe $R^r\oplus(x+\va {\rm Id}_r){\0_R}^r$, (cf. \cite[lemme 4.1.2]{N}) il existe un unique vecteur $v$ tel que $y+s_{r-1}(g)v\in R^{r-1}$.
En considèrant la matrice $u_r=\begin{pmatrix}{\rm Id}_{r-1}&v\\0&1\end{pmatrix}$ on a alors ${}^tu_rgu_r=x+\va {\rm Id}_r$, où $x$ est une matrice symétrique à coefficients dans $R$.
De plus, si $x+\va {\rm Id}_r={}^tntn$, on a nécessairement $\mathrm{sres}(n_{i-1,i})=x_{i-1,i}$.
\end{proof}

Soit $S_i$ la variété affine des matrices symétriques de taille $i$. On définit les variétés $R_i$ en posant $R_r=\mathrm{Spec}(k)$ et $R_{i-1}=R_i\times Q_i \times \mg_m$. Soit $f^X_i:\mg_m\times S_i\times R_i\ra S_{i-1}\times R_{i-1}$ le morphisme défini par $f^X_i(\alpha_i,x_i,r_i)=(s_{i-1}(x_i),r_{i-1})$, où $r_{i-1}=(r_i,\Delta_i(x_i),\alpha_i)$. Soit $h_i :\mg_m\times S_i \times R_i\ra \mg_a$ le morphisme défini par $h_i(\alpha_i,x_i,r_i)=\alpha_ix_{i-1,i}$. Soit $\mathrm{pr}_i:\mg_m\times S_i\times R_i\ra S_i\times R_i$ la projection évidente. On définit les complexes $\mathcal{I}_i$ sur $S_i\times R_i$ en posant $\mathcal{I}_r=\overline{\mathbb{Q}}_{\ell}[\frac{r(r+1)}{2}]$ et $\mathcal{I}_{i-1}=Rf^X_{i,!}(\mathcal{I}_i\otimes h^*_{i}\lsi)$ (voir loc. cit.). On a un isomorphisme $S_1\times R_1 \simeq Q_{\ud}\times\mg_m^{r-1}$ pour lequel $\mathcal{I}_1\simeq \mathrm{R}f^X_{\ud,!}h_{\ud}^*\mathcal{L}_{\psi}[\frac{r(r+1)}{2}+r-1]$.

On note $\mathrm{O}_i$ le groupe orthogonal de degré $i$ de la forme quadratique $q(x_1,\dots,x_i)=\sum_{j=1}^ix_j^2$. Ce groupe agit dans $\mg_m\times S_i\times R_i$ par l'action adjointe sur $S_i$ et par l'action triviale sur les autres facteurs. On identifiera le groupe $\mathrm{O}_{i-1}$ au sous-groupe $\D(\mathrm{O}_{i-1},1)$ de $\mathrm{O}_i$ de sorte que $\mathrm{O}_{i-1}$ agit aussi sur $\mg_m\times S_i\times R_i$ par l'action induite. Comme l'action adjointe laisse invariant le polynôme caractéristique, le morphisme $f^X_i$ est $\mathrm{O}_{i-1}$-équivariant. Le morphisme $h_i$ n'est pas $\mathrm{O}_{i-1}$-équivariant mais est néanmoins $\mathrm{O}_{i-2}$-équivariant. Le théorème~\ref{px} résulte alors de la proposition suivante
\begin{proposition}(cf. \cite[proposition 5.2.2]{N})
Soient $U_i$ et $U_{i-1}$ les images réciproques de $U_{\ud}$ dans $S_i\times R_i$ et dans $S_{i-1}\times R_{i-1}$. Si $\mathcal{I}$ est un faisceau pervers sur $S_i\times R_i$, $\mathrm{O}_{i-1}$-équivariant et isomorphe au prolongement intermédiaire à restriction à l'ouvert $U_i$, alors
$$\mathcal{I}'=Rf^X_{i,!}(\mathcal{I}\otimes h^*_{i}\lsi)[1]$$ est aussi un faisceau pervers sur $S_{i-1}\times R_{i-1}$, $\mathrm{O}_{i-2}$-équivariant et isomorphe au prolongement intermédiaire de sa restriction à l'ouvert $U_{i-1}$.
\end{proposition}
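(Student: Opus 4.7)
Le plan est d'adapter la d\'emonstration de la proposition 5.2.2 de \cite{N}, en rempla\c cant l'action adjointe de $\mathrm{GL}_i$ sur $\mathfrak{gl}_i$ par celle de $\mathrm{O}_i$ sur l'espace $S_i$ des matrices sym\'etriques. L'$\mathrm{O}_{i-2}$-\'equivariance de $\mathcal{I}'$ est formelle : $\mathcal{I}$ \'etant $\mathrm{O}_{i-1}$-\'equivariant est en particulier $\mathrm{O}_{i-2}$-\'equivariant ; $f_i^X$ est $\mathrm{O}_{i-2}$-\'equivariant car l'action adjointe de $g\in \mathrm{O}_{i-2}$ plong\'e dans $\mathrm{O}_i$ via $g\mapsto \D(g,1,1)$ pr\'eserve $\Delta_i(x_i)$ et envoie $s_{i-1}(x_i)$ sur $\D(g,1)\,s_{i-1}(x_i)\,\D(g,1)^{-1}$ ; enfin $h_i(\alpha_i,x_i,r_i)=\alpha_ix_{i-1,i}$ est $\mathrm{O}_{i-2}$-invariant puisque le coefficient $x_{i-1,i}$ est fix\'e par la conjugaison par $\D(g,1,1)$.

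On analyse ensuite la g\'eom\'etrie de $f_i^X$. En \'ecrivant $x_i=\bigl(\begin{smallmatrix} y & v \\ {}^tv & z \end{smallmatrix}\bigr)$ avec $y=s_{i-1}(x_i)$, on identifie $\mg_m\times S_i\times R_i$ avec $\mg_m\times S_{i-1}\times \mg_a^{i-1}\times \mg_a\times R_i$ ; le morphisme $f_i^X$ envoie alors $(\alpha_i,y,v,z,r_i)$ sur $(y,r_i,\Delta_i(x_i),\alpha_i)$ avec
$$\Delta_i(x_i)(\varpi)=(z+\varpi)\,\Delta_{i-1}(y)(\varpi)-{}^tv\cdot\mathrm{adj}\bigl(y+\varpi\mathrm{Id}_{i-1}\bigr)\cdot v.$$
Source et but ont la m\^eme dimension $1+i(i+1)/2+\dim R_i$, donc $f_i^X$ est g\'en\'eriquement fini ; au-dessus de $U_{i-1}$ (o\`u le polyn\^ome $a\cdot\Delta_{i-1}(y)$ n'a pas de racine multiple), la comparaison des coefficients en $\varpi$ fixe $z$ et la sp\'ecialisation aux racines distinctes $\lambda_j$ de $\Delta_{i-1}(y)$ --- o\`u $\mathrm{adj}(y+\lambda_j\mathrm{Id}_{i-1})$ est sym\'etrique de rang $1$ et o\`u $a(\lambda_j)\neq 0$ --- conduit \`a $i-1$ \'equations quadratiques d\'ecoupl\'ees ${}^tv\,w_j=\pm\sqrt{a(\lambda_j)/c_j}$ en $v$ ; ainsi $f_i^X$ est fini \'etale de degr\'e $2^{i-1}$ au-dessus de $U_{i-1}$.

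Pour \'etablir la perversit\'e de $\mathcal{I}'$ et son identification au prolongement interm\'ediaire de ${\mathcal{I}'}_{|U_{i-1}}$, on contr\^ole $Rf^X_{i,!}(\mathcal{I}\otimes h_i^*\lsi)$ au-dessus du compl\'ementaire de $U_{i-1}$. Suivant le ``pas de r\'ecurrence'' de \cite[p.~515]{N}, la stratification par types d'$\mathrm{O}_{i-1}$-orbites sur les fibres de $f_i^X$ fournit les estim\'ees de codimension souhait\'ees ; en outre, sur chaque strate de fibre de dimension strictement positive on identifie une droite affine (port\'ee par la coordonn\'ee $v_{i-1}$, qui demeure libre d\`es que deux racines de $a\cdot\Delta_{i-1}(y)$ collisionnent) le long de laquelle $h_i=\alpha_ix_{i-1,i}$ est non constante, de sorte que la torsion par $h_i^*\lsi$ annule la contribution cohomologique de cette strate par le lemme d'int\'egration de Deligne. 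Combin\'ees avec l'$\mathrm{O}_{i-1}$-\'equivariance et avec l'hypoth\`ese que $\mathcal{I}$ est prolongement interm\'ediaire depuis $U_i$, ces annulations entra\^inent simultan\'ement la perversit\'e de $\mathcal{I}'$ et l'isomorphisme canonique entre $\mathcal{I}'$ et le prolongement interm\'ediaire de sa restriction \`a $U_{i-1}$.

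Le point le plus d\'elicat sera le contr\^ole fin des fibres de $f_i^X$ sur le lieu singulier : il faut, sur chaque strate o\`u certaines racines de $a\cdot\Delta_{i-1}(y)$ collisionnent, exhiber explicitement la direction affine libre garantissant la non-constance de $h_i$ et recalculer la codimension de cette strate pour l'action de $\mathrm{O}_{i-1}$. C'est l'analogue orthogonal du calcul d'orbites de \cite{N} ; il requiert une adaptation des formules de dimensions de centralisateurs dans $\mathrm{O}_{i-1}$ pour une matrice sym\'etrique g\'en\'erique de la strate consid\'er\'ee.
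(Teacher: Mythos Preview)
Votre d\'emonstration prend une route r\'eellement diff\'erente de celle du papier et, en fait, de celle de \cite{N}~: le ``pas de r\'ecurrence'' de \cite[proposition~5.2.2]{N} n'est pas un argument de stratification et d'annulation d'Artin--Schreier, mais un argument de \emph{transformation de Fourier--Deligne}. Le papier suit exactement cette voie~: on \'ecrit $\mathcal{I}'=\epsilon^*\mathfrak{F}_{\psi}(\iota_*\mathcal{I})[2-i]$ o\`u $\iota:S_i\times R_i\hookrightarrow V=E\times\mathbb{A}^{i-1}$ et $\epsilon:E\times\mathbb{G}_m\hookrightarrow V^\vee$ sont des immersions ferm\'ees convenables~; la perversit\'e de $\mathfrak{F}_{\psi}(\iota_*\mathcal{I})$ est automatique (Fourier pr\'eserve la perversit\'e), et l'$\mathrm{O}_{i-1}$-\'equivariance jointe \`a la lissit\'e du morphisme d'orbite $\mathrm{O}_{i-1}\times Z\to V^\vee$ (analogue de \cite[lemme~5.4.3]{N}, o\`u l'on v\'erifie que l'orbite de ${}^t(0,\dots,0,1)$ sous $\mathrm{O}_{i-1}\times\mathbb{G}_m$ est l'ouvert $\{{}^tyy\neq 0\}$) permet de descendre la perversit\'e et la propri\'et\'e de prolongement interm\'ediaire \`a $\epsilon^*(\cdots)$. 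Aucune analyse d\'etaill\'ee des fibres de $f_i^X$ ni des centralisateurs dans $\mathrm{O}_{i-1}$ n'est requise.

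Votre approche directe comporte une vraie lacune. D'abord, l'affirmation que ``$v_{i-1}$ demeure libre d\`es que deux racines de $a\cdot\Delta_{i-1}(y)$ collisionnent'' n'est pas justifi\'ee~: sur une strate g\'en\'erique du bord (par exemple $a_i$ et $a_{i-1}$ partageant une racine simple $\lambda$), la fibre reste de dimension nulle (la condition en $\lambda$ devient ${}^tv\,\mathrm{adj}(y+\lambda I)\,v=0$, qui est une vraie contrainte quadratique de rang $1$, non la lib\'eration d'une coordonn\'ee). Ensuite et surtout, m\^eme si l'annulation d'Artin--Schreier tuait la contribution des strates \`a fibres de dimension strictement positive, cela ne donnerait pas le prolongement interm\'ediaire~: sur les strates du bord o\`u la fibre est finie, le pouss\'e en avant est non nul, et rien dans votre argument ne distingue $j_{!*}$ de $j_!$ ou $Rj_*$ en ces points. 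La force de l'argument de Fourier est pr\'ecis\'ement qu'il transporte la propri\'et\'e de prolongement interm\'ediaire sans passer par un d\'ecoupage du bord.
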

\begin{proof}[Démonstration]
Les morphismes qui interviennent dans la formation de $\mathcal{I}'$ sont tous $\mathrm{O}_{i-2}$-équivariants donc $\mathcal{I}'$ l'est aussi.

On utilise la transformation de Fourier-Deligne (\cite{L}) pour démontrer la perversité et le prolongement intermédiaire. Soit $E=S_i\times Q_i\times R_i$. Il est clair que $E\times \mg_m\simeq S_{i-1}\times R_{i-1}$. Soient $V$ le fibré trivial $E\times \mathds{A}^{i-1}$ et $V^{\vee}$ son fibré dual. On note $\iota:S_i\times R_i\ra V$ l'immersion fermée définie par $\iota(x_i,r_i)=(x_{i-1},a_i(x_i),r_i,y)$, où $x_i$ est de la forme
$\left(\begin{smallmatrix}
x_{i-1}&y\\
{}^ty&*
\end{smallmatrix}\right)$. On note $\epsilon :E\times\mg_m\ra V^{\vee}$ l'immersion fermé définie par $\epsilon(e,\alpha_i)=(e,{}^t(0,\dots,0,\alpha_i))$. On vérifie que $\mathcal{I}'=\epsilon^*\fsi(\iota_*\mathcal{I})[2-i]$ (cf. \cite[proposition 5.3.2]{N}), où $\fsi$ est la transformation de Fourier-Deligne. Puisque $\mathcal{I}$ est un faisceau pervers et $\iota$ est une immersion fermée, $\iota_*\mathcal{I}$ est un faisceau pervers. D'après \cite{L}, $\fsi(\iota_*\mathcal{I})$ en est un aussi. L'action de $\mathrm{O}_{i-1}$ sur $S_i\times R_i$ s'\'etend à $V$ de la manière suivante :
$$\pi(g,(x_{i-1},a_i,r_i),y)=(({}^tgx_{i-1}g,a_i,r_i),{}^tgy).$$
Cela induit donc une action sur $V^{\vee}$
$$\check{\pi}(g,(x_{i-1},a_i,r_i),\check{y})=(({}^tgx_{i-1}g,a_i,r_i),{}^tg\check{y}).$$
Par rapport à cette action, $\fsi(\iota_*\mathcal{I})$ est $\mathrm{O}_{i-1}$-équivariant.
On va utiliser le lemme suivant
\renewcommand{\qedsymbol}{}
\end{proof}
\begin{lemma}(cf. \cite[lemme 5.4.3]{N})\label{i2}
Le morphisme composé
$$\xymatrix{\mathrm{O}_{i-1}\times E\times \mg_m\ar@{^{(}->}[r]^-{\epsilon}&\mathrm{O}_{i-1}\times V^{\vee}\ar[r]^-{\check{\pi}}&V^{\vee}}$$ est un morphisme lisse de dimension relative $\frac{i(i-1)}{2}+1-(i-1)$.
\end{lemma}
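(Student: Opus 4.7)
Le morphisme composé $\check\pi\circ (\mathrm{id}_{\mathrm{O}_{i-1}}\times\epsilon)$ s'écrit explicitement
$$(g, (x_{i-1}, a_i, r_i), \alpha_i) \longmapsto ({}^tg x_{i-1} g,\, a_i,\, r_i,\, \alpha_i\cdot {}^tg e_{i-1}),$$
où $e_{i-1}={}^t(0,\dots,0,1)\in \mathbb{A}^{i-1}$. Les composantes $a_i$ et $r_i$ étant laissées inchangées, il suffira de montrer que le morphisme induit $\Phi : \mathrm{O}_{i-1}\times S_{i-1}\times \mg_m \to S_{i-1}\times \mathbb{A}^{i-1}$ est lisse et d'en calculer la dimension relative.

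Le plan est de commencer par découpler la dépendance en $g$ des deux composantes du but à l'aide de l'automorphisme $(g,x_{i-1},\alpha_i)\mapsto (g,{}^tg x_{i-1} g,\alpha_i)$ de la source, d'inverse $(g,x'_{i-1},\alpha_i)\mapsto (g,g x'_{i-1}\, {}^tg,\alpha_i)$. Cette transformation identifie $\Phi$ au produit $\mathrm{id}_{S_{i-1}}\times q$, où $q : \mathrm{O}_{i-1}\times\mg_m \to \mathbb{A}^{i-1}$ est défini par $q(g,\alpha_i)=\alpha_i\cdot{}^tg e_{i-1}$. Il suffira alors d'établir la lissité de $q$.

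Puisque $g\in \mathrm{O}_{i-1}$ préserve la forme quadratique $\mathfrak{q}(v):=\sum_j v_j^2$ et que $\mathfrak{q}(e_{i-1})=1$, on a $\mathfrak{q}(q(g,\alpha_i))=\alpha_i^2\ne 0$ : le morphisme $q$ se factorise à travers l'ouvert $U=\{v\mid \mathfrak{q}(v)\ne 0\}\subset \mathbb{A}^{i-1}$. Le groupe algébrique $H=\mathrm{O}_{i-1}\times\mg_m$ agit sur $U$ par $(g,\alpha)\cdot v=\alpha\cdot{}^tg v$, action dont je vais vérifier qu'elle est transitive sur $U(\overline{k})$ : pour tout $v\in U(\overline{k})$, on choisit $\alpha\in\overline{k}^{*}$ avec $\alpha^2=\mathfrak{q}(v)$ (possible car $\mathrm{car}(k)\ne 2$), puis on complète le vecteur unitaire $v/\alpha$ en une base orthonormée par un procédé de Gram-Schmidt algébrique. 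Le stabilisateur de $e_{i-1}$ se calcule directement :
$$H_{e_{i-1}}=\{(\D(h,\eta),\eta) \mid h\in \mathrm{O}_{i-2},\,\eta\in\mu_2\}\simeq \mathrm{O}_{i-2}\times\mu_2,$$
lisse sur $k$ (caractéristique impaire). Par conséquent, $q$, vu à valeurs dans $U$, est un $H_{e_{i-1}}$-torseur pour la topologie fppf, donc lisse ; comme $U\hookrightarrow \mathbb{A}^{i-1}$ est une immersion ouverte, $q:\mathrm{O}_{i-1}\times\mg_m\to\mathbb{A}^{i-1}$ est lisse de la dimension relative annoncée.

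Le point crucial de l'argument sera la vérification de la transitivité de l'action de $H$ sur $U(\overline{k})$, qui repose sur le théorème de Witt pour les formes quadratiques non dégénérées en caractéristique impaire (permettant de transporter par $\mathrm{O}_{i-1}$ tout vecteur unitaire sur $e_{i-1}$) ; la lissité du stabilisateur et la conclusion en découleront par les généralités standard sur les actions des groupes algébriques lisses à stabilisateur lisse.
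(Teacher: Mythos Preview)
Your proof is correct and follows essentially the same route as the paper: both forget the inert factors $Q_i$ and $R_i$, decouple the $S_{i-1}$-component via the automorphism $(g,x_{i-1})\mapsto(g,{}^tg x_{i-1} g)$, and reduce to the orbit map $\mathrm{O}_{i-1}\times\mg_m\to\mathbb{A}^{i-1}$, showing its image is the open locus $\{v:{}^t v v\neq 0\}$. The paper declares this last step ``\'evident'', whereas you supply the Witt/Gram--Schmidt argument for transitivity, compute the stabiliser $\mathrm{O}_{i-2}\times\mu_2$, and conclude via the torsor description; your write-up is thus a strict refinement of the paper's argument rather than a different approach.
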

\begin{proof}[Démonstration du lemme]
On note $Z$ l'image de l'immersion localement fermée $\epsilon$ dans $V^{\vee}$. Le morphisme composé s'écrit donc :
$$\mathrm{O}_{i-1}\times Z\buildrel {\check{\pi}}\over{\ra} V^{\vee}.$$

En oubliant les composantes $Q_i$ et $R_i$ on obtient un diagramme cartésien évident
$$\xymatrix{
\mathrm{O}_{i-1}\times Z\ar[r]^{\check{\pi}}\ar[d]&V^{\vee}\ar[d]\\
\mathrm{O}_{i-1}\times \mg_m\times S_{i-1}\ar[r]^-{\xi}&S_{i-1}\times \mathbb{A}^{i-1}
}$$
où $\xi(g_{i-1},\alpha_i,x_{i-1})=({}^tg_{i-1}x_{i-1}g_{i-1},g_{i-1}{}^t(0,\dots,0,\alpha_i))$.

En factorisant le morphisme $\zeta$ et oubliant le facteur $S_{i-1}$, l'assertion se ram\`ene à démontrer que
le morphisme
$$\mathrm{O}_{i-1}\times\mg_m\ra \mathbb{A}^{i-1},\ (g_{i-1},\alpha_i)\mapsto (g_{i-1}{}^t(0,\dots,0,\alpha_i))$$
est lisse et de dimension relative $\frac{i(i-1)}{2}+1-(i-1)$. Cela résulte de l'assertion suivante :
``L'orbite de l'\'el\'ement  ${}^t(0,\dots,0,1)\in \mathbb A^{{i-1}}$ sous l'action du  groupe $\mathrm{O}_{i-1}\times\mg_m$  sur $\mathbb{A}^{i-1}$ d\'efinie par
$$ (g_{i-1},\lambda,y)\mapsto  \lambda g_{i-1}y$$ est l'ouvert défini par l'équation ${}^tyy\neq 0$."
Cette assertion est évidente.
\renewcommand{\qedsymbol}{}
\end{proof}
\begin{proof}[Fin de la démonstration]

D'après \cite[4.2.5]{BBP}, $\check{\pi}^*(\fsi(\iota_*\mathcal{I}))[\frac{i(i-1)}{2}+1-(i-1)]$ est un faisceau pervers sur $O_{i-1}\times Z$. Grâce à la $O_{i-1}$-équivariance on a un isomorphisme
  $$\check{\pi}^*(\fsi(\iota_*\mathcal{I}))=\mathrm{pr}_Z^*(\fsi(\iota_*\mathcal{I})_{|Z}).$$
D'après loc. cit. $\fsi(\iota_*\mathcal{I})_{|Z}[1-(i-1)]$ est un faisceau pervers sur $Z$, la projection $\mathrm{pr}_Z: O_{i-1}\times Z \to Z$ étant clairement un morphisme lisse surjectif de dimension relative $\dim(O_{i-1})=\frac{i(i-1)}{2}$. Alors $\mathcal{I}'$ est un faisceau pervers sur $S_{i-1}\times R_{i-1}$ .

De la m\^eme mani\`ere, $\mathcal{I}'$ est le prolongement intermédiaire de sa restriction à $U_{i-1}$.
\end{proof}

\section{Le groupe métaplectique}
\subsection{Le groupe m\'etaplectique local}
\subsubsection{La construction du groupe m\'etaplectique local (cf. \cite{KP})}
Soit $k$ un corps fini de caract\'eristique $p\neq 2$ et \`a $q$ \'el\'ements. On note $\0_{\varpi}=k[[\varpi]]$ l'anneau des s\'eries formelles \`a une ind\'etermin\'ee $\varpi$ et \`a coefficients dans $k$, $\fp=k((\varpi))$ son corps des fractions.

Dans \cite{KP} le groupe m\'etaplectique $\tG_r$ est construit comme une extension centrale non-triviale du groupe $\G_r(\fp)$ par $\{\pm 1\}$
$$\xymatrix{1 \ar[r] &\{\pm1\} \ar[r] &\tG_r(\fp) \ar[r]& \G_r(\fp) \ar[r]& 1}. $$

L'extension $\tG_r(\fp)$ est d\'efinie par une loi de groupe sur le produit $\G_r(\fp)\times \{\pm1\}$
$$(g,z)(g',z')=(gg',zz'\chi(g,g')), $$
o\`u  $ \chi$ est un certain $2$-cocycle, ce qui veut dire qu'il v\'erifie l'identit\'e
$$\chi(g',g'')\chi(gg',g'')^{-1}\chi(g,g'g'')\chi(g,g')^{-1}=1,$$
qui traduit l'associativit\'e de la loi de groupe.


\begin{proposition}(cf. \cite[Proposition 0.1.2]{KP})\label{scindage}
L'extension centrale $\tG_r(\fp)$ est scind\'ee au-dessus de $\G_r(\0_{\varpi})$.
\end{proposition}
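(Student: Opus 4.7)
Le plan est de construire explicitement une section $\kappa^*: \G_r(\0_\varpi) \to \tG_r(\fp)$ qui soit un morphisme de groupes, c'est-à-dire une fonction $\kappa: \G_r(\0_\varpi) \to \{\pm 1\}$ trivialisant le cocycle $\chi$ en restriction à $\G_r(\0_\varpi)\times \G_r(\0_\varpi)$ :
$$\chi(g_1,g_2) = \kappa(g_1g_2)\,\kappa(g_1)^{-1}\,\kappa(g_2)^{-1}.$$
J'adopterais l'approche directe de Kazhdan--Patterson, fondée sur la formule explicite du cocycle en termes du symbole de Hilbert sur $\fp$ et des données extraites de la décomposition de Bruhat. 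L'observation clé est que pour $u,v \in \op^*$, le symbole de Hilbert $(u,v)_{\fp}$ est trivial (caractère modéré, rendu possible par l'hypothèse $p$ impair) ; de ce fait, lorsqu'on évalue $\chi(g_1,g_2)$ sur des éléments de $\G_r(\op)$, tous les facteurs mettant en jeu des produits d'unités disparaissent, et seuls subsistent des signes élémentaires provenant de la combinatoire des sous-matrices intervenant dans la normalisation de Bruhat.

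Dans le détail, je procéderais ainsi : d'abord, j'écrirais tout $g\in \G_r(\op)$ sous une forme adaptée à l'Iwahori (décomposition $g = n\,t\,n'\,k$ avec $t\in T_r(\op)$ et $n,n'$ dans des unipotents à coefficients entiers, resp. un représentant du Weyl fini), et je calculerais $\chi(g_1,g_2)$ en utilisant la formule de \cite{KP}. Ensuite, je vérifierais que les termes en $(-,-)_{\fp}$ ne font intervenir que des unités et s'annulent donc ; les contributions restantes définissent alors $\kappa(g)$ comme un produit de signes ne dépendant que de $g\bmod \varpi$. Enfin, je vérifierais la propriété de cocycle, à savoir
$$\chi(g_1,g_2)\,\kappa(g_1)\,\kappa(g_2)\,\kappa(g_1g_2)^{-1}=1,$$
en me ramenant, par un argument standard de décomposition de Bruhat, à vérifier cette identité sur les générateurs (Weyl et tore), cas dans lesquels elle se réduit à des identités élémentaires sur les symboles de Hilbert.

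Une approche alternative, plus conceptuelle, consisterait à raisonner cohomologiquement : la classe de l'extension dans $H^2(\G_r(\op),\{\pm1\})$ s'annule. En effet, la suite exacte $1\to K\to \G_r(\op)\to \G_r(k)\to 1$ où $K$ est pro-$p$ avec $p\neq 2$ donne, via la suite de Hochschild--Serre, une injection $H^2(\G_r(\op),\{\pm1\})\hookrightarrow H^2(\G_r(k),\{\pm1\})$ ; il reste à vérifier que la classe se restreint trivialement sur le groupe fini $\G_r(k)$, ce qui se fait en évaluant la formule du cocycle sur des relèvements explicites des générateurs. Cette approche a l'avantage de séparer nettement la partie topologique (trivialité automatique au niveau pro-$p$) de la partie finie.

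L'obstacle principal est ici purement calculatoire : la forme précise du cocycle de Kazhdan--Patterson pour $\G_r$ fait intervenir des produits de symboles de Hilbert associés à des sous-déterminants issus de la décomposition de Bruhat, et leur contrôle sur $\G_r(\op)$ requiert une manipulation soigneuse — en particulier, il faut s'assurer que les signes résiduels se regroupent bien en une fonction multiplicative. C'est précisément la raison pour laquelle la fonction $\kappa$ ainsi produite ``n'est vraiment pas facile à calculer'', comme le souligne l'introduction, et pour laquelle la sous-section suivante va comparer cette construction à celle, plus géométrique, d'Arbarello--De Concini--Kac, qui rend la formule transparente.
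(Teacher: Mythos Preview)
Your first approach contains a genuine error. You claim that when evaluating $\chi(g_1,g_2)$ on elements of $\G_r(\op)$, ``tous les facteurs mettant en jeu des produits d'unités disparaissent'' because the Hilbert symbol is trivial on $\op^*\times\op^*$. But the cocycle $\chi$ is computed from the Bruhat data $B(g)$ in $\G_r(\fp)$, and for $g\in\G_r(\op)$ these data need not lie in $\op$. For instance $g=\left(\begin{smallmatrix}1&0\\\varpi&1\end{smallmatrix}\right)\in\G_2(\op)$ has $B(g)=\D(-1/\varpi,\varpi)\,w_0$, whose diagonal entries have nonzero valuation. Hence $\chi$ restricted to $\G_r(\op)\times\G_r(\op)$ is \emph{not} identically $1$; if it were, the splitting function $\kappa$ would be constant equal to $1$, contradicting Kubota's formula quoted just below the proposition. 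The splitting exists because $\chi|_{\G_r(\op)^2}$ is a \emph{coboundary}, not because it vanishes, and your proposed computation does not establish that.

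Your second (cohomological) approach is the sound one and is in fact the route taken in \cite[Proposition~0.1.2]{KP}: one uses that the congruence kernel is pro-$p$ with $p\neq2$ to reduce to the finite quotient $\G_r(k)$, where the restriction of the class is shown to be trivial. The paper itself does not reprove the proposition here---it simply cites Kazhdan--Patterson---but it effectively re-obtains the splitting later by an entirely different mechanism: once the ACK-type extension $\tG_{r,\mathrm{geo}}(\fp)$ is constructed and identified with the KP extension (th\'eor\`eme~\ref{chi4}), the scindage over $\G_r(\op)$ becomes tautological, since $D_g=(\op^r\,|\,g\op^r)$ is canonically $k$ whenever $g\op^r=\op^r$. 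That geometric trivialisation, rather than a direct manipulation of the cocycle, is what makes $\kappa$ computable in the rest of the paper.
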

On a une section canonique $\kappa^*$ de l'extension $\G_r(\fp)$ au-dessus de $\G_r(\mathcal{O}_{\varpi})$ qui s'écrit $\kappa^*(g)=(g,\kappa(g))$ pour une certaine application $$\kappa: \G_r(\mathcal{O}_{\varpi})\rightarrow \{\pm1\}$$
qui satisfait les relations suivantes (c.f \cite[lemme 2]{M}):
\begin{enumerate}
\item $\kappa |_{T_r(\fp)\cap \G_r(\op)}=\kappa |_{W_r}=\kappa |_{N_r(\fp)\cap \G_r(\op)} =1$ (ceci d\'etermine $\kappa$ de mani\`ere unique).
\item $\kappa(g_1g_2)=\kappa(g_1)\kappa(g_2)\chi(g_1,g_2)$\quad  $(g_1,g_2 \in \G_r(\op))$.
\item $\kappa\left(\left[\begin{matrix}g_1&\\&g_2\end{matrix}\right]\right)=\kappa\left(\left[\begin{matrix}&g_1\\g_2&\end{matrix}\right]\right)=\kappa(g_1)\kappa(g_2)$ \ $\left(\left[\begin{matrix}g_1&\\&g_2\end{matrix}\right]\in \G_r(\op)\right)$.
\end{enumerate}

Dans le cas $r=2$ Kubota (\cite[p. 19]{Ku}) a montr\'e que
$$\kappa(g)=\begin{cases}\left[c,\frac{d}{\det(g)}\right]&\left(g=\left(\begin{matrix}a&b\\c&d\end{matrix}\right),\,c\neq 0,\,c\not\in \0_{\varpi}^*\right)\\1&(c=0 \textrm{ ou }c\in \0_{\varpi}^*), \end{cases}$$
o\`u $[.,.]$ est le symbole de Hilbert.

En rempla\c cant le symbole de Hilbert par le symbole mod\'er\'e dans la d\'efinition de $\chi$ (on note encore $\chi$ le cocycle ainsi obtenu), on obtient une extension $\tG_{r,KP}(\fp)$ de $\G_r(\fp)$ par $k^*$. Le symbole de Hilbert peut s'exprimer en termes du symbole mod\'er\'e, de sorte que cette extension $\tG_{r,KP}(\fp)$ permet de retrouver celle ci-dessus.

Le nouveau $2$-cocycle $\chi$ est bien d\'efini par la proposition suivante.
\begin{proposition}(\cite[p. 112]{M})\label{chi1}
Soit $ g \in \G_r(\fp)$. D'apr\`es la d\'ecom\-position de Bruhat, on \'ecrit $g=n_1mn_2$, o\`u $n_1,\,n_2\in N_r$ et $m\in T_rW_r(\fp)$ ($m$ est uniquement d\'etermin\'e). On note $B(g)=m$. Soit $\{.,.\}$ le symbole mod\'er\'e, i.e. $\{f,g\}=(-1)^{v(f)v(g)}\frac{f^{v(g)}}{g^{v(f)}}(0)$.  On a alors:
\begin{enumerate}
\item $\chi(t,t')=\displaystyle\prod_{i<j}\{t_i,t_j'\}$,
    \\ o\`u $t=\D[t_i]$ et $t'=\D[t_i']$.
\item $\chi(w,w')=1$\quad $(w,w' \in W_r)$.
\item $\chi(t,w)=1$\quad $(w\in W_r$, $t \in T_r(\fp))$.
\item $\chi(\alpha,t)=\displaystyle\{t_{\ell},t_{\ell+1}\}^{-1}\{-1,\frac{t_{\ell}}{t_{\ell+1}}\}\{-1,\det(t)\}$,
    \\ o\`u  $\alpha$ est la matrice de la transposition $(\ell,\ell+1)$.
\item $\chi(ng,g'n')=\chi(g,g')$ \quad $(n,n'\in N_r(\fp))$.
\item $\chi(t,g)=\chi(t,B(g))$ \quad $(t \in T_r(\fp))$.
\item La formule de loc. cit.  $\sigma(s_{\alpha},g)=\sigma(R(s_{\alpha}g)R(g))$ doit en fait \^etre corrig\'ee en $\chi(\alpha,g)=\chi(B(\alpha g)B(g)^{-1},B(g))$, comme on le voit en la comparant avec la construction de Matsumoto du groupe m\'etaplectique dans \cite[p. 40]{KP}.
\end{enumerate}
\end{proposition}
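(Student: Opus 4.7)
Les identit\'es (1) \`a (6) sont essentiellement \'etablies dans \cite{KP} et \cite{M} ; mon plan est de les red\'eriver de la pr\'esentation de Matsumoto de l'extension centrale universelle de $\G_r(\fp)$ par $K_2(\fp)$, apr\`es avoir remplac\'e le symbole de Hilbert par le symbole mod\'er\'e $\{\cdot,\cdot\}$ pour obtenir l'extension $\tG_{r,KP}(\fp)$ par $k^*$, dont les relations de d\'efinition sont celles de Steinberg avec $\{\cdot,\cdot\}$ en lieu et place du symbole de Hilbert.

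Pour (1), la restriction de $\chi$ au tore est un symbole de Steinberg bilin\'eaire \`a valeurs dans $k^*$ ; en d\'ecomposant $t=\prod_i\D(1,\dots,t_i,\dots,1)$ et de m\^eme pour $t'$ et en exploitant la bimultiplicativit\'e de $\{\cdot,\cdot\}$ ainsi que la trivialit\'e sur les facteurs commutants (termes $i>j$ et $i=j$), on obtient la formule $\prod_{i<j}\{t_i,t_j'\}$. Pour (2) et (3), les rel\`evements de Matsumoto des g\'en\'erateurs de Weyl d\'efinissent une section multiplicative sur $W_r$, et la compatibilit\'e entre les sections sur $T_r$ et sur $W_r$ dans la pr\'esentation de Matsumoto impose respectivement $\chi(w,w')=1$ et $\chi(t,w)=1$.

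Pour (4), on applique la formule de Kubota rappel\'ee plus haut au sous-groupe de Levi isomorphe \`a $\mathrm{SL}_2$ associ\'e \`a la transposition $(\ell,\ell+1)$, puis on compl\`ete la formule par bimultiplicativit\'e en combinant avec (1)-(3). Pour (5), le scindage canonique $\sigma$ de $\tG_r$ au-dessus de $N_r(\fp)$ d\'ej\`a invoqu\'e implique que $\chi$ est trivial sur $N_r\times N_r$ ; l'identit\'e de 2-cocycle appliqu\'ee aux triples $(n,g,g')$ et $(g,g',n')$, combin\'ee \`a la normalisation $\chi(\cdot,1)=\chi(1,\cdot)=1$, permet alors d'\'eliminer les facteurs unipotents gauche et droit. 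Pour (6), on combine (5) avec la d\'ecomposition de Bruhat $g=n_1B(g)n_2$ et l'on it\`ere.

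Le contenu principal est (7), qui corrige la formule de \cite{M}. Le plan est d'\'ecrire $g=n_1B(g)n_2$, puis de d\'ecomposer $\alpha n_1=n'_1B(\alpha n_1)n'_2$ ; en propageant $n'_2$ \`a travers $B(g)$ (ce qui ne modifie que des facteurs unipotents, par conjugaison), on v\'erifie que $B(\alpha g)=B(\alpha n_1)B(g)$, d'o\`u $B(\alpha n_1)=B(\alpha g)B(g)^{-1}$. Par applications r\'ep\'et\'ees de (5) et de l'identit\'e de cocycle aux triples $(\alpha,n_1,B(g))$ et $(\alpha n_1,B(g),n_2)$, les facteurs unipotents disparaissent et l'on obtient $\chi(\alpha,g)=\chi(\alpha n_1,B(g))=\chi(B(\alpha g)B(g)^{-1},B(g))$. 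La difficult\'e principale sera de contr\^oler soigneusement l'ordre des produits et les signes dans la pr\'esentation de Matsumoto \cite[p.~40]{KP}, puisque c'est pr\'ecis\'ement l\`a que r\'eside l'origine de l'erreur de \cite{M} ; pour se rassurer, je v\'erifierais la formule corrig\'ee par un calcul direct dans le cas $\mathrm{SL}_2$ et la comparerais \`a la formule explicite de Kubota rappel\'ee plus haut, ainsi qu'\`a la formule pour $\G_2$ de Jacquet \cite{J}.
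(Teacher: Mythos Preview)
The paper does not give its own proof of this proposition: it is recorded as a citation from \cite[p.~112]{M}, with item~(7) flagged as a correction justified by comparison with Matsumoto's construction in \cite[p.~40]{KP}. The closest the paper comes to a proof is the detailed verification of the parallel identities for the geometric cocycle $\chi_{\rm geo}$ in Proposition~\ref{chi2}, whose items (1)--(7) mirror those here; that is the relevant comparison point.

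Your sketch for (1)--(6) is adequate. Your argument for (7), however, contains a genuine gap. You assert that after writing $\alpha n_1 = n'_1\,B(\alpha n_1)\,n'_2$ one can propagate $n'_2$ through $B(g)$ by conjugation, yielding $B(\alpha g)=B(\alpha n_1)\,B(g)$. This fails exactly when $B(g)=wt$ with $w^{-1}(\ell)>w^{-1}(\ell+1)$ and the coefficient $u$ of $n_1$ in the $\alpha$-root direction is nonzero. Indeed, writing $n_1=n_\alpha e_\alpha^u$ with $n_\alpha$ having zero $(\ell,\ell+1)$-entry, one has $\alpha n_1=n''\,\alpha\,e_\alpha^u$ in Bruhat form, so $B(\alpha n_1)=\alpha$ and $n'_2=e_\alpha^u$; but then $B(g)^{-1}e_\alpha^u B(g)$ lands in a \emph{negative} root subgroup and is not in $N_r$, so your propagation step is illegitimate. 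The paper's proof of Proposition~\ref{chi2}(7) shows what actually happens in this case: one uses $e_\alpha^u=m_\alpha(u)\,e_\alpha^{-u}\,e_{-\alpha}^{1/u}$ to rewrite $\alpha e_\alpha^u=d_\alpha(u)\,e_\alpha^{-u}\,e_{-\alpha}^{1/u}$, after which $B(g)^{-1}e_{-\alpha}^{1/u}B(g)\in N_r$ and one finds $B(\alpha g)=d_\alpha(u)\,B(g)$, so that $B(\alpha g)B(g)^{-1}=d_\alpha(u)$ is a torus element, not $\alpha$. Thus your identity $B(\alpha n_1)=B(\alpha g)B(g)^{-1}$ is false here, and the chain of cocycle identities you propose does not close. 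A correct argument must treat separately the three cases $u=0$; $w^{-1}(\ell)<w^{-1}(\ell+1)$; and $w^{-1}(\ell)>w^{-1}(\ell+1)$ with $u\neq 0$, as the paper does for $\chi_{\rm geo}$, and in the last case there is an extra nontrivial cocycle computation (the term $\chi_{\rm geo}(e_{-\alpha}^{1/u},w)$ in the paper) that your outline does not anticipate.
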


Pour comprendre g\'eom\'etriquement l'extension $\tG_{KP}(\fp)$, on la compare \`a une autre extension due \`a Arbarello, De Concini et Kac (ACK), dont la d\'efinition est plus g\'eom\'etrique.
\subsubsection{Le symbole $(A|B)$ (cf. \cite{ACK})}
Soit $V$ un $k$-espace vectoriel de dimension finie. On note $\bigwedge V=\bigwedge^{\dim V}(V)$ la puissance ext\'erieuse maximale de $V$.
Soient $(v_1,\dots,v_m)$ une base de $V$ et $({v}^*_1,\dots,{v}^*_m)$ sa base duale. L'\'el\'ement $v_1\wedge\dots\wedge v_m$ est une base de $\bigwedge V$ et on normalise l'identification $(\bigwedge V)^*\simeq \bigwedge V^*$ de telle sorte que ${(v_1\wedge\dots\wedge v_m)}^{*}={v}^*_1\wedge\dots\wedge {v}^*_m$ soit sa base duale.
\begin{proposition}\label{ack1}
Soit  $0 \ra V \ra V'' \ra V' \ra 0$ une suite exacte d'espaces vectoriels de dimension finie. On a des isomorphismes canoniques
$$\phi(V,V'): \bigwedge V \otimes \bigwedge V' \ra \bigwedge V''\, ;$$ $$\space \phi(V',V): \bigwedge V' \otimes \bigwedge V \ra \bigwedge V'',$$
d\'efinis par
$$\phi(V,V')(v_1\wedge \dots \wedge v_m \otimes v'_1\wedge\dots\wedge v'_n)=v_1\wedge\dots\wedge v_m\wedge\widetilde{v'_1}\wedge\dots\wedge\widetilde{v'_n},$$
$$\phi(V',V)(v'_1\wedge\dots\wedge v'_n\otimes v_1\wedge \dots \wedge v_m)=\widetilde{v'_1}\wedge\dots\wedge\widetilde{v'_n}\wedge v_1\wedge\dots\wedge v_m,$$
o\`u $(v_1,\dots,v_m)$ (resp. $(v'_1,\dots,v'_n)$) est une base de $V$ (resp. de $V'$) et $\widetilde{v'_1},\dots,\widetilde{v'_n}$ sont les pr\'eimages dans $V''$ de $v'_1,\dots,v'_n$.
\end{proposition}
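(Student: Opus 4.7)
Je commencerais par observer que $\bigwedge V$, $\bigwedge V'$ et $\bigwedge V''$ sont de dimension $1$, de sorte qu'il suffit de v\'erifier que la formule donn\'ee d\'efinit bien une application lin\'eaire (ind\'ependante des choix faits) et que cette application envoie un \'el\'ement non nul sur un \'el\'ement non nul. La partie non-triviale se r\'eduit ainsi \`a deux v\'erifications de bonne d\'efinition.

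La premi\`ere v\'erification est l'ind\'ependance du choix des rel\`evements $\widetilde{v'_j}$ des $v'_j$ dans $V''$. Si $\widetilde{v'_j}$ et $\widetilde{v'_j}{}'$ sont deux rel\`evements de $v'_j$, leur diff\'erence appartient \`a $V$ et s'\'ecrit donc $\sum_i \lambda_{ij}v_i$. En substituant dans $v_1\wedge\dots\wedge v_m\wedge \widetilde{v'_1}\wedge\dots\wedge\widetilde{v'_n}$ et en d\'eveloppant par multilin\'earit\'e, tous les termes correctifs comportent un facteur $v_1\wedge\dots\wedge v_m\wedge v_i\wedge \dots$ qui s'annule par antisym\'etrie du produit ext\'erieur. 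La formule ne d\'epend donc pas des rel\`evements choisis.

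La seconde v\'erification est la compatibilit\'e avec le changement de bases de $V$ et de $V'$. Un changement de base de $V$ de d\'eterminant $\delta$ multiplie $v_1\wedge\dots\wedge v_m$ par $\delta$, et, puisque les $v_i$ apparaissent aux $m$ premi\`eres positions du produit ext\'erieur dans $V''$, le second membre est multipli\'e par le m\^eme $\delta$. De m\^eme, un changement de base de $V'$ de d\'eterminant $\delta'$ multiplie $v'_1\wedge\dots\wedge v'_n$ par $\delta'$, et il suffit de noter qu'on peut prendre pour nouveaux rel\`evements les combinaisons lin\'eaires correspondantes des anciens $\widetilde{v'_j}$ pour voir que le second membre re\c{c}oit aussi le facteur $\delta'$. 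La formule passe donc au quotient et d\'efinit bien une application lin\'eaire $\phi(V,V'):\bigwedge V\otimes \bigwedge V'\to\bigwedge V''$.

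Comme $(v_1,\dots,v_m,\widetilde{v'_1},\dots,\widetilde{v'_n})$ est une base de $V''$, l'image de $(v_1\wedge\dots\wedge v_m)\otimes(v'_1\wedge\dots\wedge v'_n)$ est une base de $\bigwedge V''$, de sorte que $\phi(V,V')$ est un isomorphisme entre droites. La construction de $\phi(V',V)$ est compl\`etement sym\'etrique (on intervertit simplement l'ordre des facteurs dans le produit ext\'erieur final), et l'on voit imm\'ediatement qu'elle diff\`ere de la pr\'ec\'edente par le signe $(-1)^{mn}$ provenant de la permutation des $m$ vecteurs de $V$ avec les $n$ rel\`evements. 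L'argument est une routine d'alg\`ebre multilin\'eaire et ne pr\'esente pas d'obstacle r\'eel ; le seul point qui demande un peu d'attention est de ne pas oublier de choisir des rel\`evements \emph{coh\'erents} lors du changement de base de $V'$, afin que le facteur $\delta'$ sorte correctement du second membre.
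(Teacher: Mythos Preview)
Your proof is correct. The paper does not actually supply a proof of this proposition: it states the formulas as the definition of $\phi(V,V')$ and $\phi(V',V)$ and immediately moves on to Deligne's sign conventions, treating the well-definedness and bijectivity as evident. Your two verifications (independence of the lifts via antisymmetry, and compatibility with base change via the determinant factor) are exactly the routine checks one would expect, and your remark that $\phi(V',V)$ differs from $\phi(V,V')$ by $(-1)^{mn}$ anticipates precisely the Koszul sign rule that the paper introduces in the paragraph following the proposition.
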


D'apr\`es Deligne (\cite{DL}), pour \'eviter les probl\`emes de signes, on va consid\'erer $\bigwedge V$ comme l'espace vectoriel gradu\'e $\bigwedge^{\dim(V)}V$ plac\'e en degr\'e $\dim V$, qui est muni un isomorphisme de sym\'etrie
$$\bigwedge V\otimes \bigwedge V'\buildrel \thicksim \over{\ra}\bigwedge V'\otimes \bigwedge V$$ par la r\`egle de Koszul (l'isomorphisme \'evident est multipli\'e par $(-1)^{\dim V.\dim V'})$. La suite exacte $0\ra V\ra V\oplus V'\ra V'\ra 0$ donne alors un diagramme commutatif :
$$\xymatrix{
\bigwedge (V\oplus V')\ar[d]_-{\wr}&\bigwedge V\otimes \bigwedge V'\ar[d]^-{Koszul}\ar[l]_-{\phi(V,V')}\\
\bigwedge (V'\oplus V)&\bigwedge V'\otimes \bigwedge V\ar[l]^-{\phi(V',V)}
},$$
o\`u l'isomorphisme $\bigwedge (V\oplus V') \buildrel \sim \over{\ra}\bigwedge (V'\oplus V)$ vient de l'isomorphisme de sym\'etrie $V\oplus V' \buildrel \sim \over{\ra}V'\oplus V$
\begin{defn}
Soient $V$ un espace-vectoriel et $A,B$ deux sous-espaces de $V$. On dit que $A$ et $B$ sont \textit{commensurables} (et on note $A\sim B$) si et seulement si $\dim\left(\frac{A+B}{A\cap B}\right)<\infty$.
\end{defn}

On s'int\'eresse au cas o\`u $V=\fp^r$ et $A,B$ sont deux $\0_{\varpi}^r$-r\'eseaux. Il est clair que $A \sim B$. On a donc $\dim(A/A\cap B)<\infty$ et $\dim(B/A\cap B)<\infty$.
\begin{defn}
Soient $A,B$ deux $\0_{\varpi}^r$-r\'eseaux. On d\'efinit la droite  $$(A|B)=\left(\bigwedge A/A \cap B\right)^*\otimes \left(\bigwedge B/A\cap B\right).$$

\end{defn}
Clairement $(A|B)\otimes(B|A)\can k$, et si $A=B$, on a $(A|B) \can k$.
\begin{proposition}\label{ACK1}Soient $A,B,C,D$ des $\0_{\varpi}^r$-r\'eseaux.
\begin{itemize}
\item On a un isomorphisme de permutation \'evident  $$(A|B)\otimes(C|D)\ra(C|D)\otimes(A|B).$$
Pour \'eviter le probl\`eme de signe, on va voir $(A|B)$ comme une droite gradu\'ee plac\'ee en degr\'e $[A:B]=\dim(A/A\cap B)-\dim(B/A\cap B),$ et on red\'efinira l'isomorphisme de sym\'etrie :$$(A|B)\otimes(C|D)\ra(C|D)\otimes(A|B),$$
en multipliant le morphisme \'evident par $(-1)^{[A:B][C:D]}$ (r\`egle de Koszul). On notera $\mathrm{Sym}^{\bullet}$ l'isomorphisme de sym\'etrie ainsi obtenu.
\item On a un isomorphisme canonique de contraction $$\beta :(A|B)\otimes (B|C) \ra (A|C).$$

\end{itemize}
\end{proposition}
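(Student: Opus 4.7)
La première partie de la proposition est essentiellement une convention (la règle de Koszul appliquée à la symétrie évidente) et ne demande pas de démonstration indépendante. Tout l'enjeu se situe dans la construction de la contraction canonique $\beta$, et mon plan est de procéder en deux temps : d'abord me ramener à de l'algèbre linéaire en dimension finie à l'aide d'un sous-réseau commun auxiliaire, puis construire $\beta$ dans ce cadre fini.

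Pour la première étape, je poserais $L=A\cap B\cap C$ et $M=A+B+C$. Comme $A,B,C$ sont deux à deux commensurables, $L$ et $M$ sont encore des $\0_{\varpi}^r$-réseaux et $M/L$ est un $k$-espace vectoriel de dimension finie. Notons $\bar A,\bar B,\bar C$ les images de $A,B,C$ dans $M/L$. Pour tout couple $(U,V)$ pris dans $\{A,B,C\}$, on a $L\subseteq U\cap V$ donc, par le théorème de correspondance, $U/(U\cap V)\simeq\bar U/(\bar U\cap\bar V)$ et $V/(U\cap V)\simeq\bar V/(\bar U\cap\bar V)$. On obtient ainsi un isomorphisme canonique $(U|V)\simeq(\bar U|\bar V)$ entre la droite définie à partir des réseaux et celle définie à partir des sous-espaces de dimension finie de $M/L$.

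On est alors ramené à construire la contraction pour trois sous-espaces $\bar A,\bar B,\bar C$ d'un même espace de dimension finie. Je trivialiserais chaque droite $(\bar U|\bar V)$ à l'aide de l'isomorphisme canonique $(\bar U|\bar V)\simeq(\bigwedge\bar U)^{\otimes -1}\otimes\bigwedge\bar V$, obtenu en appliquant la proposition~\ref{ack1} aux suites exactes $0\to\bar U\cap\bar V\to\bar U\to\bar U/(\bar U\cap\bar V)\to 0$ et à son analogue pour $\bar V$. La contraction $\beta$ s'écrit alors
$$(\bar A|\bar B)\otimes(\bar B|\bar C)\simeq(\bigwedge\bar A)^{\otimes -1}\otimes\bigwedge\bar B\otimes(\bigwedge\bar B)^{\otimes -1}\otimes\bigwedge\bar C\to(\bigwedge\bar A)^{\otimes -1}\otimes\bigwedge\bar C\simeq(\bar A|\bar C),$$
la flèche centrale étant la dualité évidente $\bigwedge\bar B\otimes(\bigwedge\bar B)^{\otimes -1}\to k$. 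L'additivité des degrés $[A:B]+[B:C]=[A:C]$ (qui traduit $\dim\bar A-\dim\bar B+\dim\bar B-\dim\bar C=\dim\bar A-\dim\bar C$) assure la compatibilité avec la convention graduée de Koszul sans faire apparaître de signe parasite.

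L'obstacle principal sera de vérifier que la contraction ainsi construite ne dépend pas du choix du réseau auxiliaire $L$ : pour tout autre sous-réseau commun $L'\subseteq L$, il faudra s'assurer que les isomorphismes $(U|V)\simeq(U/L'|V/L')\simeq(\bar U|\bar V)$ sont compatibles avec la contraction définie à chaque étage. Ceci se ramène à un diagramme de fonctorialité pour la proposition~\ref{ack1} par passage à des quotients emboîtés, qui se vérifie en comparant soigneusement les isomorphismes $\phi(V,V')$ sur les sous-quotients successifs ; la même vérification fournit au passage l'associativité $\beta\circ(\beta\otimes\mathrm{id})=\beta\circ(\mathrm{id}\otimes\beta)$ de la contraction sur les chaînes à quatre réseaux $A,B,C,D$, qui sera utile par la suite.
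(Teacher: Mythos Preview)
Votre proposition est correcte et suit essentiellement la même route que celle indiquée par le papier, qui ne démontre pas la proposition mais renvoie simplement à \cite[\S4]{ACK} pour la construction de $\beta$ à l'aide des isomorphismes $\phi$ de la proposition~\ref{ack1}. Votre réduction via $L=A\cap B\cap C\subseteq M=A+B+C$ et la trivialisation $(\bar U|\bar V)\simeq(\bigwedge\bar U)^{\otimes -1}\otimes\bigwedge\bar V$ est exactement la démarche de loc.~cit., et votre remarque sur l'indépendance du choix de $L$ ainsi que l'associativité est pertinente.
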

On renvoie \`a (\cite[\S4]{ACK}) pour la construction $\beta$ \`a l'aide de l'isomorphisme $\phi$ de la proposition \ref{ack1}.
\subsubsection{L'extension de ACK}
\begin{lemma} \label{n1} Soient $g\in \G_r(\fp)$ et $L$ un $\op^r$-r\'eseau. Il existe alors un isomorphisme canonique : $$\gamma:(\0_{\varpi}^r|g\0_{\varpi}^r) \can (L|gL).$$
\end{lemma}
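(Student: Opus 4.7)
Le plan est de construire $\gamma$ comme composition d'isomorphismes de contraction $\beta$ fournis par la proposition~\ref{ACK1}, en tirant parti de la fonctorialit\'e de la construction $(\cdot|\cdot)$ par rapport aux isomorphismes $\fp$-lin\'eaires.

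Premi\`erement, comme $g :\fp^r\to\fp^r$ est un isomorphisme $\fp$-lin\'eaire, il envoie tout $\op^r$-r\'eseau $M$ sur le $\op^r$-r\'eseau $gM$ en induisant, pour toute paire $(M,N)$ de r\'eseaux, des isomorphismes de $k$-espaces vectoriels $M/M\cap N\buildrel\sim\over{\to} gM/gM\cap gN$. En prenant la puissance ext\'erieure maximale puis la duale, on obtient un isomorphisme canonique $g_{\ast}:(M|N)\can(gM|gN)$ qui pr\'eserve la graduation, puisque $[gM:gN]=[M:N]$.

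Deuxi\`emement, en ins\'erant les r\'eseaux interm\'ediaires $\op^r$ et $g\op^r$ gr\^ace \`a deux applications successives de la contraction $\beta$, on \'ecrit
$$(L|gL)\can (L|\op^r)\otimes (\op^r|g\op^r)\otimes (g\op^r|gL).$$
L'isomorphisme $g_{\ast}$ fournit $(\op^r|L)\can (g\op^r|gL)$, puis le cas d\'eg\'en\'er\'e de $\beta$ (avec extr\'emit\'es \'egales) donne l'accouplement canonique $(L|\op^r)\otimes (\op^r|L)\can k$. En combinant ces deux ingr\'edients, et apr\`es permutation des facteurs tensoriels avec les signes de Koszul appropri\'es impos\'es par la graduation $[A|B]$, on obtient $(L|\op^r)\otimes (g\op^r|gL)\can k$. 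En reportant dans l'identit\'e pr\'ec\'edente on d\'eduit l'isomorphisme cherch\'e
$$\gamma :(\op^r|g\op^r)\can (L|gL).$$

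La principale difficult\'e technique sera la v\'erification de la canonicit\'e de $\gamma$, c'est-\`a-dire son ind\'ependance par rapport \`a l'ordre des contractions et la coh\'erence des signes de Koszul d\'etermin\'es par les degr\'es $[L:\op^r]$ et $[\op^r:g\op^r]$. On la contr\^ole en observant d'une part que pour $L=\op^r$ l'isomorphisme $\gamma$ se r\'eduit tautologiquement \`a l'identit\'e de $(\op^r|g\op^r)$, et d'autre part que pour deux r\'eseaux $L, L'$ la composition $\gamma_{L'}\circ\gamma_{L}^{-1}:(L|gL)\can (L'|gL')$ co\"incide avec l'isomorphisme d\'eduit directement des m\^emes op\'erations de contraction appliqu\'ees \`a la paire $(L,L')$. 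Cette compatibilit\'e r\'esulte des diagrammes commutatifs reliant $\beta$ et $\mathrm{Sym}^{\bullet}$ \'etablis dans \cite{ACK}.
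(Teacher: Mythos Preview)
Your construction is correct and follows essentially the same route as the paper: both decompose via $\beta$ by inserting the intermediate lattices $\op^r$ and $g\op^r$ (the paper writes $(\op^r|g\op^r)\can(\op^r|L)\otimes(L|gL)\otimes(gL|g\op^r)$, you write the inverse decomposition of $(L|gL)$), then use the $g$-action to identify $(gL|g\op^r)$ with $(L|\op^r)$ and contract the outer factors, with the Koszul symmetry $\mathrm{Sym}^{\bullet}$ handling the permutation. The paper presents this as a single commutative diagram without the additional discussion of canonicity you include at the end.
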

\begin{proof}
L'isomorphisme $\gamma$ est d\'efini par le diagramme commutatif~:
$$\xymatrix{(L|gL)&(L|gL)\otimes(\0_{\varpi}^r|L)\otimes(L|\op^r)\ar[l]\\
&(\0_{\varpi}^r|L)\otimes(L|gL)\otimes(L|\op^r)\ar[u]_-{\mathrm{Sym}^{\bullet}}\\
(\0_{\varpi}^r|g\0_{\varpi}^r)\ar[uu]^{\gamma}\ar[r]_-{\beta^{-1}}&(\0_{\varpi}^r|L)\otimes (L|gL)\otimes(gL|g\0_{\varpi}^r)\ar[u]_-{\times g^{-1}}.
}
$$
\end{proof}
Arbarello, De Concini, Kac associent \`a chaque $g \in \G_r(\fp)$ une droite $D_g:=(\0_{\varpi}^r|g\0_{\varpi}^r)$.
Cette contruction fournit une extension centrale de $\G_r(\fp)$ par $k^*$
$$1 \rightarrow k^*\rightarrow \tG_r'(\fp) \rightarrow \G_r(\fp) \rightarrow 1.$$
Le groupe $\tG_r'(\fp)$ est form\'e des \'el\'ements
$$\{(g,v)|g \in \G_r(\fp),\,v\in D_g-\{0\}\}$$ et est muni de la loi de groupe (cf. \cite{ACK} pour la
v\'erification de l'associativit\'e et de l'inverse) d\'efinie par l'isomorphisme de multiplication :
$$D_g\otimes D_{g'}\buildrel {\times g} \over{\ra}(\op^r|g\op^r)\otimes(g\op^r|gg'\op^r)\buildrel\beta\over{\ra}D_{gg'}.$$\label{chi3} La restriction de cette extension \`a $\mathrm{SL}_r(\fp)$ est celle de Beauville-Laszlo \cite{BL}.

\begin{rmk}\label{rmk1}
Soit $k'$ une extension finie de $k$. On note $\op'=k'[[\va]]$ et $\fp'$ son corps de fractions. On a aussi une extension de ACK
$$1\ra (k')^*\ra \G'_{r}(\fp')\ra\G_r(\fp')\ra 1,$$
comme celle d\'efinie ci-dessus dans le cas particulier o\`u $k'= k$. En poussant cette extension via le morphisme de norme $N_{k'/k}:{k'}^*\ra k$ on obtient une extension m\'etaplectique de $\G'_r(\fp')$ par $k^*$ :
$1\ra k^*\ra N_*\tG'_{r}(\fp')\ra \G_r(\fp')\ra 1.$ Cette extension est aussi obtenue en associant \`a chaque $g\in \G_r(\fp')$ la droite ${D'}_{g}=({\op'}^r|g{\op'}^r)_k$ (o\`u
${\op'}^r$ et $g{\op'}^r$ sont vus comme $k$-espaces vectoriels).
\end{rmk}
D'apr\`es la d\'efinition de $D_g$, on a $D(g)\can k\,,\ \forall g\in\G_r(\op)$, de sorte que cette extension est scind\'ee canoniquement sur $\G_r(\op)$, i.e on a le morphisme de groupes $$\G_r(\op)\ra\tG'_r(\op)\quad g\mapsto (g,1),$$
o\`u $1$ désigne le pr\'eimage dans $D_g$ de $1\in k$ par l'isomorphisme canonique $D_g\can k\,,\ \forall \, g\in \G_r(\op)$.
\begin{lemma} \label{n2}
Soit $n \in N_r(\fp)$, il existe alors un $\op^r$-r\'eseau $M$  tel que $nM=M$.
\end{lemma}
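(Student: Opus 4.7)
Le plan consiste à construire $M$ explicitement comme une dilatation diagonale du réseau standard $\op^r$. Soit $(e_1,\dots,e_r)$ la base canonique de $\fp^r$. Puisque $n\in N_r(\fp)$ est unipotente triangulaire supérieure, on a $ne_j=e_j+\sum_{i<j}n_{ij}e_i$ avec les $n_{ij}\in\fp$. L'idée-clé est que si on dilate chaque $e_i$ par une puissance croissante de $\va$, les coefficients hors-diagonale $n_{ij}$ seront repoussés dans $\op$.

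Plus précisément, je poserais $C=\max\bigl(0,-\min_{i<j}v(n_{ij})\bigr)$, qui est un entier bien défini car $n$ n'a qu'un nombre fini de coefficients non nuls. On définit alors le $\op$-module
$$M=\bigoplus_{i=1}^r \va^{Ci}\op\cdot e_i,$$
qui est visiblement un $\op$-réseau dans $\fp^r$ (libre de rang $r$). Je vérifierais l'inclusion $nM\subseteq M$ par un calcul direct: pour chaque $j$,
$$n(\va^{Cj}e_j)=\va^{Cj}e_j+\sum_{i<j}\va^{C(j-i)}n_{ij}\cdot(\va^{Ci}e_i),$$
et $v\bigl(\va^{C(j-i)}n_{ij}\bigr)=C(j-i)+v(n_{ij})\geq C+v(n_{ij})\geq 0$ par choix de $C$. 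Donc chaque terme appartient à $M$.

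Pour passer de $nM\subseteq M$ à l'égalité $nM=M$, j'invoquerais un argument d'indice: $n$ étant unipotente, $\det(n)=1$, donc la longueur du $\op$-module $M/nM$ vaut $v(\det n)=0$. Par suite $M/nM=0$ et $nM=M$.

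Il n'y a pas vraiment d'obstacle technique dans cette preuve; le seul point à surveiller est la justification que $M$ est bien un $\op$-réseau (c'est clair de la définition) et l'utilisation correcte de la formule liant l'indice d'un endomorphisme à la valuation de son déterminant, laquelle est standard pour les $\op$-modules libres de type fini.
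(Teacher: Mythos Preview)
Your proof is correct and follows essentially the same approach as the paper: both construct $M$ as a diagonal dilation $\D(\beta_1,\dots,\beta_r)\op^r$ with rapidly increasing valuations $v(\beta_i)$ (the paper just writes $v(\beta_{i-1})\ll v(\beta_i)$ where you give the explicit choice $\beta_i=\va^{Ci}$). The only minor difference is that the paper obtains $nM=M$ directly from the computation of each component, whereas you first show $nM\subseteq M$ and then deduce equality via the index formula $\mathrm{length}(M/nM)=v(\det n)=0$; both conclusions are valid.
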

\begin{proof}
 Supposons que $n=\left(
\begin{smallmatrix}
1&n^1_{2}&n^1_3&\dots&n^1_r\\
0&1&n^2_3&\dots&n^2_r\\
\vdots &&\ddots&&\vdots\\
0&&\dots& &1
\end{smallmatrix}
\right)$.
On consid\`ere le $\op^r$-r\'eseau $M$ de la forme $M=\D(\beta_1,\dots,\beta_r)\op^r$. On a alors
$$nM=\left(\begin{matrix}\beta_1\op+n^1_2\beta_2\op+n^1_3\beta_3\op\dots +n^1_r\beta_r\op\\
\beta_2\op+n^2_3\beta_3\op+\dots+n^2_r\beta_r\op\\
\vdots\\
\beta_r\op\end{matrix}\right).$$

On choisit les $\beta_i\in \fp$ de telle sorte que \`a $v(\beta_{i-1})\ll v(\beta_{i})\,,\ \forall i=2,\dots,r$. Cela implique $\beta_i\op+\sum_{j=i+1}^rn^i_j\beta_j\op=\beta_i\op\,,\ \forall i$. On en d\'eduit : $nM=M$.
\end{proof}

\begin{lemma}\label{n3}
Soient $M$, $M'$ deux $\op^r$-r\'eseaux tels que   $nM=M$ et $nM'=M'$. Le diagramme
$$\xymatrix{(M|nM)\ar[rr]^-{(~\ref{n1})}\ar[dr]_-{\textrm{\'evident}}&&(M'|nM')\ar[dl]^-{\textrm{\'evident}}\\
&k.
}$$
est alors commutatif.
\end{lemma}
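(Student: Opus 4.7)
The plan is to reduce the statement to the case of an inclusion $M \subset M'$ of $n$-stable lattices and then verify the commutativity by an explicit diagram chase. First, since $n \in N_r(\fp)$ is unipotent, the set of $n$-stable $\op^r$-r\'eseaux is closed under intersections and sums: from $nM = M$, $nM' = M'$, and the fact that $n^{-1}$ stabilise aussi les deux r\'eseaux, on obtient $n(M \cap M') = M \cap M'$ and $n(M + M') = M + M'$. (De fa\c con \'equivalente, comme dans la d\'emonstration du lemme \ref{n2}, le sous-$\op[n]$-module engendr\'e par un r\'eseau est encore un r\'eseau, puisque $n - \mathrm{Id}$ est nilpotent, et il est $n$-stable par construction.) L'isomorphisme $\gamma_{M'} \circ \gamma_{M}^{-1} : (M|nM) \to (M'|nM')$ provenant du lemme \ref{n1} \'etant transitif en le r\'eseau auxiliaire, la commutativit\'e du diagramme pour $(M, M')$ r\'esulte de celles pour $(M, M \cap M')$ et $(M', M \cap M')$, de sorte qu'il suffit de traiter le cas d'une inclusion $M \subset M'$.

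Dans ce cas, j'appliquerais l'isomorphisme de contraction $\beta$ de la proposition \ref{ACK1} aux cha\^\i nes $M = nM \subset nM' = M'$ et $M \subset nM' \supset nM$, ce qui fournit deux factorisations d'une m\^eme droite $(M | nM')$ :
\begin{equation*}
(M | nM) \otimes (nM | nM') \xrightarrow{\beta} (M | nM') \xleftarrow{\beta} (M | M') \otimes (M' | nM').
\end{equation*}
L'automorphisme $\fp$-lin\'eaire $n$ de $\fp^r$ induit un isomorphisme de droites gradu\'ees $(M|M') \to (nM|nM')$, venant de son action sur les quotients de dimension finie $M/(M \cap M')$ et $M'/(M \cap M')$. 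Le point crucial est que cet isomorphisme est \emph{l'identit\'e} : \'ecrivant $n = \mathrm{Id} + N$ avec $N$ nilpotent sur $\fp^r$, l'endomorphisme induit sur chacun de ces quotients est nilpotent, donc $n$ y agit de mani\`ere unipotente de d\'eterminant $1$, et donc trivialement sur les puissances ext\'erieures maximales. En combinant les deux factorisations avec cette identification et en d\'eroulant la construction de $\gamma$ par naturalit\'e de $\beta$ et de $\mathrm{Sym}^{\bullet}$ sous les inclusions, on obtient un isomorphisme canonique $(M|nM) \to (M'|nM')$ co\"\i ncidant avec $\gamma_{M'} \circ \gamma_M^{-1}$, et sous lequel les trivialisations canoniques $1_M$ et $1_{M'}$ se correspondent, ce qui est exactement la commutativit\'e cherch\'ee.

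L'obstacle principal sera le suivi des signes provenant de la r\`egle de Koszul $\mathrm{Sym}^{\bullet}$ de la proposition \ref{ACK1}, qui intervient dans la construction de $\gamma_L$. Comme $M$ et $M'$ sont tous deux $n$-stables, les degr\'es $[M : nM] = [M' : nM'] = 0$ sont nuls, ce qui fait dispara\^\i tre la plupart de ces signes ; il restera cependant \`a v\'erifier que les signes r\'esiduels venant des facteurs auxiliaires $(\op^r|M)$ et $(M|\op^r)$ se compensent bien dans la comparaison finale.
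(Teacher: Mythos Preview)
Your approach is correct and essentially identical to the paper's: both reduce to the case of an inclusion of $n$-stable lattices via the intersection $M \cap M'$, and both conclude by observing that $n$ acts unipotently on the finite-dimensional quotient $M/M'$ (or $M'/M$), hence trivially on its top exterior power, so that the automorphism $\times n$ of $(M'|M)$ is the identity. The paper states this last point in one line without unpacking the factorisation via $\beta$, and does not mention the Koszul signs (which, as you correctly note, vanish since the relevant degrees are zero).
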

\begin{proof}
On peut supposer que $M'\subset M$ (comme on le voit en consid\'erant le r\'eseau $M''=M\cap M''$, qui v\'erifie lui aussi $nM''=M''$).

L'assertion r\'esulte imm\'ediatement du fait que l'automorphisme $(M'|M)\break\buildrel{\times n}\over{\ra}(nM'|nM)=(M'|M)$ est l'identit\'e.  Ceci r\'esulte du fait que l'automorphisme $\times n$ de $M/M'$ est unipotent.
\end{proof}

D'après le lemme~\ref{n2}, on peut choisir un $\op^r$-réseau M qui satisfait $nM=M$, ce qui induit un isomorphisme $D_n\can k$ (qui est l'isomorphimse composé de $D_n\buildrel ~\ref{n1}\over{\ra}(M|nM)$ et de $(M|nM)\can k$). On note $\mathfrak{d}_n$ l'image réciproque de $1\in k$ dans $D_n$ par cet isomorphisme. \`A l'aide du lemme~\ref{n3}, on voit que $\mathfrak{d}_n$ ne dépend pas du choix de $M$.
\begin{lemma}\label{n4}
La fonction $\sigma_N:N_r(\fp)\ra\widetilde{N}'_r(\fp)\ n\mapsto (n,\mathfrak{d}_n)$ est un morphisme de groupes, i.e l'extension de ACK est scindée au-dessus de $N_r(\fp)$.
\end{lemma}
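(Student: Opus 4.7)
Le plan est de v\'erifier directement l'identit\'e $\sigma_N(nn')=\sigma_N(n)\sigma_N(n')$ pour tous $n,n'\in N_r(\fp)$. Par d\'efinition de la loi de groupe sur $\tG'_r(\fp)$, cela revient \`a montrer que le morphisme de multiplication
$$D_n\otimes D_{n'}\buildrel{\times n}\over{\ra}(\op^r|n\op^r)\otimes(n\op^r|nn'\op^r)\buildrel\beta\over{\ra}D_{nn'}$$
envoie $\mathfrak{d}_n\otimes\mathfrak{d}_{n'}$ sur $\mathfrak{d}_{nn'}$.

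Je commencerais par raffiner le lemme~\ref{n2} en construisant un unique $\op^r$-r\'eseau $M$ stable simultan\'ement par $n$ et par $n'$ (donc automatiquement par $nn'$). L'ansatz diagonal $M=\D(\beta_1,\dots,\beta_r)\op^r$ de la d\'emonstration du lemme~\ref{n2} convient : il suffit de choisir les sauts $v(\beta_{i-1})-v(\beta_i)$ assez grands pour dominer simultan\'ement les valuations des coefficients non diagonaux de $n$ et de $n'$.

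Je transporterais ensuite toute la multiplication vers ce r\'eseau $M$ via les isomorphismes canoniques $\gamma:D_g\can(M|gM)$ fournis par le lemme~\ref{n1}, pour $g\in\{n,n',nn'\}$. Sous ces identifications, les trois droites $(M|nM)$, $(M|n'M)$ et $(M|nn'M)$ se confondent avec $(M|M)\can k$, et le lemme~\ref{n3} assure que les vecteurs $\mathfrak{d}_n,\mathfrak{d}_{n'},\mathfrak{d}_{nn'}$ correspondent tous \`a $1\in k$ par construction. Il ne restera alors qu'\`a constater que la multiplication, relue dans ces coordonn\'ees, devient l'analogue
$$(M|nM)\otimes(M|n'M)\buildrel{\times n}\over{\ra}(M|nM)\otimes(nM|nn'M)\buildrel\beta\over{\ra}(M|nn'M),$$
lequel, sous l'hypoth\`ese $nM=n'M=nn'M=M$, se r\'eduit manifestement au produit usuel sur $k$ et envoie $1\otimes 1$ sur $1$, ce qui conclura.

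La principale difficult\'e sera pr\'ecis\'ement la commutativit\'e du carr\'e
$$\xymatrix{D_n\otimes D_{n'}\ar[r]\ar[d]_-{\gamma\otimes\gamma}&D_{nn'}\ar[d]^-{\gamma}\\
(M|nM)\otimes(M|n'M)\ar[r]&(M|nn'M)
}$$
c'est-\`a-dire la fonctorialit\'e de l'identification $\gamma$ vis-\`a-vis de la loi de groupe. Cette v\'erification sera de nature purement formelle : en d\'epliant la d\'efinition de $\gamma$ donn\'ee au lemme~\ref{n1} et en y ins\'erant partout la droite interm\'ediaire $(\op^r|M)$, elle se ram\`ene \`a l'associativit\'e du morphisme de contraction $\beta$ (proposition~\ref{ACK1}), \`a sa compatibilit\'e avec les translations par $g$, et \`a la r\`egle de signe port\'ee par l'isomorphisme de sym\'etrie $\mathrm{Sym}^{\bullet}$.
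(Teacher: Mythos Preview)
Your approach is correct and coincides with the paper's own proof: both choose a single diagonal lattice $M$ simultaneously stable under $n$, $n'$ and $nn'$, then transport the multiplication via the canonical identifications of Lemma~\ref{n1} to reduce everything to the trivial product on $k$. The paper's argument is more telegraphic (it simply invokes the unfolded form $D_n\can(\op^r|M)\otimes(nM|n\op^r)$ and declares the result immediate), whereas you explicitly isolate the commutativity of the square relating $\gamma$ to the group law as the point requiring verification; but the substance is the same.
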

\begin{proof}[Démonstration]Soient $n,n'\in N_r(\fp).$ On choisit $M=\D(\beta_1,\dots,\beta_r)\0_\varpi^r$ tel que $v(\beta_i)\ll v(\beta_{i+1})$, de telle sorte que : $nM=n'M=nn'M=M$. Le lemme résulte immédiatement de $((\op^r|M)\otimes(nM|n\op^r))\otimes ((\op^r|M')\otimes(nM'|n\op^r))\can ((\op^r|M)\otimes(nn'M|nn'\op^r))$ (à l'aide de l'isomorphisme canonique $D_n\can((\op^r|M)\otimes(nM|n\op^r))$ du lemme~\ref{n1}, ici avec  $g=n$).
\end{proof}

\subsubsection{Construction geom\'etrique du groupe m\'etaplectique}
En utilisant la construction de ACK ci-dessus dans le cas $r=1$, on obtient aussi une extension centrale
$$1\ra k^*\ra \tG'_1(\fp)\ra\G_1(\fp)\ra 1.$$ On note $\det^*(\tG'_1(\fp))$ l'image réciproque de cette extension par le morphisme de d\'eterminant $\det:\G_r(\fp)\ra \G_1(\fp)$. Dans le groupe d'extensions $H^2(\G_r(\fp),k^*)$ (dont la loi de groupe sera not\'ee additivement), on consid\`ere la classe d'\'equivalence $\det^*(\tG'_1(\fp))-\tG'_r(\fp)$. C'est la classe d'\'equivalence de l'extension $$\tG_{r,{\rm geo}}(\fp)=\{(g,v)|g\in \G_r(\fp),\,v\in \Delta_g-\{0\}\}\,,$$
o\`u l'on note $\Delta_{g}=D_{\det(g)}\otimes D_g^{\otimes (-1)}$.
Les scindages de $\tG_r'(\fp)$ au-dessus de $N_r$ et $K_r$ d\'efinissent encore des scindages de $\tG_{r,\rm geo}(\fp)$ au-dessus de $N_r$ et $K_r$.
\begin{rmk}\label{rmk2}
Comme pour l'extension ACK, soit $k'$ une extension finie de $k$. On note $\op'=k'[[\va]]$ et $\fp'$ son corps de fractions. On a aussi une extension m\'etaplectique g\'eom\'etrique
$$1\ra (k')^*\ra \tG_{r,\rm geo}(\fp')\ra\G_r(\fp')\ra 1,$$
comme celle d\'efinie  ci-dessus dans le cas particulier o\`u $k'= k$. En poussant cette extension via le morphisme de norme $N_{k'/k}:{k'}^*\ra k$ on obtient une extension m\'etaplectique de $\G_r(\fp')$ par $k^*$ :
$1\ra k^*\ra N_*\tG_{r,\rm geo}(\fp')\ra \G_r(\fp')\ra 1.$ Cette extension est aussi obtenue en associant \`a chaque $g\in \G_r(\fp')$ la droite $\Delta'_g=D'_{\det(g)}\otimes_k{D'}_{g}^{\otimes -1}$.
\end{rmk}

\begin{theorem}\label{chi4} $\tG_{r,\rm geo}(\fp)$ et $\tG_{r,KP}(\fp)$ sont isomorphes.
\end{theorem}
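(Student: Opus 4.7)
La strat\'egie est de comparer les deux $2$-cocycles en utilisant la d\'ecomposition de Bruhat et les propri\'et\'es caract\'eristiques de $\chi$ donn\'ees \`a la proposition~\ref{chi1}. Plus pr\'ecis\'ement, je construis un scindage ensembliste (une section) $s_{\rm geo}:\G_r(\fp)\ra \tG_{r,\rm geo}(\fp)$ adapt\'e \`a la d\'ecomposition de Bruhat, je note $\chi_{\rm geo}$ le $2$-cocycle qui en r\'esulte, et je v\'erifie que $\chi_{\rm geo}$ satisfait les sept formules du (1)-(7) de la proposition~\ref{chi1}. Comme ces formules d\'eterminent $\chi$ de mani\`ere unique, on aura alors $\chi_{\rm geo}=\chi$, ce qui fournira l'isomorphisme cherch\'e entre $\tG_{r,\rm geo}(\fp)$ et $\tG_{r,KP}(\fp)$.

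Pour construire $s_{\rm geo}$, j'\'ecris tout $g=n_1\,m\,n_2$ avec $m=B(g)\in T_rW_r(\fp)$ unique et $n_i\in N_r(\fp)$, puis je d\'efinis $s_{\rm geo}(g)=\sigma_N(n_1)\cdot s_{\rm geo}(m)\cdot\sigma_N(n_2)$ \`a l'aide du scindage $\sigma_N$ fourni par le lemme~\ref{n4}. Il reste \`a sp\'ecifier $s_{\rm geo}$ sur $T_rW_r(\fp)$. Pour $w\in W_r$ on prend la trivialisation canonique de $\Delta_w$ (on a $w\in \G_r(\op)$, donc $\Delta_w\can k$). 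Pour $t=\D(t_1,\dots,t_r)\in T_r(\fp)$ je choisis comme base de $\Delta_t$ la base tensorielle provenant, composante par composante, des bases canoniques $\0_\va/t_i\0_\va$ et $t_i\0_\va/\0_\va$ (selon que $v(t_i)\geq 0$ ou $<0$), et de leurs avatars pour $\det(t)$. Le point (5) de la proposition~\ref{chi1} (invariance \`a gauche et \`a droite par $N_r$) r\'esulte alors directement de ce que $\sigma_N$ est un morphisme de groupes, et les points (2), (3), (6) et (7) sont des v\'erifications faciles \`a partir de la d\'efinition de $s_{\rm geo}$ sur $T_r W_r$ et du fait que les \'el\'ements concern\'es laissent stable $\op^r$ (de sorte que leurs droites $D$ s'identifient canoniquement \`a $k$).

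Le c\oe ur du travail est donc la v\'erification de (1) et de (4). Pour (1), on calcule la multiplication
$$\Delta_t\otimes\Delta_{t'}\ra \Delta_{tt'}$$
obtenue par l'isomorphisme de multiplication combin\'e avec le twist par le d\'eterminant. Pour $t,t'$ diagonaux, les r\'eseaux $t\op^r$ et $tt'\op^r$ sont somme directe de droites, et l'isomorphisme de contraction $\beta$ de la proposition~\ref{ACK1} s'\'evalue composante par composante. La r\`egle de Koszul sur les droites gradu\'ees produit alors les signes $(-1)^{v(t_i)v(t_j')}$ pour $i<j$; les contributions restantes apr\`es la soustraction de Baer avec $\det^*(\tG_1')$ se recombinent exactement en $\prod_{i<j}\frac{t_i^{v(t_j')}}{(t_j')^{v(t_i)}}(0)$, c'est-\`a-dire en $\prod_{i<j}\{t_i,t_j'\}$. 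Pour (4), o\`u $\alpha$ est la transposition simple $(\ell,\ell+1)$, on choisit un repr\'esentant $M$ (lemme~\ref{n3}) qui rend les identifications explicites : la conjugaison par $\alpha$ \'echange les composantes $\ell$ et $\ell+1$ de $\op^r$ et c'est l\`a qu'apparaissent les facteurs $\{-1,t_\ell/t_{\ell+1}\}$ (signes d'anti-sym\'etrie dans $\bigwedge$ pour l'\'echange des bases) et $\{-1,\det(t)\}$ (qui vient de la partie $\det^*\tG_1'$). On obtient ainsi la formule (4) au signe du symbole mod\'er\'e pr\`es.

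L'obstacle principal est la v\'erification combinatoire de (1) et (4) : il faut suivre avec soin toutes les conventions de signes (r\`egle de Koszul, orientation des bases de $\bigwedge$, contribution du twist par $\det$), et bien s'assurer que l'isomorphisme canonique $(M|nM)\can k$ (lemme~\ref{n3}) choisi pour trivialiser $\Delta_n$ sur $N_r(\fp)$ est compatible avec le choix de base utilis\'e sur $T_r(\fp)$. Une fois ces calculs men\'es, la proposition~\ref{chi1} (qui affirme que $\chi$ est enti\`erement caract\'eris\'e par (1)-(7)) ach\`eve imm\'ediatement la d\'emonstration : $\chi_{\rm geo}=\chi$, donc les extensions $\tG_{r,\rm geo}(\fp)$ et $\tG_{r,KP}(\fp)$ sont isomorphes (et l'isomorphisme est donn\'e par $(g,v)\mapsto (g,z)$ o\`u $z$ est d\'etermin\'e en comparant $v$ \`a la base $s_{\rm geo}(g)$ de $\Delta_g$).
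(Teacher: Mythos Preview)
Your overall strategy coincides with the paper's: build a section $s_{\rm geo}$ via the Bruhat decomposition and show that the associated cocycle $\chi_{\rm geo}$ satisfies the seven defining relations of Proposition~\ref{chi1}. But your assessment of where the work lies is inverted, and this hides a genuine gap.

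You dismiss (6) and (7) as ``v\'erifications faciles \ldots\ du fait que les \'el\'ements concern\'es laissent stable $\op^r$''. This is false: in both (6) and (7) the element $g$ is \emph{arbitrary} in $\G_r(\fp)$ and need not preserve $\op^r$. Property (7), $\chi_{\rm geo}(\alpha,g)=\chi_{\rm geo}(B(\alpha g)B(g)^{-1},B(g))$, is in fact the most delicate point of the whole proof. One must track how multiplying by the simple reflection $\alpha$ changes the Bruhat cell of $g=n\,wt\,n'$: writing $n=n_\alpha e_\alpha^u$, the paper separates the three cases $u=0$, $w^{-1}(\ell)<w^{-1}(\ell+1)$, and $w^{-1}(\ell)>w^{-1}(\ell+1)$ with $u\neq 0$. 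In the last case one uses the factorisation $e_\alpha^u=m_\alpha(u)\,e_\alpha^{-u}\,e_{-\alpha}^{1/u}$ and is forced into an explicit lattice computation of $\chi'(e_{-\alpha}^{1/u},w)$ (itself split according to the sign of $v(u)$), after which one checks that $\chi_{\rm geo}(e_{-\alpha}^{1/u},w)=1$. None of this is visible in your sketch.

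There is a second, related gap: the paper does not compute $\chi_{\rm geo}$ directly on $\Delta_g$ as you propose. It first builds a section $s'$ of the \emph{untwisted} ACK extension $\tG'_r(\fp)$, computes its cocycle $\chi'$ (Lemme~\ref{chi6}), and only then defines $\delta_{tw}$ through a precise diagram (Construction~\ref{chi5}) involving the decomposition $D_t\simeq\bigotimes_i D_{t_i}$, the iterated multiplication $\mu$, and an explicit sign normalisation $(-1)^{\sum_{i<j}v(t_i)v(t_j)}$ together with $(-1)^{\sum_i(r-i)v(t_i)}$. This yields the working formula
\[
\chi_{\rm geo}(g,g')=\frac{[\delta_g]\,[\delta_{g'}]\,\chi'(\det g,\det g')}{[\delta_{gg'}]\,\chi'(g,g')},
\]
from which (1)--(6) follow by combining with the already-known $\chi'$. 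Your looser ``base tensorielle composante par composante'' does not fix this normalisation; without it the signs in (1) and (4) will not match, and the reduction of (7) to a computation with $\chi'$ is unavailable.
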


Pour montrer ce th\'eor\`eme, on va construire une section ensembliste $s_{{\rm geo}}:\G_r(\fp)\ra\tG_{r,{\rm geo}}(\fp)$. Ensuite on associera \`a cette section  un $2$-cocycle $\chi_{\rm geo}  :\G_r(\fp)\times \G_r(\fp)\ra k^*$, d\'efini par
$$({\rm Id}_r, \chi_{\rm geo}(g,g'))=s_{\rm geo}(g)s_{\rm geo}(g')/s_{\rm geo}(gg').$$ Le th\'eor\`eme~\ref{chi4} r\'esultera  de l'\'enonc\'e plus pr\'ecis $$\chi_{\rm geo}(g,g')=\chi(g,g').$$
\begin{const}({\it La section ensembliste $s':\G_r(\fp)\ra \tG'_r(\fp)$}).

Soient $(e_i)_i$ la base canonique de $\fp^r$ et $(e^*_i)_i$ sa base duale. On note $\varpi^j_i=\varpi^j e_i$ et $\varpi^{j*}_i=\varpi^je^*_i$. Soient $t=\D(t_1,\dots,t_r)\in T_r(\fp)$ et $w\in W_r$. Soit $\mathfrak{n}\gg 0 \in 2\mathbb{Z}$ tel que $\varpi^\mathfrak n\op^r\subset \op^r\cap tw\op^r$, par d\'efinition on a un isomorphisme canonique
$$D_{tw}=(\op^r|tw\op^r)=(\op^r|t\op^r)\can (\bigwedge \op^r/\varpi^{\mathfrak{n}}\op^r)^*\otimes (\bigwedge t\op^r/\varpi^{\mathfrak{n}}\op^r).$$

On choisit  $(\bigwedge_{i=1}^r\bigwedge_{j=1}^{\n-1}\varpi^j_i)^*\otimes \bigwedge_{i=1}^r\bigwedge_{j=v(t_i)}^{\n-1}\varpi^j_i$ comme base du facteur de gauche et on note $\mathfrak{d}_{tw}$ son image réciproque dans $D_{tw}$ par l'isomorphisme ci-dessus. L'\'el\'ement $\mathfrak{d}_{tw}$ ne d\'epend pas du choix de $\n$. En pratique, on notera ``=" au lieu de l'isomorphisme canonique, i.e $\mathfrak{d}_{tw}=(\bigwedge_{i=1}^r\bigwedge_{j=1}^{\n-1}\varpi^j_i)^*\otimes \bigwedge_{i=1}^r\bigwedge_{j=v(t_i)}^{\n-1}\varpi^j_i$.

Soit $g\in \G_r(\fp)$, \`a l'aide de la d\'ecomposition de Bruhat, on a $g=nB(g)n'$. En utilisant le scindage au-dessus de $N_r(\fp)$ de l'extension de ACK et l'isomorphisme de multiplication, on obtient l'isomorphime canonique
$D_g\can D_{B(g)}$ (c'est l'isomorphisme compos\'e de $D_g\can D_n\otimes D_{B(g)}\otimes D_{n'}$ et de $D_n\otimes D_{B(g)}\otimes D_{n'}\can k\otimes D_{B(g)}\otimes k \can D_{B(g)}$). On note $\mathfrak{d}_{g}$ l'image réciproque de $\mathfrak{d}_{B(g)}\in D_{B(g)}$ dans $D_g$ par cet isomorphisme.
\begin{lemma}\label{s1}$\mathfrak{d}_{g}$ ne depend pas du choix de l'écriture
$g=nB(g)n'$.
\end{lemma}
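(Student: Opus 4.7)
\emph{Plan}: The strategy is to exploit the group structure on $\tG'_r(\fp)$ and the fact (Lemme~\ref{n4}) that the section $\sigma_N$ au-dessus de $N_r(\fp)$ is a group morphism. Suppose $g = n_1 B n_1' = n_2 B n_2'$ are two Bruhat decompositions of $g$ sharing the same $B = B(g)$. Setting $u := n_1^{-1} n_2 \in N_r(\fp)$ and $v := n_1' (n_2')^{-1} \in N_r(\fp)$, the equality of the two products gives $uB = Bv$, and in particular $u \in N_r(\fp) \cap B N_r(\fp) B^{-1}$. Using that $\sigma_N$ is a group morphism, the equality of the two candidate values of $\mathfrak{d}_g$ reduces to the identity
\[
(u, \mathfrak{d}_u) \cdot (B, \mathfrak{d}_B) \;=\; (B, \mathfrak{d}_B) \cdot (v, \mathfrak{d}_v) \quad \text{dans } \tG'_r(\fp),
\]
both sides of which project to $uB = Bv \in \G_r(\fp)$; the question is thus whether a certain element of $k^*$ (their ratio in the fibre over $uB$) equals $1$.

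To verify this identity I would compute both sides in the line $D_{uB} = D_{Bv}$ using a well-chosen $\op^r$-r\'eseau. By Lemme~\ref{n2} applied to $v$, pick $M' \subset \fp^r$ with $vM' = M'$, and enlarge it if necessary so that $\op^r \subset M'$. Setting $M := BM'$, the relation $u = B v B^{-1}$ immediately yields $uM = M$. By the very definition of $\mathfrak{d}_u$ and $\mathfrak{d}_v$, together with Lemme~\ref{n3} (independence of the chosen lattice), $\mathfrak{d}_u$ identifies, via the isomorphism of Lemme~\ref{n1}, with $1 \in (M|uM) \can k$, and $\mathfrak{d}_v$ with $1 \in (M'|vM') \can k$.

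With this common intermediate lattice, the multiplication isomorphisms $\beta : D_u \otimes D_B \to D_{uB}$ and $\beta : D_B \otimes D_v \to D_{Bv}$ both factor through $M = BM'$ (viewed respectively as a $u$-stable lattice and as the $B$-image of a $v$-stable lattice). Since $uBM = BvM' = BM' = M$, the two composed isomorphisms end up identifying their inputs with the same tautological element of $D_{uB}$, which gives the desired equality.

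\emph{Main obstacle}: The principal difficulty lies in the careful bookkeeping of the chain of canonical isomorphisms involved --- the symmetry $\mathrm{Sym}^{\bullet}$ with its Koszul signs, the shift-by-$g$ maps on the lines $(A|B)$, and the contractions $\beta$ of Proposition~\ref{ACK1}. Ensuring that no spurious sign appears requires matching up the various degrees $[A:B]$ correctly. The conceptual input --- that any unipotent automorphism of a finite-dimensional quotient has trivial determinant --- is exactly Lemme~\ref{n3}, and this is precisely what ensures that the comparison through $M$ yields the identity rather than a non-trivial scalar.
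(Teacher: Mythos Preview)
Your proposal is correct and follows essentially the same route as the paper: both reduce (via the group-morphism property of $\sigma_N$, Lemme~\ref{n4}) to showing $\mathfrak{d}_u * \mathfrak{d}_B = \mathfrak{d}_B * \mathfrak{d}_v$ when $uB = Bv$, and both settle this by appealing to the triviality of the unipotent action on the relevant determinant lines (Lemme~\ref{n3}). The one technical difference is that the paper works with a \emph{single} diagonal lattice $M$ satisfying simultaneously $nM=M$ and $n'M=M$ (so that $n(twM)=twM$ as well, which makes the key identity $(\times n)(u)=u$ immediate), whereas you use the pair $M'$ and $M=BM'$; this is workable, but your final paragraph would need to be unpacked a bit more to track the compatibility of the $\gamma$-isomorphisms of Lemme~\ref{n1} with the multiplication maps --- the paper's single-lattice choice makes that bookkeeping shorter.
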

\begin{proof}[Démonstration]
On suppose que $g=n_1B(g)n_1'=n_2B(g)n_2'$. On note $\mathfrak{d}_g^{i}\ (i=1,2)$ l'élément de $D_g$  défini comme ci-dessus relativement aux deux \'ecritures  de $g$. D'après la définition, on a $\mathfrak{d}_g^i=\mathfrak{d}_{n_i}\otimes\mathfrak{d}_{B(g)}\otimes\mathfrak{d}_{n'_i}$.

On a $\mathfrak{d}_g^1=\mathfrak{d}_g^2\Leftrightarrow\mathfrak{d}_{n_2}^{\otimes-1}\otimes\mathfrak{d}_{n_1}\otimes\mathfrak{d}_{B(g)}=\mathfrak{d}_{B(g)}\otimes\mathfrak{d}_{n'_2}\otimes\mathfrak{d}_{n'_1}^{\otimes-1}$. Comme $\sigma_N$ est un morphisme de groupes (cf. le lemme~\ref{n4}), on a $\mathfrak{d}_{n_2}^{\otimes-1}\otimes\mathfrak{d}_{n_1}=\mathfrak{d}_{n_2^{-1}n_1}$ et $\mathfrak{d}_{n'_2}\otimes\mathfrak{d}_{n'_1}^{\otimes-1}=\mathfrak{d}_{n'_2{n'}_1^{-1}}$. On note $n=n_2^{-1}n_1$ et $n'=n'_2{n'}_1^{-1}$. Le lemme résultera immédiatement du fait  que l'image de $\mathfrak{d}_{n}\otimes\mathfrak{d}_{tw}$ dans $D_{ntw}$ par l'isomorphisme canonique $D_n\otimes D_{tw}\ra D_{ntw} $ et celle de $\mathfrak{d}_{tw}\otimes\mathfrak{d}_{n'}$ dans $D_{twn'}$ par l'isomorphimse de $D_{tw}\otimes D_{n'}\ra D_{twn'}$ sont identiques quand $ntw=twn'$, ce qu'on va maintenant d\'emontrer. Plus précisement on va montrer que l'automorphisme $D_{ntw}\can D_{tw} \can D_{twn'}$ est trivial.

Soit $M=\D(\beta_1,\dots,\beta_r)\0_\varpi^r$ tel que $v(\beta_i)\ll v(\beta_{i+1})$ (de telle sorte que $nM=n'M=M$). Comme $ntw=twn'$, on a $ntwM=twn'M=twM$. Comme dans la démonstration du lemme~\ref{n3}, l'automorphisme $(M|twM)\buildrel {\times n} \over{\ra} (nM|ntwM)$ est trivial. On note $u\neq 0$ une base de $(M|twM)$ (donc $u=(\times n)(u)$) et $v\neq 0$ une base de $(\op^r|M)$. D'après la définition l'image de $u$ dans $D_{ntw}$ par l'isomorphisme canonique $(M|twM)\can D_{tw}\can D_{ntw}$ est l'image de $v\otimes (\times n)(u)\otimes (\times ntw)(v^*)\in (\op^r|M)\otimes(nM|ntwM)\otimes(ntwM|ntw\op^r)$ par l'isomorphisme de multiplication. De même, l'image de $u$ dans $D_{twn'}$ est l'image de $v\otimes u\otimes (\times twn')(v^*)\in (\op^r|M)\otimes(M|twM)\otimes(twn'M|twn'\op^r)$ par l'isomorphimse de multiplication. Par ailleurs, comme $ntw=twn'$ et $u=(\times n)(u)$, on obtient $v\otimes (\times n)(u)\otimes (\times ntw)(v^*)=v\otimes u\otimes (\times twn')(v^*)$, ce qui achève la démonstration du lemme.\qedhere

\end{proof}

La section ensembliste $s':\G_r(\fp)\ra \tG'_r(\fp)$ est alors d\'efinie par $s'(g)=(g,\mathfrak{d}_{g})$. On note $\chi'(g,g'):=s'(g)s'(g')/s'(gg')$ le $2$-cocycle associé à $s'$. On note $\mathfrak{d}_g*\mathfrak{d}_{g'}$ l'image de $\mathfrak{d}_g\otimes\mathfrak{d}_{g'}$ dans $D_{gg'}$ par l'isomorphisme de multiplication $D_g\otimes D_{g'}\to D_{gg'}$.
\end{const}
\begin{lemma}\label{chi6}
\begin{enumerate}
\item $\chi'(t,t')=\displaystyle(-1)^{\sum_{i <j}v(t_i)v(t'_j)}\prod_{i=1}^r{\left(\frac{t_i}{\varpi^{v(t_i)}}\right)}^{v(t'_i)}(0)$ \\  o\`u $t=\D(t_i)_i$ et $t'=\D(t_i')_i$.
\item $\chi'(w,w')=1$\quad $(w,w'\in W_r)$.
\item $\chi'(t,w)=1$\quad $(w\in W_r$, $t \in T_r(\fp))$.
\item $\chi'(\alpha,t)=\displaystyle(-1)^{v(t_{\ell})v(t_{\ell+1})} $\\
 o\`u $\alpha$ est la matrice de la permutation $(\ell, \ell+1)$ et $t\in T_r(\fp)$.
\item $\chi'(ng,g'n')=\chi'\quad (n,n'\in N_r(\fp))$
\item $\chi'(t,g)=\chi'(t,B(g))\quad (t\in T_r(\fp))$
\end{enumerate}
\end{lemma}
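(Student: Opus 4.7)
La strat\'egie est de calculer $\mathfrak{d}_g*\mathfrak{d}_{g'}$ explicitement \`a l'aide des bases
$\mathfrak{d}_{tw}=(\bigwedge_{i,j}\varpi^j_i)^*\otimes \bigwedge_{i,j'}\varpi^{j'}_i$ introduites dans la construction pr\'ec\'edente, puis de comparer le r\'esultat \`a $\mathfrak{d}_{gg'}$ dans $D_{gg'}$ \`a travers l'isomorphisme de multiplication. Les assertions se rang\`erent en trois niveaux de difficult\'e.

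Je commencerais par \'etablir (5) et (6), qui d\'ecoulent imm\'ediatement de la d\'efinition. Puisque $s'$ est construite \`a partir de la d\'ecomposition de Bruhat en utilisant le scindage $\sigma_N$ sur $N_r(\fp)$, et comme $\sigma_N$ est un morphisme de groupes (lemme~\ref{n4}), on a $\mathfrak{d}_{ng}=\mathfrak{d}_n*\mathfrak{d}_g$ et $\mathfrak{d}_{g'n'}=\mathfrak{d}_{g'}*\mathfrak{d}_{n'}$, ce qui donne $\chi'(ng,g'n')=\chi'(g,g')$. L'identit\'e (6) en est le cas particulier obtenu en prenant $n=1$ et en absorbant la partie droite de la d\'ecomposition de Bruhat de $g$ dans $n'$. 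Ensuite, pour (2) et (3), les \'el\'ements concern\'es appartiennent \`a $\G_r(\op)$ ou \`a $T_r(\fp)\cdot W_r$, avec la particularit\'e que $tw\0_\varpi^r=t\0_\varpi^r$ ne d\'epend que de $t$; les trivialisations canoniques $D_w\can k$ et $D_{w'}\can k$ sont compatibles \`a la multiplication $D_{tw}\otimes D_{w'}\to D_{tww'}$, qui envoie $\mathfrak{d}_{tw}\otimes\mathfrak{d}_{w'}$ sur $\mathfrak{d}_{tww'}$ par simple identification des deux c\^ot\'es \`a un m\^eme \'el\'ement de $(\0_\varpi^r|t\0_\varpi^r)$.

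Le c\oe ur du travail porte sur (1). Fixons $\mathfrak{n}\gg 0$ pair et assez grand pour que $\varpi^{\mathfrak n}\0_\varpi^r\subset\0_\varpi^r\cap t\0_\varpi^r\cap tt'\0_\varpi^r$. La multiplication $D_t\otimes D_{t'}\to D_{tt'}$ est la compos\'ee de l'action de $t$ sur le second facteur, $D_{t'}\can (t\0_\varpi^r|tt'\0_\varpi^r)$, suivie de la contraction $\beta:(\0_\varpi^r|t\0_\varpi^r)\otimes(t\0_\varpi^r|tt'\0_\varpi^r)\to(\0_\varpi^r|tt'\0_\varpi^r)$, elle-m\^eme d\'efinie par $\phi$ (proposition~\ref{ack1}). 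Il s'agit d'abord de d\'evelopper $t\cdot\varpi^j_i=(t_i/\varpi^{v(t_i)})\varpi^{j+v(t_i)}_i$ modulo les puissances sup\'erieures de $\varpi$, dont il ne restera \`a la limite (en prenant le coefficient dominant apr\`es contraction) que la valeur $(t_i/\varpi^{v(t_i)})(0)$ \`a la puissance $v(t'_i)$; c'est ainsi que surgit le facteur $\prod_i(t_i/\varpi^{v(t_i)})^{v(t'_i)}(0)$. Ensuite il faut r\'eordonner les produits ext\'erieurs de fa\c con \`a les identifier \`a $\mathfrak{d}_{tt'}$; le nombre de transpositions n\'ecessaires, compt\'e modulo $2$, vaut $\sum_{i<j}v(t_i)v(t'_j)$ d'apr\`es la r\`egle de Koszul, ce qui fournit le signe. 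L'assertion (4) se traite par un calcul analogue, mais restreint aux indices $\ell,\ell+1$ : la d\'ecomposition $\alpha t=t^\alpha\alpha=B(\alpha t)$ permet d'identifier $\mathfrak{d}_\alpha*\mathfrak{d}_t$ \`a un r\'earrangement de $\mathfrak{d}_{\alpha t}$ o\`u l'\'echange des blocs de wedges d'indices $\ell$ et $\ell+1$ produit le signe de Koszul $(-1)^{v(t_\ell)v(t_{\ell+1})}$.

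Le principal obstacle sera la gestion des signes de Koszul dans (1), qui n\'ecessite d'\'ecrire soigneusement la contraction $\beta$ sur les bases choisies et d'appliquer la r\`egle Sym$^\bullet$ plusieurs fois; en particulier il faut tenir compte du degr\'e $[A:B]$ de chaque droite gradu\'ee interm\'ediaire. Une fois ce calcul r\'edig\'e proprement, toutes les autres assertions s'en d\'eduisent ou s'en inspirent directement, les points (5) et (6) servant ensuite \`a r\'eduire le cas g\'en\'eral aux cas couverts par (1)--(4) dans la suite du texte.
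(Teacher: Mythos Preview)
Your strategy coincides with the paper's: explicit computation of $\mathfrak{d}_t*\mathfrak{d}_{t'}$ via transition matrices and Koszul reordering for (1), the block-swap at indices $\ell,\ell+1$ for (4), and the observation that $s'$ is built from the $N_r$-splitting $\sigma_N$ for (5), with (2)--(3) immediate from the construction.

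One genuine slip: (6) is \emph{not} a special case of (5) in the way you describe. Writing $g=n_1B(g)n_2$ and absorbing $n_2$ on the right via (5) gives only $\chi'(t,g)=\chi'(t,n_1B(g))$; the factor $n_1$ sits on the \emph{left} of $B(g)$, where (5) says nothing directly. To finish you must use that $T_r$ normalizes $N_r$: write $tn_1=n_1't$ with $n_1'\in N_r$, then invoke the cocycle identity together with $\chi'(t,n_1)=\chi'(n_1,B(g))=1$ (both consequences of (5)) to obtain $\chi'(t,n_1B(g))=\chi'(tn_1,B(g))=\chi'(n_1't,B(g))=\chi'(t,B(g))$. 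The paper carries out exactly this computation; once you insert it, your argument is complete and identical to the paper's.
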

\begin{proof}[D\'emonstration]
\begin{enumerate}
\item On rappelle la d\'efinition du morphisme canonique de multiplication sur $T_r(\fp)$ :
$$\xymatrix{D_{t}\otimes D_{t'}\ar[r]^-{\times t}&(\op^r|t\op^r)\otimes(t\op^r|tt'\op^r)\ar[r]^-{\beta} &D_{tt'}.
}$$
Soit $\n\in 2\mathbb{Z}$ tel que $\n>{\rm max}\{v(t_i),v(t'_i),v(t_i)+v(t'_i)\}\,,\  \forall i$.
On a $t\op^r/\varpi^{\n}t\op^r=\displaystyle\oplus_{i=1}^rt_i\op/\varpi^{\n}t_i \op$. Cela nous donne  une base  $\left\{\varpi_i^j\right\}$ de $t\op^r/\varpi^{\n}\op^r$, o\`u $i$ varie de $1$ \`a $r$ et o\`u $j$ varie de $v(t_i)$ \`a $\n-1+v(t_i)$. La matrice de passage de $\left\{\varpi_i^j\right\}^{i=1...r}_{j=v(t_i)...\n-1+v(t_i)}$ \`a $\left\{t\varpi_i^j\right\}^{i=1...r}_{j=0...\n-1}$  est la matrice carr\'ee  $\m=\D(\m_i)$ de taille $\n r$, où
$$\m_i=\left(\begin{matrix}
\frac{t_i}{\varpi^{v(t_i)}}(0)&*&*&\dots&*\\
&\frac{t_i}{\varpi^{v(t_i)}}(0)&*&\dots&*\\
&&&\ddots&\vdots\\
&&&&\frac{t_i}{\varpi^{v(t_i)}}(0)
\end{matrix}\right)\text{ de taille $\n$}.$$

On a aussi $tt'\op^r/\varpi^{\n}t\op^r=\displaystyle\oplus_{i=1}^rt_it_i'\op/\varpi^{\n} t_i\op$. Cela nous donne  une base  $\left\{\varpi_i^j\right\}^{i=1\dots r}_{j=v(t_it_i')\dots \n-1+v(t_i)}$ de $tt'\op^r/\varpi^{\n}t\op^r$. La matrice de passage de $\left\{\varpi_i^j\right\}^{i=1...r}_{j=v(t_it'_i)...\n-1+v(t_i)}$ \`a $\left\{t\varpi_i^j\right\}^{i=1...r}_{j=v(t'_i)...\n-1}$  est la matrice carr\'ee  $\m'=\D(\m'_i)$ de taille $\n r-\sum_{i=1}^rv(t'_i)$, où
$${\m'}_i=\left(\begin{matrix}
\frac{t_i}{\varpi^{v(t_i)}}(0)&*&*&\dots&*\\
&\frac{t_i}{\varpi^{v(t_i)}}(0)&*&\dots&*\\
&&&\ddots&\vdots\\
&&&&\frac{t_i}{\varpi^{v(t_i)}}(0)
\end{matrix}\right)\text{ de taille $(\n-v(t'_i))$}.$$
On a (on se contente de faire le calcul dans le cas   o\`u $v(t_i)>0$, le cas g\'en\'eral \'etant laiss\'e au lecteur) :
{\allowdisplaybreaks\footnotesize
\begin{eqnarray*}
\mathfrak{d}_{t}*\mathfrak{d}_{t'}&=&\displaystyle\left(\bigwedge_{i=1}^r\bigwedge_{j=1}^{\n-1}\varpi^j_i\right)^*\otimes\bigwedge_{i=1}^r\bigwedge_{j=v(t_i)}^{\n-1}\varpi^j_i\otimes\left(\bigwedge_{i=1}^r\bigwedge_{j=1}^{\n-1}t\varpi^j_i\right)^*\otimes\bigwedge_{i=1}^r\bigwedge_{j=v(t'_i)}^{\n-1}t\varpi^j_i\\
&=&\displaystyle\left(\bigwedge_{i=1}^r\bigwedge_{j=1}^{\n-1}\varpi^j_i\right)^*\otimes\bigwedge_{i=1}^r\bigwedge_{j=v(t_i)}^{\n-1}\varpi^j_i\otimes \frac{1}{\det(\m)}\left(\bigwedge_{i=1}^r\bigwedge_{j=v(t_i)}^{\n-1+v(t_i)}\varpi^j_i\right)^*\otimes\\
&&\otimes\bigwedge_{i=1}^r\bigwedge_{j=v(t'_i)}^{\n-1}t\varpi^j_i\\
&=&\displaystyle\left(\bigwedge_{i=1}^r\bigwedge_{j=1}^{\n-1}\varpi^j_i\right)^*\otimes\frac{(-1)^{\sum_{i<j}(\n-v(t_j))v(t_i)}}{\det(\m)}\left(\bigwedge_{i=1}^r\bigwedge_{j=\n}^{\n-1+v(t_i)}\varpi^j_i\right)^*\otimes\\
&&\otimes\bigwedge_{i=1}^r\bigwedge_{j=v(t'_i)}^{\n-1}t\varpi^j_i\\
&=&\displaystyle\left(\bigwedge_{i=1}^r\bigwedge_{j=1}^{\n-1}\varpi^j_i\right)^*\otimes\frac{(-1)^{\sum_{i<j}(\n-v(t_j))v(t_i)}}{\det(\m)}\left(\bigwedge_{i=1}^r\bigwedge_{j=\n}^{\n-1+v(t_i)}\varpi^j_i\right)^*\otimes\\
&&\otimes\det(\m')\bigwedge_{i=1}^r\bigwedge_{j=v(t_it'_i)}^{\n-1+v(t_i)}\varpi^j_i\\
&=&\frac{(-1)^{\sum_{i<j}(\n-v(t_j))v(t_i)}(-1)^{\sum_{i< j}(\n-v(t_jt_j'))v(t_i)}\det(\m')}{\det(\m)}\times\\
&&\times\displaystyle\left(\bigwedge_{i=1}^r\bigwedge_{j=1}^{\n-1}\varpi^j_i\right)^*\otimes\bigwedge_{i=1}^r\bigwedge_{j=v(t_it'_i)}^{\n-1}\varpi^j_i\\
&=&(-1)^{\sum_{i<j}v(t_i)v(t'_j)}\prod_{i=1}^r\frac{\det(\m^i_i)}{\det({\m'}^i_i)}.\mathfrak{d}_{tt'}\\
&=&(-1)^{\sum_{i<j}v(t_i)v(t'_j)}\prod_{i=1}^r\left(\frac{t_i}{\varpi^{v(t_i)}}\right)^{v(t'_i)}(0).\mathfrak{d}_{tt'}.
\end{eqnarray*}}

On en d\'eduit $\chi'(t,t')=\displaystyle(-1)^{\sum_{i<j}v(t_i)v(t'_j)}\prod_{i=1}^r\left(\frac{t_i}{\varpi^{v(t_i)}}\right)^{v(t'_i)}(0)$.
\item Les assertions (2) et (3) r\'esultent imm\'ediatement de la construction de la section $s'$.
\item En notant ${}^\alpha t=\alpha t\alpha^{-1}\in T_r(\fp)$, on a
$$\mathfrak{d}_{\alpha t}=\mathfrak{d}_{{}^{\alpha} t\alpha}=(\bigwedge_{i=1}^r\bigwedge_{j=1}^{n-1}\varpi^j_i)^*\otimes \bigwedge_{i=1}^r\bigwedge_{j=v(t_{\alpha(i)})}^{n-1}\varpi^j_i.$$

D'autre part, 
on a alors :
{\small\begin{eqnarray*}
\mathfrak{d}_{\alpha}*\mathfrak{d}_{ t}&=&(\bigwedge_{i=1}^r\bigwedge_{j=1}^{\n-1}\alpha\varpi^j_{i})^*\otimes \bigwedge_{i=1}^r\bigwedge_{j=v(t_{i})}^{\n-1}\alpha\varpi^j_i\\
&=&(\bigwedge_{i=1}^r\bigwedge_{j=1}^{\n-1}\varpi^j_{\alpha(i)})^*\otimes \bigwedge_{i=1}^r\bigwedge_{j=v(t_{i})}^{\n-1}\varpi^j_{\alpha(i)}\\
&=&(-1)^{\n²}(\bigwedge_{i=1}^r\bigwedge_{j=1}^{\n-1}\varpi^j_i)^*\otimes(-1)^{(\n-v(t_{\ell}))(\n-v(t_{\ell+1}))} \bigwedge_{i=1}^r\bigwedge_{j=v(t_{\alpha(i)})}^{n-1}\varpi^j_i\\
&=&(-1)^{v(t_{\ell})v(t_{\ell+1})}\mathfrak{d}_{\alpha t}
\end{eqnarray*}}

Par cons\'equent, on a : $\chi'(\alpha,t)=(-1)^{v(t_{\ell})v(t_{\ell+1})}.$
\item En utilisant la propri\'et\'e de $2$-cocycle $\chi'(g_1,g_2)\chi'(g_1g_2,g_3)=\chi'(g_1,g_2g_3)\break\chi'(g_2,g_3)$, le point (5) r\'esulte imm\'ediatement de $\chi'(n,g)=\chi'(g,n')=1, \  \forall g\in \G_r(\fp)$. On montre que $\chi'(n,g)=1$ et l'autre est pareil.

Soit $g=n_1twn_2$, d'apr\`es la construction on a $s'(g)=s'(n_1)s'(tw)s'(n_2)$ et $s'(ng)=s'(nn_1)s'(tw)s'(n_2)$. Comme $s'(n)s'(n_1)=s'(nn_1)$, on obtient $s'(n)s'(g)=s'(ng)$, i.e $\chi'(n,g)=s'(n)s'(g)/s'(ng)=1$.
\item Soit $g=n_1B(g)n_2$, on a $tg=n_1'tB(g)n_2=n_1'B(tg)n_2$. On a
\begin{eqnarray*}
s'(t)s'(g)&=&s'(t)s'(n_1)s'(B(g))s'(n_2)\\
&=&(tn_1,\mathfrak{d}_{tn_1})s'(B(g))s'(n_2)\ (\text{comme } \chi(t,n_1)=1)\\
&=&(n_1't,\mathfrak{d}_{n'_1t})s'(B(g))s'(n_2)\\
&=&s'(n'_1)s'(t)s'(B(g))s'(n_2)\ (\text{comme } \chi(n'_1,t)=1)\\
&=&\chi'(t,B(g))s'(n'_1)s'(B(tg))s'(n_2)\\
&=&\chi'(t,B(g))s'(tg)
\end{eqnarray*}
Par cons\'equent, $\chi'(t,g)=\chi'(t,B(g))$.
\end{enumerate}
\end{proof}
\begin{const}\label{chi5}
Soit $t=\D(t_1,\dots,t_r)\in T_r(\fp)$. On a alors un isomorphisme canonique
$$D_t\displaystyle\can\bigotimes_{i=1}^rD_{t_i}$$
d\'efini comme suit.
\end{const}
Soit $\n\gg 0 \in 2\mathbb{Z}$ tel que : $\va^\n\op \subset t_i\op\cap \op\,,\  \forall \, i\in \{1,\dots,\n\}$ et $\va^\n\op^r\subset t\op^r\cap\op^r$. On a
les isomorphismes canoniques :
$$D_{t_i}\buildrel \beta\over{\leftarrow} (\op|\va^\n\op)\otimes(\va^\n\op|t_i\op)=\left(\bigwedge(\op/\va^\n\op)\right)^*\otimes\bigwedge(t_i\op/\va^\n\op),$$
$$D_t\buildrel \beta\over{\leftarrow} (\op^r|\va^\n\op^r)\otimes(\va^\n\op|t_i\op)=\left(\bigwedge(\op^r/\va^\n\op^r)\right)^*\otimes\bigwedge(t\op^r/\va^\n\op^r).$$

En utilisant la suite exacte : $0\ra t_i\op/\va^\n\op\ra \bigoplus_{j=1}^it_j\op/\va^\n\op\ra\bigoplus_{j=1}^{i-1}t_j\op/\va^\n\op\ra 0$, on a l'isomorphisme canonique :
$$\bigwedge\left(\bigoplus_{j=1}^it_j\op/\va^\n\op\right)\buildrel\phi_i\over{\leftarrow}\bigwedge \left(\bigoplus_{j=1}^{i-1}t_j\op/\va^\n\op \right)\otimes\bigwedge(t_i\op/\va^\n\op),$$
o\`u $\phi_i=\phi(\bigoplus_{j=1}^{i-1}t_j\op/\va^\n\op,t_j\op/\va^\n\op)$.

En utilisant la suite exacte : $0\ra \op/\va^\n\op\ra \bigoplus_{j=1}^i\op/\va^\n\op\ra\bigoplus_{j=1}^{i-1}\op/\va^\n\op\ra 0$, on a l'isomorphisme canonique :
$$\bigwedge\left(\bigoplus_{j=1}^i\op/\va^\n\op\right)\buildrel\phi'_i\over{\leftarrow}\bigwedge \left(\bigoplus_{j=1}^{i-1}\op/\va^\n\op \right)\otimes\bigwedge(\op/\va^\n\op),$$
o\`u $\phi'_i=\phi(\bigoplus_{j=1}^{i-1}\op/\va^\n\op,\op/\va^\n\op)$.

Par ailleurs, 
on a l'isomorphisme canonique :
$$\xymatrix{\bigotimes_{i=1}^r\left[\left(\bigwedge(\op/\va^\n\op)\right)^*\otimes\bigwedge(t_i\op/\va^\n\op)\right]\ar[d]\\
\bigotimes_{i=1}^r\left(\bigwedge(\op/\va^\n\op)\right)^*\otimes\bigotimes_{i=1}^r\bigwedge(t_i\op/\va^\n\op),}$$
(comme on utilise la règle de commutation de Koszul, c'est l'isomorphisme \'evident multipli\'e par $(-1)^{\sum_{i=2}^r\n.\sum_{j=1}^{i-1}(\n-v(t_j))}=1.$)

Par cons\'equent, en utilisant les relations $t\op^r/\va^\n\op^r=\bigoplus_{i=1}^rt_i\op/\va^\n\op$, et $\op^r/\va^\n\op^r=\bigoplus_{i=1}^r\op/\va^\n\op$, on obtient le diagramme
$$\xymatrix{\bigotimes_{i=1}^r\left[\left(\bigwedge(\op/\va^\n\op)\right)^*\otimes\bigwedge(t_i\op/\va^\n\op)\right]\ar[d]\ar[r]^-{\beta}&\bigotimes_{i=1}^rD_{t_i}\ar[dd]\\
\bigotimes_{i=1}^r\left(\bigwedge(\op/\va^\n\op)\right)^*\otimes\bigotimes_{i=1}^r\bigwedge(t_i\op/\va^\n\op)\ar[d]_-{\phi}\\
\left(\bigwedge(\op^r/\va^\n\op^r)\right)^*\otimes\bigwedge(t\op^r/\va^\n\op^r)\ar[r]_-{\beta}&D_t,
}$$
o\`u $\phi=(\phi'_r\circ\dots\circ\phi'_2)\otimes(\phi_r\circ\dots\circ\phi_2)$.

Soit $tw \in T_rW_r(\fp)$, o\`u $t=\D(t_1,\dots,t_r) \in T_r(\fp)$. Clairement, on a
$$\Delta_{tw}\can Hom_k(D_{tw},D_{\det(tw)}).$$
On d\'efinit alors $\delta_{tw}$ comme l'isomorphisme rendant le diagramme suivant  
$$\xymatrix{D_{tw}=D_t\ar[d]_-{\delta_{tw}}\ar[r]_-{can}^-{(~\ref{chi5})}&\bigotimes_{i=1}^r D_{t_i} \ar[rrr]^-{\times(-1)^{\sum_{i<j}v(t_i)v(t_j)}}&&&\bigotimes_{i=1}^r D_{t_{r+1-i}}\ar[d]^{\mu}\\
D_{\det(tw)}=D_{\det(t)}&&&&D_{\det(t)}\ar[llll]^-{\times(-1)^{\sum_{i=1}^r(r-i)v(t_i)+\sum_{i<j}v(t_i)v(t_j)}},
} $$
commutatif,
o\`u l'isomorphisme $\mu$ est l'isomorphisme compos\'e des isomorphismes de multiplication $ D_{\prod_{j=1}^{r-i}t_{r+1-j}} \otimes D_{t_i} \buildrel{\mu_i}\over{\ra} D_{\prod_{j=1}^{r-i+1}t_{r+1-j}}$, c.\`a.d $\mu=\mu_{1}\circ\dots\circ\mu_{r-1}$.

Soit $g\in \G_r(\fp)$, \`a l'aide de la d\'ecomposition de Bruhat on a $g=ntwn'$. On a un isomorphisme canonique $\Delta_n\otimes \Delta_{tw} \otimes \Delta_{n'}\can \Delta_g$ provenant du diagramme
{\small$$\xymatrix{ \Delta_n\otimes \Delta_{tw} \otimes \Delta_{n'}\ar[d]\ar@{=}[r]&D_{\det(n)}\otimes D_n^{\otimes -1}\otimes D_{\det(tw)}\otimes D_{tw}^{\otimes -1}\otimes D_{\det(n')}\otimes D_{n'}^{\otimes -1}\ar[d]\\
D_{\det(g)}\otimes D_{g}^{\otimes -1}&(D_{\det(n)}\otimes D_{\det(tw)}\otimes D_{\det(n')})\otimes(D_n\otimes D_{tw}\otimes D_{n'})^{\otimes -1}\ar[l]}$$} o\`u l'isomorphisme vertical de gauche est l'isomorphisme compos\'e des isomorphismes de commutation (la règle de Koszul est ici triviale car $D_n$ et $D_{\det(n)}$ sont des droites gradu\'ees de degr\'e 0), et o\`u l'isomorphisme horizontal du bas est le compos\'e des isomorphismes de multiplication. Puisque $\Delta_n\can k\,,\ \forall n\in N_r(\fp)$ on a
$\Delta_g\can \Delta_{tw}$. On note $\delta_{g}$ l'image r\'eciproque de $\delta_{tw}$ par cet isomorphisme.
\begin{proposition}$\delta_g$ ne d\'epend pas du choix l'écriture $g=nB(g)n'$.
\end{proposition}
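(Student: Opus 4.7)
La d\'emonstration proc\`edera en r\'eduisant l'assertion \`a celle, analogue, d\'ej\`a \'etablie pour $\mathfrak{d}_g$ dans le lemme~\ref{s1}. Soient $g = n_1 B(g) n'_1 = n_2 B(g) n'_2$ deux \'ecritures de $g$ (avec $B(g) \in T_r W_r(\fp)$ uniquement d\'etermin\'e), et notons $\delta_g^1, \delta_g^2 \in \Delta_g$ les \'el\'ements obtenus via chacune des deux \'ecritures. Par construction, $\delta_g^i$ est l'image de $\delta_{B(g)}$ par un isomorphisme canonique $\Delta_{B(g)}\can \Delta_g$ construit \`a partir des isomorphismes de multiplication $\Delta_{n_i}\otimes \Delta_{B(g)}\otimes \Delta_{n'_i}\can \Delta_g$ et des trivialisations canoniques $\Delta_{n_i}\can k$, $\Delta_{n'_i}\can k$.

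Pour l'\'etape suivante, j'observerai que $\det(n_i) = \det(n'_i) = 1$ pour toute matrice unipotente; on a donc $D_{\det(g)} = D_{\det(B(g))}$, et la composante en $D_{\det}$ de l'isomorphisme $\Delta_{B(g)}\can \Delta_g$ se r\'eduit \`a l'identit\'e, de mani\`ere manifestement ind\'ependante du choix de l'\'ecriture. Il restera \`a traiter la composante en $D_g^{\otimes -1}$, c'est-\`a-dire \`a montrer que l'isomorphisme
$$D_g \can D_{n_i}\otimes D_{B(g)}\otimes D_{n'_i}\can D_{B(g)},$$
compos\'e des isomorphismes de multiplication et des trivialisations $D_{n_i}\can k,\ D_{n'_i}\can k$ fournies par $\sigma_N$ (lemme~\ref{n4}), est ind\'ependant de l'\'ecriture. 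Or c'est exactement le contenu du lemme~\ref{s1}.

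En combinant ces deux observations via la factorisation \'evidente de l'isomorphisme $\Delta_{B(g)}\otimes (\text{trivialisations}) \can \Delta_g$ en un produit tensoriel de l'isomorphisme sur la composante en $D_{\det}$ et de l'inverse de celui sur $D_g$, on obtiendra $\delta_g^1 = \delta_g^2$.

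L'obstacle principal, comme souvent dans ce type d'argument, sera le contr\^ole des signes issus de la r\`egle de Koszul dans les isomorphismes de commutation intervenant dans la d\'efinition de $\Delta_g = D_{\det(g)}\otimes D_g^{\otimes -1}$. Heureusement, ces signes disparaissent dans le pr\'esent contexte~: les droites $D_n$ et $D_{\det(n)}$ associ\'ees aux \'el\'ements unipotents $n$ sont toutes plac\'ees en degr\'e $0$ (puisque ${}^t n\op^r$ et $\op^r$ ont m\^eme dimension r\'eduite, et que $\det(n)=1$), de sorte que les isomorphismes de sym\'etrie \`a la Koszul co\"\i ncident avec les isomorphismes de permutation \'evidents. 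Cela rend la r\'eduction \`a l'assertion analogue pour $D_g$ enti\`erement formelle.
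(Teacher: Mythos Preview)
Your proposal is correct and takes essentially the same approach as the paper, which simply states that the proof is similar to that of Lemma~\ref{s1}. You have just made explicit the reduction: the $D_{\det}$-component is trivially independent since $\det(n_i)=1$, the Koszul signs vanish because $D_n$ and $D_{\det(n)}$ are in degree~$0$ (a point the paper already noted when defining $\delta_g$), and the $D_g$-component is exactly Lemma~\ref{s1}.
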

\begin{proof}
Elle est semblable à celle du lemme~\ref{s1}.
\end{proof}Grâce à cette proposition, on obtient une section ensembliste $s_{\rm geo}: \G_r(\fp)\ra\tG_{r,\rm geo}(\fp)$ d\'efinie par
$$s_{\rm geo}(g)=(g,\delta_{g}).$$
 On note $\chi_{\rm geo}$ le 2-cocycle associé à cette section. On va comparer $\chi_{\rm geo}$ au cocycle de Kazhdan-Patterson et le th\'eor\`eme~\ref{chi4} r\'esulte alors de la proposition suivante :
\begin{proposition}\label{chi2}
\begin{enumerate}
\item $\chi_{{\rm geo}}(t,t')=\displaystyle(-1)^{\sum_{i<j}v(t_i)v(t_j')}\prod_{i<j}\frac{t_i^{v(t_j')}}{{t_j'}^{v(t_i)}}(0)$ \\  o\`u $t=\D[t_i]$ et $t'=\D[t_i']$.
\item $\chi_{{\rm geo}}(w,w')=1$\quad $(w,w' \in W_r)$.
\item $\chi_{{\rm geo}}(t,w)=1$\quad $(w\in W_r$, $t \in T_r)$.
\item $\chi_{{\rm geo}}(\alpha,t)=\displaystyle(-1)^{v(t_{\ell})v(t_{\ell+1})+\sum_{i\neq\ell,\ell+1}v(t_i)}\frac{t_{\ell+1}^{v(t_{\ell})}}{t_{\ell}^{v(t_{\ell+1})}}(0) $\\
 o\`u $\alpha$ est la matrice de la permutation $(\ell,\ell+1)$ et $t\in T_r$.
\item $\chi_{{\rm geo}}(ng,g'n')=\chi_{{\rm geo}}(g,g')\quad (n,n'\in N_r)$
\item $\chi_{{\rm geo}}(t,g)=\chi_{{\rm geo}}(t,B(g))\quad (t\in T_r)$
\item $\chi_{{\rm geo}}(\alpha,g)=\chi_{{\rm geo}}(B(\alpha g)B(g)^{-1},B(g))$
 \end{enumerate}

 \vskip 2mm
En particulier, $\chi_{\rm geo}(g,g')=\chi(g,g')$.
\end{proposition}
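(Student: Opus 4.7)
The plan is to compute $\chi_{\rm geo}$ by reduction to the cocycle $\chi'$ of Lemma~\ref{chi6}. The key point is that Construction~\ref{chi5} expresses the isomorphism $\delta_{tw}\colon D_{tw}\to D_{\det(tw)}$ as the composite of the canonical factorisation $D_t\simeq\bigotimes_{i=1}^r D_{t_i}$, a Koszul reordering $\bigotimes_i D_{t_i}\to\bigotimes_i D_{t_{r+1-i}}$, the iterated multiplication $\mu$ (which is nothing but the product in the extension $\tG'_1(\fp)$), and an explicit scalar sign. Combined with the Baer-difference presentation $\tG_{r,\rm geo}(\fp)=\det^*(\tG'_1(\fp))-\tG'_r(\fp)$, this yields a formula of the shape
$$\chi_{\rm geo}(g,g')\;=\;\varepsilon(g,g')\,\chi'_1(\det g,\det g')\,\chi'(g,g')^{-1},$$
where $\chi'_1$ denotes the $r=1$ specialisation of Lemma~\ref{chi6} (giving simply $\chi'_1(a,b)=(a/\varpi^{v(a)})^{v(b)}(0)$, since all cross-terms are empty) and $\varepsilon$ is an explicit $2$-coboundary extracted from the Koszul signs of the Baer sum together with the scalar $(-1)^{\sum_i(r-i)v(t_i)+\sum_{i<j}v(t_i)v(t_j)}$ in the definition of $\delta_{tw}$.

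Granting this identification, the easy points fall out quickly. For (2), (3), (5) and (6) one observes that $\chi'_1$ and the Koszul correction vanish on the relevant matrices (Weyl elements have trivial valuation, and $B(g)$ is unchanged by unipotent multiplication on either side), reducing these identities to the corresponding items of Lemma~\ref{chi6}. For point~(1), the combination $\chi'_1(\det t,\det t')\,\chi'(t,t')^{-1}$ expands as $(-1)^{\sum_{i<j}v(t_i)v(t'_j)}\prod_{i\neq j}(t_i/\varpi^{v(t_i)})^{v(t'_j)}(0)\big/\prod_i(t_i/\varpi^{v(t_i)})^{v(t'_i)}(0)$; after absorbing the coboundary $\varepsilon$ and swapping indices in the $i>j$ product to turn leading coefficients of $t_i$ into inverse leading coefficients of $t'_j$, one recovers the claimed product $\prod_{i<j}t_i^{v(t'_j)}/{t'_j}^{v(t_i)}(0)$ with the expected sign. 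Point~(4) is similar: the Weyl permutation contributes the Koszul sign $(-1)^{v(t_\ell)v(t_{\ell+1})}$ from the reordering of the factors $D_{t_i}$, while the parity of $v(\det t)$ governs the extra sign $(-1)^{\sum_{i\neq \ell,\ell+1}v(t_i)}$ coming from the $\chi'_1$ factor.

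The main obstacle will be point~(7), which is not a formal consequence but rather captures how the canonical trivialisation $\Delta_g\simeq\Delta_{B(g)}$ (underlying Lemma~\ref{s1} and its analogue for $\delta_g$) transforms under left multiplication by a simple reflection $\alpha$. I expect to handle it by combining the cocycle identity with (5) and (6) to reduce $\chi_{\rm geo}(\alpha,g)$ to a computation involving only $B(\alpha g)B(g)^{-1}$ and $B(g)$, then matching this against the corrected Matsumoto--Kazhdan--Patterson relation $\chi(\alpha,g)=\chi(B(\alpha g)B(g)^{-1},B(g))$ by invoking the torus-case formula already proved in~(1) and~(4); the delicate step is the sign bookkeeping when $\alpha B(g)$ falls into a different Bruhat cell. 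Once (1)--(7) are established, the final identity $\chi_{\rm geo}=\chi$ follows by direct term-by-term comparison with Proposition~\ref{chi1}: the tame symbol satisfies $\{f,g\}=(-1)^{v(f)v(g)}f^{v(g)}/g^{v(f)}(0)$, which converts every formula of Proposition~\ref{chi1} into exactly the corresponding formula of Proposition~\ref{chi2}.
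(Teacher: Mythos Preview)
Your reduction formula $\chi_{\rm geo}(g,g')=\varepsilon(g,g')\,\chi'_1(\det g,\det g')\,\chi'(g,g')^{-1}$ is exactly what the paper obtains (with $\varepsilon$ the coboundary of $g\mapsto[\delta_g]=(-1)^{\sum_i(r-i)v(t_i)}\prod_{i}\chi'(\prod_{j>i}t_j,t_i)$), and points (1)--(6) do follow from it together with Lemma~\ref{chi6} just as you describe. So far your plan and the paper's coincide.

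The gap is in point~(7), precisely in the cell-changing case $w^{-1}(\ell)>w^{-1}(\ell+1)$ with $u\neq 0$. Your proposed reduction via the cocycle identity and (5)--(6) brings you to $\chi_{\rm geo}(\alpha e_\alpha^u,B(g))$, but here $B(g)^{-1}e_\alpha^u B(g)$ is \emph{lower} unipotent, so (5) no longer lets you strip $e_\alpha^u$. Writing $\alpha e_\alpha^u=d_\alpha(u)e_\alpha^{-u}e_{-\alpha}^{1/u}$, the paper shows that what remains after the formal reductions is $\chi_{\rm geo}(e_{-\alpha}^{1/u},B(g))\cdot\chi_{\rm geo}(d_\alpha(u),B(g))$, and the second factor is the desired right-hand side. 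So one must prove $\chi_{\rm geo}(e_{-\alpha}^{1/u},w)=1$. This is \emph{not} accessible from (1)--(6): it requires computing $\chi'(e_{-\alpha}^{1/u},w)$, and Lemma~\ref{chi6} gives no formula for $\chi'$ on a lower-unipotent element against a Weyl element. The paper does this by going back to the lattice definition: it chooses explicit $\op^r$-lattices $M$ stabilised by the relevant unipotents (separately for $v(u)>0$ and $v(u)\le 0$), writes down bases of the quotients, and reads off the transition matrices, obtaining $\chi'(e_{-\alpha}^{1/u},w)=(\pm u/\varpi^{v(u)})^{\pm v(u)}(0)$, which then cancels exactly against $[\delta_{e_{-\alpha}^{1/u}}]$.

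Your suggestion to ``match against the Matsumoto--Kazhdan--Patterson relation by invoking (1) and (4)'' does not close this: the KP relation (7) for $\chi$ is an input about $\chi$, not about $\chi_{\rm geo}$, and you cannot transfer it without already knowing $\chi_{\rm geo}=\chi$ on the relevant elements---which is the conclusion you are aiming for. The explicit lattice computation is the genuine content of (7) and cannot be replaced by formal bookkeeping.
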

\begin{proof}

On a le diagramme suivant :
$$\xymatrix{D_{g}\otimes D_{g'}\ar[r]^-{\delta_{g}\otimes\delta_{g'}}\ar[d]&D_{\det(g)}\otimes D_{\det(g')}\ar[d]\\
D_{gg'}\ar[r]_{\delta_{gg'}}&D_{\det(gg')}\,,
}$$
o\`u les isomorphismes verticaux  ci-dessus sont des isomorphismes de multiplication.

D'apr\`es la d\'efinition de $\delta_{g}$ on trouve
$$\delta_{g}=[\delta_{g}]\mathfrak{d}_{{\rm det}(g)}\otimes\mathfrak{d}_g^{\otimes -1}\,,$$
o\`u $[\delta_{g}]=(-1)^{\sum_{i=1}^r(r-i)v(t_i)}\prod_{i=1}^{r-1}\chi'(\prod_{j=i+1}^{r}t_j,t_i)$.
Le cocycle $\chi_{\rm geo}$ est alors donn\'e par la formule
$$\chi_{\rm geo}(g,g')=\frac{[\delta_{g}].[\delta_{g'}].\chi'(\det(g),\det(g'))}{[\delta_{gg'}].\chi'(g,g')}.$$

Par cons\'equent, les six premi\`eres  assertions de la proposition r\'esultent imm\'ediatement des six assertions du lemme~\ref{chi6} et de la d\'efinition de $s_{\rm geo}$ 

On va maintenant donner la d\'emonstration du point (7) de la proposition.

On note $m_{\alpha}(u)$ la matrice $\D({\rm Id}_{\ell-1},\begin{pmatrix}
0&u\\
-\frac{1}{u}&0
\end{pmatrix},{\rm Id}_{r-\ell-1})$, $e_{\alpha}^{u}$ la matrice $\D({\rm Id}_{\ell-1},\begin{pmatrix}
1&u\\
0&1
\end{pmatrix},{\rm Id}_{r-\ell-1})$ et $d_{\alpha}(u)$ la matrice $\D({\rm Id}_{\ell-1},-\frac{1}{u},u,{\rm Id}_{r-\ell-1})$. On note aussi $e_{-\alpha}^u$ la matrice de transposition de $e_{\alpha}^u$, i.e $e_{-\alpha}^u=\break\D({\rm Id}_{\ell-1},\begin{pmatrix}
1&0\\
u&1
\end{pmatrix},{\rm Id}_{r-\ell-1})$.


Soit $g=nwtn'$, o\`u $t=\D(t_1,\dots,t_r)$. On \'ecrit $n=n_{\alpha}e_\alpha^u$, o\`u $n_\alpha \in N_r(\fp)$ est telle que le coefficient situ\'e sur la $\ell$-i\`eme  ligne et la $\ell+1$-i\`eme colonne est nul.

On consid\`ere $\alpha g=\alpha n_\alpha e_\alpha^uB(g)n'=n''\alpha e_\alpha^uB(g)n'$.
\begin{itemize}\item Si $u=0$, on a $\alpha g=n''\alpha B(g)n'$, donc $B(\alpha g)=\alpha B(g)$. On voit que
\begin{eqnarray*}
\delta_\alpha*\delta_g&=&\delta_\alpha*(\delta_{n_{\alpha}}*\delta_{B(g)}*\delta_{n'})\\
&=&\delta_{\alpha n_\alpha}*\delta_{B(g)}*\delta_{n'}\\
&=&\delta_{n''\alpha}*\delta_{B(g)}*\delta_{n'}\\
&=&\delta_{n''}*\delta_{\alpha}*\delta_{B(g)}*\delta_{n'}\\
&=&\chi_{\rm geo}(\alpha,B(g))\delta_{n''}*\delta_{\alpha B(g)}*\delta_{n'}\\
&=&\chi_{\rm geo}(\alpha,B(g))\delta_{\alpha g}
\end{eqnarray*}
Par cons\'equent, $\chi_{\rm geo}(\alpha,g)=\chi_{\rm geo}(B(\alpha g)B(g)^{-1},B(g))$.
\item Si $w^{-1}(\ell)<w^{-1}(\ell+1)$, comme $B(g)^{-1}e_\alpha^uB(g)\in N_r(\fp)$, on a $\alpha g=n''\alpha B(g)(B(g)^{-1}e_\alpha^uB(g)n')$, de sorte qu'on obtient $B(\alpha g)=\alpha B(g)$. On a
{\allowdisplaybreaks\begin{eqnarray*}
\delta_\alpha*\delta_g&=&\delta_\alpha*\delta_{n}*\delta_{B(g)}*\delta_{n'}\\
&=&\delta_{\alpha n}*\delta_{B(g)}*\delta_{n'}\\
&=&\delta_{n''\alpha e^u_\alpha}*\delta_{B(g)}*\delta_{n'}\\
&=&\delta_{n''}*\delta_{\alpha e^u_\alpha}*\delta_{B(g)}*\delta_{n'}\\
&=&\delta_{n''}*\delta_{\alpha}*\delta_{e^u_\alpha}*\delta_{B(g)}*\delta_{n'}\\
&=&\delta_{n''}*\delta_{\alpha}*\delta_{e^u_\alpha B(g)}*\delta_{n'}\\
&=&\delta_{n''}*\delta_{\alpha}*\delta_{B(g)B(g)^{-1}e^u_\alpha B(g)}*\delta_{n'}\\
&=&\delta_{n''}*\delta_{\alpha}*\delta_{B(g)}*\delta_{B(g)^{-1}e^u_\alpha B(g)}*\delta_{n'}\\
&=&\chi_{\rm geo}(\alpha,B(g))\delta_{n''}*\delta_{\alpha B(g)}*\delta_{B(g)^{-1}e^u_\alpha B(g)n'}\\
&=&\chi_{\rm geo}(\alpha,B(g))\delta_{\alpha g}
\end{eqnarray*}}
Par cons\'equent, $\chi_{\rm geo}(\alpha,g)=\chi_{\rm geo}(B(\alpha g)B(g)^{-1},B(g))$.
\item Si $w^{-1}(\ell)>w^{-1}(\ell+1)$, et $u\neq 0$, on a $e_\alpha^u=m_\alpha(u)e_{\alpha}^{-u}e_{-\alpha}^{1/u}$. Ceci implique (en utilisant $\alpha m_{\alpha}(u)=d_{\alpha}(u)$)
{\allowdisplaybreaks\begin{eqnarray*}
\alpha g&=&n''d_{\alpha}(u)e_{\alpha}^{-u}e_{-\alpha}^{1/u}B(g)n'\\
&=&n'''(d_{\alpha}(u)B(g))(B(g)^{-1}e_{-\alpha}^{1/u}B(g))n'.
\end{eqnarray*}}
Comme $B(g)^{-1}e_{-\alpha}^{1/u}B(g)\in N_r(\fp)$, on a $B(\alpha g)=d_{\alpha}(u)B(g)$. On consid\`ere
{\allowdisplaybreaks\begin{eqnarray*}
\delta_\alpha*\delta_g&=&\delta_\alpha*\delta_{n}*\delta_{B(g)}*\delta_{n'}\\
&=&\delta_{\alpha n}*\delta_{B(g)}*\delta_{n'}\\
&=&\delta_{n''\alpha e^u_\alpha}*\delta_{B(g)}*\delta_{n'}\\
&=&\delta_{n''d_{\alpha}(u)e_\alpha^{-u}e_{-\alpha}^{1/u}}*\delta_{B(g)}*\delta_{n'}\\
&=&\delta_{n'''d_{\alpha}(u)e_{-\alpha}^{1/u}}*\delta_{B(g)}*\delta_{n'}\\
&=&\delta_{n'''}*\delta_{d_{\alpha}(u)e_{-\alpha}^{1/u}}*\delta_{B(g)}*\delta_{n'}\\
&=&\chi_{\rm geo}(d_{\alpha}(u),e_{-\alpha}^{1/u})^{-1}\times\\
&&\times\delta_{n'''}*\delta_{d_{\alpha}(u)}* \delta_{e_{-\alpha}^{1/u}}*\delta_{B(g)}*\delta_{n'}\\
&=&\chi_{\rm geo}(d_{\alpha}(u),B(e_{-\alpha}^{1/u}))^{-1}\chi_{\rm geo}(e_{-\alpha}^{1/u},B(g))\times\\
&&\times\delta_{n'''}*\delta_{d_{\alpha}(u)}
 *\delta_{e_{-\alpha}^{1/u}B(g)}*\delta_{n'}\\
&=&\chi_{\rm geo}(d_{\alpha}(u),d_{\alpha}({1/u}))^{-1}\chi_{\rm geo}(e_{-\alpha}^{1/u},B(g))\times\\
&&\times\delta_{n'''}*\delta_{d_{\alpha}(u)}
 *\delta_{B(g)B(g)^{-1}e_{-\alpha}^{1/u}B(g)}*\delta_{n'}\\
&=&\chi_{\rm geo}(e_{-\alpha}^{1/u},B(g))\chi_{\rm geo}(d_{\alpha}(u),B(g))\delta_{\alpha g}.
\end{eqnarray*}}
On calcule $\chi_{\rm geo}(e_{-\alpha}^{1/u},B(g))$.
{\allowdisplaybreaks\begin{eqnarray*}
\chi_{\rm geo}(e_{-\alpha}^{1/u},B(g))&=&\chi_{\rm geo}(e_{-\alpha}^{1/u},wt)\\
&=&\chi_{\rm geo}(e_{-\alpha}^{1/u}w,t)\chi_{\rm geo}(e_{-\alpha}^{1/u},w)/\chi_{\rm geo}(w,t)\\
&=&\chi_{\rm geo}(ww^{-1}e_{-\alpha}^{1/u}w,t)\chi_{\rm geo}(e_{-\alpha}^{1/u},w)/\chi_{\rm geo}(w,t)\\
&=&\frac{\chi_{\rm geo}(w,w^{-1}e_{-\alpha}^{1/u}wt)\chi_{\rm geo}(w^{-1}e_{-\alpha}^{1/u}w,t)\chi_{\rm geo}(e_{-\alpha}^{1/u},w)} {\chi_{\rm geo}(w,w^{-1}e_{-\alpha}^{1/u}w)\chi_{\rm geo}(w,t)}\\
&=&\chi_{\rm geo}(e_{-\alpha}^{1/u},w)\text{ (comme $w^{-1}e_{-\alpha}^{1/u}w\in N_r(\fp))$.}
\end{eqnarray*}}
On va prouver que $\chi_{\rm geo}(e_{-\alpha}^{1/u},w)=1$. On a $e_{-\alpha}^{1/u}=e_{\alpha}^ud_{\alpha}(1/u)\alpha e_{\alpha}^u$ et $w^{-1}e_{-\alpha}^{1/u}w=e^{1/u}_{(w^{-1}(\ell+1),w^{-1}(\ell))}\in N_r(\fp)$.
\begin{itemize}
\item Si $v(u)>0$, i.e $v(1/u)<0$, on a $e_{\alpha}^u\op^r=\op^r$. Donc $\mathfrak{d}_{e_{-\alpha}^{1/u}}=(\bigwedge_{i=1}^{v(u)-1}\varpi^i e_{\ell})^*\otimes\left(\bigwedge_{i=-v(u)}^{-1}\varpi^i\begin{pmatrix}u\\1\end{pmatrix}_{\ell}\right)\in (e_\alpha^u\op^r|e_\alpha^ud_\alpha(1/u)\op^r)$, o\`u $\begin{pmatrix}u\\1\end{pmatrix}_{\ell}$ est le vecteur $ue_{\ell}+e_{\ell+1}\in\op^r$.

    On choisit $M=\oplus_{i=1}^{w^{-1}(\ell)-1}\op\oplus\va^{v(u)}\op\oplus\oplus_{i=w^{-1}(\ell)+1}^{r}\op$, de sorte qu'on a $e^{1/u}_{(w^{-1}(\ell+1),w^{-1}(\ell))}M=M$. Donc $\mathfrak{d}_{e^{1/u}_{-\alpha}w}=\mathfrak{d}_{we^{1/u}_{(w^{-1}(\ell+1),w^{-1}(\ell))}}\break=(\bigwedge_{i=1}^{v(u)-1}\varpi^ie_{\ell})^*
    \otimes \left(\bigwedge_{i=0}^{v(u)-1}\frac{1}{u}\varpi^i\begin{pmatrix}u\\1\end{pmatrix}_{\ell}\right)\in (w\op^r|w M)\otimes\break(wM|we^{1/u}_{(w^{-1}(\ell+1),w^{-1}(\ell))}\op^r)$.

    La matrice de passage de $\left(\varpi^i\begin{pmatrix}u\\1\end{pmatrix}_{\ell}\right)_{i=-v(u)}^{-1}$ \`a
    $\left(\frac{1}{u}\varpi^i\begin{pmatrix}u\\1\end{pmatrix}_{\ell}\right)_{i=0}^{v(u)-1}$ est une matrice de taille $v(u)$ : $\m=\left(\begin{smallmatrix}\frac{\varpi^{v(u)}}{u}(0)&*&\cdots&*\\
0&\frac{\varpi^{v(u)}}{u}(0)&\cdots&*\\
\vdots&&\ddots&\vdots\\
0&\cdots&&\frac{\varpi^{v(u)}}{u}(0)\end{smallmatrix}\right).$ Ceci implique $\chi'(e^{1/u}_{-\alpha},w)=\left(\frac{u}{\va^{v(u)}}\right)^{v(u)}(0).$
\item Si $v(u)\leq 0$, i.e $v(1/u)\geq 0$, on a $e^{1/u}_{(w^{-1}(\ell+1),w^{-1}(\ell))}\op^r=\op^r$, donc $\mathfrak{d}_{e_{-\alpha}^{1/u}w}=\mathfrak{d}_w\otimes\mathfrak{d}_{e^{1/u}_{(w^{-1}(\ell+1),w^{-1}(\ell))}}=1$.

    On choisit $M=\oplus_{i=1}^{\ell}\op\oplus\va^{-v(u)}\op\oplus\oplus_{i=\ell+2}^{r}\op$, de sorte qu'on a $e^u_\alpha M=M$. Donc $\mathfrak{d}_{e^{1/u}_{-\alpha}}=(\bigwedge_{i=0}^{-v(u)-1}\va^ie_{\ell+1})^*\otimes
    \left(\bigwedge_{i=0}^{-v(u)-1}\va^i\begin{pmatrix}u\\1\end{pmatrix}_{\ell}\right)\otimes
    \left(\bigwedge_{i=0}^{-v(u)-1}\va^i\begin{pmatrix}u\\1\end{pmatrix}_{\ell}\right)^*\otimes
    \left(\bigwedge_{i=v(u)}^{-1}\va^ie_\ell\right)
    \otimes \left(\bigwedge_{i=0}^{-v(u)-1}-u\va^ie_\ell\right)^*\otimes
    \left(\bigwedge_{i=0}^{-v(u)-1}\va^ie_{\ell+1}\right)
    \in (\op^r|M)\otimes(e^u_\alpha M|e^u_\alpha \op^r)\otimes (e^u_\alpha\op^r|e^u_\alpha d_\alpha(1/u)\op^r)\otimes(e^u_\alpha d_\alpha(1/u)\alpha\op^r|e^u_\alpha d_\alpha(1/u)\alpha M)\otimes(e^{1/u}_{-\alpha}M|e^{1/u}_{-\alpha}\op^r)$.

    La matrice de passage de $\left(\varpi^ie_{\ell}\right)_{i=v(u)}^{-1}$ \`a
    $\left(-u\varpi^ie_{\ell}\right)_{i=0}^{-v(u)-1}$ est une matrice de taille $-v(u)$ : $\m=\left(\begin{smallmatrix}\frac{-u}{\va^{v(u)}}(0)&*&\cdots&*\\
0&\frac{-u}{\va^{v(u)}}(0)&\cdots&*\\
\vdots&&\ddots&\vdots\\
0&\cdots&&\frac{-u}{\va^{v(u)}}(0)\end{smallmatrix}\right).$ Ceci implique $\mathfrak{d}_{e^{1/u}_{-\alpha}}=\left(\frac{-u}{\va^{v(u)}}\right)^{v(u)}(0)$, de sorte qu'on a $$\chi'(e^{1/u}_{-\alpha},w)=\left(\frac{-u}{\va^{v(u)}}\right)^{v(u)}(0).$$
\end{itemize}
Par ailleurs on a
{\allowdisplaybreaks
\begin{eqnarray*}
 \chi_{\rm geo}(e^{1/u}_{-\alpha},w)&=&\frac{\overline{\delta}_{e^{1/u}_{-\alpha}}.\overline{\delta}_w.\chi'(\det(e^{1/u}_{-\alpha}),\det(w))}{\overline{\delta}_{e^{1/u}_{-\alpha}w}.\chi'(e^{1/u}_{-\alpha},w)}\\
 &=&(-1)^{v(u)}\left(\frac{u}{\varpi^{v(u)}}\right)^{-v(u)}/\chi'(e^{1/u}_{-\alpha},w)\\
 &=&1
\end{eqnarray*}}
\end{itemize}
\end{proof}
\subsubsection{Fonction $\kappa$ locale}
Le scindage $\underline{\kappa}^*$ de la proposition~\ref{scindage} provient en fait d'un scindage de l'extension $\tG_{r, KP}(\fp)$ au-dessus de ${\rm GL}_{r}(\op)$.
Ceci d\'efinit une fonction  $\underline{\kappa} : \G_{r}(\op)\to k^*$ telle que (cf. \cite{KP}) $$\underline{\kappa}^*(g)=(g,\underline{\kappa}(g))=(1,\underline{\kappa}(g)).s(g)\,,\ \forall g\in\G_r(\op)\,,$$ (où $s(g)=(g,1)$ est la section de $\G_r(\fp)\ra \tG_r(\fp)$ de Kazhdan-Patterson).

En termes géométriques,  le scindage trivial de $\tG_{r,\rm geo}(\fp)$ au-dessus de $\G_r(\op)$ qui vient de l'isomorphisme canonique $${\rm triv}: \Delta_g\can k\,,\ \forall g\in \G_r(\op)$$ co\"\i ncide avec $s_{\rm geo}$ au-dessus des trois sous-groupes $T_{r}(\op)$,  $W_{r}$ et $N_{r}(\op)$ (cf. la premi\`ere condition de la proposition~\ref{scindage}) et s'identifie donc au scindage $\underline{\kappa}^*$.  Par
 cons\'equent, on a $\underline{\kappa}(g)={\rm triv}/\delta_g$.

\begin{rmk}
En rempla\c cant  $k$ par  une extension finie de $k'$ de $k$, on a une extension de $\G_r(\fp')$ par $(k')^*$ :
$1\ra (k')^*\ra\tG_{r,\rm geo}(\fp')\ra \G_r(\fp)\ra 1$. On obtient alors de la m\^eme mani\`ere une  fonction $\kappa': \G_r(\op')\ra (k')^*$. En composant cette fonction $\underline{\kappa}'$  avec le  morphisme de norme $N_{k'/k}$ on obtient une fonction $\underline{\kappa} :\G_r(\op')\ra k^*$. Cette fonction satisfait que $g\mapsto (g,\underline{\kappa}(g))$ est un scindage de $1\ra k^*\ra N_*\tG_{r,\rm geo}(\fp')\ra\G_r(\fp')\ra 1$ au-dessus de $\G_r(\op')$.
\end{rmk}
\`A l'aide de ce point de vue, on obtient une nouvelle démonstration de la formule pour $\underline{\kappa}$ de Kubota pour $r=2$ (on plutôt, du renforcement où en remplace le symbole de Hilbert par le symbole modéré).
\begin{proposition}\label{kubo}
Soit $g=\begin{pmatrix}a&b\\c&d\end{pmatrix}\in \G_2(\op)$, on a alors $$\underline{\kappa}(g)=\begin{cases}1&\text{si }c=0 \text{ ou } c\in \op^*\\
\left\{c,\frac{d}{\det(g)}\right\}&\text{sinon}. \end{cases}$$
\end{proposition}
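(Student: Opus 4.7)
The plan is to combine two ingredients: first, the cocycle identity
$\underline{\kappa}(g_1 g_2) = \underline{\kappa}(g_1)\underline{\kappa}(g_2)\chi(g_1,g_2)$
characterizing the scindage, together with the fact that $\underline{\kappa}$ is trivial on each of the three subgroups $T_2(\op)$, $W_2$, $N_2(\op)$; second, the identification $\chi = \chi_{\rm geo}$ from Proposition~\ref{chi2}, so that all cocycle values are given by its seven explicit formulas. The proof splits into three cases according to the range of $v(c)$.

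If $c = 0$, write $g = \D(a,d) n$ with $n \in N_2(\op)$; then $\chi(\D(a,d), n) = 1$ by Proposition~\ref{chi2}.(5), hence $\underline{\kappa}(g) = 1$. If $c \in \op^*$, the standard Bruhat factorization $g = n_1 \cdot \D(-\det(g)/c, c)\alpha \cdot n_2$ has all factors in the three good subgroups (since $a/c, d/c \in \op$ and $-\det(g)/c,\, c \in \op^*$), and the same reasoning gives $\underline{\kappa}(g) = 1$.

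The interesting case is $v(c) > 0$. Since $\det(g) \in \op^*$ and $v(bc) \geq v(c) > 0$, one has $a, d \in \op^*$, which enables the Iwasawa-type factorization \emph{inside} $\G_2(\op)$:
$$g = u_+ \cdot t \cdot u_-, \qquad u_+ = \bigl(\begin{smallmatrix} 1 & b/d \\ 0 & 1 \end{smallmatrix}\bigr),\ \ t = \D\bigl(\det(g)/d,\, d\bigr),\ \ u_- = \bigl(\begin{smallmatrix} 1 & 0 \\ c/d & 1 \end{smallmatrix}\bigr).$$
Here $u_+ \in N_2(\op)$ and $t \in T_2(\op)$, whereas $u_- \in \G_2(\op)$ (because $v(c/d) = v(c) > 0$) lies outside the three good subgroups. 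Two applications of the cocycle relation, together with Proposition~\ref{chi2}.(5), reduce the computation to $\underline{\kappa}(g) = \underline{\kappa}(u_-) \cdot \chi(t, u_-)$. Next, Proposition~\ref{chi2}.(6) gives $\chi(t, u_-) = \chi(t, B(u_-))$, and a direct computation yields $B(u_-) = \D(-d/c, c/d)\alpha$. The cocycle identity combined with Proposition~\ref{chi2}.(3) removes the $\alpha$, and Proposition~\ref{chi2}.(1) reduces the remainder to the tame symbol $\{\det(g)/d,\, c/d\}$. Evaluating this symbol with $v(\det(g)/d) = 0$ and $v(c/d) = v(c) > 0$ gives $(\overline{\det(g)}/\bar d)^{v(c)}$, which one checks is exactly $\{c,\, d/\det(g)\}$.

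The remaining step, and the main technical obstacle of the proof, is the verification that $\underline{\kappa}(u_-) = 1$, since $u_-$ lies in none of the three subgroups on which $\underline{\kappa}$ is \emph{a priori} trivial. The trick is to conjugate it into the upper unipotent by the Weyl element: $u_- = \alpha \cdot u_+' \cdot \alpha$ with $u_+' = \bigl(\begin{smallmatrix} 1 & c/d \\ 0 & 1 \end{smallmatrix}\bigr) \in N_2(\op)$. Two applications of the cocycle relation, together with the triviality of $\underline{\kappa}$ on $\alpha$ and on $u_+'$, yield $\underline{\kappa}(u_-) = \chi(\alpha, u_+') \cdot \chi(\alpha u_+', \alpha)$. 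Each factor is then evaluated by finding the Bruhat form of the intermediate matrix and applying Proposition~\ref{chi2}.(7) to reduce $\chi(\alpha, -)$ to a $\chi$ between diagonal matrices, which by (1) is a tame symbol that collapses to $1$. All the delicate bookkeeping of Bruhat decompositions and signs is concentrated in this last step; every other part of the argument is a one-line application of a listed property of $\chi_{\rm geo}$.
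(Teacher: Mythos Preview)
Your proof is correct but follows a different route from the paper's. The paper computes $\underline{\kappa}(g)={\rm triv}/\delta_g$ directly from the geometric definition: in the case $v(c)>0$ it takes the Bruhat decomposition $g=n\cdot\D(-\det(g)/c,c)w_0\cdot n'$, chooses the lattice $M=\op\oplus\varpi^{v(c)}\op$ (stable under $n$ and $n'$), writes down an explicit basis element $\mathfrak{d}_g$ by tracking the relevant transition matrices, and combines with the constant $[\delta_g]=(-1)^{v(c)}(c/\varpi^{v(c)})^{-v(c)}(0)$ to read off the tame symbol.

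You instead avoid the lattice computation entirely, working purely with the cocycle identities of Proposition~\ref{chi2} and the Iwasawa-type factorization $g=u_+\,t\,u_-$ inside $\G_2(\op)$. This is closer in spirit to the classical (non-geometric) derivations of Kubota's formula; it is cleaner bookkeeping-wise and would apply to any cocycle satisfying those seven axioms. The paper's approach, by contrast, is meant as a first illustration of the lattice technique that it later needs in greater generality (Th\'eor\`eme~\ref{pop}), where no such short cocycle argument is available. One small imprecision in your last paragraph: after applying (7) to $\chi(\alpha,u'_+\alpha)$ you land on $\chi(\D(-d/c,c/d),\alpha)$, which vanishes by (3) rather than by (1); and $\chi(\alpha,u'_+)$ is already $1$ by (5) without invoking (7). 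Neither affects the validity of the argument.
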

\begin{proof}
\begin{itemize}
\item Si $c=0$, la décomposition de Bruhat de $g$ est
$$g=\D(a,d)\begin{pmatrix}1&\frac{b}{a}\\0&1\end{pmatrix}.$$
Comme $g\in \G_2(\op)$, donc $a,d\in \op^*$, de sorte qu'on a $n=\begin{pmatrix}1&\frac{b}{a}\\0&1\end{pmatrix}\in \G_2(\op)$ et $t=\D(a,d)\in \G_2(\op)$. Par conséquent, on a $\delta_g={\rm triv}$, i.e $\underline{\kappa}(g)=1$.
\item Si $c\neq 0$, la décomposition de Bruhat de $g$ est
$$g=\begin{pmatrix}1&\frac{a}{c}\\0&1\end{pmatrix}\D\left(\frac{-\det(g)}{c},c\right)w_0\begin{pmatrix}1&\frac{d}{c}\\0&1\end{pmatrix}.$$
\begin{itemize}
\item Si $c\in \op^*$, les facteurs de la décomposition de Bruhat de $g$ ci-dessus sont dans $\G_2(\op)$, donc on obtient facilement que $\delta_g={\rm triv}$, de sorte qu'on a $\underline{\kappa}(g)=1$.
\item Si $c\not \in \op^*$ (i.e $v(c)>0$), on a $[\delta_g]=(-1)^{v(c)}\left(\frac{c}{\varpi^{v(c)}}\right)^{-v(c)}(0)$. On va noter chaque facteur de la décomposition ci-dessus respectivement (de gauche à droite) $n,t,w_0,n'$. Comme $g\in \G_2(\op)$ et $v(c)>0$, on a $a,d\in \op^*$. Pour trouver $\mathfrak{d}_g$, on choisit $M=\op\oplus\varpi^{v(c)}\op$ (ce réseau satisfait la condition que $nM=M$ et $n'M=M$), de sorte qu'on obtient :
    \begin{multline*}
    \mathfrak{d}_g=(\bigwedge_{j=0}^{v(c)-1}\varpi^je_2)^*\otimes (\bigwedge_{j=0}^{v(c)-1}\varpi^jne_2)\otimes (\bigwedge_{j=0}^{v(c)-1}\varpi^jne_2)^*\otimes \bigwedge_{j=-v(c)}^{-1}\varpi^j ne_1\\\otimes(\bigwedge_{j=0}^{v(c)-1}\varpi^jntw_0e_2)^*\otimes (\bigwedge_{j=0}^{v(c)-1}\varpi^jge_2)\\
    \in (\op^2|M)\otimes (nM|n\op^2)\otimes(n\op^2|nM)\otimes(nM|ntw_0\op^2)\\\otimes(ntw_0\op^2|ntw_0M)\otimes(gM|g\op^2).
    \end{multline*}
    On considère le $k$-espace vectoriel $ntw_0\op^2/ntw_0M$. Celui-ci est muni de  deux bases  : $\{ntw_0\varpi^je_2\}_{j=0\dots v(c)-1}$, qui est l'image de $\{\varpi^je_2\}_{j=0\dots v(c)-1}\in \op^2/M$ par l'isomorphisme $\times ntw_0$, et \break$\{n\varpi^je_1\}_{j=-v(c)\ldots -1}$ qui est l'image de $\{\varpi^je_1\}_{j=-v(c)\ldots -1}\in \break tw_0\op^2/tw_0M$ par l'isomorphisme $\times n$.
    La matrice de passage de la seconde vers la première est la matrice carrée de taille $v(c)$
    $$\m=\begin{pmatrix}\frac{-\det(g)}{c\varpi^{-v(c)}}(0)&*&*&\dots&*\\
&\frac{-\det(g)}{c\varpi^{-v(c)}}(0)&*&\dots&*\\
&&&\ddots&\vdots\\
&&&&\frac{-\det(g)}{c\varpi^{-v(c)}}(0)
\end{pmatrix}.$$
Par ailleurs, on a $ntw_0M=M$ (ce qui implique $gM=M$) et $g\op^2=\op^2$, donc $g\op^2/gM=\op^2/M$, de sorte que le $k$-espace vectoriel $\op^2/M$ est muni des deux bases $\{\varpi^je_2\}_{j=0\ldots v(c)}$ et $\{g\varpi^je_2\}_{j=0\ldots v(c)}$ qui est l'image de $\{\varpi^je_2\}_{j=0\ldots v(c)}\in \op^2/M$ par l'isomorphisme $\times g$. La matrice de passage de la seconde vers la première est la matrice carrée de taille $v(c)$
    $$\m'=\begin{pmatrix}d(0)&*&*&\dots&*\\
&d(0)&*&\dots&*\\
&&&\ddots&\vdots\\
&&&&d(0)
\end{pmatrix}.$$
Par conséquent, on a $\mathfrak{d}_g=\frac{\det(\m')}{\det(\m)}{\rm triv}$. En utilisant la formule  $\delta_g=[\delta_g]\mathfrak{d}_{\det(g)}\otimes\mathfrak{d}_g^{\otimes (-1)}$, on a donc que $\underline{\kappa}(g)=\left\{c,\frac{d}{\det(g)}\right\}$.
\end{itemize}
\end{itemize}
\end{proof}
\subsection{Le groupe métaplectique global}\label{glokap}
\subsubsection{La construction du groupe métaplectique $S$-global}
Soient $\0:=k[\va]$ l'anneau des polynômes à une indéterminée $\va$ et à coefficients dans $k$, $F=k(\va)$ son corps des fractions. Soit $S$ un ensemble fini des points fermés de $\mathbb{P}^1$ (qui seront considérés comme des places de $F$). On associe à chaque $g\in \G_r(F)$ la droite  $\Delta^{S}_g:=\otimes_v \Delta_{g,v}$ 5(grâce à la règle de Koszul cette définition ne dépend pas du choix d'un ordre sur les places $v$), où $\Delta_{g,v}=(\0_v^r|g\0_v^r)_k^{\otimes -1}\otimes (\0_v|\det(g)\0_v)_k$ ($\0_v$ est l'anneau d'entiers de $F$ en place $v$ ; $\0_v^r$, $g\0_v^r$, $\0_v$ et $\det(g)\0_v$ sont vu comme des $k$-espaces vectoriels).
Par ailleurs, dans le cas où $S\neq \emptyset$, on a le lemme suivant :
\begin{lemma}\label{ka3} Pour $S\neq \emptyset$, soit $\0(S)$ l'anneaux des fractions rationnelles dont les pôles sont dans $S$. On a un isomorphisme canonique $D^S_g\can (\0(S)^r|g\0(S)^r)$.
Par conséquent, on a un isomorphisme canonique $\Delta_{g}^S\can (\0(S)|\det(g)\0(S))\otimes(\0(S)^r|g\0(S)^r)^{\otimes -1}$.
\end{lemma}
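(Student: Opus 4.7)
L'id\'ee est d'exprimer chaque facteur local $D_{g,v}=(\0_v^r|g\0_v^r)_k$ en termes d'espaces de torsion sur $\0(S)$ et d'utiliser une d\'ecomposition par support \`a la mani\`ere du th\'eor\`eme chinois.

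D'abord, je commencerais par remarquer que $g\in \G_r(F)$ appartient \`a $\G_r(\0_v)$ pour presque toute place $v$, et en particulier pour presque tout $v\notin S$. Par cons\'equent, dans le produit tensoriel $D_g^S=\bigotimes_{v\notin S}D_{g,v}$ tous les facteurs sont canoniquement triviaux sauf un nombre fini, correspondant aux places (finies) divisant les coefficients de $g$ ou de $g^{-1}$. La droite $D_g^S$ est donc bien d\'efinie.

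Ensuite, l'\'etape-cl\'e est la suivante~: pour tout couple de $\0(S)$-r\'eseaux $L_1,L_2$ dans $F^r$ (par exemple $L_1=\0(S)^r$ et $L_2=g\0(S)^r$), on a une d\'ecomposition canonique de $k$-espaces vectoriels
$$L_1/(L_1\cap L_2)\simeq \bigoplus_{v\notin S}(L_1)_v/((L_1)_v\cap (L_2)_v),$$
o\`u $(L_i)_v=L_i\otimes_{\0(S)}\0_v$. En effet, le quotient $L_1/(L_1\cap L_2)$ est un $\0(S)$-module de torsion, de longueur finie, donc se d\'ecompose en somme directe de ses composantes locales, et la composante en $v$ s'identifie canoniquement \`a sa compl\'etion en $v$. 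La m\^eme d\'ecomposition vaut pour $L_2/(L_1\cap L_2)$.

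En prenant la puissance ext\'erieure maximale de chaque membre et en utilisant l'isomorphisme canonique de Deligne $\bigwedge(V\oplus W)\simeq\bigwedge V\otimes\bigwedge W$ (conform\'ement \`a la r\`egle de Koszul, adopt\'ee dans la proposition~\ref{ACK1}), on obtient
$$\bigwedge L_1/(L_1\cap L_2)\simeq \bigotimes_{v\notin S}\bigwedge (L_1)_v/((L_1)_v\cap(L_2)_v),$$
et analogue pour $L_2$. En combinant dualit\'e et produit tensoriel sur les deux facteurs d\'efinissant $(\cdot|\cdot)$, on obtient l'isomorphisme canonique voulu
$$(\0(S)^r|g\0(S)^r)\simeq\bigotimes_{v\notin S}(\0_v^r|g\0_v^r)_k=D_g^S.$$

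La seconde assertion sur $\Delta_g^S$ en d\'ecoule imm\'ediatement en appliquant la m\^eme formule \`a $g$ d'une part et \`a $\det(g)\in\G_1(F)$ d'autre part, puis en assemblant les deux isomorphismes selon la d\'efinition $\Delta_{g,v}=(\0_v|\det(g)\0_v)_k\otimes(\0_v^r|g\0_v^r)_k^{\otimes -1}$ (la commutation des facteurs ne produit aucun signe, puisque les degr\'es $[\0_v:\det(g)\0_v]$ et $[\0_v^r:g\0_v^r]$ sont \'egaux modulo~$2$, d'ailleurs la droite $\Delta_{g,v}$ est concentr\'ee en degr\'e~$0$).

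L'obstacle principal est de contr\^oler rigoureusement la r\`egle de signe de Koszul intervenant dans la d\'ecomposition en somme directe sur les places~: il faut s'assurer que l'isomorphisme canonique obtenu ne d\'epende pas du choix d'un ordre sur $\{v\notin S\}$ apparaissant dans le produit tensoriel, ce qui est pr\'ecis\'ement garanti par la parit\'e du degr\'e de commutation entre les facteurs dualis\'es et non-dualis\'es. C'est pour cette raison m\^eme que la d\'efinition de $\Delta_g^S$ est intrins\`eque, comme rappel\'e dans l'\'enonc\'e.
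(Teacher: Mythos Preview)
Your proposal is correct and follows essentially the same approach as the paper: both arguments rest on the Chinese remainder decomposition of the finite-length torsion quotient $\0(S)^r/(\0(S)^r\cap g\0(S)^r)$ (and its companion) into local pieces indexed by $v\notin S$, followed by taking top exterior powers. The only cosmetic difference is that the paper fixes $L=\0(S)^r\cap g\0(S)^r$ from the outset and applies the decomposition $M/M'\simeq\bigoplus_{v\notin S}M_v/M'_v$ directly with $M'\subset M$, whereas you phrase it via $(L_1)_v\cap(L_2)_v$; this tacitly uses flatness of $\0_v$ over $\0(S)$ to identify $(L_1\cap L_2)_v$ with $(L_1)_v\cap(L_2)_v$, which is standard for lattices over a Dedekind ring.
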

\begin{proof} On note $L=\0(S)^r\cap g\0(S)^r$. En utilisant le lemme chinois, si $M'\subset M$ sont deux $\0(S)$-réseaux dans $F^r$ on obtient des isomorphismes canoniques
$$M/M'\simeq\bigoplus_{v\not\in S}(M\otimes_{\0(S)}\0_v)/(M'\otimes_{\0(S)}\0_v),$$
d'où des isomorphismes canoniques
\begin{eqnarray*}
(\0(S)^r|g\0(s)^r)&\simeq&(\bigwedge \0(S)^r/L)^*\otimes (\bigwedge g\0(S)^r/L)\\
&\simeq&\left(\bigwedge \bigoplus_{v\not\in S}(\0_v^r)/(L\otimes_{\0(S)}\0_v)\right)^*\otimes\\
&&\otimes\left(\bigwedge\bigoplus_{v\not\in S}(g\0_v^r)/(L\otimes_{\0(S)}\0_v)\right)\\
&\simeq&{\bigotimes}_{v\not\in S}'D_{g,v}\,.\qedhere
\end{eqnarray*}
\end{proof}

En utilisant ce lemme,  on obtient de nouveau la définition de la dro\-ite $\Delta_g^S$. Cette construction fournit une extension centrale de $\G_r(F)$ par $k^*$ :
$$1\ra k^*\ra \tG_r^S(F) \ra \G_r(F) \ra 1.$$
On va appeler $\tG_r^S(F)$ \textit{le groupe métaplectique $S$-global}.
Comme dans  le cas local envisagé dans la partie précédente, cette extension est scindée au-dessus de $N_r(F)$, et au-dessus de $\G_r(\0(S))$. On construit aussi comme dans  le cas local une section ensembliste $$s_{\rm geo} : \G_r(F)\ra\tG_r^S(F),\ g\mapsto (g,\delta_g).$$

Pour toute place $v$ de $F$, on note $F_v$ la complétion en $v$ de $F$, $\0_v$ son anneau des entiers et $k_v$ son corps résiduel. En utilisant que $k_v$ est une extension finie de $k$ on a une extension (qu'on appelle le groupe métaplectique local en la place $v$) :
$$1\ra k^*\ra N_*\tG_r(F_v)\ra\G_r(F_v)\ra 1,$$ une fonction $\underline{\kappa}_v :\G_r(\0_v)\ra k^*$ (voir ci-dessus pour la définition) et un scindage $\underline{\kappa}^*_v:\G_r(\0_v)\ra N_*\tG_r(F_v) \ g\mapsto (g,\underline{\kappa}_v(g))$.

D'après Kazhdan-Patterson, on a l'extension suivante (voir \cite[section 2]{KP} pour la construction quand on remplace le symbole de Hilbert par le symbole modéré) :
$$\xymatrix{ 1 \ar[r]&\prod_{v\not\in S}'k_v^*\ar[r]&\prod_{v\not\in S}'\tG_{r}(F_v)\ar[r]&\prod_{v\not\in S}'\G_r(F_v)\ar[r]&1,
}$$
où
\begin{itemize}
\item $\prod_{v\not\in S}'k_v^*$ est le groupe form\'e des éléments $(x_v)_{v\not \in S}$ où $x_{v}\in k_v^*$ et $x_{v}=1$ pour presque tout $v$.
\item $\prod_{v\not \in S}'\G_{r}(F_v)$ est le groupe form\'e des éléments $(g_v)_{v\not \in S}$ où $g_v\in \G_r(F_v)$ et $g_v\in \G_r(\0_v)$ pour presque tout $v$.
\item $\prod_{v\not \in S}'\tG_{r}(F_v)$ est le groupe form\'e des éléments $(\widetilde{g}_v)_{v\not \in S}$ où $\widetilde{g}_v\in \tG_r(F_v)$ et, pour presque tout $v$, $\widetilde{g}_v\in \tG_r(\0_v)$ et $\widetilde{g}_{v}=\underline{\kappa}^*_v(g_v)$.
\end{itemize}
En poussant cette extension via le morphisme $\prod'_{v \not \in S}k_v^*\ra k^*,\ (x_v)\mapsto \prod_{v\not\in S}N_{k_v/k}(x_v)$, on obtient une extension de $\G_r(\aS)$ par $k^*$ (où $\aS:=\prod_{v\not \in S}'F_v$) :
$$1\ra k^*\ra \tG_r(\aS)\ra\G_r(\aS)\ra 1.$$
\begin{proposition}\label{ka4} On a un morphisme d'extensions :
$$\xymatrix{ 1\ar[r]&k^*\ar[r]\ar[d]&\ar @{} [dr] |{\square}\tG_{r}^S(F)\ar[r]\ar[d]&\G_r(F)\ar[r]\ar@{_{(}->}[d]&1\\
1\ar[r]&k^*\ar[r]&\tG_r(\aS)\ar[r]&\G_r(\aS)\ar[r]&1,
}$$
pour lequel le carr\'e de droite est cart\'esien.
\end{proposition}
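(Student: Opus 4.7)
Le plan consiste d'abord \`a construire la fl\`eche verticale $\tG_r^S(F) \to \tG_r(\aS)$, puis \`a v\'erifier qu'elle est un morphisme d'extensions, et enfin \`a observer que le carr\'e de droite est automatiquement cart\'esien.

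Pour construire la fl\`eche verticale, je pars d'un \'el\'ement $(g,v) \in \tG_r^S(F)$, avec $v \in \Delta_g^S - \{0\}$. Par d\'efinition $\Delta_g^S = \bigotimes'_{w \not\in S} \Delta_{g,w}$ est un produit tensoriel restreint (au sens o\`u, pour presque toute place $w$, on a $g \in \G_r(\0_w)$, et alors $\Delta_{g,w}$ est canoniquement trivialis\'ee par l'isomorphisme $\mathrm{triv}$). On \'ecrit donc $v = \bigotimes_w v_w$ o\`u $v_w$ est la trivialisation canonique pour presque tout $w$. Par le Th\'eor\`eme~\ref{chi4} appliqu\'e \`a chaque place $w \not\in S$ (utilisant la Remarque~\ref{rmk2} pour passer de $k$ \`a $k_w$ puis pousser par la norme $N_{k_w/k}$), on dispose d'un isomorphisme d'extensions $N_*\tG_{r,\rm geo}(F_w) \simeq \tG_r(F_w)$. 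De plus, via cet isomorphisme, la trivialisation canonique de $\Delta_{g,w}$ (quand $g \in \G_r(\0_w)$) correspond exactement au scindage $\underline{\kappa}^*_w(g_w)$, puisque ces deux scindages co\"\i ncident sur $T_r(\0_w)$, $W_r$ et $N_r(\0_w)$. Ainsi chaque $v_w$ fournit un \'el\'ement $\tilde{g}_w \in \tG_r(F_w)$, et pour presque tout $w$ on a bien $\tilde{g}_w = \underline{\kappa}^*_w(g_w)$~; la collection $(\tilde{g}_w)_{w\not\in S}$ est donc un \'el\'ement de $\prod'_{w\not\in S} \tG_r(F_w)$, dont l'image dans le quotient $\tG_r(\aS)$ d\'efinit le morphisme cherch\'e.

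La v\'erification que cette application est un morphisme de groupes repose sur le fait que l'isomorphisme de multiplication de $\tG_{r,\rm geo}^S$ (provenant de la contraction $\beta$ et des r\`egles de Koszul sur le produit tensoriel restreint) se d\'ecompose place par place en les isomorphismes de multiplication des $\tG_{r,\rm geo}(F_w)$. Modulo le Th\'eor\`eme~\ref{chi4}, ces derniers co\"\i ncident avec les produits dans $\tG_r(F_w)$ d\'efinis par le cocycle de Kazhdan-Patterson. L'additivit\'e de $\sum_{w\not\in S}\mathrm{tr}_{k_w/k}$ (qui intervient via le poussage par $\prod N_{k_w/k}$) assure que les produits tensoriels globaux se transportent sur les produits dans $\tG_r(\aS)$.

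Pour la propri\'et\'e cart\'esienne, l'argument est formel. Les deux extensions sont par le m\^eme groupe $k^*$, et la fl\`eche verticale de gauche est l'identit\'e sur $k^*$. Pour tout $g \in \G_r(F)$, vu dans $\G_r(\aS)$ via le plongement diagonal, la fibre de $\tG_r^S(F)$ au-dessus de $g$ et celle de $\tG_r(\aS)$ au-dessus de $g$ sont toutes deux des $k^*$-torseurs, et le morphisme construit ci-dessus entre elles est $k^*$-\'equivariant~; c'est donc un isomorphisme. Le carr\'e de droite est alors cart\'esien. L'obstacle principal dans cette preuve se trouve dans l'\'etape de construction proprement dite~: il faut s'assurer que le produit tensoriel restreint est compatible avec le produit restreint des $\tG_r(F_w)$, ce qui n\'ecessite le Th\'eor\`eme~\ref{chi4} et la compatibilit\'e explicite entre la trivialisation canonique g\'eom\'etrique sur $\G_r(\0_w)$ et le scindage $\underline{\kappa}^*_w$ de Kazhdan-Patterson.
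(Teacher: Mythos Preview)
Your proof is correct and follows essentially the same approach as the paper's, which simply states that the assertion follows immediately from the identification $\tG_r(F_v)\simeq\tG_{r,\rm geo}(F_v)$ (Th\'eor\`eme~\ref{chi4}) together with Lemme~\ref{ka3}. You have expanded the argument in more detail---explicitly decomposing $v\in\Delta_g^S$ place by place, checking the compatibility of the canonical trivialisation with $\underline{\kappa}^*_w$, and noting why the cartesian property is formal---but the key ingredients (the place-by-place decomposition of $\Delta_g^S$ and the local comparison of Th\'eor\`eme~\ref{chi4}) are the same as in the paper.
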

\begin{proof}
L'assertion résulte immédiatement de l'identité $\tG_r(F_v)\simeq\tG_{r,\rm geo}(F_v)$ (cf. théorème~\ref{chi4}) et du lemme~\ref{ka3}.
\end{proof}
\begin{rmk}\label{rmk3}
L'extension globale est triviale au-dessus de $\G_r(F)$ lorsqu'on prend $S=\emptyset$ (cf.\cite[p. 50-51]{KP}), i.e $\tG_r(F):=\tG_r^\emptyset(F)$ est une extension scindée. On va le voir géométriquement en la reliant au déterminant de la cohomologie.

Soient $\mathcal{C}$ une courbe lisse (connexe) projective sur $k$, $F$ son corps des fonctions rationelles. On note $\mathbb{A}_{\mathcal{C}}=\prod'_{v\in |\mathcal{C}|}F_v$ et $\0_{\mathbb{A}_{\mathcal{C}}}=\prod_{v\in |\mathcal{C}|}\0_v$ où $\0_v$ est l'anneau des entiers de $F_v$. Soient $\mathcal{F}$, $\mathcal{G}$ deux fibrés vectoriels de rang $r$ sur $\mathcal{C}$ munis un isomorphisme générique entre eux (i.e un isomorphisme $\mathcal{F}_{|U}\simeq \mathcal{G}_{|U}$ défini sur un ouvert $U$ assez petit).\`A l'aide de cet isomorphisme on a que $\mathcal{G}\otimes \0_{\mathbb{A}_{\mathcal{C}}}$ est un $\0_{\mathbb{A}_{\mathcal{C}}}$-réseau dans $\mathcal{F}\otimes \mathbb{A}_{\mathcal{C}}$ et vice versa, de sorte qu'on définit $(\mathcal{G}\otimes \0_{\mathbb{A}_{\mathcal{C}}}|\mathcal{F}\otimes \0_{\mathbb{A}_{\mathcal{C}}})$ comme dans la section 3.1. On a la proposition suivante
\begin{proposition}\label{dc}
Il existe un isomorphisme canonique :
$$\det (R\Gamma\mathcal{G})\otimes (\det(R\Gamma\mathcal{F}))^{\otimes -1}\simeq (\mathcal{F}\otimes \0_{\mathbb{A}_{\mathcal{C}}}|\mathcal{G}\otimes \0_{\mathbb{A}_{\mathcal{C}}})$$
\end{proposition}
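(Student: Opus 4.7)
Le plan est de se ramener au cas o\`u $\mathcal{G}$ est un sous-faisceau de $\mathcal{F}$ (via l'isomorphisme g\'en\'erique), puis d'utiliser la multiplicativit\'e du d\'eterminant de la cohomologie dans les suites exactes courtes. On d\'efinit d'abord le sous-faisceau coh\'erent $\mathcal{H}:=\mathcal{F}\cap\mathcal{G}$ \`a l'int\'erieur du fibr\'e vectoriel g\'en\'erique commun (l'intersection de deux fibr\'es de rang $r$ \`a fibre g\'en\'erique commune sur une courbe de Dedekind est encore un fibr\'e de rang $r$); pour toute place $v$, on a alors $\mathcal{H}\otimes \0_v=(\mathcal{F}\otimes \0_v)\cap(\mathcal{G}\otimes \0_v)$ dans $\mathcal{F}\otimes F_v$. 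Gr\^ace \`a l'isomorphisme de contraction $\beta$ de la proposition~\ref{ACK1}, on obtient
$$(\mathcal{F}\otimes \0_{\mathbb{A}_{\mathcal{C}}}|\mathcal{G}\otimes \0_{\mathbb{A}_{\mathcal{C}}})\simeq (\mathcal{F}\otimes \0_{\mathbb{A}_{\mathcal{C}}}|\mathcal{H}\otimes \0_{\mathbb{A}_{\mathcal{C}}})\otimes (\mathcal{H}\otimes \0_{\mathbb{A}_{\mathcal{C}}}|\mathcal{G}\otimes \0_{\mathbb{A}_{\mathcal{C}}}),$$
ce qui ram\`ene le probl\`eme au cas de deux inclusions $\mathcal{H}\subset\mathcal{F}$ et $\mathcal{H}\subset\mathcal{G}$ de faisceaux coh\'erents.

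Dans le cas particulier o\`u $\mathcal{G}\subset\mathcal{F}$, le faisceau quotient $\mathcal{T}:=\mathcal{F}/\mathcal{G}$ est un faisceau de torsion \`a support fini, donc $R\Gamma(\mathcal{T})=H^0(\mathcal{T})=\bigoplus_v (\mathcal{F}\otimes \0_v)/(\mathcal{G}\otimes \0_v)$ est plac\'e en degr\'e $0$. La suite exacte $0\to\mathcal{G}\to\mathcal{F}\to\mathcal{T}\to 0$ fournit un isomorphisme canonique $\det R\Gamma(\mathcal{F})\simeq \det R\Gamma(\mathcal{G})\otimes \det H^0(\mathcal{T})$, d'o\`u
$$\det R\Gamma(\mathcal{G})\otimes \det R\Gamma(\mathcal{F})^{\otimes -1}\simeq \det H^0(\mathcal{T})^{\otimes -1}\simeq \bigotimes_v \left(\bigwedge (\mathcal{F}\otimes \0_v)/(\mathcal{G}\otimes \0_v)\right)^*.$$
C\^ot\'e ad\'elique, comme $\mathcal{G}\otimes \0_v\subset \mathcal{F}\otimes \0_v$ entra\^\i ne $(\mathcal{F}\otimes \0_v)\cap(\mathcal{G}\otimes \0_v)=\mathcal{G}\otimes \0_v$, la d\'efinition du symbole se r\'eduit \`a $(\mathcal{F}\otimes \0_v|\mathcal{G}\otimes \0_v)=(\bigwedge (\mathcal{F}\otimes \0_v)/(\mathcal{G}\otimes \0_v))^*$, ce qui fournit l'isomorphisme voulu dans ce cas.

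Le cas g\'en\'eral s'obtient en appliquant deux fois le cas particulier (\`a $\mathcal{H}\subset\mathcal{F}$ et \`a $\mathcal{H}\subset\mathcal{G}$) et en combinant les identifications obtenues via la factorisation pr\'ec\'edente; apr\`es simplification du facteur $\det R\Gamma(\mathcal{H})$ qui appara\^\i t deux fois de mani\`ere oppos\'ee, il reste exactement $\det R\Gamma(\mathcal{G})\otimes \det R\Gamma(\mathcal{F})^{\otimes -1}$. L'ind\'ependance du choix de l'ouvert $U$ o\`u l'isomorphisme g\'en\'erique est d\'efini r\'esulte de ce que $\mathcal{H}$, construit comme intersection dans la fibre g\'en\'erique, ne d\'epend pas de $U$.

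Le point d\'elicat de la d\'emonstration est le contr\^ole des degr\'es et des signes dict\'es par la r\`egle de Koszul: le symbole $(A|B)$ est plac\'e en degr\'e $[A:B]$, tandis que $\det R\Gamma(\mathcal{F})$ est naturellement plac\'e en degr\'e $\chi(\mathcal{F})$; la compatibilit\'e se v\'erifie gr\^ace \`a l'identit\'e $\chi(\mathcal{G})-\chi(\mathcal{F})=-\dim H^0(\mathcal{F}/\mathcal{G})$ dans le cas d'une inclusion, puis s'\'etend au cas g\'en\'eral par additivit\'e. Il reste alors \`a v\'erifier que l'isomorphisme de contraction $\beta$ est compatible avec la concat\'enation des suites exactes d\'efinissant les isomorphismes de d\'eterminants, ce qui d\'ecoule essentiellement des axiomes de Deligne \cite{DL} pour le d\'eterminant de la cohomologie.
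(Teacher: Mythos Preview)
Your proof is correct and follows essentially the same approach as the paper: both reduce to the case of an inclusion via an auxiliary subbundle (the paper chooses any $\mathcal{H}$ with $\mathcal{H}\otimes\0_{\mathbb{A}_{\mathcal{C}}}\subset\mathcal{F}\otimes\0_{\mathbb{A}_{\mathcal{C}}}\cap\mathcal{G}\otimes\0_{\mathbb{A}_{\mathcal{C}}}$, you take the canonical choice $\mathcal{H}=\mathcal{F}\cap\mathcal{G}$), then use the long exact cohomology sequence of $0\to\mathcal{G}\to\mathcal{F}\to\mathcal{F}/\mathcal{G}\to 0$ and the identification of $H^0$ of the torsion quotient with the ad\'elic symbol. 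Your discussion of degrees and Koszul signs is a useful addition that the paper leaves implicit.
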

\begin{proof}
On peut supposer que $\mathcal{F}\subset \mathcal{G}$ (sinon, on considère un troisième fibré vectoriel $\mathcal{H}$ muni d'un isomorphisme générique avec $\mathcal{G}$ tel que $\mathcal{H}\otimes \0_{\mathbb{A}_{\mathcal{C}}}\subset (\mathcal{F}\otimes\0_{\mathbb{A}_{\mathcal{C}}}\cap\mathcal{G}\otimes \0_{\mathbb{A}_{\mathcal{C}}})$ dans $\mathcal{G}\otimes \0_{\mathbb{A}_{\mathcal{C}}}$ et est ramené à démontrer la proposition pour les deux couples $(\mathcal{H},\mathcal{F})$ et $(\mathcal{H},\mathcal{G})$).

On considère la suite exacte :
$$0\to \mathcal{F}\hookrightarrow \mathcal{G}\to \mathcal{G}/\mathcal{F} \to 0.$$
Par conséquent, on obtient la suite exacte longue :
$$0\to H^0(\mathcal{F})\to H^0(\mathcal{G}) \to H^0(\mathcal{G}/\mathcal{F}) \to H^1(\mathcal{F})\to H^1(\mathcal{G})\to 0.$$
Elle implique {\small$(\det(H^0(\mathcal{F})))^{\otimes -1}\otimes \det(H^0(\mathcal{G})\otimes (\det(H^0(\mathcal{G}/\mathcal{F})))^{\otimes -1}\otimes \det(H^1(\mathcal{F}))\otimes (\det(H^1(\mathcal{F})))^{\otimes -1}\simeq k$}, de sorte qu'on a:
$$\det (R\Gamma\mathcal{G})\otimes (\det(R\Gamma\mathcal{F}))^{\otimes -1}\simeq (\det(H^0(\mathcal{G}/\mathcal{F})))^{\otimes -1}\simeq (\mathcal{F}\otimes \0_{\mathbb{A}_{\mathcal{C}}}|\mathcal{G}\otimes \0_{\mathbb{A}_{\mathcal{C}}}). $$
\end{proof}
En utilisant cette proposition dans le cas où $\mathcal{F}=\0_{\mathcal{C}}^r$ et $\mathcal{G}$ est le fibré vectoriel associé à $g\in \G_r(F)\setminus \G_r(\mathbb{A}_{\mathcal{C}})/\G_r(\0_{\mathbb{A}_{\mathcal{C}}})$, on a :
$$\det(R\Gamma\mathcal{G})\otimes \det(R\Gamma\0_{\mathcal{C}}^r)^{\otimes -1}\simeq D_g.$$
Quand $g\in \G_r(F)$, on a $\mathcal{G}\simeq \0_{\mathcal{C}}^r$, de sorte qu'on obtient des isomorphisme canoniques
$D_g\can k$ et $\Delta_g \can k$.

Par ailleurs, on a le diagramme commutatif suivant :
$$\xymatrix{D_g\otimes D_{g'}\ar[r]\ar[d]_-{\times g'}& (\det(R\Gamma\mathcal{G})\otimes \det(R\Gamma\0_{\mathcal{C}}^r)^{\otimes -1})\otimes (\det(R\Gamma\mathcal{G}')\otimes \det(R\Gamma\0_{\mathcal{C}}^r)^{\otimes -1}\ar[d]^-{\times g'}\\
D_{gg'}&(\det(R\Gamma\mathcal{G})\otimes \det(R\Gamma\0_{\mathcal{C}}^r)^{\otimes -1})\otimes (\det(R\Gamma\mathcal{G}'')\otimes \det(R\Gamma\mathcal{G})^{\otimes -1},\ar[l]}$$
où $\mathcal{G'}$ et $\mathcal{G}''$ sont des fibrés vectoriels de rang $r$ associés à $g'$ et $gg'$.
Alors, le morphisme $D_g\can k$ qu'on vient de défnir est compatible avec la multiplication du groupe $\tG_r(F)$, de sorte qu'on obtient un scindage au dessus de $\G_r(F)$.

\end{rmk}
\subsubsection{Fonction $\emph{\kappa}$ globale}
Clairement, d'après la construction du groupe métaplectique $S$-global on a un scindage canonique $\underline{\kappa}^*_S$ au-dessus $\G_r(\0(S))$ (qui vient de l'isomorphisme $\Delta_g(S)\can k, \,\forall g\in \G_r(\0(S))$). De plus, on a :
\begin{itemize}
\item ${\underline{\kappa}_S^*}_{|T_r(\0(S))}=s_{{\rm geo}_{|T_r(\0(S))}}$,
\item ${\underline{\kappa}_S^*}_{|W_r}=s_{{\rm geo}_{|W_r}}$
\item ${\underline{\kappa}_S^*}_{|N_r(\0(S))}=s_{{\rm geo}_{|N_r(\0(S))}}$.
\end{itemize} D'après \cite[proposition 0.1.3]{KP}, $\underline{\kappa}^*_S$ est alors un scindage canonique au sens de Kazhdan-Patterson. On considère donc l'application $\underline{\kappa}_S : \G_r(\0(S))\break\ra k^*$ qui est définie par :
$$\underline{\kappa}_S(g)=\underline{\kappa}_S^*(g).s(g)^{-1}$$
(la loi du groupe de $\tG_r^S(F)$ étant notée multiplicativement.)

On note $g_v$ la matrice $g$ vue comme matrice à coefficients dans $F_v$. D'après la proposition~\ref{ka4}, on a :
\begin{proposition}\label{glo4}
$$\underline{\kappa}_S(g)=\prod_{v\not\in S}\underline{\kappa}_v(g_v)\quad \forall\,g\in \G_r(\0(S)).$$
\end{proposition}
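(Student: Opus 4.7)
The plan is to exploit Proposition~\ref{ka4}, which presents $\tG_r^S(F)$ as the pullback of the adelic extension $\tG_r(\aS)$ along $\G_r(F) \hookrightarrow \G_r(\aS)$. Since the adelic extension is itself obtained by pushing $\prod'_{v \notin S} \tG_r(F_v)$ through the norm-product $(x_v) \mapsto \prod_v N_{k_v/k}(x_v)$, the claim reduces to showing that the global canonical splitting $\underline{\kappa}_S^*$ over $\G_r(\0(S))$ corresponds, under this identification, to the family of local canonical splittings $\underline{\kappa}_v^*$ over $\G_r(\0_v)$ for $v \notin S$.

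I would unwind both sides in terms of the geometric definitions introduced in Section~3. The global $\underline{\kappa}_S(g)$ is the ratio $\mathrm{triv}/\delta_g$ inside $\Delta_g^S$, where $\mathrm{triv}$ is the canonical trivialization available for $g \in \G_r(\0(S))$ and $\delta_g$ is the Bruhat section constructed in \S3.2. Analogously, each local $\underline{\kappa}_v(g_v)$ is $\mathrm{triv}_v/\delta_{g_v}$ in $\Delta_{g,v}$. By Lemma~\ref{ka3} applied both to $g$ and to $\det(g)$, one has a canonical factorization $\Delta_g^S \simeq \bigotimes_{v \notin S} \Delta_{g,v}$ of $k$-lines, under which the global trivialization $\mathrm{triv}$ manifestly corresponds to $\bigotimes_v \mathrm{triv}_v$, since both arise from the Chinese remainder identification $\0(S)^r/L \simeq \bigoplus_v \0_v^r/L_v$ on any auxiliary lattice $L$.

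The crux is the corresponding compatibility for the Bruhat section, namely $\delta_g \simeq \bigotimes_{v \notin S} \delta_{g_v}$. The Bruhat decomposition $g = n B(g) n'$ over $F$ specialises to $g_v = n_v B(g)_v n'_v$ over each $F_v$; the definitions of $\delta$ for $n$ and $n'$ (via $\sigma_N$ and Lemma~\ref{n4}) and for the torus-Weyl factor $B(g)$ (via Construction~\ref{chi5}) all proceed by choosing a lattice $M$, decomposing $M/\va^{\n}M$ into finite-dimensional $k$-pieces, and taking wedge products of prescribed bases. Under Chinese remainder, each such piece decomposes as a direct sum over $v \notin S$; the Koszul signs and the determinants of the transition matrices which enter the formulae of \S3.1.4 and Construction~\ref{chi5} then factor multiplicatively across the places. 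Running through this verification for each of the three building blocks of $\delta_g$ yields the desired tensor factorization.

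Combining the two compatibilities, $\underline{\kappa}_S(g) = \mathrm{triv}/\delta_g$ equals $\prod_{v \notin S} \mathrm{triv}_v/\delta_{g_v} = \prod_{v \notin S} \underline{\kappa}_v(g_v)$ in $k^*$. The main obstacle is the sign-bookkeeping in the tensor factorization of $\delta_g$: one must check that the ordering conventions on places, the Koszul rule used to commute the graded $k$-lines $(M|M')_k$, and the particular choice of $\n$ in Construction~\ref{chi5} all line up across the adelic/local decomposition. Most signs are in fact innocuous because, for all but finitely many places, $g_v$ lies in $\G_r(\0_v)$ with trivial Bruhat data, so $\delta_{g_v} = \mathrm{triv}_v$ and the corresponding factors drop out of the product; the argument therefore reduces to the finite set of places at which $g$ has nontrivial Bruhat decomposition, where a direct computation suffices.
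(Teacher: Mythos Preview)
Your approach is correct and is essentially the same as the paper's: the paper treats Proposition~\ref{glo4} as an immediate consequence of Proposition~\ref{ka4} (the Cartesian diagram identifying $\tG_r^S(F)$ with the pullback of the adelic extension), without writing a separate proof. Your proposal simply unpacks what this invocation means---the factorization of $\Delta_g^S$ via Lemma~\ref{ka3} and the compatibility of both the trivial and Bruhat sections with that factorization---which is exactly the content implicit in the paper's one-line justification.
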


\section{Intégrale $J$}
\subsection{Somme locale}
\setcounter{theorem}{0}
\setcounter{subsubsection}{1}
\quad \  Pour tout $\underline{\alpha}=(\alpha_2,\dots,\alpha_{r})\in {(k^*)}^{r-1}$, on note $\theta'_{\underline{\alpha}} : N_r(\fp)\rightarrow \overline{\mathbb{Q}}^{*}_{\ell}$ le caractère défini par $\theta'_{\underline{\alpha}}(n)=\Psi(\frac{1}{2}\sum^{r}_{i=2}\alpha_i n_{i-1,i})$, et $\overline{\alpha}=(\alpha_{r},\dots,\alpha_2)$. La restriction de $\theta'_{\underline{\alpha}}$ à $N_r(\op)$ étant triviale, elle induit une fonction $\theta'_{\underline{\alpha}}$ sur $N_r(\fp)/N_r(\op)$ à valeurs dans $\overline{\mathbb{Q}}^{*}_{\ell}$. Soit $\zeta:k^*\ra\{\pm1\}$ le caract\`ere quadratique non trivial ($\zeta(\lambda)=\lambda^{\frac{q-1}{2}},\,\lambda\in k^*$).

Pour chaque $t=\D(a_1,a_2/a_1,\dots,a_r/a_{r-1})\in T_r(\fp)$, on considère l'ensemble fini (cf. \cite[proposition 1.1.2]{N})
$$Y_{\varpi}(t)(k)=\{(n,n')\in (N_r(\fp)/N_r(\op))^2 | {}^t ntn' \in \G_r(\op) \}.$$
En utilisant le changement de variable $n\mapsto {}^tn''=w_0n^{-1}w_0$, l'intégrale orbitale $J$ de Jacquet-Mao peut s'écrire :
\begin{eqnarray*}
J_{\varpi}(t,\underline{\alpha})&=&\sum_{(n'',n') \in Y_{\varpi}(t)(k)}\underline{\kappa}(w_0{}^tn''tn)\theta'_{\overline{\alpha}}(w_0{}^tn''w_0)\theta'_{\underline{\alpha}}(n')\\
&=&\sum_{(n,n') \in Y_{\varpi}(t)(k)}\underline{\kappa}(w_0{}^tntn)\theta'_{\overline{\alpha}}(w_0{}^tnw_0)\theta'_{\underline{\alpha}}(n').
\end{eqnarray*}

L'ensemble $Y_{\varpi}(t)(k)$ est de manière naturelle l'ensemble des points à valeurs dans $k$ d'une variété algébrique $Y_{\varpi}(t)$ de type fini sur $k$. Cette variété est munie d'un morphisme $h'_{\underline{\alpha}}:Y_{\varpi}(t)\ra \mathbb{G}_a$ défini par
\begin{eqnarray*}
h'_{\underline{\alpha}}(n,n')&=&\sum_{i=2}^{r}\frac{1}{2}\mathrm{res}(\alpha_{r+2-i} n_{r+1-i,r+2-i}d\va)+\sum_{i=2}^{r}\frac{1}{2}\mathrm{res}(\alpha_in'_{i-1,i}d\va)\\
&=&\sum_{i=2}^{r}\frac{1}{2}\mathrm{res}(\alpha_{i} (n_{i-1,i}+n'_{i-1,i})d\va).
\end{eqnarray*} et d'un morphisme $\underline{\kappa} :Y_{\va}(t)\ra \mg_m$ qu'on va maintenant construire (la fonction $\kappa: Y_{\va}(t)(k)\to\{\pm 1\}$ est alors définie par $\kappa=\zeta\circ\underline{\kappa}$).
\begin{const}\label{geokap} Soit $R$ une $k$-algèbre. On note $\0_{\varpi,R}=\op\otimes_k R$ et $F_{\varpi,R}=\fp\otimes_k R$.
On va géométriser la fonction $\underline{\kappa}$ à l'aide des résultats de la section 3.
\begin{enumerate}
\item Soit $g\in \G_r(F_{\varpi,R})$. On va définir une $R$-droite (plus précisément un $R$-module inversible) associée à $g$. Pour cela on a besoin du lemme suivant :
\begin{lemma}(cf.\cite{AF})\label{morphisme1}
Soient $M$ et $M'$ deux $\0_{\varpi,R}^r$-réseaux dans $F_{\varpi,R}^r$ tels que $M'\subset M$. Le $R$-module $M/M'$ est alors localement libre.
\end{lemma}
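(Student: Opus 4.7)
La strat\'egie consiste \`a se ramener \`a une situation Zariski-locale o\`u $M$ et $M'$ sont libres sur $\mathcal{O}_{\varpi,R}$, puis \`a pr\'esenter explicitement $M/M'$ comme conoyau d'une matrice et \`a appliquer un crit\`ere de constance locale des rangs aux fibres.

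D'abord, la commensurabilit\'e des r\'eseaux $M$ et $M'$ par rapport au r\'eseau standard $\mathcal{O}_{\varpi,R}^r$ fournit un entier $N\geq 0$ tel que $\varpi^N M\subset M'\subset M$. Par cons\'equent $M/M'$ est annul\'e par $\varpi^N$ et se pr\'esente comme quotient de $M/\varpi^N M$. Comme $M$ est projectif de rang $r$ sur $\mathcal{O}_{\varpi,R}$, le $R$-module $M/\varpi^N M$ est Zariski-localement libre (de rang $Nr$), ce qui entra\^\i ne en particulier que $M/M'$ est un $R$-module de pr\'esentation finie.

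Ensuite, on travaille Zariski-localement sur $\operatorname{Spec}(R)$ de sorte que $M$ et $M'$ soient libres de rang $r$ sur $\mathcal{O}_{\varpi,R}$. En choisissant des bases, l'inclusion $M'\hookrightarrow M$ est donn\'ee par la multiplication par une matrice $A\in M_r(\mathcal{O}_{\varpi,R})$ dont le d\'eterminant est inversible dans $F_{\varpi,R}$. L'inclusion $\varpi^N M\subset M'$ fournit de plus une matrice $B\in M_r(\mathcal{O}_{\varpi,R})$ avec $AB=BA=\varpi^N\mathrm{Id}_r$, et donc $\det(A)\det(B)=\varpi^{Nr}$.

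Pour chaque point $x\in\operatorname{Spec}(R)$ de corps r\'esiduel $\kappa(x)$, la fibre de $M/M'$ en $x$ s'identifie \`a $\kappa(x)[[\varpi]]^r/A(x)\kappa(x)[[\varpi]]^r$. L'anneau $\kappa(x)[[\varpi]]$ \'etant un anneau de valuation discr\`ete, le th\'eor\`eme des diviseurs \'el\'ementaires donne que la $\kappa(x)$-dimension de cette fibre vaut $v_\varpi(\det A(x))$. La relation $\det(A)\det(B)=\varpi^{Nr}$ et le fait que $\det(A)$ est inversible dans $F_{\varpi,R}$ entra\^\i nent que cette valuation reste constante sur les composantes connexes de $\operatorname{Spec}(R)$. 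Un module de pr\'esentation finie \`a rang fibre localement constant \'etant localement libre, on conclut.

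Le point d\'elicat est le contr\^ole, en famille, de la valuation $\varpi$-adique de $\det(A)$ : elle pourrait a priori sauter aux points sp\'eciaux, et seule la structure particuli\`ere de $\det(A)$ comme ``facteur quasi-mon\^omial'' de $\varpi^{Nr}$, h\'erit\'ee de l'identit\'e $AB=\varpi^N\mathrm{Id}_r$, exclut ce ph\'enom\`ene et fournit la constance locale du rang.
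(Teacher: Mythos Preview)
The paper does not prove this lemma; it simply refers to [AF] (Genestier--Lafforgue). So there is no proof in the paper to compare against, but your proposal contains a genuine gap.

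Your steps up to and including the fibre-rank computation are essentially correct. In particular, the local constancy of $x\mapsto v_\varpi(\det A(x))$ follows exactly as you indicate: both this function and $x\mapsto v_\varpi(\det B(x))$ are upper semi-continuous and their sum is the constant $Nr$, so each is locally constant. The gap is in the final implication. The assertion ``un module de pr\'esentation finie \`a rang fibre localement constant est localement libre'' is \emph{false} over non-reduced rings: for $R=k[\epsilon]/(\epsilon^2)$, the $R$-module $R/(\epsilon)$ is finitely presented with constant fibre rank~$1$ over the one-point space $\operatorname{Spec}(R)$, yet it is not free. Since the lemma is stated for an arbitrary $k$-alg\`ebre $R$ and is used functorially in the construction immediately following it, you cannot assume $R$ reduced.

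What is missing is flatness of $M/M'$ over $R$; constancy of fibre rank alone does not give it. One way to close the gap in rank~$1$: once $v_\varpi(\det A(x))=d$ is constant, the coefficients $a_0,\dots,a_{d-1}$ of $\det(A)$ lie in every prime of $R$, hence are nilpotent and generate a nilpotent ideal $I$. Then $R$ is trivially $I$-adically complete, so Weierstrass preparation writes $\det(A)=u\cdot p$ with $u\in R[[\varpi]]^\times$ and $p$ monic of degree~$d$, whence $R[[\varpi]]/(\det A)$ is free of rank~$d$ over $R$. Extending this to matrices of size $r>1$ requires a further argument (there is no Smith normal form over $R[[\varpi]]$ for general $R$), and this is precisely what the cited reference~[AF] provides.
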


Soit $g\in \G_r(F_{\varpi,R})$. On choisit un $\0_{\varpi,R}^r$-réseau $M$ tel que $M\subset g\0_{\varpi,R}^r\cap\0_{\varpi,R}^r$. En utilisant le lemme~\ref{morphisme1} ci-dessus, $g(\0_{\varpi,R}^r)/M$ et $\0_{\varpi,R}/M$ sont des $R$-modules localements libres. On considère alors les puissances extérieures de degré maximal $\bigwedge (g(\0_{\varpi,R}^r)/M)$, $\bigwedge(\0_{\varpi,R}^r/M)$ et on note $D_{g,R}$ la $R$-droite  $$(\0_{\varpi,R}^r|g\0_{\varpi,R}^r)=(\bigwedge(\0_{\varpi,R}^r/M)^*\otimes_R\bigwedge (g(\0_{\varpi,R}^r)/M).$$
Cette définition ne dépend pas du choix de $M$. En particulier, pour $g\in \G_r(\0_{\varpi,R})$ cette droite est canoniquement trivialisée.

Comme dans la section 3, on pose $\Delta_{g,R}=D_{\det(g),R}\otimes D_{g,R}^{\otimes -1}$.
\item  On trouve une trivialisation naturelle de $\Delta_{n,R}$ pour $n\in N_r(F_{\varpi,R})$, à l'aide des deux généralisations suivantes des lemmes~\ref{n2} et~\ref{n3} (les démonstrations données plus haut s'étendent sans problème).
\begin{lemma}
Soit $n \in N_r(F_{\varpi,R})$, il existe alors un $\0_{\varpi,R}^r$-r\'eseau $M$  tel que $nM=M$
\end{lemma}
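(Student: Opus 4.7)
Le plan est d'adapter directement la d\'emonstration du lemme~\ref{n2} (cas absolu sur $k$) au cadre relatif sur la $k$-alg\`ebre $R$. On va chercher un r\'eseau diagonal de la forme $M = \D(\va^{m_1},\dots,\va^{m_r})\,\0_{\va,R}^r$ stable par $n$, pour une suite d'exposants $m_1\ll m_2\ll\dots\ll m_r$ (strictement croissante avec des sauts suffisants) \`a choisir. En \'ecrivant $n$ sous la forme triangulaire sup\'erieure unipotente de coefficients $n^i_j\in F_{\va,R}$ (pour $i<j$), la stabilit\'e $nM\subset M$ se ram\`ene ligne par ligne aux inclusions $n^i_j\,\va^{m_j}\0_{\va,R}\subset\va^{m_i}\0_{\va,R}$ pour tous $i<j$.

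La premi\`ere \'etape consiste \`a observer que $F_{\va,R}=\fp\otimes_k R=\bigcup_{N\geq 0}\va^{-N}\0_{\va,R}$, puisque $\fp=\bigcup_N\va^{-N}\op$ et que le produit tensoriel commute aux limites inductives filtrantes. Il en r\'esulte que chaque coefficient $n^i_j$ a un ordre polaire born\'e: il existe un entier $N_{ij}\geq 0$ tel que $n^i_j\in\va^{-N_{ij}}\0_{\va,R}$. Comme il n'y a qu'un nombre fini de coefficients, un m\^eme entier $N$ convient pour tous, i.e.\ $\va^N n^i_j\in\0_{\va,R}$ pour tous $i<j$.

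Ensuite, on choisit les $m_i$ par r\'ecurrence descendante de $i=r$ \`a $i=1$, en posant $m_r=0$ et $m_i=m_{i+1}-N$. Pour tout couple $i<j$ on a alors $m_j-N\geq m_i$, d'o\`u $n^i_j\va^{m_j}\in\va^{m_j-N}\0_{\va,R}\subset\va^{m_i}\0_{\va,R}$ et donc $nM\subset M$. L'inclusion r\'eciproque $M\subset nM$ r\'esulte soit du m\^eme argument appliqu\'e \`a $n^{-1}$ (qui est encore une matrice unipotente triangulaire sup\'erieure, \`a coefficients donc \'egalement d'ordre polaire born\'e dans $F_{\va,R}$), soit plus conceptuellement de ce que l'action induite par $n$ sur un quotient $M/\va^K M$ (avec $K$ assez grand) est unipotente, donc bijective.

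Je ne m'attends \`a aucune difficult\'e v\'eritable: le passage au cas relatif est sans obstacle essentiel puisque l'on travaille avec le produit tensoriel alg\'ebrique $\fp\otimes_k R$ (et non avec la compl\'etion $R(\!(\va)\!)$), de sorte que tout \'el\'ement de $F_{\va,R}$ a une partie polaire finie au sens ci-dessus. Le seul point technique \`a soigner est la minoration uniforme $N$ des ordres polaires des $n^i_j$, laquelle est toutefois imm\'ediate par finitude de la matrice et ne requiert aucune hypoth\`ese suppl\'ementaire sur $R$.
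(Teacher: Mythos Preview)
Your proof is correct and is essentially the paper's approach: the paper does not give a separate argument here but simply asserts that the proof of lemme~\ref{n2} (the absolute case) \og s'\'etend sans probl\`eme\fg{} to the relative setting, and your argument is an explicit rendering of that extension, including the key observation $F_{\va,R}=\bigcup_{N}\va^{-N}\0_{\va,R}$. For the reverse inclusion, one should either take $N$ large enough to bound the polar orders of the entries of both $n$ and $n^{-1}$, or more directly note that $(n-1)^r=0$ (strict upper-triangularity), so that $n^{-1}=\sum_{k=0}^{r-1}(-1)^k(n-1)^k$ preserves $M$ as soon as $n$ does---no passage to the quotients $M/\va^K M$ is needed.
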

\begin{lemma}
Soit $n\in N_r(F_{\varpi,R})$.Soient $M$, $M'$ deux $\0_{\varpi,R}^r$-r\'eseaux tels que   $nM=M$ et $nM'=M'$. Le diagramme
$$\xymatrix{(M|nM)\ar[rr]^-{(~\ref{n1})}\ar[dr]_-{\textrm{\'evident}}&&(M'|nM')\ar[dl]^-{\textrm{\'evident}}\\
&k.
}$$
est alors commutatif.
\end{lemma}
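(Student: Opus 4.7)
La stratégie est d'adapter mot pour mot la démonstration du lemme~\ref{n3} au cadre relatif, en remplaçant partout les $k$-espaces vectoriels de dimension finie par des $R$-modules localement libres de rang fini (comme garantie par le lemme~\ref{morphisme1}). Les deux seules subtilités à contrôler sont la notion de réseau dans le cadre relatif et la nilpotence de $n-\mathrm{Id}$ sur les quotients correspondants.

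Premièrement, je me ramènerais au cas $M'\subset M$ en considérant le $\0_{\varpi,R}$-module $M''=M\cap M'$. Il faut vérifier que $M''$ est encore un $\0_{\varpi,R}^r$-réseau dans $F_{\varpi,R}^r$ (au sens que $M/M''$ et $M'/M''$ sont des $R$-modules localement libres de rang fini) et que $nM''=M''$ ; la stabilité par $n$ est formelle à partir de $nM=M$ et $nM'=M'$, et la propriété d'être un réseau résulte d'un argument standard de platitude sur $R$. Alors le cas général se déduit en composant les deux cas $(M,M'')$ et $(M',M'')$ où l'une des inclusions vaut.

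Deuxièmement, sous l'hypothèse $M'\subset M$, la commutativité du triangle se ramène à montrer que l'automorphisme
$$(M'|M)\xrightarrow{\times n}(nM'|nM)=(M'|M)$$
est l'identité. Par définition du $R$-module inversible $(M'|M)=\bigwedge_R(M/M')$ (qui a un sens grâce au lemme~\ref{morphisme1}), il suffit de vérifier que l'automorphisme $R$-linéaire induit par $n$ sur la puissance extérieure maximale $\bigwedge_R(M/M')$ est trivial. Or $n\in N_r(F_{\varpi,R})$ est triangulaire supérieure unipotente, donc son action sur le quotient $M/M'$ reste unipotente (en filtrant $M/M'$ par l'image du drapeau standard, $n-\mathrm{Id}$ agit comme un endomorphisme nilpotent). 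Un endomorphisme unipotent d'un $R$-module localement libre de rang fini a déterminant $1$ (vérification locale, où le module devient libre et $n$ se représente par une matrice triangulaire unipotente), donc induit l'identité sur $\bigwedge_R(M/M')$.

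L'obstacle principal est purement technique et réside dans la vérification que $M\cap M'$ est bien un $\0_{\varpi,R}^r$-réseau au sens voulu ; le reste de l'argument (réduction au calcul du déterminant d'un endomorphisme unipotent) est formellement identique à celui du cas absolu traité dans le lemme~\ref{n3} et ne présente aucune difficulté supplémentaire dans le cadre relatif grâce au lemme~\ref{morphisme1}. Cette extension est précisément ce qui est nécessaire pour pouvoir définir la trivialisation $\mathfrak{d}_{n,R}$ de $\Delta_{n,R}$ de manière indépendante du choix du réseau $M$ avec $nM=M$, et donc pour obtenir l'analogue relatif du scindage $\sigma_N$ du lemme~\ref{n4}.
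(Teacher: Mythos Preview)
Your proposal is correct and follows essentially the same approach as the paper: the paper simply states that ``les d\'emonstrations donn\'ees plus haut s'\'etendent sans probl\`eme'', i.e.\ the proof of Lemma~\ref{n3} carries over verbatim to the relative setting. You have spelled out precisely this extension, including the reduction to $M'\subset M$ via $M\cap M'$ and the key point that the unipotence of $n$ on $M/M'$ forces the induced automorphism of $(M'|M)$ to be the identity; your extra care about the lattice property of $M\cap M'$ and the determinant computation over a general base $R$ is justified but does not change the argument.
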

\item On construit une base $\delta_R(w_0t)$ de $\Delta_{w_0t,R}$ en prenant la construction~\ref{chi5}.
\item Comme $\Delta_{nw_0tn'}\can \Delta_{n}\otimes\Delta_{w_0t}\otimes\Delta_{n'}$, on obtient une base $\delta_R(nw_0tn')$ de $\Delta_{nw_0tn',R}$ et on pose $\underline{\kappa}(nw_0tn')=\delta_R(nw_0tn')/{\rm triv}$ pour $nw_0tn'\in \G_r(F_{\varpi,R})$.
\end{enumerate}
La fonction $\underline{\kappa} : Y(t)(R)\to R^*$ est définie par $(n,n')\mapsto \underline{\kappa}((w_0{}^tnw_0)w_0tn')$. Comme la construction ci-dessus commute au changement de base et généralise celle de la section 3 (dans le cas où $R$ est un corps) on obtient bien un morphisme $\underline{\kappa} :Y(t)\to \mg_m$.
\end{const}

On note $\overline{Y}_{\varpi}(t)=Y_{\varpi}(t)\otimes_k\overline{k}$. Soit $\mathcal{L}_{\zeta}$ le faisceau de Kummer sur $\mathbb{G}_m$ associé au revêtement $\mathbb{G}_m\ra\mathbb{G}_m,\, x\mapsto x^2$ et au caractère non trivial $\zeta$ de $\{\pm1\}$. D'après la formule des traces de Grothendieck-Lefschetz, on a :
$$ J_{\varpi}(t,\underline{\alpha}):=\mathrm{Tr}(\mathrm{Fr},\mathrm{R}\Gamma_c(\overline{Y}_{\varpi}(t),{h'}_{\underline{\alpha}}^*\mathcal{L}_{\psi}\otimes\underline{\kappa}^*\mathcal{L}_{\zeta})).$$
\subsection{Somme globale}
\setcounter{theorem}{0}
\setcounter{subsubsection}{1}
\quad Comme pour  l'intégrale $I$, on ne connaît pas explicitement $\mathrm{R}\Gamma_c(\overline{Y}_{\varpi}(t),\break{h'}_{\underline{\alpha}}^*\mathcal{L}_{\psi}\otimes\underline{\kappa}^*\mathcal{L}_{\zeta})$ car la variété $\overline{Y}_{\varpi}(t)$ est trop compliquée, donc on a besoin de  globaliser. Dans cette partie, on va introduire deux d\'efinitions de  l'intégrale $J$ globale. La première utilise les int\'egrales locales  et la propriété de multiplicativit\'e et la deuxième est géométrique. Bien entendu, ces deux d\'efinitions donnent le m\^eme r\'esultat (voir le lemme~\ref{compint}).

\begin{enumerate}

\item Première d\'efinition

On rappelle que ${\0}={k}[\varpi]$ est l'anneau des polynômes en une variable $\varpi$ à coefficients dans ${k}$, et ${F}$ est son corps des fractions. Pour toute place $v$, ${\0}_v$ est le complété de ${\0}$ en $v$, ${F}_v$ est son corps des fractions, et $k_v$ est son corps résiduel. On note $t_v$ l'image de $t$ dans  $T_r(F_v)$.

Soit $t=\D(a_1,a_2/a_1,\dots,a_r/a_{r-1})$ où $a_i\in \0-\{0\}$. Désormais, on note
$Y_v(t)=\{(n,n')\in (N_r(F_v)/N_r(\0_v))^2| {}^tnt_vn'\in \mathfrak{gl}_r(\0_v) \}$ et $\theta'_{\underline{\alpha}}(n_v)=\psi(\sum_{i=2}^r\alpha_i tr_{k_v/k}\mathrm{res}_{v}(n_{i-1,i}{\rm d}\varpi))$.

Pour toute place $v\not | a_r $, on pose
$$J_v(t,\underline{\alpha})=\int_{N_r(F_v)\times N_r(F_v)}f_v(w_0{}^tnt_vn')\theta'_{\overline{\alpha}}(w_0{}^tnw_0)\theta'_{\underline{\alpha}}(n')dndn',$$
où $f_v(w_0{}^tnt_vn')=\begin{cases}\zeta(\underline{\kappa}_v(w_0{}^tnt_vn'))& \textrm{si }w_0{}^tnt_vn'\in \G_r(\0_v)\\0&\textrm{sinon}\end{cases}$.\\En utilisant que $a_r\in \0_v^*$ implique ${}^tN_r(F_v)t_vN_r(F_v)\cap \G_r(\0_v)={}^tN_r(F_v)t_vN_r(F_v)\cap \mathfrak{gl}_r(\0_v)$ (cf. \cite[proposition 1.1.1]{N}), on peut aussi écrire :
$$J_v(t,\underline{\alpha})=\sum_{(n,n')\in Y_v(t)}\zeta(\underline{\kappa}_v(w_0{}^tnt_vn'))\theta'_{\overline{\alpha}}(w_0{}^tnw_0)\theta'_{\underline{\alpha}}(n').$$

Pour toute place $v|a_r$, on pose
$$J_v(t,\underline{\alpha})=\int_{N_r(F_v)\times N_r(F_v)}\mathbb{I}_{\mathfrak{gl}_r(\0_v)}(w_0{}^tnt_vn')\theta'_{\overline{\alpha}}(w_0{}^tnw_0)\theta'_{\underline{\alpha}}(n')dndn',$$
où $\mathbb{I}_{\mathfrak{gl}_r(\0_v)}$ est la fonction caractéristique de l'ensemble $\mathfrak{gl}_r(\0_v)$. Cette intégrale s'\'ecrit aussi :
$$J_v(t,\underline{\alpha})=\sum_{(n,n')\in Y_v(t)}\theta'_{\overline{\alpha}}(w_0{}^tnw_0)\theta'_{\underline{\alpha}}(n').$$
\begin{defn} \label{glo1}
On introduit l'intégrale orbitale $J$ globale :
$$J^{(1)}(t,\underline{\alpha})=\displaystyle\prod_{v\neq\infty} J_{v}(t,\underline{\alpha}).$$
\end{defn}
\begin{lemma}\label{glo3}Si $v\not |\prod_{i=1}^{r-1}a_i$, alors $J_v(t,\underline{\alpha})=1.$
\end{lemma}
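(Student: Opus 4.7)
Le plan est de suivre mot pour mot la d\'emonstration de l'\'enonc\'e analogue pour $I_v$ dans la section~2, en d\'emontrant d'abord que $Y_v(t)(k)$ est r\'eduit au singleton $\{({\rm Id}_r,{\rm Id}_r)\}$, puis en \'evaluant le sommande en ce point.

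D'abord on proc\`ede \`a la r\'eduction du domaine. Comme $v\nmid \prod_{i=1}^{r-1}a_i$, on a $a_i\in \0_v^*$ pour tout $i=1,\dots,r-1$. En adaptant l'argument de \cite[corollaire 1.1.5]{N} (la condition $^tntn'\in \mathfrak{gl}_r(\0_v)$ remplace ici la condition $^tntn\in S_r(\0_v)$), on proc\`ede par r\'ecurrence sur $r$~: en regardant les mineurs principaux successifs de $^tnt_vn'$ et en utilisant que les $a_i$ sont des unit\'es pour $i<r$, on montre que n\'ecessairement $n,n'\in N_r(\0_v)$, donc qu'ils repr\'esentent la classe triviale dans $N_r(F_v)/N_r(\0_v)$.

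Il reste \`a \'evaluer le sommande en $(n,n')=({\rm Id}_r,{\rm Id}_r)$. Les coefficients surdiagonaux $n_{i-1,i}$ et $n'_{i-1,i}$ sont nuls, donc $\theta'_{\overline{\alpha}}(w_0{}^tnw_0)=\theta'_{\underline{\alpha}}(n')=1$. Dans le cas o\`u $v\mid a_r$, c'est tout ce qu'il y a \`a \'evaluer et on obtient directement $J_v(t,\underline{\alpha})=1$. Dans le cas o\`u $v\nmid a_r$, il faut en outre calculer $\zeta(\underline{\kappa}_v(w_0t_v))$. Comme $t_v\in T_r(\0_v)$ (toutes les valuations de ses coefficients diagonaux sont nulles), tous les symboles mod\'er\'es $\{t_i,t_j\}$ ainsi que les expressions du type $\{-1,t_i/t_j\}$ et $\{-1,\det(t_v)\}$ qui interviennent dans la Proposition~3.1.2 sont triviaux, de sorte que $\chi(w_0,t_v)=1$. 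En utilisant alors la relation $\underline{\kappa}(g_1g_2)=\underline{\kappa}(g_1)\underline{\kappa}(g_2)\chi(g_1,g_2)$ et les restrictions triviales $\underline{\kappa}|_{W_r}=1$ et $\underline{\kappa}|_{T_r(\fp)\cap \G_r(\0_v)}=1$, on conclut $\underline{\kappa}_v(w_0t_v)=1$, d'o\`u $\zeta(\underline{\kappa}_v(w_0t_v))=1$, et $J_v(t,\underline{\alpha})=1$.

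L'obstacle principal est de v\'erifier que $\chi(w_0,t_v)=1$ quand $t_v\in T_r(\0_v)$, puisque $w_0$ n'est pas une r\'eflexion simple et la formule explicite de la Proposition~3.1.2 concerne $\chi(\alpha,t)$ pour une transposition simple $\alpha$. On peut contourner cette difficult\'e soit en it\'erant la formule pour des transpositions simples gr\^ace \`a la propri\'et\'e de $2$-cocycle, soit (plus \'el\'egamment) en invoquant l'interpr\'etation g\'eom\'etrique : d'apr\`es la Proposition~3.2.10, $\chi_{\rm geo}(g,g')$ est un produit de symboles mod\'er\'es et de signes qui ne d\'ependent que des valuations $v(t_i)$ des entr\'ees diagonales des composantes de Bruhat ; comme ces valuations sont toutes nulles, tous les facteurs disparaissent.
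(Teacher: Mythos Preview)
Your proof is correct and follows essentially the same line as the paper's own proof: reduce $Y_v(t)$ to the single point $({\rm Id}_r,{\rm Id}_r)$ via \cite[corollaire~1.1.5]{N}, then evaluate the summand and, in the case $v\nmid a_r$, use the multiplicativity relation for $\underline{\kappa}$ together with $\underline{\kappa}|_{W_r}=\underline{\kappa}|_{T_r(\0_v)}=1$ to reduce to $\chi_v(w_0,t_v)=1$. In fact you are more explicit than the paper, which simply asserts ``\`a l'aide de la formule de d\'efinition du $2$-cocycle $\chi_v$, on a $\chi_v(w_0,t_v)=1$''; your discussion of how to pass from the formula for $\chi(\alpha,t)$ with $\alpha$ a simple reflection to $\chi(w_0,t)$ (either by iterating via the cocycle identity or by noting that in the geometric formula every factor vanishes when all $v(t_i)=0$) fills in exactly what the paper leaves implicit.
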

\begin{proof}[Démonstration]
D'après \cite[corollaire 1.1.5]{N}, $Y_v(t)$ est réduit à l'élément $(n,n')=(Id_r,Id_r)$ (car $a_i\in\0_v^*\ \forall i\in\{1\dots,r-1\}$. Donc
$$ J_v(t,\underline{\alpha})=\begin{cases}\zeta(\underline{\kappa}_v (w_0t_v))&\text{si }v\not|a_r\\
1&\text{si }v|a_r.\end{cases}$$

Si $v\nmid a_r$, on a alors $t_v\in \G_r(\0_v)$. En utilisant la propriété de la fonction $\underline{\kappa}_v$, on a:
\begin{eqnarray*}
\underline{\kappa}_v(w_0t_v)&=&\underline{\kappa}_v(w_0)\underline{\kappa}_v(t_v)\chi_v(w_0,t_v)\\
&=&\chi_v(w_0,t_v)
\end{eqnarray*}

A l'aide de la formule de définition de $2$-cocycle $\chi_v$, on a : $\chi_v(w_0,t_v)=1$.
Par conséquent, $J_v(t,\underline{\alpha})=1$.\qedhere

\end{proof}

\item Deuxième d\'efinition :

Soit $t=\D(a_1,a_2/a_1,\dots,a_r/a_{r-1})$ où $a_i\in \0-\{0\}$. Pour tout $x \in {F}$, $\mathrm{sres}(x{\rm d}\varpi)$ est la somme des résidus en tous ses pôles  à distance finie (c.à.d en les points de $\mathrm{Spec}(\0)$).  On notera $\Psi$ le caractère de $F$ défini par $\Psi(x)=\psi(\mathrm{sres}(x{\rm d}\varpi))$. Pour tout $\underline{\alpha}\in {(k^*)}^{r-1}$, on note $\theta'_{\underline{\alpha}}: N_r(F)\rightarrow \overline{\mathbb{Q}}^{*}_{\ell}$ le caractère défini par $\theta'_{\underline{\alpha}}(n)=\Psi(\frac{1}{2}\sum^{r}_{i=2}\alpha_i n_{i-1,i})$. Sa restriction à $N_r(\0)$ étant triviale, elle induit une fonction $\theta'_{\underline{\alpha}}$ sur $N_r(F)/N_r(\0)$ à valeurs dans $\overline{\mathbb{Q}}^{*}_{\ell}$.

On introduit la variété de type fini $Y(t)$ dont l'ensemble des $k$ points est
$$Y(t)(k)=\{(n,n')\in (N_r(F)/N_r(\0)^2|{}^tntn'\in \mathfrak{gl}_r(\0)\}.$$
Cette variété est munie d'un morphisme $$h'_{\underline{\alpha}}(n,n')=\frac{1}{2}\sum_{i=2}^r\alpha_i\mathrm{sres}((n_{i-1,i}+n'_{i-1,i})d\varpi),$$
et d'un morphisme $\underline{\kappa} :Y(t)\ra \mg_m$ défini par
$$\underline{\kappa}(n,n')=\prod_{v|\prod_{i=1}^{r-1}} \underline{\kappa}_v(w_0{}^tn_vt_vn'_v),$$
où les fonctions $\underline{\kappa}_v$ sont celles définies dans la section 3.2.
\begin{defn}\label{glo2}On introduit l'intégrale $J$ globale :
$$J^{(2)}(t,\underline{\alpha})=\sum_{(n,n')\in Y(t)(k)}\zeta(\underline{\kappa}(w_0{}^tntn'))\theta'_{\overline{\alpha}}(w_0{}^tnw_0)\theta'_{\underline{\alpha}}(n').$$
\end{defn}

\end{enumerate}
\begin{proposition}\label{compint} Lorsque ${\rm pgcd}(a_r,\prod_{i=1}^{r-1}a_i)=1$,  les deux définitions dans ~\ref{glo1} et ~\ref{glo2} donnent le m\^eme r\'esultat.
\end{proposition}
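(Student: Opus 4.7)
L'idée est de factoriser le membre de droite de la définition~\ref{glo2} en produit sur les places, en trois temps (ensemble d'indexation, caractère $\theta'$, fonction $\underline{\kappa}$). Pour l'ensemble d'indexation, la décomposition $F/\0 = \bigoplus_{v \neq \infty} F_v/\0_v$ (somme des résidus) et le fait que $N_r$ soit une extension itérée de $\mg_a$ fournissent $N_r(F)/N_r(\0) = \bigoplus_{v \neq \infty} N_r(F_v)/N_r(\0_v)$. La condition ${}^t n t n' \in \mathfrak{gl}_r(\0)$ équivaut alors aux conditions locales ${}^t n_v t_v n'_v \in \mathfrak{gl}_r(\0_v)$ pour toute place $v$, d'où une injection $Y(t)(k) \hookrightarrow \prod_{v \neq \infty} Y_v(t)(k)$. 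D'après \cite[corollaire 1.1.5]{N}, aux places où $a_1, \dots, a_{r-1} \in \0_v^*$ on a $Y_v(t)(k) = \{(\mathrm{Id}_r, \mathrm{Id}_r)\}$, et l'hypothèse $\mathrm{pgcd}(a_r, \prod_{i=1}^{r-1} a_i) = 1$ assure que les places $v \mid a_r$ entrent elles aussi dans ce cas : on obtient la bijection $Y(t)(k) \simeq \prod_{v \in \mathrm{supp}(t)} Y_v(t)(k)$.

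Pour le caractère, l'identité $\mathrm{sres}(x \, \mathrm{d}\varpi) = \sum_v \mathrm{tr}_{k_v/k} \mathrm{res}_v(x \, \mathrm{d}\varpi)$ donne $\theta'_{\overline{\alpha}}(w_0 {}^t n w_0) \, \theta'_{\underline{\alpha}}(n') = \prod_v \theta'_{\overline{\alpha}, v}(w_0 {}^t n_v w_0) \, \theta'_{\underline{\alpha}, v}(n'_v)$. Pour la fonction $\underline{\kappa}$, la construction~\ref{geokap} et la proposition~\ref{glo4} entraînent
$$\underline{\kappa}(w_0 {}^t n t n') = \prod_{v \mid \prod_{i=1}^{r-1} a_i} \underline{\kappa}_v(w_0 {}^t n_v t_v n'_v),$$
et la multiplicativité de $\zeta$ transmet cette factorisation à $\zeta \circ \underline{\kappa}$. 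Sous l'hypothèse de $\mathrm{pgcd}$, toute place intervenant dans ce produit vérifie $v \nmid a_r$, ce qui correspond précisément à la branche de la définition de $J_v$ contenant le facteur $\zeta(\underline{\kappa}_v(\cdot))$ ; les places $v \mid a_r$ sont absentes du produit, en accord avec l'autre branche de la définition de $J_v$ qui ne fait pas intervenir $\underline{\kappa}_v$.

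En combinant ces trois factorisations, la somme définissant $J^{(2)}(t, \underline{\alpha})$ se réorganise en $\prod_{v \in \mathrm{supp}(t)} J_v(t, \underline{\alpha})$, qui vaut $\prod_{v \neq \infty} J_v(t, \underline{\alpha}) = J^{(1)}(t, \underline{\alpha})$ d'après le lemme~\ref{glo3}. La seule subtilité réelle est la compatibilité locale-globale de $\underline{\kappa}$, qui repose sur le diagramme cartésien de la proposition~\ref{ka4} ; une fois celle-ci acquise, la démonstration est une réorganisation formelle des sommes, et le rôle essentiel de l'hypothèse $\mathrm{pgcd}(a_r, \prod_{i=1}^{r-1} a_i) = 1$ est d'éliminer du produit global les places $v \mid a_r$ en lesquelles la fonction $\underline{\kappa}$ n'est pas définie globalement.
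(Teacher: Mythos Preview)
Your proof is correct and follows essentially the same approach as the paper: the paper's argument is a very terse two-line sketch invoking the multiplicativity of the global $\underline{\kappa}$ (which you justify via proposition~\ref{glo4}) together with \cite[proposition 1.3.2]{N} (which you unpack into the factorization of $Y(t)(k)$ and of $\theta'$), plus lemma~\ref{glo3} on the $J^{(1)}$ side. You have simply spelled out in detail what the paper compresses into two citations.
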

\begin{proof}[Démonstration] 
On rappelle que $\mathrm{supp}(t)=\mathrm{Spm}(\0/\prod_{i=1}^{r-1} t_i)$.\\
D'après le lemme ~\ref{glo3}, on a  :  $J^{(1)}(t,\underline{\alpha})=\prod_{v\in \mathrm{supp}(t)}J_v(t,\underline{\alpha})$.\\
D'après la propriété multiplicative de la fonction $\underline{\kappa}$ globale et \cite[proposition 1.3.2]{N} on a :
$J^{(2)}(t,\underline{\alpha})=\prod_{v\in \mathrm{supp}(t)}J_v(t,\underline{\alpha}).$

Par conséquent, on a : $J^{(1)}(t,\underline{\alpha})=J^{(2)}(t,\underline{\alpha})$.
\end{proof}
On ajoute une barre pour indiquer le changement de corps de $k$ à $\overline{k}$. On définit les triples $(\overline{Y}(t),h'_{\underline{\alpha}},\underline{\kappa})$ et $(\overline{Y}_v(t),h'_{\underline{\alpha},v},\underline{\kappa}_v)$, où $v\in \mathrm{Spm}(\overline{\0})$ comme on a fait pour le cas local (4.1). On note encore $\mathrm{supp}(t)=\mathrm{Spm}(\overline{\0}/\break\prod_{i=1}^{r-1}a_i\overline{\0})$. En utilisant $\underline{\kappa}(w_0{}^tntn')=\prod_{v\in \mathrm{supp}(t)}\underline{\kappa}_v(w_0{}^tn_vt_vn'_v)$, on a une forme  cohomologique de la proposition~\ref{compint} (cf. \cite[corollaire 3.2.3]{N} pour la démonstration) : quand ${\rm pgcd}(a_r,\prod_{i=1}^{r-1}a_i)=1$,
$$\mathrm{R}\Gamma_c(\overline{Y}(t),{h'}_{\underline{\alpha}}^{*}\mathcal{L}_{\psi}\otimes \underline{\kappa}^*\mathcal{L}_{\zeta})=\bigotimes_{v\in \mathrm{supp}(t)}\mathrm{R}\Gamma_c(\overline{Y}_{v}(t),{h'}_{\underline{\alpha},v}^{*}\mathcal{L}_{\psi}\otimes\underline{\kappa}_v^*\mathcal{L}_{\zeta}).$$

Supposons $t=\D(a_1,\frac{a_2}{a_1},\dots,\frac{a_r}{a_{r-1}})$, où les $a_i$ sont des polynôme unitaires dont on fixera les degrés $d_i=\deg(a_i)$. On note $V_{\ud}=\{(a_1,\dots,a_r)\in Q_{\ud}\,|\mathrm{pgcd}(\prod_{i=1}^{r-1}a_i,a_r)=1\}$. Le triple $({Y}(t),h'_{\underline{\alpha}},\underline{\kappa})$ se mettent en famille  de  sorte  qu'on  obtient  une  variété ${Y}_{\ud}$ de  type  fini  sur ${k}$ munie de trois morphismes $f_{\ud}^Y:{Y}_{\ud}\times\mathbb{G}_m^{r-1}\ra V_{\ud}\times \mathbb{G}_m^{r-1}$,  $h'_{\ud}:{Y}_{\ud}\times\mathbb{G}_m^{r-1}\ra\mathbb{G}_a$, et $\underline{\kappa}_{\ud}:Y_{\ud}\times\mg_m^{r-1}\ra\mg_m$ tels que $\overline{Y}(t)$ et $ \mathrm{R}\Gamma_c(\overline{Y}(t),{h'}_{\underline{\alpha}}^{*}\mathcal{L}_{\psi}\otimes\underline{\kappa}^*\mathcal{L}_{\zeta})$ sont respectivement les fibres en $(t,\underline{\alpha})\in (V_{\ud}\times \mg_m^{r-1})(\overline{k})$ de $f_{\ud}^Y$ et de $\mathrm{R}f^Y_{\ud,!}({h'}_{\ud}^{*}\mathcal{L}_{\psi}\otimes\underline{\kappa}_{\ud}^*\mathcal{L}_{\zeta})$. En précisant, on a :
\begin{lemma}(cf. \cite[proposition 3.3.1]{N})
\begin{enumerate}
\item Pour tout $\underline{d}\in \mathbb{N}^r$ le foncteur $Y_{\underline{d}}$ qui associe à toute $k$-algèbre $R$ l'ensemble
\begin{eqnarray*}Y_{\underline{d}}(R)&=&{}^tN_r(\0\otimes_kR)\setminus\{g\in \mathfrak{gl}_r(\0\otimes_k R)|\det(g_i)\in Q_{d_i}(R),\\
&&\mathrm{pgcd}(\prod_{i=1}^{r-1}\det(g_i),\det(g_r))=1\}/N_r(\0\otimes_k R),
\end{eqnarray*}
    où $g_i$ est la sous-matrice de $g$ faite des $i$-premières lignes et des $i$-premières colonnes de $g$, est représenté par une variété affine de type fini sur $k$ qu'on note aussi $Y_{\underline{d}}$.
    Soit
    $$f^{Y}_{\underline{d}}: Y_{\underline{d}}\times \mg_m^{r-1}\ra V_{\underline{d}}\times \mg_m^{r-1}$$
    le morphisme défini par $f^{Y}_{\underline{d}}(g,\alpha)=((a_i)_{1\leq i\leq r},\alpha)$ où $a_i=\det(g_i)$
\item Pour tout $i$ avec $2\leq i \leq r$, l'application $h'_{i}:\mathfrak{gl}_r(\0\otimes_kR)\times (R^*)^{r-1}\ra R$ définie par
    \begin{eqnarray*}
    h'_{i}&=&\frac{1}{2}\,{\rm res}\left(a_{i-1}^{-1}\left((g_{i,1},\dots,g_{i,i-1})a_{i-1}g_{i-1}^{-1}\left(\begin{matrix}0\\\vdots\\0\\\alpha_i\end{matrix}\right)+\right.\right.\\
    &&\left.\left.+(0,\dots,0,\alpha_i)a_{i-1}g_{i-1}^{-1}\left(\begin{matrix}g_{1,i}\\\vdots\\g_{i-2,i}\\g_{i-1,i}\end{matrix}\right)\right)\right),
    \end{eqnarray*}
    où $a_ig_i^{-1}$ est la matrice des cofacteurs de $g_i$ lesquels sont dans $\0_{R}:=\0\otimes_kR$ et où $\mathrm{res}(a_{i-1}^{-1} b)$ est le coefficient de $\varpi^{d_{i-1}-1}$ dans l'expression polynomiale en variable $\varpi$ du reste de la
    division euclidienne de $b$ par $a_{i-1}$ (cette division euclidienne a un sens puisque le coefficient dominant de $a_{i-1}$ est égal à $1$), induit un morphisme $h'_{i} : Y_{\underline{d}}\times \mg_m^{r-1}\ra\mg_a$.
\item Soit $h_{\underline{d}}=\sum_{i=2}^rh_{i}$. Il existe un morphisme naturel $\underline{\kappa}_{\ud}: Y_{\ud}\to \mg_m$ 
    tel que pour tous $t\in Q_{\underline{d}}(k)$ et $\underline{\alpha} \in \mg_m^{r-1}$, le triplet $(Y(t),h'_{\alpha},\underline{\kappa})$ est isomorphe à la fibre $(f^{Y}_{\underline{d}})^{-1}(t,\underline{\alpha})$ munie de la restriction de $h_{\underline{d}}$ et de $\underline{\kappa}_{\ud}$ à cette fibre.
\end{enumerate}
\end{lemma}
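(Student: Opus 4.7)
Le plan est de calquer l'argument de \cite[proposition 3.3.1]{N} d\'ej\`a utilis\'e pour l'analogue c\^ot\'e $X$, en ajoutant comme ingr\'edient nouveau la construction du morphisme $\underline{\kappa}_{\ud}$ \`a partir de la Construction~\ref{geokap}.

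Pour le point $(1)$, on proc\`ede comme pour $X_{\ud}$ : apr\`es avoir ramen\'e chaque double classe dans $\mathfrak{gl}_r(\0\otimes_k R)$ \`a une forme normale de type Bruhat (analogue \`a la r\'eduction employ\'ee dans la d\'emonstration de la proposition~\ref{ngo1}), les conditions $\det(g_i)\in Q_{d_i}(R)$ et ${\rm pgcd}(\prod_{i<r}\det(g_i),\det(g_r))=1$ d\'ecoupent dans un espace affine une sous-vari\'et\'e localement ferm\'ee, la coprimalit\'e \'etant l'ouvert de non-annulation du r\'esultant. Le morphisme $f_{\ud}^Y$ est donn\'e par les coefficients des mineurs principaux, qui sont invariants sous l'action de ${}^tN_r(\0\otimes_k R)\times N_r(\0\otimes_k R)$ et descendent donc au quotient.

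Pour le point $(2)$, la formule de $h'_i$ est la sym\'etris\'ee de celle de $h_i$ dans le cas $X$ : le premier terme (faisant intervenir la ligne $(g_{i,1},\dots,g_{i,i-1})$) est invariant par l'action \`a droite de $N_r$ selon le calcul effectu\'e dans le cas $X$, et le second terme (faisant intervenir la colonne ${}^t(g_{1,i},\dots,g_{i-1,i})$) est invariant par l'action \`a gauche de ${}^tN_r$ par le calcul transpos\'e. La somme descend donc bien au double quotient et d\'efinit un morphisme vers $\mg_a$.

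Pour le point $(3)$, on invoque la Construction~\ref{geokap} : elle associe \`a chaque $k$-alg\`ebre $R$ et chaque $(n,n')\in Y_{\ud}(R)$ un \'el\'ement $\underline{\kappa}(w_0\,{}^tntn')\in R^*$, et la fonctorialit\'e en $R$ livre le morphisme $\underline{\kappa}_{\ud}:Y_{\ud}\to\mg_m$ cherch\'e. L'obstacle principal est de v\'erifier que cette construction est bien alg\'ebrique (c.-\`a-d. donne un morphisme de sch\'emas et non pas seulement une fonction ensembliste), mais cette alg\'ebricit\'e est d\'ej\`a encod\'ee dans la Construction~\ref{geokap} : elle repose uniquement sur des identifications canoniques (contractions $\beta$, sym\'etries de Koszul, multiplications par $g$) toutes compatibles au changement de base. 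La compatibilit\'e des fibres---identification $(f_{\ud}^Y)^{-1}(t,\underline{\alpha})\simeq Y(t)$ et co\"\i ncidence des restrictions de $h_{\ud}$ (resp. $\underline{\kappa}_{\ud}$) avec $h'_{\underline{\alpha}}$ (resp. $\underline{\kappa}$) sur cette fibre---est alors imm\'ediate par construction.
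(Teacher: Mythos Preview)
Your approach for parts (1) and (2) is exactly that of the paper: adapt \cite[proposition 3.3.1]{N}, with nothing new to add. For part (3) your direction is also correct, but there is a genuine technical point you have glossed over.

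You invoke Construction~\ref{geokap} directly, but that construction is written for the \emph{local} setting: it works with $\0_{\varpi,R}=k[[\varpi]]\otimes_k R$ and produces the morphism $\underline{\kappa}:Y_\varpi(t)\to\mg_m$ fibre by fibre. Here we are in the \emph{global} polynomial setting $\0\otimes_k R=R[\varpi]$, and the functor $Y_{\ud}$ involves matrices in $\mathfrak{gl}_r(R[\varpi])$ that are only invertible after inverting $a_r$. The paper therefore does not simply cite Construction~\ref{geokap}; it redoes the construction explicitly over $R[\varpi,a_r^{-1}]$. Concretely, one writes the Bruhat decomposition $g=nw_0tn'$ inside $\G_r(R[\varpi,(\prod_i a_i)^{-1}])$, observes (via \cite{N}) that $n,n'$ actually lie in $N_r(R[\varpi,a_r^{-1}])$ up to denominators in the $a_{i-1}$, chooses the explicit lattice $M=\D(1,a_1,a_1a_2,\dots,\prod_{i=1}^{r-1}a_i)\,R[\varpi,a_r^{-1}]^r$ (which satisfies $nM=M=n'M$), and then checks a chain of inclusions among the modules $M$, $n\cdot P\cdot R[\varpi,a_r^{-1}]^r$, $nw_0t\cdot M$, etc.\ (with $P=\prod_i a_i$) to assemble the isomorphism $D_n^R\otimes D_{w_0t}^R\otimes D_{n'}^R\simeq(R[\varpi,a_r^{-1}]^r\mid g\,R[\varpi,a_r^{-1}]^r)$. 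Only then does $\underline{\kappa}_{\ud}(g)={\rm triv}/\delta_g$ make sense as an element of $R^*$, and the compatibility with the local $\underline{\kappa}$ of \S3.2 on $k$-points follows.

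So your sketch is the right outline, but the sentence ``cette alg\'ebricit\'e est d\'ej\`a encod\'ee dans la Construction~\ref{geokap}'' is not quite accurate: one must transport that construction from the power-series ring to the polynomial ring localised at $a_r$, and the paper carries this out in detail.
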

\begin{proof}
On adapte la démonstration de \cite[proposition 3.3.1]{N} et il ne reste que le morphisme $\underline{\kappa}_{\ud}$ à construire. 

Soit $R$ une $k$-algèbre. 
Soit $g\in Y_{\ud}(R)$. On considère la ``décomposition de Bruhat'' de $g$ dans $R[\varpi,(\prod_{i=1}^ra_i)^{-1}]$ : $$g=nw_0tn',$$
où
\begin{itemize}
\item  les $a_i$ sont des polynômes unitaires à coefficients dans $R$,
\item $n,n'\in N_r(R[\varpi,(\prod_{i=1}^ra_i)^{-1}])$
\item et $t=\D(a_1,a_2/a_1,\dots,a_r/a_{r-1})$.
\end{itemize}
Comme $g\in\G_r(R[\varpi,a_r^{-1}])$, d'après Ngo \cite{N}, $n$ (resp. $n'$) s'écrit sous la forme :
$$n=\prod_{i=2}^r(\mathrm{Id}_r+y_i)\quad (\text{resp. }n'=\prod_{i=2}^r(\mathrm{Id}_r+y'_i)),$$
où $y_i\in a_{i-1}^{-1} R[\varpi,a_r^{-1}]$ (resp. $y'_i\in a_{i-1}^{-1} R[\varpi,a_r^{-1}]$).

Lorsque $M$, $M'$ et $M''$ sont trois $R[\varpi,a_r^{-1}]$- sous-modules de \break $R[\varpi,(\prod_{i=1}^ra_i)^{-1}]^r$ tels que $M''\subset M\cap M'$ et lorsque les quotients $M/M''$ et $M'/M''$ sont libres de type fini en tant que $R$-modules, on note
$$(M|M')=(\bigwedge M/M'')^{\otimes -1}\otimes (\bigwedge M'/M''),$$
où $\bigwedge$ est la puissance extérieure de degré maximal.

On note $P=\prod_{i=1}^ra_i$. On a alors : $P.R[\varpi,a_r^{-1}]^r\subset w_0tR[\varpi,a_r^{-1}]^r \cap R[\varpi,a_r^{-1}]^r$. Comme $P$ est un polynôme unitaire, $w_0tR[\varpi,a_r^{-1}]^r/\break P.R[\varpi,a_r^{-1}]^r$ et $R[\varpi,a_r^{-1}]^r/P.R[\varpi,a_r^{-1}]^r$ sont libres de type fini, de sorte qu'on définit :
$$D_{w_0t}^R=(\bigwedge R[\varpi,a_r^{-1}]^r/P.R[\varpi,a_r^{-1}]^r)^{\otimes -1}\otimes_R(\bigwedge w_0tR[\varpi,a_r^{-1}]^r/P.R[\varpi,a_r^{-1}]^r)$$
(on obtient de la même manière la $R$-droite $D_{t_i}$ où $t_i=a_i/a_{i-1}$ en convenant que $a_0=1$). En utilisant la même démarche que dans \ref{chi5}, on obtient le morphisme $\delta_{w_0g}: D_{w_0t} \to D_{\det(w_0t)}^R=(R[\varpi,a_r^{-1}]^r|\det(w_0t)R[\varpi,a_r^{-1}]^r)\can R$ (le morphisme de multiplication
$$D_{\prod_{j=1}^{r-i}t_{r+1-j}}\otimes_R D_{t_i}\to D_{\prod_{j=1}^{r-i+1}t_{r+1-j}}$$ est défini comme dans la proposition \ref{ACK1}), de sorte qu'on obtient un isomorphisme $\Delta_{w_0t}^R\simeq R$.

On considère $M=\D(1,a_1,a_1.a_2,\dots,\prod_{i=1}^{r-1}a_i)R[\varpi,a_r^{-1}]^r$. On a $nM=M$ et $n'M=M$. De plus, $R[\varpi,a_r^{-1}]^r/M \simeq n.R[\varpi,a_r^{-1}]^r/n.M =\break n.R[\varpi,a_r^{-1}]^r/M$ est un $R$-module libre, de sorte qu'on définit
$$D_n^R:=(\bigwedge R[\varpi,a_r^{-1}]^r/M)^{\otimes -1}\otimes_R(\bigwedge n.R[\varpi,a_r^{-1}]^r/M)$$
et
$$D_{n'}^R:=(\bigwedge R[\varpi,a_r^{-1}]^r/M)^{\otimes -1}\otimes_R(\bigwedge n'.R[\varpi,a_r^{-1}]^r/M).$$
L'isomorphisme canonique $n.R[\varpi,a_r^{-1}]^r/n.M\to R[\varpi,a_r^{-1}]^r/M$ (resp. \break $n'.R[\varpi,a_r^{-1}]^r/n'.M\to R[\varpi,a_r^{-1}]^r/M$) nous donne un isomorphisme $D_n\can R$ (resp. $D_{n'}\can R$).

On considère
\begin{eqnarray*}
&D_n^R\otimes D_{w_0t}^R\otimes D_{n'}^R\simeq (R[\varpi,a_r^{-1}]^r|M)\otimes (n.M|n.R[\varpi,a_r^{-1}]^r)\otimes \\
&(n . R[\varpi,a_r^{-1}]^r|n.P.R[\varpi,a_r^{-1}]^r)\otimes (n.P.R[\varpi,a_r^{-1}]^r|n.w_0tR[\varpi,a_r^{-1}]^r)\otimes\\
&(nw_0t.R[\varpi,a_r^{-1}]^r|nw_0t.M)\otimes (nw_0t.M|nw_0tn'R[\varpi,a_r^{-1}]^r).
\end{eqnarray*}

Comme $n.P.R[\varpi,a_r^{-1}]^r\subset n.M\subset n.R[\varpi,a_r^{-1}]^r$, on a un isomorphisme canonique
\begin{multline*}(n.M|n.R[\varpi,a_r^{-1}]^r)\otimes(n . R[\varpi,a_r^{-1}]^r|n.P.R[\varpi,a_r^{-1}]^r)\\
\simeq (n.M|n.P.R[\varpi,a_r^{-1}]^r).
\end{multline*}

On a aussi $n.P.R[\varpi,a_r^{-1}]^r\subset nw_0t. M\subset nw_0t.R[\varpi,a_r^{-1}]^r$, de sorte qu'on obtient un isomorphisme canonique
\begin{multline*}(n.P.R[\varpi,a_r^{-1}]^r|n.w_0tR[\varpi,a_r^{-1}]^r)\otimes(nw_0t.R[\varpi,a_r^{-1}]^r|nw_0t.M)\\
\simeq
(n.P.R[\varpi,a_r^{-1}]^r|nw_0t.M).
\end{multline*}

De plus, comme $n.P.R[\varpi,a_r^{-1}]^r\subset nw_0t. M\subset nw_0t.n'R[\varpi,a_r^{-1}]^r$, on obtient :
\begin{multline*}
(n.P.R[\varpi,a_r^{-1}]^r|nw_0t.M)\otimes (nw_0t.M|nw_0tn'R[\varpi,a_r^{-1}]^r)\\\simeq (n.P.R[\varpi,a_r^{-1}]^r|nw_0tn'R[\varpi,a_r^{-1}]^r).
\end{multline*}
D'autre part, on a $n.P.R[\varpi,a_r^{-1}]^r\subset n.M=M\subset R[\varpi,a_r^{-1}]^r$, donc
\begin{multline*}
(R[\varpi,a_r^{-1}]^r|M)\otimes (n.M|n.P.R[\varpi,a_r^{-1}]^r)\\\simeq
(R[\varpi,a_r^{-1}]^r|n.P.R[\varpi,a_r^{-1}]^r)
\end{multline*}

Par conséquent, on obtient un isomorphisme $\delta_g : R\simeq D_n^R\otimes D_{w_0t}^R\otimes D_{n'}^R \to (R[\varpi,a_r^{-1}]^r|gR[\varpi,a_r^{-1}]^r)$. On obtient alors un isomorphisme $\delta_g : R \to \Delta_g^R$, où $\Delta_g^R:= (R[\varpi,a_r^{-1}]^r|gR[\varpi,a_r^{-1}]^r)^{\otimes -1}\otimes(R[\varpi,a_r^{-1}]|\det(g)R[\varpi,a_r^{-1}])$. On définit  $\underline{\kappa}_{\ud}(g)=\mathrm{triv}/\delta_g$, où $\mathrm{triv}$ est l'isomorphisme $R\to \Delta_g^R$ provenant du fait que $gR[\varpi,a_r^{-1}]^r=R[\varpi,a_r^{-1}]^r$.

Cette définition commute au changement de base, on obtient donc bien un morphisme $\underline{\kappa}_{\ud}: Y_{\ud}\to \mg_m$ ; en prenant les $k$-points celui-ci redonne la fonction $\underline{\kappa}$ de la  section 3.2.

\end{proof}
\begin{theorem}\label{ka7}
Soit $g\in Y_{\ud}$. On a alors $$\underline{\kappa}_{\ud}(w_0g)\underline{\kappa}_{\ud}(w_0{}^tg)=(-1)^{\sum_{i=1}^{r-1}d_i+\sum_{i=1}^{r-1}d_id_{i+1}}{\rm result}(a_{r-1},a_r).$$
\end{theorem}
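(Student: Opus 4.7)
The plan is to verify the identity by combining a regularity/extension argument with an explicit calculation on a dense Bruhat-open subset. First I would note that both sides are polynomial in the entries of $y$: the right-hand side ${\rm result}(a_{r-1}(y),a_r(y))$ is manifestly polynomial, and $\underline{\kappa}_{\ud}(w_0(y+\va I_r))$ extends to a regular function on $\mathfrak{gl}_r$ by the construction of $\underline{\kappa}_{\ud}$ via the line $\Delta_g^R=(R[\va,a_r^{-1}]|\det(g)R[\va,a_r^{-1}])\otimes(R[\va,a_r^{-1}]^r|gR[\va,a_r^{-1}]^r)^{\otimes -1}$ (the trivializing frames ${\rm triv}$ and $\delta_g$ are both defined over $R[\va,a_r^{-1}]$, but their quotient, being a unit, is a regular section of $\mg_m$ on $Y_{\ud}$, which one checks extends to a regular function on all of $\mathfrak{gl}_r$ by examining the behaviour at the boundary of the big cell). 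So it suffices to verify the identity on the dense open locus where $g=y+\va I_r$ admits a big-cell Bruhat decomposition $g=n_1\,w_0 t\,n_2$ with $t=\D(a_1,a_2/a_1,\ldots,a_r/a_{r-1})$.

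On this open set, by the very definition of $\delta_g$ and the scindage of $N_r$ from Lemma~\ref{n4}, we have $\delta_g=\delta_{n_1}*\delta_{w_0 t}*\delta_{n_2}$, so that $\underline{\kappa}_{\ud}(w_0 g)$ reduces to a product of explicit cocycle values $\chi_{\rm geo}$ (given by Proposition~\ref{chi2}) together with the quantity $\delta_{w_0 t}/{\rm triv}$ obtained from Construction~\ref{chi5}. For the transposed factor ${}^t g={}^t n_2\cdot tw_0\cdot{}^t n_1$, using the identities ${}^t n=w_0 n^\sharp w_0$ (with $n^\sharp\in N_r$), $w_0 t w_0=\tilde t$ (the diagonal $t$ with entries reversed), and $t w_0=w_0\tilde t$, one rearranges $w_0{}^t g$ into the form $n_2^\sharp\cdot w_0 t\cdot\text{(cocycle corrections)}$, so that the same Bruhat-based recipe applies. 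Each of $\underline{\kappa}_{\ud}(w_0 g)$ and $\underline{\kappa}_{\ud}(w_0{}^t g)$ thereby becomes a product of leading-coefficient factors of the type $\bigl(t_i/\va^{v(t_i)}\bigr)^{v(t_j)}(0)$ (Proposition~\ref{chi6}(1)) and signs.

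Multiplying, the cocycle cross-terms telescope (by the 2-cocycle relation combined with Proposition~\ref{chi2}(5),(6)), and the surviving leading-coefficient product involves $t_{r-1}, t_r$ paired symmetrically; rewriting $t_i=a_i/a_{i-1}$ and using the identity $\prod_{\beta\in\text{roots}(a_r)}a_{r-1}(\beta)=\pm{\rm result}(a_{r-1},a_r)$ (viewed at the generic point of each place $v\mid a_{r-1}a_r$), one recognizes the global product as ${\rm result}(a_{r-1},a_r)$ up to sign. The sign $(-1)^{\sum d_i+\sum d_i d_{i+1}}$ arises from three sources that must be combined: the Koszul signs $(-1)^{\sum_{i<j}v(t_i)v(t_j)}$ and $(-1)^{\sum(r-i)v(t_i)}$ appearing in Construction~\ref{chi5} for $\delta_{w_0 t}$, the sign $(-1)^{v(t_\ell)v(t_{\ell+1})}$ picked up each time $w_0$ is commuted past a diagonal factor (Proposition~\ref{chi6}(4)), and an analogous contribution from the transpose factor. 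Writing $v(t_i)=d_i-d_{i-1}$ (with $d_0=0$) and simplifying modulo $2$ yields the asserted exponent.

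The main obstacle is the combinatorial bookkeeping of signs in the last paragraph: matching the contributions from the Koszul rule in Construction~\ref{chi5} with those from the cocycle formulas of Proposition~\ref{chi2} and those produced by the identities ${}^t n=w_0 n^\sharp w_0$ used to bring $w_0{}^t g$ into a form comparable to $w_0 g$. A secondary obstacle is verifying that the naïve Bruhat-theoretic expression for $\underline{\kappa}_{\ud}(w_0 g)$, which a priori lives only on the big cell, does extend to a polynomial on all of $\mathfrak{gl}_r$; this rests on the fact (to be proved along the way) that the morphism $\underline{\kappa}_{\ud}:Y_{\ud}\to\mg_m$ constructed in \S4.2 pulls back to a morphism $\mathfrak{gl}_r\to\mg_m$ under $y\mapsto w_0(y+\va I_r)$, which in turn comes from a natural factorization of $\Delta_{w_0(y+\va I_r)}$ through $R$-module data that remain well-defined even off the Bruhat cell.
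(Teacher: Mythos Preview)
Your approach has a genuine gap at its core. You propose to compute $\underline{\kappa}_{\ud}(w_0g)$ by decomposing $w_0g=n_1\cdot w_0t\cdot n_2$ and reducing to ``cocycle values $\chi_{\rm geo}$ together with the quantity $\delta_{w_0t}/{\rm triv}$''. But ${\rm triv}$ is only defined for elements of $\G_r(\0[a_r^{-1}])$, and the Bruhat factors $n_1$, $w_0t$, $n_2$ are \emph{not} in $\G_r(\0[a_r^{-1}])$: they involve denominators $a_i^{-1}$ for $i<r$. Consequently the relation $\underline{\kappa}(g_1g_2)=\underline{\kappa}(g_1)\underline{\kappa}(g_2)\chi(g_1,g_2)$ cannot be applied to peel off these factors, and there is no well-defined ``$\delta_{w_0t}/{\rm triv}$'' to isolate. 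Your telescoping claim and the emergence of ${\rm result}(a_{r-1},a_r)$ from leading-coefficient factors are therefore unsupported: nothing in Propositions~\ref{chi6} or~\ref{chi2} produces a resultant of two polynomials of degrees $d_{r-1}$ and $d_r$ from a single Bruhat computation.

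The paper's proof circumvents exactly this obstruction by passing to \emph{local} places. One restricts to the dense open where $\prod_i a_i$ has simple roots and uses the product formula $\underline{\kappa}(w_0g)=\prod_{i,j}\underline{\kappa}_{\varpi-\lambda_{i,j}}(w_0g)$. At a place $v=\varpi-\lambda_{i,j}$ only $a_i$ fails to be a unit, so all Bruhat factors except those in a single $2\times2$ block \emph{are} in $\G_r(\0_v)$ and have $\underline{\kappa}_v=1$; the relation $\underline{\kappa}(g_1g_2)=\underline{\kappa}(g_1)\underline{\kappa}(g_2)\chi(g_1,g_2)$ then legitimately strips them away. The surviving $2\times2$ block is handled by Kubota's explicit formula (Proposition~\ref{kubo}), yielding $\underline{\kappa}_v(w_0g)\underline{\kappa}_v(w_0{}^tg)=-\tfrac{a_{i+1}}{a_{i-1}}(\lambda_{i,j})$. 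Taking the product over all $(i,j)$ gives a telescoping product of resultants that collapses to ${\rm result}(a_{r-1},a_r)$, with the sign coming from ${\rm result}(P,Q)=(-1)^{\deg P\deg Q}{\rm result}(Q,P)$. This local-to-$\G_2$ reduction is the essential idea your proposal is missing.

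A secondary issue: you invoke the polynomiality of $\underline{\kappa}(y)$ on all of $\mathfrak{gl}_r$, but in the paper this is only established later (Theorem~\ref{pop}, specific to $\ud=(1,2,\dots,r)$) by an independent global calculation; the present theorem is for general $\ud$ and uses only that $\underline{\kappa}_{\ud}$ is a morphism $Y_{\ud}\to\mg_m$, so that verification on a dense open suffices.
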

\begin{proof}
Comme $\underline{\kappa}_{\ud}$ est un morphisme de $Y_{\ud}$ dans $\mg_m$, le théorème résulte du fait (qu'on va ensuite vérifier) que
$$\underline{\kappa}_{\ud}(w_0g)\underline{\kappa}_{\ud}(w_0{}^tg)=(-1)^{\sum_{i=1}^{r-1}d_i+\sum_{i=1}^{r-1}d_id_{i+1}}{\rm result}(a_{r-1},a_r)$$ pour $g$ dans un ouvert dense de $Y_{\ud}$, qui n'est autre que l'image réciproque de $U_{\ud}$ par le morphisme $f_{\ud}^Y$. On va supprimer l'indice $\ud$ pour alléger les notations.

Soit $i\in \{1,\dots,r-1\}$. On note $\lambda_{i,j}$ où $j\in \{1,\dots,d_i\}$ les racines simples de $a_i$. En utilisant la formule de produit, on a :
$$\underline{\kappa}(w_0g)=\prod_{i=1}^{r-1}\prod_{j=1}^{d_i}\underline{\kappa}_{\varpi-\lambda_{i,j}}(w_0g)\  ;\quad \underline{\kappa}(w_0{}^tg)=\prod_{i=1}^{r-1}\prod_{j=1}^{d_i}\underline{\kappa}_{\varpi-\lambda_{i,j}}(w_0{}^tg).$$

En chaque place $\varpi-\lambda_{i,j}$, on écrit $n=u_2u_3\dots u_n$ et $n'=u_2'u_3'\dots u'_n$ où $u_k,u'_k$ appartiennent au-sous groupe $U_k(F_{\varpi-\lambda_{i,j}}$ de $N_r(F_{\varpi-\lambda_{i,j}})$ formé des matrice triangulaires supérieures unipotentes dont les coefficients non-diagonaux sont tous nuls sauf caux de la $k$-ième colonne. Soient $u_k=\mathrm{Id}_r+y_k$ et $u'_k=\mathrm{Id}_r+y'_k$. D'après \cite[lemme 1.1.3]{N}, $y_k,y'_k\in (a_{k-1}^{-1}\0_{\varpi-\lambda_{i,j}})^{k-1}$, donc $u_k,u'_k\in U_k(\0_{\varpi-\lambda_{i,j}})$ pour $j\neq i$.
On note $n_{i+1}=u_2\dots u_{i+1}u_i^{-1}\dots u_2^{-1}$ et $n'_{i+1}=u'_2\dots u'_{i+1}{u'}_i^{-1}\dots {u'}_2^{-1}$. En reprenant la démonstration de \cite[proposition 2.4.1]{N}, on obtient que $n_{i+1}, n'_{i+1}\in U_{i+1}$ et que les coefficients $n_{k,i+1},n'_{k,i+1}$ de $n_{i+1}$ et $n'_{i+1}$ sont dans $\0_{\varpi-\lambda_{i,j}}$ pour $k=1,\dots, i-1$. Ainsi $n_{i+1}$ (resp. $n'_{i+1}$) s'écrit-il  $n_{i+1}=(\mathrm{Id}+ z_{i+1})(\mathrm{Id}+\underline{z}_{i+1})$ (resp. $n'_{i+1}=(\mathrm{Id}+ z'_{i+1})(\mathrm{Id}+\underline{z}'_{i+1})$, où $z_{i+1}={}^t(0,\dots,0,n_{i,i+1})\in \0_{\varpi-\lambda_{i,j}}^{i}$ et $\underline{z}_{i+1}={}^t(n_{1,i+1},\dots,n_{i-1,i+1},0)\in \0_{\varpi-\lambda_{i,j}}^{i}$ (resp. $z'_{i+1}={}^t(0,\dots,0,n'_{i,i+1})\in \0_{\varpi-\lambda_{i,j}}^{i}$ et $\underline{z}'_{i+1}={}^t(n'_{1,i+1},\dots,n'_{i-1,i+1},0)\in \0_{\varpi-\lambda_{i,j}}^{i}$).
Soient $$\widehat{n}_{i+1}=(\mathrm{Id}+\underline{z}_{i+1})u_2\dots u_iu_{i+1}\dots u_r\in N_r(\0_{\varpi-\lambda_{i,j}})$$
et $$\widehat{n}'_{i+1}=(\mathrm{Id}+\underline{z}'_{i+1})u'_2\dots u'_iu'_{i+1}\dots u'_r\in N_r(\0_{\varpi-\lambda_{i,j}}).$$
On a alors $w_0g=(w_0{}^t\widehat{n}_{i+1}w_0)w_0{}^t(\mathrm{Id}+ z_{i+1})t(\mathrm{Id}+ z'_{i+1})\widehat{n}'_{i+1}$. Comme $w_0{}^t\widehat{n}_{i+1}w_0,\break\widehat{n}'_{i+1}\in N_r(\0_{\varpi-\lambda_{i,j}})$, on a $\underline{\kappa}(w_0{}^t\widehat{n}_{i+1}w_0)=\underline{\kappa}(\widehat{n}'_{i+1})=1$. Par ailleurs, en utilisant $\underline{\kappa}(g_1g_2)=\underline{\kappa}(g_1)\underline{\kappa}(g_2)\chi(g_1,g_2)$ et $\chi(n,g)=\chi(g,n)=1$, on a $$\underline{\kappa}(w_0g)=\underline{\kappa}(w_0{}^t(\mathrm{Id}+ z_{i+1})t(\mathrm{Id}+ z'_{i+1})).$$
Pareillement, on a aussi :
$$\underline{\kappa}(w_0{}^tg)=\underline{\kappa}(w_0{}^t({}^t(\mathrm{Id}+ z_{i+1})t(\mathrm{Id}+ z'_{i+1}))).$$
$t$ s'écrit $t=t_1t_2$ où $t_1=\D(1,\dots,a_i/a_{i-1},a_{i+1}/a_i,1,\dots,1)$ et $t_2=\D(a_1,\dots,a_{i-1}/a_{i-2},1,1,a_{i+1}/a_{i+2},\dots, a_r/a_{r-1})\in T_r(\0_{\varpi-\lambda_{i,j}})$. On a
$$(\mathrm{Id}+ z_{i+1})t(\mathrm{Id}+ z'_{i+1})=(\mathrm{Id}+ z_{i+1})t_1(\mathrm{Id}+ z''_{i+1})t_2$$
où $z''_{i+1}$ est une matrice triangulaire supérieure unipotente dont les coefficients non-diagonaux sont tous nuls sauf celui de la position $(i,i+1)$.

Comme $\underline{\kappa}(gt_2)=\underline{\kappa}(g)$ pour $g\in \G_r(\0_{\varpi-\lambda_{i,j}})$ et $t_2\in T_r(\0_{\varpi}-\lambda_{i,j})$ et que $\underline{\kappa}(\begin{smallmatrix}g_1&0\\0&g_2\end{smallmatrix})=\underline{\kappa}(\begin{smallmatrix}0&g_1\\g_2&0\end{smallmatrix})=\underline{\kappa}(g_1)\underline{\kappa}(g_2)$, le calcul $\underline{\kappa}$ se réduit au cas où $r=2$ avec \begin{eqnarray*}g&=&\begin{pmatrix}1&0\\x&1\end{pmatrix}\D(a_i/a_{i-1},a_{i+1}/a_i)\begin{pmatrix}1&x'\\0&1\end{pmatrix}\\
&=&\begin{pmatrix}\frac{a_i}{a_{i-1}}&x'\frac{a_i}{a_{i-1}}\\x\frac{a_i}{a_{i-1}}&xx'\frac{a_i}{a_{i-1}}+\frac{a_{i+1}}{a_{i}}\end{pmatrix}\in \G_2(\0_{\varpi-\lambda_{i,j}}).\end{eqnarray*}

En utilisant la formule de Kubota (cf. la proposition~\ref{kubo}), on a :
$$\underline{\kappa}_{\varpi-\lambda_{i,j}}(w_0g)=\left\{a_i/a_{i-1},\frac{x'a_i/a_{i-1}}{\det(w_0g)}\right\}_{\varpi-\lambda_{i,j}};$$ $$\underline{\kappa}_{\varpi-\lambda_{i,j}}(w_0{}^tg)=\left\{a_i/a_{i-1},\frac{xa_i/a_{i-1}}{\det(w_0{}^tg)}\right\}_{\varpi-\lambda_{i,j}}.$$
Comme $g\in \G_2(\0_{\varpi-\lambda_{i,j}})$, on a donc $v_{\varpi-\lambda_{i,j}}(x)=v_{\varpi-\lambda_{i,j}}(x')=-1$. Donc $\underline{\kappa}_{\varpi-\lambda_{i,j}}(w_0g)=\frac{\det(w_0g)}{x'a_i/a_{i-1}}(\lambda_{i,j})$ et $\underline{\kappa}_{\varpi-\lambda_{i,j}}(w_0{}^tg)=\frac{\det(w_0g)}{xa_i/a_{i-1}}(\lambda_{i,j})$.

Par ailleurs on a : $\det(w_0g)=x'\frac{a_i}{a_{i-1}}x\frac{a_i}{a_{i-1}}-\frac{a_i}{a_{i-1}}\left(xx'\frac{a_i}{a_{i-1}}+\frac{a_{i+1}}{a_{i}}\right)$. Le deuxième terme de la somme est de valuation supérieure ou égal à $1$, donc $\det(w_0g)(\lambda_{i,j})=\left(x'\frac{a_i}{a_{i-1}}x\frac{a_i}{a_{i-1}}\right)(\lambda_{i,j})$. Par conséquent, on a :
$$\underline{\kappa}_{\varpi-\lambda_{i,j}}(w_0g)\underline{\kappa}_{\varpi-\lambda_{i,j}}(w_0{}^tg)=\det(w_0g)(\lambda_{i,j})=-\frac{a_{i+1}}{a_{i-1}}(\lambda_{i,j}).$$

On en déduit que :
\begin{eqnarray*}
\underline{\kappa}(w_0g)\underline{\kappa}(w_0{}^tg)&=&\prod_{i=1}^{r-1}\prod_{j=1}^{d_i}(-1)\frac{a_{i+1}}{a_{i-1}}(\lambda_{i,j})\\
&=&\prod_{i=1}^{r-1}(-1)^{d_i}\frac{{\rm result}(a_{i+1},a_i)}{{\rm result}(a_{i-1},a_i)}\\
&=&(-1)^{\sum_{i=1}^{r-1}d_i+\sum_{i=1}^{r-1}d_id_{i+1}}{\rm result}(a_{r-1},a_r).
\end{eqnarray*}
La signe dans la dernière formule vient du fait que
$${\rm result}(P,Q)=(-1)^{\deg(P)\deg(Q)}{\rm result}(Q,P).$$
\end{proof}
\subsection{Le cas $\emph{d}=(1,2,\dots,r)$}
\setcounter{theorem}{0}
\setcounter{subsubsection}{1}

\begin{proposition}\label{ngo2}
Pour $\underline{d}=(1,2,\dots,r)$ le quadruple $(Y_{\underline{d}},f^{Y}_{\underline{d}},h'_{\underline{d}},\underline{\kappa}_{\ud})$ est isomorphe au quadruple $(\mathfrak{gl}_r^\circ,f^Y,h',\underline{\kappa})$ où :
\begin{itemize}
\item $\mathfrak{gl}_r=\{y\in \mathfrak{gl}_r|{\rm pgcd}(\prod_{i=1}^{r-1}a_i(y),a_r(y))=1\}$,
\item le morphisme $f^Y:\mathfrak{gl}_r^{\circ}\times \mg_m^{r-1}\ra\prod_{i=1}^rQ_i\times\mg_m^{r-1}$ est défini par
$$f^Y(y,\underline{\alpha})=(a_1(y),\dots,a_r(y),\underline{\alpha}),$$
\item le morphisme $h':\mathfrak{gl}_r^\circ\times \mg_m^{r-1}\ra \mg_a$ est défini par
$$h'(y,\underline{\alpha})=\sum_{i=2}^r\frac{1}{2}\alpha_i(y_{i-1,i}+y_{i,i-1})$$
\item le morphisme $\underline{\kappa}:\mathfrak{gl}_r^\circ\to \mg_m$
est défini par $\underline{\kappa}(y)=\underline{\kappa}_{\ud}(w_0(y+\varpi{\rm Id}_r))$.
\end{itemize}
\end{proposition}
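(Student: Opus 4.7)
The plan is to proceed by induction on $r$, following closely the argument used for Proposition \ref{ngo1}, but replacing the action $n\cdot g = {}^tngn$ of $N_r$ on $S_r(\0_R)$ by the two-sided action $(n_1,n_2)\cdot g = {}^tn_1\,g\,n_2$ of $N_r\times N_r$ on $\mathfrak{gl}_r(\0_R)$. The geometric tool is again \cite[lemme 4.1.2]{N}, which for every $x\in \mathfrak{gl}_r(R)$ supplies the canonical decomposition $\0_R^r = R^r \oplus (x+\varpi\,\mathrm{Id}_r)\,\0_R^r$, valid both as a decomposition of column modules and (after transposing) of row modules.

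For the inductive step, assume that $s_{r-1}(g) = y_{r-1}+\varpi\,\mathrm{Id}_{r-1}$ with $y_{r-1}\in \mathfrak{gl}_{r-1}(R)$, and write
\[
g = \begin{pmatrix} y_{r-1}+\varpi\,\mathrm{Id}_{r-1} & y \\ z & w \end{pmatrix},
\]
with $y,\,{}^tz \in \0_R^{r-1}$ and $w \in \0_R$. Applying the decomposition above on the column side, there is a unique $u\in \0_R^{r-1}$ with $y+(y_{r-1}+\varpi\,\mathrm{Id}_{r-1})u \in R^{r-1}$; applying it analogously on the row side, there is a unique ${}^tv\in \0_R^{r-1}$ with $z+{}^tv(y_{r-1}+\varpi\,\mathrm{Id}_{r-1})\in R^{r-1}$. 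Multiplying $g$ on the right by $\begin{pmatrix}\mathrm{Id}_{r-1}&u\\0&1\end{pmatrix}$ and on the left by ${}^t\!\begin{pmatrix}\mathrm{Id}_{r-1}&v\\0&1\end{pmatrix}$ --- both in the unipotent subgroup of $N_r$ fixing the $(r-1)\times(r-1)$ block, so that the previous reduction is not disturbed --- brings $y$ and $z$ into $R^{r-1}$. The condition $\det(g_r)\in Q_r(R)$ then forces $w = y_{r,r}+\varpi$ with $y_{r,r}\in R$: expanding $\det(g_r) = w\cdot a_{r-1}(y) - z\cdot\mathrm{adj}(y_{r-1}+\varpi\,\mathrm{Id}_{r-1})\cdot y$, the second term has $\varpi$-degree at most $r-2$, while $a_{r-1}(y)$ is monic of degree $r-1$, so $w$ must be of degree $1$ in $\varpi$ with leading coefficient $1$. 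Uniqueness of the normal form $y+\varpi\,\mathrm{Id}_r$ follows from the uniqueness in \cite[lemme 4.1.2]{N} together with the triviality of the ${}^tN_r\times N_r$-stabilizer of any $y+\varpi\,\mathrm{Id}_r$.

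Once the identification $Y_{\ud}\simeq \mathfrak{gl}_r^{\circ}$ is in place, the comparison of morphisms is direct. One has $\det(g_i) = \det(s_i(y)+\varpi\,\mathrm{Id}_i) = a_i(y)$, giving $f^Y_{\ud} = f^Y$; substituting $g = y+\varpi\,\mathrm{Id}_r$ into the explicit formula for $h'_i$ from the preceding lemma reduces it to $\tfrac{1}{2}\alpha_i(y_{i-1,i}+y_{i,i-1})$ by the same calculation that, in the proof of Proposition \ref{ngo1}, simplified $h_i$ to $\alpha_i x_{i-1,i}$, with the two summands corresponding respectively to the row and column contributions in the definition of $h'_i$; and the assertion about $\underline{\kappa}_{\ud}$ is its very definition on the fiber, since the double coset of $g$ is canonically represented by $y+\varpi\,\mathrm{Id}_r$ and the pre-factor $w_0$ is inherited from the change of variable $n\mapsto w_0\,{}^tn\,w_0$ used to pass from the original ${}^tN_r\times N_r$ formulation to the convention built into $\underline{\kappa}$. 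The main technical point is the joint reduction of the last row, the last column, and the entry $w$ via the appropriate parabolic-unipotent subgroup of $N_r\times N_r$, so as to ensure compatibility with the inductive hypothesis; once this is settled, the identification of the four morphisms is a mechanical verification parallel to the end of the proof of Proposition \ref{ngo1}.
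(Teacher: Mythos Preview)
Your proposal is correct and follows precisely the approach the paper indicates: the paper's own proof reads in its entirety ``On adapte la d\'emonstration de \cite[proposition 4.2.1]{N}'', and what you have written is exactly this adaptation---replacing the one-sided action of $N_r$ on $S_r$ by the two-sided action of ${}^tN_r\times N_r$ on $\mathfrak{gl}_r$, invoking \cite[lemme 4.1.2]{N} separately on rows and on columns, and then reading off the morphisms. Your determinant argument for the shape of $w$ makes explicit a step that is left implicit in the paper's proof of Proposition~\ref{ngo1}.
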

\begin{proof}[Démonstration] On adapte la démonstration de \cite[proposition 4.2.1]{N}. 
\end{proof}Il reste à calculer la fonction $\underline{\kappa}$ sur $\mathfrak{gl}_r^\circ$. Soit $y=(y_{i,j})\in \mathfrak{gl}_r^\circ$. Pour cela on va utiliser le lemme suivant :
\begin{lemma}\label{ka2}
Soi $V'$ un espace vectoriel (en général de dimension infinie). Soient $A$ et $B$ deux sous-espaces vectoriels et $V=A\oplus B$. Soit $g\in \mathrm{Aut}(V')$ tel que $A$ et $gA$ soient commensurables ainsi que $B$ et $gB$. Les sous-espaces vectoriels $V$ et $gV$ sont alors commensurables et on a un isomorphisme canonique :
$$(V|gV)\can (A|gA)\otimes (B|gB).$$ Cet isomorphisme est compatible avec la convention de signe, c.à.d le diagramme
$$\xymatrix{&(V|gV)\ar[dr]\ar[dl]\\
(A|gA)\otimes (B|gB)\ar[rr]^{\mathrm{Sym}^{\bullet}}&&(B|gB)\otimes(A|gA)}$$ est commutatif.
\end{lemma}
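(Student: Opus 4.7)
Le plan est de se ramener au cas fini-dimensionnel via le choix d'un sous-r\'eseau commun dans $V\cap gV$, puis d'appliquer la proposition~\ref{ack1}. Par commensurabilit\'e de $A,gA$ d'une part et de $B,gB$ d'autre part, on choisira des sous-espaces $C\subset A\cap gA$ et $D\subset B\cap gB$ tels que les quatre quotients $A/C$, $gA/C$, $B/D$, $gB/D$ soient de dimension finie. Comme $A\cap B=0$, on aura $C\cap D=0$, et le sous-espace $C\oplus D$ sera contenu \`a la fois dans $V=A\oplus B$ et dans $gV=gA\oplus gB$, avec les quotients $V/(C\oplus D)=A/C\oplus B/D$ et $gV/(C\oplus D)=gA/C\oplus gB/D$ de dimension finie. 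Ceci d\'emontrera imm\'ediatement la commensurabilit\'e de $V$ et $gV$.

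Pour construire l'isomorphisme, la proposition~\ref{ack1} appliqu\'ee aux d\'ecompositions ci-dessus fournira des isomorphismes canoniques $\phi(A/C,B/D)$ et $\phi(gA/C,gB/D)$ au niveau des puissances ext\'erieures maximales. En dualisant le premier et en le tensorisant avec le second, puis en permutant les deux facteurs centraux $(\bigwedge B/D)^*$ et $\bigwedge gA/C$ par la r\`egle de Koszul, on obtiendra
$$(V|gV)\simeq (\bigwedge A/C)^*\otimes\bigwedge gA/C\otimes(\bigwedge B/D)^*\otimes\bigwedge gB/D=(A|gA)\otimes(B|gB).$$
L'ind\'ependance par rapport au choix de $C$ et $D$ se v\'erifiera d'abord dans le cas emboit\'e ($C'\subset C$ et $D'\subset D$) en utilisant la compatibilit\'e des morphismes $\phi$ avec les extensions successives, puis le cas g\'en\'eral s'en d\'eduira en introduisant le troisi\`eme choix $(C\cap C',D\cap D')$, de mani\`ere analogue \`a la d\'emonstration du lemme~\ref{n3}.

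Enfin, la commutativit\'e du diagramme de sym\'etrie r\'esultera directement du diagramme de la proposition~\ref{ack1} reliant $\phi(V,V')$ et $\phi(V',V)$ via la r\`egle de Koszul, appliqu\'e simultan\'ement aux d\'ecompositions $V/(C\oplus D)=A/C\oplus B/D$ et $gV/(C\oplus D)=gA/C\oplus gB/D$. L'obstacle principal sera le contr\^ole soigneux des signes de Koszul lors des permutations successives des facteurs tensoriels~: ceux provenant de la sym\'etrie sur $V$ et sur $gV$ devront se combiner exactement en le signe $(-1)^{[A:gA]\cdot[B:gB]}$ attendu, ce qui se v\'erifiera en utilisant l'additivit\'e des degr\'es $[V:gV]=[A:gA]+[B:gB]$ ainsi que les congruences modulo $2$ appropri\'ees entre les dimensions des divers quotients.
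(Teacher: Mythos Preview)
Your proposal is correct and follows essentially the same approach as the paper. The paper simply takes the maximal choice $C=A\cap gA$ and $D=B\cap gB$ from the outset, which removes the need for your independence-of-choice argument, and then writes out the chain of isomorphisms via proposition~\ref{ack1} exactly as you describe; the paper is also more terse about the sign compatibility, which you treat more carefully.
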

\begin{proof}
On note $A'=A\cap gA$ et $B'=B\cap gB$.
\begin{eqnarray*}
(V|gV)&=&(A\oplus B|gA\oplus gB)\\
&\can&\left(\bigwedge(A\oplus B/A'\oplus B')\right)^*\otimes \bigwedge (gA\oplus gB/A'\oplus B')\\
&\can&\left(\bigwedge(A/A'\oplus B/ B')\right)^*\otimes \bigwedge (gA/A'\oplus gB/B')\\
&\can&\left(\bigwedge(A/A')\right)^*\otimes\left(\bigwedge(B/B')\right)^*\otimes \bigwedge(gA/A')\otimes\bigwedge(gB/B')\\
&\can&\left(\bigwedge(A/A')\right)^*\otimes \bigwedge(gA/A')\otimes\left(\bigwedge(B/B')\right)^*\otimes\bigwedge(gB/B')\\
&\can&(A|gA)\otimes(B|gB).
\end{eqnarray*}
\end{proof}

Soit $g$ une matrice de taille $r$. Soient $I,J$ deux sous-ensembles de $\{1,2,\dots,\break r\}$. On note $g_{I,J}$ la matrice extraite de $g$ dont les indices de ligne sont dans $I$ et les indices de colonnes sont dans $J$, en ordonnant par ordre croissant les éléments de $I$ et de $J$.
\begin{proposition}(cf. \cite[p. 394-395]{A2})\label{alain1}
Soit $g\in \G_r$. Supposons que $g$ s'écrit comme un produit $AB^{-1}$, où $A$ est une matrice triangulaire inférieure et $B$ est une matrice triangulaire supérieure dont tous les coefficiens diagonaux sont égaux à $1$. On écrit les matrices $A$ et $B$ sous la forme
$$A=\begin{pmatrix}
a_{1,1}&0&\dots&0\\
a_{2,1}&\ddots&\ddots&\vdots\\
\vdots&\ddots&\ddots&0\\
a_{r,1}&\dots&a_{r,r-1}&a_{r,r}
\end{pmatrix}\ ;\quad B=\begin{pmatrix}
1&b_{1,2}&\dots&b_{1,r}\\
0&\ddots&\ddots&\vdots\\
\vdots&\ddots&\ddots&b_{r-1,r}\\
0&\dots&0&1
\end{pmatrix}.$$
On note $\tilde{a}_{i,j}$ (resp. $\tilde{b}_{i,j}$) les coefficients de la matrice $A^{-1}$ (resp. de la matrice $B^{-1}$). On a alors :
$$a_{i,j}=\frac{\det(g_{[1,j-1]\cup \{i\},[1,j]})}{\det(g_{[1,j-1],[1,j-1]})},\quad
b_{i,j}=(-1)^{j-i}\frac{\det(g_{[1,j-1],[1,j]-\{i\}})}{\det(g_{[1,j-1],[1,j-1]})},$$
$$\tilde{a}_{i,j}=(-1)^{i-j}\frac{\det(g_{[1,i]-\{j\},[1,i-1]})}{\det(g_{[1,i],[1,i]})}
\text{ et }\quad \tilde{b}_{i,j}=\frac{\det(g_{[1,i],[1,i-1]\cup\{j\}})}{\det(g_{[1,i],[1,i]})}.$$
\end{proposition}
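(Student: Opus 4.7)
The plan is to derive all four formulas from the single identity $gB=A$ combined with Cramer's rule and the triangular shapes. I first treat $a_{i,j}$: restricting the identity $gB=A$ to the rows $[1,j-1]\cup\{i\}$ and the columns $[1,j]$, one gets $g_{[1,j-1]\cup\{i\},[1,j]}\cdot B_{[1,j],[1,j]}=A_{[1,j-1]\cup\{i\},[1,j]}$. The factor $B_{[1,j],[1,j]}$ is upper unitriangular, hence of determinant $1$, while the right-hand side is block-lower-triangular of determinant $\det(A_{[1,j-1],[1,j-1]})\cdot a_{i,j}$ (because $A$ is lower triangular so its top-left $(j-1)\times(j-1)$ block appears, and the only entry of row $i$ that meets column $j$ is $a_{i,j}$). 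Specialising to $i=j$ and to $(i,j)\mapsto(j-1,j-1)$ gives $\det(A_{[1,k],[1,k]})=\det(g_{[1,k],[1,k]})$ for $k=j-1,j$; plugging in yields the announced formula for $a_{i,j}$.

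For $b_{i,j}$ with $i<j$, the vanishing of the rows $k\le j-1$ of column $j$ of $A$ (again because $A$ is lower triangular) reads $\sum_{\ell=1}^{j-1}g_{k,\ell}b_{\ell,j}=-g_{k,j}$ for $k=1,\ldots,j-1$. This is a linear system whose matrix is $g_{[1,j-1],[1,j-1]}$; by Cramer's rule $b_{i,j}$ equals $-\det(M_i)/\det(g_{[1,j-1],[1,j-1]})$, where $M_i$ is obtained from $g_{[1,j-1],[1,j-1]}$ by replacing column $i$ with $g_{[1,j-1],\{j\}}$. Moving that inserted column from position $i$ to the right past positions $i+1,\ldots,j-1$ costs $(-1)^{j-1-i}$, yielding $b_{i,j}=(-1)^{j-i}\det(g_{[1,j-1],[1,j]-\{i\}})/\det(g_{[1,j-1],[1,j-1]})$.

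For $\widetilde a_{i,j}$ I exploit that $A^{-1}g=B^{-1}$ is upper unitriangular. Reading off rows $j<i$ and row $i$, the vector $(\widetilde a_{i,1},\ldots,\widetilde a_{i,i})$ satisfies ${}^tg_{[1,i],[1,i]}\cdot(\widetilde a_{i,1},\ldots,\widetilde a_{i,i})^{t}=e_i$. Cramer's rule plus cofactor expansion (the inserted column $e_i$ contributes only in row $i$) gives $\widetilde a_{i,j}=(-1)^{i+j}\det(g_{[1,i]-\{j\},[1,i-1]})/\det(g_{[1,i],[1,i]})$, which is the stated formula. Finally $\widetilde b_{i,j}$ is obtained by plugging $\widetilde a_{i,k}$ into $\widetilde b_{i,j}=(A^{-1}g)_{i,j}=\sum_{k=1}^{i}\widetilde a_{i,k}\,g_{k,j}$: the resulting alternating sum is exactly the cofactor expansion along the last column of $g_{[1,i],[1,i-1]\cup\{j\}}$, so $\widetilde b_{i,j}=\det(g_{[1,i],[1,i-1]\cup\{j\}})/\det(g_{[1,i],[1,i]})$.

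There is no deep obstacle; the only delicate point is the sign bookkeeping in steps two and three, which I will handle by always fixing the convention that a subset indexing rows or columns is listed in increasing order and by explicitly counting the number of adjacent transpositions needed to bring a single inserted index to its correct position.
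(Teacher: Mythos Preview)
Your proof is correct. Each of the four derivations goes through as written: the key identities $g_{R,[1,j]}B_{[1,j],[1,j]}=A_{R,[1,j]}$ (for $R\subset[1,r]$) and $A^{-1}g=B^{-1}$ are used exactly as you claim, and the sign bookkeeping in steps two and three is accurate.

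Note that the paper itself does not prove this proposition: it is stated with a reference to \cite[p.~394--395]{A2} and used without further argument. So there is no proof in the paper to compare yours against; you have simply supplied a self-contained verification of a result the author chose to cite. Your approach (restricting $gB=A$ to suitable blocks, then Cramer's rule) is the standard elementary one and is almost certainly what appears in the cited reference.
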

En utilisant la proposition~\ref{alain1} ci-dessus pour $g=y+\varpi{\rm Id}_r$ et en factorisant la matrice triangulaire inférieure sous la forme ${}^tN_rT_r$, on obtient le corollaire suivant
\begin{corollary}\label{ka5}
La décomposition de Bruhat de $w_0g=w_0(y+\varpi{\rm Id}_r)$ est $nw_0tn'$ avec
$n=(n_{i,j})$ où $$n_{i,j}=\frac{\det(g_{[1,r-j]\cup \{r+1-i\},[1,r+1-j]})}{\det(g_{[1,r+1-j],[1,r+1-j]})},$$ $t=\D(a_1(y),a_2(y)/a_1(y),\dots,a_r(y)/a_{r-1}(y))$ et
$n'=(n'_{i,j})$ où $$n'_{i,j}=(-1)^{j-i}\frac{\det(g_{[1,j-1],[1,j]-\{i\}})}{\det(g_{[1,j-1],[1,j-1]})}.
$$
De plus, si on note $n^{-1}=(\tilde{n}_{i,j})$, on a alors
$$\tilde{n}_{i,j}=(-1)^{j-i}\frac{\det(g_{[1,r+1-i]-\{r+1-j\},[1,r-i]})}{\det(g_{[1,r-i],[1,r-i]})}.$$
\end{corollary}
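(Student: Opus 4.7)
The plan is a direct application of Proposition~\ref{alain1} to the matrix $g = y + \varpi\,\mathrm{Id}_r$, followed by a conjugation by $w_0$ to pass from an $LU$-type decomposition to the Bruhat decomposition of $w_0 g$. No new idea is needed; the content is essentially bookkeeping.

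First I would check the hypothesis of Proposition~\ref{alain1}: the leading principal minors of $g$ are $\det(g_{[1,i],[1,i]}) = \det(s_i(y) + \varpi\,\mathrm{Id}_i) = a_i(y)$, which are monic in $\varpi$ and hence nonzero (in fact units in the ambient ring $R[\varpi,a_r^{-1}]$ used in the construction of $\underline{\kappa}_{\ud}$). Proposition~\ref{alain1} then yields a factorization $g = A B^{-1}$ with $A$ lower triangular and $B$ upper unipotent, together with the explicit formulas for $a_{i,j},\,b_{i,j},\,\tilde a_{i,j},\,\tilde b_{i,j}$ as signed ratios of minors of $g$.

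Next I would factor $A = n''\,t$, where $t$ is the diagonal of $A$ and $n'' \in {}^tN_r$ is lower unipotent. By Proposition~\ref{alain1}, $A_{i,i} = \det(g_{[1,i],[1,i]})/\det(g_{[1,i-1],[1,i-1]}) = a_i(y)/a_{i-1}(y)$, which is precisely the asserted $t$, and $n''_{i,j} = A_{i,j}/A_{j,j}$ for $i>j$. Substituting into $g$ and multiplying on the left by $w_0$ yields
\[
w_0 g = (w_0\, n''\, w_0^{-1})\cdot(w_0 t)\cdot B^{-1}.
\]
Setting $n := w_0\, n''\, w_0^{-1}$ (upper unipotent, since conjugation by $w_0$ swaps upper and lower triangularity) and $n' := B^{-1}$ then exhibits $w_0 g$ in the claimed Bruhat form $n\,w_0 t\, n'$, and the middle factor $w_0 t$ has the stated diagonal.

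It remains only to identify the explicit formulas. For $n_{i,j} = n''_{r+1-i,\,r+1-j} = A_{r+1-i,\,r+1-j}/A_{r+1-j,\,r+1-j}$, inserting the formula for $a_{i,j}$ from Proposition~\ref{alain1} and cancelling the common factor $\det(g_{[1,r-j],[1,r-j]})$ yields directly the asserted expression. The formula for $\tilde n_{i,j}$ is obtained analogously from the identity $\tilde n_{i,j} = ((n'')^{-1})_{r+1-i,\,r+1-j} = A_{r+1-i,\,r+1-i}\,\tilde a_{r+1-i,\,r+1-j}$ together with the expression for $\tilde a_{i,j}$, with the sign $(-1)^{j-i}$ coming from the sign already present in $\tilde a_{i,j}$. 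The entries of $n'$ are read off from the formulas of Proposition~\ref{alain1} for the entries of $B^{-1}$. The only subtlety in the bookkeeping is the index reversal $i\leftrightarrow r+1-i$ induced by $w_0$-conjugation, which governs the precise shape of the submatrices appearing in the $n$ and $\tilde n$ formulas; there is no substantive obstacle.
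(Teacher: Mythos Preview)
Your approach is exactly the paper's: apply Proposition~\ref{alain1} to $g = y + \varpi\,\mathrm{Id}_r$, factor the lower-triangular $A$ as ${}^tN_r\,T_r$, and conjugate by $w_0$ to obtain the Bruhat form $n\,w_0 t\,n'$. One small bookkeeping remark: the displayed formula for $n'_{i,j}$ in the statement coincides with the entries $b_{i,j}$ of $B$ from Proposition~\ref{alain1} rather than with $\tilde b_{i,j}$, so when you say ``read off from the formulas for $B^{-1}$'' you should double-check which matrix is actually $n'$ in the final matching of indices.
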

\begin{theorem}\label{pop} $\underline{\kappa}(y)$ est en fait un polyn\^ome en les coefficients de la matrice $y\in \mathfrak{gl}_r^\circ$. De plus on a : $$\underline{\kappa}(y)\underline{\kappa}({}^ty)=(-1)^{\sum_{i=1}^{r-1}[i+i(i+1)]}({\rm result}(a_{r-1}(y),a_r(y)).$$ \end{theorem}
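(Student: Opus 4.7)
La seconde égalité est une spécialisation immédiate du théorème~\ref{ka7}. En effet, la proposition~\ref{ngo2} identifie $\underline{\kappa}(y)=\underline{\kappa}_{\underline{d}}(w_0(y+\varpi\mathrm{Id}_r))$ et $\underline{\kappa}({}^ty)=\underline{\kappa}_{\underline{d}}(w_0{}^t(y+\varpi\mathrm{Id}_r))$, avec $a_i(y)=a_i$ dans la notation du théorème~\ref{ka7}. En spécialisant $\underline{d}=(1,2,\dots,r)$, l'exposant $\sum_{i=1}^{r-1}d_i+\sum_{i=1}^{r-1}d_id_{i+1}$ devient $\sum_{i=1}^{r-1}(i+i(i+1))$, ce qui donne la formule annoncée.

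Le cœur du théorème est donc l'assertion de polynomialité. Comme $Y_{\underline{d}}\simeq\mathfrak{gl}_r^\circ$ est un ouvert de l'espace affine lisse $\mathfrak{gl}_r$, le morphisme $\underline{\kappa}:Y_{\underline{d}}\to\mg_m$ s'étend automatiquement en une fonction rationnelle sur $\mathfrak{gl}_r$, et il s'agit de vérifier la régularité de cette fonction le long de chaque composante irréductible du diviseur $\mathfrak{gl}_r\setminus\mathfrak{gl}_r^\circ=\bigcup_{i=1}^{r-1}\{\mathrm{result}(a_i,a_r)=0\}$. L'idée est de calculer $\underline{\kappa}(y)$ explicitement via la définition géométrique et de montrer que les éventuels pôles provenant des dénominateurs $a_i(y)$ se compensent systématiquement.

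Concrètement, le corollaire~\ref{ka5} fournit la décomposition de Bruhat $w_0(y+\varpi\mathrm{Id}_r)=nw_0tn'$ avec $t=\D(a_1,a_2/a_1,\dots,a_r/a_{r-1})$ et les coefficients $n_{i,j}$, $n'_{i,j}$ écrits comme quotients de mineurs principaux de $g=y+\varpi\mathrm{Id}_r$. En reportant dans la définition de $\underline{\kappa}$ via la base $\delta_g$ de $\Delta_g$, les dénominateurs $\det(g_i)=a_i(y)$ hérités des entrées de $n$ et $n'$ doivent se compenser avec le facteur de normalisation $[\delta_{w_0t}]$ de la construction~\ref{chi5}, lequel contient par définition un produit des mêmes $a_i(y)$. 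Pour rendre cette compensation transparente, j'utiliserais le lemme~\ref{ka2} afin de décomposer le $R[\varpi,a_r^{-1}]^r$-module ambiant en somme directe adaptée à la filtration par les sous-matrices principales (somme directe qui est stable sous $n$ et $n'$), ce qui factorise la droite $\Delta_{w_0g}$ en un produit tensoriel de facteurs de rang inférieur, chacun calculable essentiellement par la formule de Kubota de la proposition~\ref{kubo}.

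Le principal obstacle sera la gestion des signes de Koszul et des isomorphismes de multiplication au long de cette chaîne d'identifications : il faut choisir judicieusement les décompositions pour que les annulations de dénominateurs apparaissent de manière manifeste, et ne pas compter deux fois les mineurs $a_i(y)$ qui interviennent à la fois dans les entrées rationnelles du terme unipotent et dans la normalisation du terme torique. Une fois la polynomialité établie, la seconde identité du théorème (qui résulte déjà du théorème~\ref{ka7}) raffine l'information en disant que $\underline{\kappa}(y)$ est un diviseur polynomial de $\mathrm{result}(a_{r-1}(y),a_r(y))$ dans $k[\mathfrak{gl}_r]$, ce qui sera exploité dans la suite pour établir la perversité de $\mathcal{J}_{\underline{d}}$.
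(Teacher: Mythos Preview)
Your reduction of the second identity to Theorem~\ref{ka7} via Proposition~\ref{ngo2} is correct and is exactly what the paper does at the end of its proof.

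For the polynomiality, however, your strategy diverges substantially from the paper's and, as written, has a genuine gap. You propose to compute $\underline{\kappa}(y)$ directly from the Bruhat data of Corollary~\ref{ka5} at the places dividing $\prod_{i=1}^{r-1}a_i$, and to argue that the denominators $a_i(y)$ coming from the rational entries of $n,n'$ are cancelled by the normalisation factor $[\delta_{w_0t}]$. But $[\delta_{w_0t}]$ is a product of \emph{tame symbols} of the $t_i$, not a product of the $a_i(y)$ themselves; there is no obvious mechanism by which it absorbs the polynomial denominators appearing in the unipotent entries. Your suggested use of Lemma~\ref{ka2} along the ``filtration by principal submatrices'' is too vague to bridge this: that filtration is not a direct-sum decomposition preserved by $w_0(y+\varpi\mathrm{Id}_r)$, so the hypothesis of the lemma does not apply as stated.

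The paper sidesteps this entire difficulty by a reciprocity trick. On the dense open where $a_r(y)$ has simple roots $\lambda_1,\dots,\lambda_r$, it uses Lemma~\ref{ka2} with the \emph{adelic} decomposition $V=A\oplus B$, where $A=\overline{k}[\varpi,a_r(y)^{-1}]$ and $B=\bigoplus_i\overline{k}[[\varpi-\lambda_i]]\oplus\varpi^{-1}\overline{k}[[\varpi^{-1}]]$. Because the full global extension (with $S=\emptyset$) is canonically trivial (Remark~\ref{rmk3}, via the determinant of cohomology), one gets $\Delta^A_{w_0g}\simeq(\Delta^B_{w_0g})^{\otimes -1}$. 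The right-hand side is computed at the places $\varpi-\lambda_i$ and at $\infty$, where $n,n'$ are \emph{integral} by Corollary~\ref{ka5}; hence no denominators $a_i(y)$ ever enter. The outcome is a closed determinantal formula
\[
\underline{\kappa}(y)=\det\bigl(g^{\,r+1-i,\,r}[j-1]\bigr)_{1\le i,j\le r},
\]
where $g^{i,r}=\det\bigl((y+\varpi\mathrm{Id}_r)_{[1,r]\setminus\{i\},[1,r-1]}\bigr)=\sum_j g^{i,r}[j]\varpi^j$, which is manifestly polynomial in the entries of $y$. The conceptual point you are missing is that the product formula for the metaplectic extension lets one trade the hard places (those dividing $\prod_{i<r}a_i$) for the easy ones (those dividing $a_r$ and $\infty$).
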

\begin{proof}

Comme $\underline{\kappa}$ est un morphisme de $\mathfrak{gl}_r^\circ$ dans $\mg_m$, on peut se contenter de calculer $\underline{\kappa}(y)$ sur l'ouvert dense où $a_r(y)$ est à racines simples. On note $(\lambda_i)_{ i\in \{1,\dots,r\}}$ ($\lambda_i\in\overline{k}$) les racines de $a_r(y)$. En utilisant $S=\{\lambda_i\}_i$, la fonction $\underline{\kappa}_{\ud}$ provient de l'extension $\tG_r^S(F)$ de la partie ci-dessus. D'après le corollaire~\ref{ka5}, la décomposition de Bruhat de $w_0(y+\varpi{\rm Id}_r)$ est $nw_0tn'$ avec $t=\D(a_1(y),a_2(y)/a_1(y),\dots,a_r(y)/a_{r-1}(y))$.  La fonction $\underline{\kappa}_{\ud}$ provient alors du diagramme
$$\xymatrix{k\ar[r]^-{can} &\Delta_{w_0(y+\varpi{\rm Id}_r)}^S\ar[d]\\
&\Delta_{w_0t}^S\ar[ul],}$$
où l'isomorphisme vertical est celui figurant dans la définition de la section ensembliste $s_{\rm geo}:\G_r(F)\to\tG_r^S(F)$.

En utilisant le lemme~\ref{ka2} dans le cas particulier où $V=V'=\bigoplus_{i=1}^r\overline{k}((\varpi-\lambda_i))\oplus \overline{k}((\varpi^{-1}))$, $A=\overline{k}[\varpi,\Delta_r(y)^{-1}]$, $B=\bigoplus_{i=1}^r\overline{k}[[\varpi-\lambda_i]]\oplus \varpi^{-1}\overline{k}[[\varpi^{-1}]]$, on a $(V|\det(w_0(y+\varpi{\rm Id}_r))V)\can (A|\det(w_0(y+\varpi{\rm Id}_r))A)\otimes(B|\det(w_0(y+\varpi{\rm Id}_r))B)$ et aussi $(V^r|w_0(y+\varpi{\rm Id}_r)V^r)\can(A^r|w_0(y+\varpi{\rm Id}_r)A^r)\otimes(B^r|w_0(y+\varpi{\rm Id}_r)B^r)$. Par conséquent, on a $\Delta_{w_0(y+\varpi{\rm Id}_r)}^V\can \Delta_{w_0(y+\varpi{\rm Id}_r)}^A\otimes \Delta_{w_0(y+\varpi{\rm Id}_r)}^B$, où $\Delta_y^\bullet=(\bullet|\det(y)\bullet)\otimes(\bullet^r|y\bullet^r)^{\otimes -1}$ avec $\bullet =A,B \text{ ou } V$ (voir aussi la remarque \ref{rmk3} sur la trivialité de l'extension globale $\tG_r^{\emptyset}(F)$).

Pareillement, on a $\Delta_{w_0t}^V\can \Delta_{w_0t}^A\otimes \Delta_{w_0t}^B$.

Comme $w_0(y+\varpi{\rm Id}_r), w_0t\in\G_r(V)$, on a $\Delta_{w_0(y+\varpi{\rm Id}_r)}^V\can \overline{k}$ et $\Delta_{w_0t}^V\can \overline{k}$, de sorte qu'on a un isomorphisme canonique :
$\Delta_{w_0(y+\varpi{\rm Id}_r)}^A\can {\Delta_{w_0(y+\varpi{\rm Id}_r)}^B}^{\otimes-1}$ et $\Delta_{w_0t}^A\can {\Delta_{w_0t}^B}^{\otimes-1}$. On a alors le diagramme suivant :
$$\xymatrix{\overline{k}\ar[r]^-{can}&\Delta_{w_0(y+\varpi{\rm Id}_r)}^S=\Delta_{w_0(y+\varpi{\rm Id}_r)}^A\ar@{-}[r]^-{\sim}\ar[d]&{\Delta_{w_0(y+\varpi{\rm Id}_r)}^B}^{\otimes-1}\ar[d]\\
& \Delta_{w_0t}^S=\Delta_{w_0t}^A\ar[ul]&{\Delta_{w_0t}^B}^{\otimes-1}\ar@{-}[l]^-{\sim}}$$

\begin{enumerate}
\item \`A la place $v_i=\varpi-\lambda_i$, on note $\overline{\0}_{v_i}=\overline{k}[[\varpi-\lambda_i]]$. D'après le lemme~\ref{ka5} on a : $n\overline{\0}_{v_i}^r=n'\overline{\0}_{v_i}^r=\overline{\0}_{v_i}^r$. Comme $t\overline{\0}_{v_i}^r=\D(a_1(y),\break a_2(y)/a_1(y),\dots,a_r(y)/a_{r-1}(y))\overline{\0}_{v_i}^r=\bigoplus_{j=1}^{r-1}\overline{\0}_{v_i}e_j\oplus v_i\overline{\0}_{v_i}e_r$, on a $${\mathfrak{d}_{w_0(y+\varpi{\rm Id}_r)}}_{|v_i}={e_1^*}_{|v_i}.$$

    Comme $\det(w_0(y+\varpi{\rm Id}_r))=(-1)^{\frac{r(r-1)}{2}}a_r(y)$, on a $${\mathfrak{d}_{\det(w_0(y+\varpi{\rm Id}_r))}}_{|v_i}=1^*_{|v_i}.$$

    On a ${\overline{\delta}_{w_0(y+\varpi{\rm Id}_r)}}_{|v_i}=(-1)^{r-1}a_{r-1}(\lambda_i)$. Donc
    on a $${\delta_{w_0(y+\varpi{\rm Id}_r)}}_{|v_i}=(-1)^{r-1}a_{r-1}(\lambda_i)1^*_{|v_i}\otimes{e_1}_{|v_i}.$$
\item \`A la place $\infty$, on note $\overline{\mathfrak{m}}_\infty=\varpi^{-1}\overline{k}[[\varpi^{-1}]]$ et $\overline{\0}_\infty=\overline{k}[[\varpi^{-1}]]$. La droite qu'on va utiliser dans ce cas est $D_g^\infty=(\overline{\mathfrak{m}}_\infty^r|g\overline{\mathfrak{m}}_\infty^r)$ et le choix de l'élément ${\delta_{w_0(y+\varpi{\rm Id}_r)}}_{|\infty}$ est le même que celui fait dans la construction~\ref{chi5}. D'après le lemme~\ref{ka5} on a $v_{\infty}(n_{i,j})\geq 1$ et $v_{\infty}(n'_{i,j})\geq 1$, de sorte qu'on a $n\overline{\mathfrak{m}}^r_\infty=n'\overline{\mathfrak{m}}^r_{\infty}=\overline{\mathfrak{m}}^r_{\infty}$ et $t\overline{\mathfrak{m}}^r_{\infty}=\overline{\0}_\infty^r$. Donc ${\mathfrak{d}_{w_0(y+\varpi{\rm Id}_r)}}_{|\infty}=\bigwedge_{i=1}^rn{e_i}_{|\infty}$. En reprenant la démonstration du lemme \ref{n3} dans le cas où $M=\overline{\0}_\infty^r$ et $M'=\overline{\mathfrak{m}}_\infty^r$, on voit que l'isomorphisme $(\overline{\0}_\infty^r|\overline{\mathfrak{m}}_\infty^r)\buildrel {\times n}\over{\ra}(n\overline{\0}_\infty^r|n\overline{\mathfrak{m}}_\infty^r)$ est l'identité, de sorte qu'on obtient $${\mathfrak{d}_{w_0(y+\varpi{\rm Id}_r)}}_{|\infty}=\bigwedge_{i=1}^r{e_i}_{|\infty}.$$De plus ${\mathfrak{d}_{\det(w_0(y+\varpi{\rm Id}_r))}}_{|\infty}=(\bigwedge_{i=r-1}^{0}\varpi^i)_{|\infty}$. Par ailleurs ${\overline{\delta}_{w_0(y+\varpi{\rm Id}_r)}}_{|\infty}=(-1)^{\frac{(r-1)r}{2}}$. On a alors
    $${\delta_{w_0(y+\varpi{\rm Id}_r)}}_{|\infty}=(-1)^{\frac{(r-1)r}{2}}(\bigwedge_{i=r-1}^{0}\varpi^i)_{|\infty}\otimes(\bigwedge_{i=1}^re_i)^*_{|\infty}.$$
\end{enumerate}
Ensuite on va expliciter l'isomorphisme ${\Delta_{w_0(y+\varpi{\rm Id}_r)}^B}^{\otimes -1}\simeq \overline{k}$ du diagramme ci-dessus.
On note $B'=\bigoplus_{j=1}^rg\overline{\0}_{v_i}^r\oplus\overline{\mathfrak{m}}_\infty^r$. Il est clair que $B'=B^r\cap gB^r$. Comme $g\in \G_r(A)\subset\G_r(V)$, on a successivement les identifications suivantes :
\begin{eqnarray*}
V^r/B'&=&gV^r/B'\\
 (A^r\oplus B^r)/B'&\can&(gA^r\oplus gB^r)/B'\\
 A^r\oplus (B^r/B')&\can& gA^r\oplus (gB^r/B')\\
(B^r/B')&\can& (gB^r/B')\quad\quad (A^r=gA^r)\\
 (\bigoplus_{i=1}^r\0_{v_i}^r\oplus\mathfrak{m}_\infty^r)/(\bigoplus_{i=1}^rg\0_{v_i}^r\oplus\mathfrak{m}_\infty^r)
&\can&(\bigoplus_{i=1}^rg\0_{v_i}^r\oplus g\mathfrak{m}_\infty^r)/(\bigoplus_{i=1}^rg\0_{v_i}^r\oplus\mathfrak{m}_\infty^r)\\
 \bigoplus_{i=1}^r(\0_{v_i}^r/g\0_{v_i}^r)&\can& g\mathfrak{m}_\infty^r/\mathfrak{m}_\infty^r\can\0_\infty^r/\mathfrak{m}_\infty^r\  (**).
\end{eqnarray*}

$(e_i)_{|\infty}\in V^r$ s'écrit $(\bigoplus_{j=1}^r(e_i)_{|v_j}\oplus(e_i)_{|\infty})\oplus\bigoplus_{j=1}^r(-e_i)_{|v_j}\in A^r\oplus B^r$ 
Par conséquent, l'image de $(e_i)_{|\infty}\in \overline{\0}_\infty^r/\overline{\mathfrak{m}}_\infty^r$ par l'isomorphisme canonique (**) ci-dessus est $\bigoplus_{j=1}^r(-\overline{e_i})_{|v_j}\in \bigoplus_{j=1}^r(\overline{\0}_{v_j}^r/g\overline{\0}_{v_j}^r)$, où $(-\overline{e_i})_{|v_j}$ est l'image de $(-{e_i})_{|v_j}$ par la projection canonique $\overline{\0}_{v_j}^r\rightarrow\overline{\0}_{v_j}^r/g\overline{\0}_{v_j}^r$.

Soit $(1,\tilde{n}_{1,2},\dots,\tilde{n}_{1,r})$ la première ligne de la matrice $n^{-1}$. On a   $n^{-1}(-{e_i})_{|v_j}=-{\tilde{n}_{1,i}(\lambda_j)e_1}_{|v_j}\  ({\rm mod}\  w_0tn'\overline{\0}_{v_j})$ (en convenant que $\tilde{n}_{1,1}=1$), de sorte qu'on obtient $(-\overline{e_i})_{|v_j}=-\tilde{n}_{1,i}(\lambda_j)(e_1)_{|v_j}$. Par conséquent l'isomorphisme canonique $D_{g,\infty}\can \bigotimes_{i=1}^rD_{g,v_i}$ (qui vient de l'isomorphisme (**)) est $(\bigwedge_{i=1}^re_i)_{|\infty}\mapsto\det(\m)\bigotimes_{i=1}^r{e_1}_{|v_i}$, où
$$\m=\begin{pmatrix}-\tilde{n}_{1,1}(\lambda_1)&-\tilde{n}_{1,1}(\lambda_2)&\dots&-\tilde{n}_{1,1}(\lambda_r)\\
-\tilde{n}_{1,2}(\lambda_1)&-\tilde{n}_{1,2}(\lambda_2)&\dots&-\tilde{n}_{1,2}(\lambda_r)\\
\vdots&\vdots&\ddots&\vdots\\
-\tilde{n}_{1,r}(\lambda_1)&-\tilde{n}_{1,r}(\lambda_2)&\dots&-\tilde{n}_{1,r}(\lambda_r)
\end{pmatrix}.$$

De même, l'isomorphisme canonique $D_{\det(g),\infty}\can\bigotimes_{i=1}^rD_{\det(g),v_i}$ est
$(\bigwedge_{i=r-1}^{0}\varpi^i)_{|\infty}\mapsto\det(\m')\bigotimes_{i=1}^r 1_{|v_i}$ où
$$\m'=\begin{pmatrix}-\lambda_1^{r-1}&-\lambda_1^{r-2}&\dots&-1\\
-\lambda_2^{r-1}&-\lambda_2^{r-2}&\dots&-1\\
\vdots&\vdots&\ddots&\vdots\\
-\lambda_r^{r-1}&-\lambda_r^{r-2}&\dots&-1\\
\end{pmatrix}.$$

Par conséquent, on a
$$\underline{\kappa}_{\ud}(w_0(y+\varpi{\rm Id}_r))=(-1)^{r(r-1)+\frac{r(r-1)}{2}}{\rm result}(a_{r-1}(y),a_r(y))\det(\m)/\det(\m').$$

Par ailleurs, en utilisant le corollaire \ref{ka5}, on a
$$\det(\m)=(-1)^{r+\sum_{i=1}^r (i-1)}\left(\begin{smallmatrix}\frac{\det(g^{r,r})}{a_{r-1}(y)}(\lambda_1)&\frac{\det(g^{r,r})}{a_{r-1}(y)}(\lambda_2)&\dots&\frac{\det(g^{r,r})}{a_{r-1}(y)}(\lambda_r)\\
\frac{\det(g^{r-1,r})}{a_{r-1}(y)}(\lambda_1)&\frac{\det(g^{r-1,r})}{a_{r-1}(y)}(\lambda_2)&\dots&\frac{\det(g^{r-1,r})}{a_{r-1}(y)}(\lambda_r)\\
\vdots&\vdots&\ddots&\vdots\\
\frac{\det(g^{1,r})}{a_{r-1}(y)}(\lambda_1)&\frac{\det(g^{1,r})}{a_{r-1}(y)}(\lambda_2)&\dots&\frac{\det(g^{1,r})}{a_{r-1}(y)}(\lambda_r)
\end{smallmatrix}\right),$$
où $g^{i,r}$ est la sous-matrice de $g$ obtenue en supprimant sa $i$-ième ligne et sa $r$-ième colonne de $g$ (en effet, par définition on a $g^{i,r}=g_{[1,r]-\{i\},[1,r-1]}$).

Il est clair que $\det(g^{i,r})$ est un polynôme en la variable $\varpi$  et en les coefficients de $y$, de degré partiel $\leq r-1$ en la variable $\varpi$. On note
$\det(g^{i,r})=g^{i,r}[0]+g^{i,r}[1]\varpi+\dots+g^{i,r}[r-1]\varpi^{r-1}$, de sorte que la formule de $\det(\m)$ se reécrit  :
\begin{multline*}
\det(\m)=(-1)^{r+\frac{r(r-1)}{2}}\frac{1}{{\rm result}(a_{r-1}(y),a_r(y))}\times\\
\times \det\left(\left[\begin{smallmatrix}g^{r,r}[0]&g^{r,r}[1]&\dots&g^{r,r}[r-1]\\
g^{r-1,r}[0]&g^{r-1,r}[1]&\dots&g^{r-1,r}[r-1]\\
\vdots&\vdots&\ddots&\vdots\\
g^{1,r}[0]&g^{1,r}[1]&\dots&g^{1,r}[r-1]\end{smallmatrix}\right]\right)\det\left(\left[\begin{smallmatrix}1&1&\dots&1\\
\lambda_1&\lambda_2&\dots&\lambda_r\\
\vdots&\vdots&\ddots&\vdots\\
\lambda_1^{r-1}&\lambda_2^{r-1}&\dots&\lambda_r^{r-1}\\
\end{smallmatrix}\right]\right)
\end{multline*}
(comme on le voit en effectuant le produit de ces deux dernières matrices).

Par conséquent, \begin{eqnarray*}
\underline{\kappa}(y)&=&\underline{\kappa}_{\ud}(w_0(y+\varpi{\rm Id}_r))\\
&=&\det\left(\left[\begin{smallmatrix}g^{r,r}[0]&g^{r,r}[1]&\dots&g^{r,r}[r-1]\\
g^{r-1,r}[0]&g^{r-1,r}[1]&\dots&g^{r-1,r}[r-1]\\
\vdots&\vdots&\ddots&\vdots\\
g^{1,r}[0]&g^{1,r}[1]&\dots&g^{1,r}[r-1]\end{smallmatrix}\right]\right)
 \end{eqnarray*} est un polynôme en les coefficients de la matrice $y$.

La deuxième assertion du théorème est la conséquence du théorème \ref{ka7} dan le cas particulier où $\ud=(1,2,\dots,r)$ et $g=y+\varpi{\rm Id}_r$.
\end{proof}
D'après le théorème~\ref{pop} la fonction $\underline{\kappa}$ est un produit des facteurs irréductibles de ${\rm result}(a_{r-1},a_r)$, de sorte qu'on peut prolonger la fonction $\underline{\kappa}$ au-dessus de l'ouvert $V'_{\ud}=\{(a_1,\dots,a_r)\in Q_{\ud}|{\rm pgcd}(a_{r-1},a_r)=1\}$. De cette manière la famille $(Y_{\ud},f^Y_{\ud},h'_{\ud},\underline{\kappa}_{\ud})$ au-dessus de $V_{\ud}\times\mg_m^{r-1}$ se prolonge à $V'_{\ud}\times\mg_m^{r-1}$ tout entier. On note $\check{\mathfrak{gl}}_r$ l'ouvert de $\mathfrak{gl}_r$ au-dessus de $V'_{\ud}\times\mg_m^{r-1}$.
\begin{theorem}\label{py}
Pour $\ud=(1,2,\dots,r)$ le complexe de faisceaux\break
$\mathrm{R}f^Y_{\ud,!}({h'}_{\ud}^*\mathcal{L}_{\psi}\otimes\underline{\kappa}_{\ud}^*\mathcal{L}_{\zeta})[r^2+r-1]$ est un faisceau pervers sur $V'_{\ud}$, prolonge\-ment interm\'ediaire de sa restriction \`a l'ouvert $U_{\ud}$.
\end{theorem}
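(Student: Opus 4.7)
Le plan est de reprendre l'argument r\'ecursif de Fourier-Deligne utilis\'e pour d\'emontrer le th\'eor\`eme~\ref{px} (\cite[\S5]{N}), en rempla\c cant le groupe orthogonal $\mathrm{O}_{i-1}$ par un rev\^etement double $G_{i-1}$ de $\G_{i-1}$ obtenu en extrayant une racine carr\'ee du d\'eterminant, afin d'absorber le twist suppl\'ementaire par $\underline{\kappa}_{\ud}^*\lze$. D'abord, \`a l'aide de la proposition~\ref{ngo2}, identifier $(Y_{\ud}, f^Y_{\ud}, h'_{\ud}, \underline{\kappa}_{\ud})$ avec $(\check{\mathfrak{gl}}_r, f^Y, h', \underline{\kappa})$ sur l'ouvert $\check{\mathfrak{gl}}_r \subset \mathfrak{gl}_r$ d\'efini par $\mathrm{pgcd}(a_{r-1}(y), a_r(y)) = 1$. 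Construire une tour de projections $f^Y_i : \mg_m \times \mathfrak{gl}_i \times R_i \to \mathfrak{gl}_{i-1} \times R_{i-1}$ munie de formes lin\'eaires $h'_i$ et de facteurs partiels $\underline{\kappa}_i$ de $\underline{\kappa}$, puis poser $\mathcal{J}_{i-1} := \mathrm{R}f^Y_{i,!}(\mathcal{J}_i \otimes h'^*_i \lsi \otimes \underline{\kappa}_i^* \lze)$ avec $\mathcal{J}_r = \overline{\mathbb{Q}}_{\ell}[r^2]$, de sorte que $\mathcal{J}_1[r-1]$ co\"\i ncide, au d\'ecalage pr\`es, avec le complexe \`a \'etudier.

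L'ingr\'edient crucial est de restaurer l'\'equivariance perdue par le twist de Kummer. La conjugaison $y \mapsto \D(g, 1)^{-1} y \D(g, 1)$ par $g \in \G_{r-1}$ pr\'eserve chaque $a_i(y)$, donc fixe $f^Y$ et le r\'esultant $\mathrm{result}(a_{r-1}(y), a_r(y))$. Combin\'ee avec le th\'eor\`eme~\ref{pop}, qui exprime $\underline{\kappa}$ comme un polyn\^ome en les coefficients de $y$ v\'erifiant $\underline{\kappa}(y)\underline{\kappa}({}^t y) = (-1)^{\bullet}\mathrm{result}(a_{r-1}, a_r)$, cette invariance force $\underline{\kappa}(g^{-1} y g) = \det(g)^k \underline{\kappa}(y)$ pour un entier explicite $k$ : en effet, chaque $\G_{r-1}$-translat\'e de $\underline{\kappa}$ est une fonction r\'eguli\`ere diff\'erant de $\underline{\kappa}$ par un scalaire non-nul sur $\G_{r-1}$, n\'ecessairement un caract\`ere de $\G_{r-1}$, donc une puissance de $\det$. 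En passant au rev\^etement double $G_{r-1} = \{(g, \lambda) \in \G_{r-1} \times \mg_m : \lambda^2 = \det(g)\}$, la puissance $\det(g)^k$ devient $\lambda^{2k}$, et $\underline{\kappa}^*\lze$ devient $G_{r-1}$-\'equivariant pour l'action conjugu\'ee relev\'ee.

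Une fois ce cadre mis en place, la d\'emonstration de \cite[proposition 5.2.2]{N} s'adapte mot pour mot : on \'ecrit l'\'etape inductive sous la forme $\mathcal{J}' = \epsilon^*\fsi(\iota_*\mathcal{J})[2-i]$, o\`u $\iota$ est une immersion ferm\'ee dans un fibr\'e trivial $V$ et $\epsilon$ une immersion dans son dual ; on en d\'eduit la perversit\'e et la propri\'et\'e de prolongement interm\'ediaire \`a partir des propri\'et\'es correspondantes sur $V^{\vee}$ (pr\'eserv\'ees par Fourier-Deligne) via une lissit\'e du morphisme compos\'e $G_{i-1} \times E \times \mg_m \hookrightarrow G_{i-1} \times V^{\vee} \to V^{\vee}$, analogue direct du lemme~\ref{i2} (la dimension relative se calculant \`a partir de $\dim G_{i-1} = i(i-1)/2 + 1$ et de l'orbite ouverte dans $\mathbb{A}^{i-1}$). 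La compatibilit\'e des plongements embo\^\i t\'es $G_i \hookrightarrow G_{i+1}$ via $g \mapsto \D(g, 1)$ (qui pr\'eserve $\det$, donc se rel\`eve coh\'eremment aux racines carr\'ees) garantit que la r\'ecurrence se referme. L'obstacle principal est la v\'erification structurelle que $\underline{\kappa}$ se transforme par une pure puissance de $\det$ sous la $\G_{r-1}$-conjugaison : c'est l\`a que la polynomialit\'e de $\underline{\kappa}$ fournie par le th\'eor\`eme~\ref{pop} et l'analyse des facteurs irr\'eductibles du r\'esultant interviennent de mani\`ere essentielle, puisque l'identit\'e du th\'eor\`eme~\ref{pop} ne contr\^ole a priori que le produit $\underline{\kappa}(y)\underline{\kappa}({}^ty)$ et non $\underline{\kappa}$ seul.
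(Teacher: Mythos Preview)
Your overall strategy is correct and matches the paper's: run Ngo's Fourier--Deligne recursion with $G_{i-1}$ (the double cover of $\G_{i-1}$ obtained by adjoining $\sqrt{\det}$) replacing the orthogonal group, and use Theorem~\ref{pop} to show that $\underline{\kappa}$ transforms under $\G_{r-1}$-conjugation by a power of $\det$, so that $\underline{\kappa}^*\lze$ becomes $G_{r-1}$-equivariant. However, several of your technical parameters are imported from the symmetric side (Theorem~\ref{px}) and do not fit the $\mathfrak{gl}$ side.

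First, there are no ``partial factors $\underline{\kappa}_i$'': the paper does not (and cannot naturally) factor $\underline{\kappa}$ along the tower. Instead one sets $\mathcal{J}_r=\underline{\kappa}^*\lze[r^2]$, placing the entire Kummer twist at the top, and defines $\mathcal{J}_{i-1}=\mathrm{R}f^Y_{i,!}(\mathcal{J}_i\otimes h'^*_i\lsi)$ with no further twist. The whole point of the equivariance argument is precisely that $\underline{\kappa}^*\lze$ is already a $G_{r-1}$-equivariant perverse sheaf on $\check{\mathfrak{gl}}_r$, so the recursion can start there; distributing undefined pieces of $\underline{\kappa}$ at each step is neither available nor needed.

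Second, you have transplanted the $X$-side linear algebra. For $y_i\in\mathfrak{gl}_i$ one writes $y_i=\left(\begin{smallmatrix}y_{i-1}&z'\\ {}^tz&*\end{smallmatrix}\right)$ with \emph{two} vectors $z,z'\in\mathbb{A}^{i-1}$, so the relevant fibre bundle is $V=E\times(\mathbb{A}^{i-1})^2$, the shift is $[1-2(i-1)]$ rather than $[2-i]$, the open orbit is $\{{}^tzz'\neq 0\}\subset(\mathbb{A}^{i-1})^2$ (not a quadric in a single $\mathbb{A}^{i-1}$), and $\dim G_{i-1}=\dim\G_{i-1}=(i-1)^2$ since a finite cover has the same dimension --- not $i(i-1)/2+1$. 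The smoothness lemma then yields relative dimension $(i-1)^2+1-2(i-1)$. With these corrections your argument coincides with the paper's.
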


On définit les variétés $R_i$ en posant $R_r=\mathrm{Spec}(k)$ et $R_{i-1}=R_i\times Q_i \times \mg_m$. Soit $f^Y_i:\mg_m\times \mathfrak{gl}_i\times R_i\ra \mathfrak{gl}_{i-1}\times R_{i-1}$ le morphisme défini par $f^Y_i(\alpha_i,y_i,r_i)=(s_{i-1}(y_i),r_{i-1})$, où $r_{i-1}=(r_i,a_i(y_i),\alpha_i)$ (sauf $f^Y_r:\mg_m\times\check{\mathfrak{gl}}_r\times R_r\ra \mathfrak{gl}_{r-1}\times R_{r-1}$). Soit $h'_i :\mg_m\times \mathfrak{gl}_i \times R_i\ra \mg_a$ le morphisme défini par $h'_i(\alpha_i,y_i,r_i)=\frac{1}{2}\alpha_i(y_{i-1,i}+y_{i,i-1})$. Soit $\mathrm{pr}_i:\mg_m\times S_i\times R_i\ra S_i\times R_i$ la projection évidente. On définit les complexes $\mathcal{J}_i$ sur $\mathfrak{gl}_i\times R_i$ en posant $\mathcal{J}_r=\underline{\kappa}^*\mathcal{L}_\zeta[r^2]$ et $\mathcal{J}_{i-1}=Rf^Y_{i,!}(\mathcal{J}_i\otimes {h'}^*_{i}\lsi)$ (voir \cite{N}). On a un isomorphisme $\mathfrak{gl}_1\times R_1 \simeq Q_{\ud}\times\mg_m^{r-1}$ via lequel $\mathcal{J}_1\simeq \mathrm{R}f^Y_{\ud,!}({h'}_{\ud}^*\mathcal{L}_{\psi}\otimes \underline{\kappa}^*_{\ud}\mathcal{L}_\zeta)[r^2+r-1]$.

On note $G_i$ l'extension de $\G_i$ obtenue en extrayant une racine carrée de $\det(g)\,\forall g\in\G_i$ (i.e $G_i=\{(g,\delta)|g\in \G_i, \det(g)=\delta^2\}$. Ce groupe agit sur $\mg_m\times \mathfrak{gl}_i\times R_i$ par l'action adjointe de $\G_i$ sur $\mathfrak{gl}_i$ et par l'action triviale sur les autres facteurs. On identifiera le groupe $G_{i-1}$ au sous-groupe $\D(G_{i-1},1)$ de $G_i$ (puisque $\det(g)=\det(\D(g,1))$ de sorte que $G_{i-1}$ agit aussi sur $\mg_m\times \mathfrak{gl}_i\times R_i$ par l'action induite. Comme l'action adjointe laisse invariant le polynôme caractéristique, le morphisme $f^Y_i$ est $G_{i-1}$-équivariant. Le morphisme $h'_i$ n'est pas $G_{i-1}$-équivariant mais est néanmoins $G_{i-2}$-équivariant.

Lorsque $P$ un polynôme de plusieurs variables, on note $V(P)$ le schéma des zéros de $P$. D'après le théorème~\ref{pop} ci-dessus, la fonction $\underline{\kappa}$ est un produit de facteurs irréductibles de ${\rm result}(a_{r-1},a_r)$, donc on peut écrire ${\rm result}(a_{r-1},a_r)=\prod_{i}P_i^{m_i}$ et $\underline{\kappa}(w_0(y+\varpi{\rm Id}_r)=\prod_i P_i(y)^{n_i}$ avec $0\leq n_i\leq m_i$. On a $V({\rm result}(a_{r-1},a_r))=\bigcup_i V(P_i^{m_i})$. Soit $h\in \G_{r-1}$, comme l'action adjointe $y\mapsto h.y$  laisse invariant $a_{r-1}(y)$ et $a_r(y)$, cette action laisse invariant ${\rm result}(a_{r-1}(y),a_r(y))$, i.e
   $${\rm result}(a_{r-1}(y),a_r(y))={\rm result}(a_{r-1}(h.y),a_r(h.y)),$$ donc encore
   $$V({\rm result}(a_{r-1}(y),a_r(y)))=V({\rm result}(a_{r-1}(h.y),a_r(h.y))).$$
On considère le morphisme $\rho :\G_{r-1}\times \mathfrak{gl}_r\to \mathfrak{gl}_r\ (h,y)\mapsto h.y$ défini par l'action. Comme $V({\rm result}(a_{r-1}(y),a_r(y)))=V({\rm result}(a_{r-1}(h.y),a_r(h.y)))=V$, on a donc un morphisme $\rho_{|V(P_i)}: \G_{r-1}\times V(P_i)\to V$. Puisque $\G_{r-1}\times V(P_i)$ est irréductible, son image dans $V$ est aussi irréductible. Par ailleurs, cette image contient $V(P_i)$ et lui est donc égale. Par conséquent, on a $V(P_i(h.y))=V(P_i(y))\, \forall\, h\in \G_{r-1},\,\forall y\in \mathfrak{gl}_{r}$, de sorte que $P_i(h.y)=c_i(h,y).P_i(y)$ où $c_i(h,y)$ est inversible (i.e $c_i(h,y)\in (\0_{\G_{r-1}}[(y_{i,j})])^*)$.On a donc $c_i(h,y)=c_i(h)\in \0_{\G_{r-1}}^*$ puisque les inversibles d'un anneau de polynômes $A[x_1,\dots,x_n]$ sont ceux de $A$. On obtient alors $\underline{\kappa}_{\ud}(w_0(h.y+\varpi{\rm Id}_r))=c(h)\underline{\kappa}_{\ud}(w_0(y+\varpi{\rm Id}_r))$ avec $c(h)=\prod_i(c_i(h))^{n_i}$.
\begin{lemma}
On a $c(h.h')=c(h).c(h')$. En particulier, on a un morphisme $c: \G_{r-1}\to \mg_m$.
\end{lemma}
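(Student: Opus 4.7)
La strat\'egie est de d\'eduire la multiplicativit\'e de $c$ directement de la relation de d\'efinition $\underline{\kappa}_{\ud}(w_0(h.y+\varpi{\rm Id}_r))=c(h)\underline{\kappa}_{\ud}(w_0(y+\varpi{\rm Id}_r))$ \'etablie dans le paragraphe pr\'ec\'edent, combin\'ee avec le fait que $y\mapsto h.y:=\D(h,1)^{-1}y\D(h,1)$ est une action de groupe, donc $(hh').y=h.(h'.y)$ pour tous $h,h'\in\G_{r-1}$.

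Concr\`etement, je fixerais un point g\'en\'erique $y$ dans l'ouvert dense o\`u $a_r(y)$ est \`a racines simples (ouvert sur lequel le th\'eor\`eme~\ref{pop} permet le calcul explicite de $\underline{\kappa}$). Appliquant la relation de d\'efinition \`a l'\'el\'ement $hh'$ on a
$$\underline{\kappa}_{\ud}(w_0((hh').y+\varpi{\rm Id}_r))=c(hh')\,\underline{\kappa}_{\ud}(w_0(y+\varpi{\rm Id}_r)).$$
Utilisant $(hh').y=h.(h'.y)$ et appliquant la relation successivement pour $h'$ puis pour $h$, le m\^eme membre s'\'ecrit
$$\underline{\kappa}_{\ud}(w_0(h.(h'.y)+\varpi{\rm Id}_r))=c(h)\,\underline{\kappa}_{\ud}(w_0(h'.y+\varpi{\rm Id}_r))=c(h)c(h')\,\underline{\kappa}_{\ud}(w_0(y+\varpi{\rm Id}_r)).$$
Comme $\underline{\kappa}_{\ud}$ est \`a valeurs dans $\mg_m$, le facteur $\underline{\kappa}_{\ud}(w_0(y+\varpi{\rm Id}_r))$ est inversible, de sorte qu'on peut le simplifier et obtenir $c(hh')=c(h)c(h')$.

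Pour la seconde assertion, les $c_i$ construits dans la discussion pr\'ec\'edant l'\'enonc\'e appartiennent \`a $(\mathcal{O}_{\G_{r-1}})^*$, i.e. d\'efinissent des morphismes $\G_{r-1}\to \mg_m$ ; comme $c=\prod_ic_i^{n_i}$ est un produit fini de tels morphismes, il s'agit bien d'un morphisme de vari\'et\'es alg\'ebriques $\G_{r-1}\to \mg_m$, et la multiplicativit\'e \'etablie ci-dessus en fait un morphisme de groupes. L'\'etape la plus d\'elicate est en fait d\'ej\`a pass\'ee (la v\'erification que la relation $P_i(h.y)=c_i(h)P_i(y)$ se globalise en $h$, faite juste avant l'\'enonc\'e au moyen de l'irr\'eductibilit\'e de $\G_{r-1}\times V(P_i)$ et du fait que les inversibles d'un anneau de polyn\^omes sont les constantes inversibles) ; ce qui reste est purement formel.
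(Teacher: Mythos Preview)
Your proposal is correct and is precisely the argument the paper has in mind: the paper's own proof reads in its entirety ``La d\'emonstration est \'evidente.'' You have simply spelled out the evident cocycle computation (apply the defining relation $\underline{\kappa}_{\ud}(w_0(h.y+\varpi{\rm Id}_r))=c(h)\underline{\kappa}_{\ud}(w_0(y+\varpi{\rm Id}_r))$ twice and use that $(hh').y=h.(h'.y)$), together with the observation that $c=\prod_ic_i^{n_i}\in\mathcal{O}_{\G_{r-1}}^*$ already defines a regular map to $\mg_m$. One minor remark: you do not actually need to restrict to the locus where $a_r(y)$ is \`a racines simples, since the relation $P_i(h.y)=c_i(h)P_i(y)$ was established in the preceding paragraph as an identity of polynomials in the coefficients of $y$ over $\mathcal{O}_{\G_{r-1}}$; but this does not affect the validity of your argument.
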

\begin{proof}La démonstration est évidente.
\end{proof}

On considère le diagramme suivant
$$\xymatrix{1\ar[r]&\mathrm{SL}_{r-1}\ar@{^{(}->}[r]&\G_{r-1}\ar[r]^-{\det}\ar[d]_-{c}&\mg_m.\\
&&\mg_m&}$$
En utilisant ${\rm Hom} (\mathrm{SL}_{r-1},\mg_m)=\{1\}$ et la propriété universelle du conoyau , on obtient le diagramme commutatif
$$\xymatrix{\G_{r-1}\ar[r]^-{\det}\ar[d]_-{c}&\mg_m,\ar[dl]\\
\mg_m&}$$
de sorte qu'on a $c(h)=\det(h)^{s}$ pour un certain $s\in \mathbb{Z}$. Donc $\underline{\kappa}$ est $G_{r-1}$-équivariant.
Le théorème~\ref{py} résulte alors de la proposition suivante
\begin{proposition}(cf. \cite[proposition 5.2.2]{N})
Soient $U_i$ et $U_{i-1}$ les images réciproques de $U_{\ud}$ dans $\mathfrak{gl}_i\times R_i$ et dans $\mathfrak{gl}_{i-1}\times R_{i-1}$. Si $\mathcal{J}$ est un faisceau pervers sur $\mathfrak{gl}_i\times R_i$, $G_{i-1}$-équivariant et isomorphe au prolongement intermédiaire à restriction à l'ouvert $U_i$, alors
$$\mathcal{J}'=Rf^Y_{i,!}(\mathcal{J}\otimes h^*_{i}\lsi)[1]$$ est aussi un faisceau pervers sur $\mathfrak{gl}_{i-1}\times R_{i-1}$, $G_{i-2}$-équivariant et isomorphe au prolongement intermédiaire de sa restriction à l'ouvert $U_{i-1}$.
\end{proposition}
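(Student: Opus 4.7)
Le plan est de transposer la d\'emonstration de la perversit\'e de $\mathcal{I}'$ (proposition \`a la fin de la section 2.3), en rempla\c{c}ant $S_i$ par $\mathfrak{gl}_i$ et $\mathrm{O}_{i-1}$ par $G_{i-1}$. La $G_{i-2}$-\'equivariance de $\mathcal{J}'$ est imm\'ediate puisque $f^Y_i$, $h'_i$ et $\underline{\kappa}_{\ud}$ sont tous $G_{i-2}$-\'equivariants. Ce recours \`a $G_{i-1}$ plut\^ot qu'\`a $\G_{i-1}$ est pr\'ecis\'ement ce qui a motiv\'e l'introduction de cette extension double : $\underline{\kappa}_{\ud}^*\mathcal{L}_{\zeta}$ se transforme par une puissance du caract\`ere $\det$ sous $\G_{i-1}$ et devient donc \'equivariant sous $G_{i-1}$, o\`u l'on dispose d'une racine carr\'ee de $\det$.

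Pour \'etablir la perversit\'e, je poserai $E=\mathfrak{gl}_{i-1}\times Q_i\times R_i$, de sorte que $E\times \mg_m\simeq \mathfrak{gl}_{i-1}\times R_{i-1}$, et je consid\'ererai le fibr\'e trivial $V=E\times \mathbb{A}^{i-1}\times \mathbb{A}^{i-1}$ sur $E$ (le doublement de la dimension par rapport au cas sym\'etrique refl\'etant l'absence de contrainte de sym\'etrie sur $y_i$) ainsi que son dual $V^{\vee}$. L'immersion ferm\'ee $\iota:\mathfrak{gl}_i\times R_i\to V$ sera d\'efinie par $\iota(y_i,r_i)=(y_{i-1},a_i(y_i),r_i,u,v)$ o\`u $y_i=\left(\begin{smallmatrix}y_{i-1}&u\\{}^tv&\ast\end{smallmatrix}\right)$, et l'immersion ferm\'ee $\epsilon:E\times \mg_m\to V^{\vee}$ par $\epsilon(e,\alpha_i)=(e,\tfrac{\alpha_i}{2}e_{i-1},\tfrac{\alpha_i}{2}e_{i-1})$ (de sorte que l'appariement avec $(u,v)$ redonne bien $h'_i=\tfrac{1}{2}\alpha_i(u_{i-1}+v_{i-1})$). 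Une v\'erification directe, analogue \`a celle de loc.~cit., donnera l'identit\'e $\mathcal{J}'=\epsilon^*\fsi(\iota_*\mathcal{J})[3-2i]$. Puisque $\iota$ est une immersion ferm\'ee et $\mathcal{J}$ pervers, $\iota_*\mathcal{J}$ est pervers et $\fsi(\iota_*\mathcal{J})$ aussi d'apr\`es \cite{L}.

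Le point technique principal sera l'analogue du lemme~\ref{i2} : le morphisme compos\'e
$$\xymatrix{G_{i-1}\times E\times \mg_m\ar@{^{(}->}[r]^-{\mathrm{id}\times \epsilon}&G_{i-1}\times V^{\vee}\ar[r]^-{\check\pi}&V^{\vee}}$$
est lisse de dimension relative $(i-1)^2+1-2(i-1)=(i-2)^2$, l'action de $G_{i-1}$ sur $V$ \'etant donn\'ee par $g\cdot(e,u,v)=(g\cdot e,gu,{}^tg^{-1}v)$ et celle sur $V^{\vee}$ par dualit\'e. En oubliant les composantes $E$ comme dans loc.~cit., l'assertion se ram\`enera \`a d\'emontrer que l'orbite de $({}^t(0,\dots,0,1/2),\,{}^t(0,\dots,0,1/2))\in \mathbb{A}^{i-1}\times \mathbb{A}^{i-1}$ sous l'action de $\G_{i-1}\times \mg_m$ d\'efinie par $(g,\alpha,\check u,\check v)\mapsto ({}^tg^{-1}\alpha\check u,g\alpha\check v)$ est l'ouvert $\{{}^t\check u\check v\neq 0\}$ et que l'action y est transitive. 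L'obstacle principal se situera pr\'ecis\'ement ici : l'\'egalit\'e ${}^t\check u\check v=\alpha^2/4$ forcera \`a extraire une racine carr\'ee, ce qui reste lisse gr\^ace au fait que $\mg_m\to \mg_m,\,\alpha\mapsto\alpha^2$ est \'etale en caract\'eristique impaire ; \`a $\alpha$ fix\'e, la transitivit\'e r\'esultera du fait classique que $\G_{i-1}$ agit transitivement sur les paires $(\check u,\check v)$ d'appariement fix\'e non nul.

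Une fois la lissit\'e \'etablie, la conclusion suivra comme dans loc.~cit. : par $G_{i-1}$-\'equivariance de $\fsi(\iota_*\mathcal{J})$, on aura $\check\pi^*\fsi(\iota_*\mathcal{J})=\mathrm{pr}_Z^*(\fsi(\iota_*\mathcal{J})_{|Z})$ o\`u $Z=\mathrm{Im}(\epsilon)$, et puisque $\mathrm{pr}_Z:G_{i-1}\times Z\to Z$ est lisse surjectif de dimension relative $\dim G_{i-1}=(i-1)^2$, on en d\'eduira via \cite[4.2.5]{BBP} que $\fsi(\iota_*\mathcal{J})_{|Z}[3-2i]$ est pervers sur $Z$, d'o\`u la perversit\'e de $\mathcal{J}'$. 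Le m\^eme argument, appliqu\'e \`a la restriction \`a l'ouvert $U_{i-1}$, assurera que $\mathcal{J}'$ est le prolongement interm\'ediaire de sa restriction \`a $U_{i-1}$.
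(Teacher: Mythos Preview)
Your plan is correct and follows essentially the same route as the paper: the same Fourier--Deligne setup with $V=E\times(\mathbb{A}^{i-1})^2$, the same identification $\mathcal{J}'=\epsilon^*\fsi(\iota_*\mathcal{J})[3-2i]$, and the same reduction of the smoothness lemma to the transitivity of $\G_{i-1}\times\mg_m$ on pairs $(\check u,\check v)$ with ${}^t\check u\check v\neq 0$ (which the paper simply cites from \cite[lemme 5.3.5]{N}). Your handling of the $\tfrac{1}{2}$ in $\epsilon$ and your definition $E=\mathfrak{gl}_{i-1}\times Q_i\times R_i$ are in fact more accurate than the paper's text, which contains minor typos at these points.
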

\begin{proof}[Démonstration]
Les morphismes qui interviennent dans la formation de $\mathcal{J}'$ sont tous $G_{i-2}$-équivariants donc $\mathcal{J}'$ l'est aussi.

On utilise la transformation de Fourier-Deligne (\cite{L}) pour démontrer la perversité et le prolongement intermédiaire. Soit $E=\mathfrak{gl}_i\times Q_i\times R_i$. Il est clair que $E\times \mg_m\simeq \mathfrak{gl}_{i-1}\times R_{i-1}$. Soient $V$ le fibré trivial $E\times (\mathds{A}^{i-1})^2$ et $V^{\vee}$ son fibré dual. On note $\iota:\mathfrak{gl}_i\times R_i\ra V$ l'immersion fermée définie par $\iota(y_i,r_i)=((y_{i-1},\Delta_i(y_i),r_i),z,z')$, où $y_i$ est de la forme
$\left(\begin{smallmatrix}
y_{i-1}&z'\\
{}^tz&*
\end{smallmatrix}\right)$. On note $\epsilon :E\times\mg_m\ra V^{\vee}$ l'immersion fermée définie par $\epsilon(e,\alpha_i)=(e,{}^t(0,\dots,0,\alpha_i),{}^t(0,\dots,0,\alpha_i))$. On vérifie que $\mathcal{J}'=\epsilon^*\fsi(\iota_*\mathcal{J})[1-2(i-1)]$ (cf. \cite[proposition 5.3.2]{N}), où $\fsi$ est la transformation de Fourier-Deligne. Puisque $\mathcal{J}$ est un faisceau pervers et $\iota$ est une immersion fermée, $\iota_*\mathcal{J}$ est un faisceau pervers. D'après \cite{L}, $\fsi(\iota_*\mathcal{J})$ en est un aussi. L'action de $G_{i-1}$ sur $\mathfrak{gl}_i\times R_i$ s'\'etend à $V$ de la manière suivante :
$$\pi(g,(y_{i-1},a_i,r_i),(z,z'))=((g^{-1}y_{i-1}g,a_i,r_i),({}^tgz,g^{-1}z').$$
Cela induit donc une action sur $V^{\vee}$
$$\check{\pi}(g,(y_{i-1},a_i,r_i),(\check{z},\check{z}')=(({}^tgy_{i-1}g,a_i,r_i),({}^tg^{-1}\check{y},g\check{y}')).$$
Par rapport à cette action, $\fsi(\iota_*\mathcal{J})$ est $G_{i-1}$-équivariant.
On utilise alors le lemme suivant
\renewcommand{\qedsymbol}{}
\end{proof}
\begin{lemma}(cf. \cite[lemme 5.4.3]{N})\label{i2}
Le morphisme composé
$$\xymatrix{G_{i-1}\times E\times \mg_m\ar@{^{(}->}[r]^-{\epsilon}&G_{i-1}\times V^{\vee}\ar[r]^-{\check{\pi}}&V^{\vee}}$$ est un morphisme lisse de dimension relative $(i-1)^2+1-2(i-1)$.
\end{lemma}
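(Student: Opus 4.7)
Le plan est d'adapter directement la d\'emonstration de la version ``orthogonale'' du m\^eme lemme figurant dans la section~2, en substituant $G_{i-1}$ \`a $\mathrm{O}_{i-1}$ et en tenant compte de la pr\'esence des deux facteurs $\mathbb{A}^{i-1}$ dans la cible. D'abord, j'oublierais les composantes $Q_i$ et $R_i$ de $E=\mathfrak{gl}_i\times Q_i\times R_i$ pour former un diagramme cart\'esien
\begin{equation*}
\xymatrix{
G_{i-1}\times Z \ar[r]^-{\check{\pi}}\ar[d] & V^{\vee}\ar[d]\\
G_{i-1}\times\mg_m\times\mathfrak{gl}_{i-1}\ar[r]^-{\xi} & \mathfrak{gl}_{i-1}\times(\mathbb{A}^{i-1})^2
}
\end{equation*}
o\`u $Z$ est l'image de $\epsilon$ et $\xi(g,\delta,\alpha,y_{i-1})=({}^tgy_{i-1}g,\,\alpha\,{}^tg^{-1}\,{}^te_{i-1},\,\alpha\,g\,{}^te_{i-1})$. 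Apr\`es le changement de variables $y_{i-1}\mapsto{}^tgy_{i-1}g$ dans la source, $\xi$ se d\'ecompose en le produit de l'identit\'e sur $\mathfrak{gl}_{i-1}$ et de l'application
$$\phi:G_{i-1}\times\mg_m\to(\mathbb{A}^{i-1})^2,\quad (g,\delta,\alpha)\mapsto (\alpha\,{}^tg^{-1}\,{}^te_{i-1},\,\alpha\,g\,{}^te_{i-1}),$$
et il suffit donc de d\'emontrer que $\phi$ est lisse de dimension relative $(i-1)^2+1-2(i-1)$.

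L'\'enonc\'e cl\'e est~: \emph{l'orbite de $({}^te_{i-1},{}^te_{i-1})$ sous l'action de $G_{i-1}\times\mg_m$ d\'efinie par $(g,\delta,\alpha)\cdot(\check z,\check z')=(\alpha\,{}^tg^{-1}\check z,\,\alpha\,g\,\check z')$ est l'ouvert $U=\{{}^t\check z\,\check z'\neq 0\}$ de $(\mathbb{A}^{i-1})^2$ tout entier.} L'inclusion $\phi(G_{i-1}\times\mg_m)\subset U$ r\'esulte du calcul ${}^t\check z\,\check z'=\alpha^2\,{}^te_{i-1}g^{-1}g\,{}^te_{i-1}=\alpha^2\neq 0$. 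R\'eciproquement, pour $(\check z,\check z')\in U$, je prendrais $\alpha$ v\'erifiant $\alpha^2={}^t\check z\,\check z'$ (apr\`es un rev\^etement \'etale quadratique) puis un $g\in\G_{i-1}$ dont la derni\`ere colonne est $\check z'/\alpha$ et dont la derni\`ere ligne de $g^{-1}$ est $({}^t\check z)/\alpha$~; ces $2(i-1)$ conditions lin\'eaires sont simultan\'ement r\'esolubles pr\'ecis\'ement parce que $\alpha^2={}^t\check z\,\check z'$.

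Pour conclure, je calculerais le stabilisateur de $({}^te_{i-1},{}^te_{i-1})$~: les conditions $\alpha g\,{}^te_{i-1}={}^te_{i-1}$ et $\alpha\,{}^tg^{-1}\,{}^te_{i-1}={}^te_{i-1}$ imposent ensemble $g=\D(A,\alpha^{-1})$ avec $A\in\G_{i-2}$ et $\alpha^2=1$. Ceci donne un stabilisateur de dimension exactement $(i-2)^2$, donc par la formule orbite-stabilisateur une orbite de dimension $(i-1)^2+1-(i-2)^2=2(i-1)=\dim U$~; \'etant un ouvert irr\'eductible de la vari\'et\'e irr\'eductible $U$, elle lui est \'egale. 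L'application $\phi$ est alors un torseur sous un sous-groupe alg\'ebrique lisse, donc elle-m\^eme lisse de dimension relative $(i-2)^2=(i-1)^2+1-2(i-1)$, ce qui entra\^\i ne la lissit\'e du morphisme compos\'e $\check\pi\circ\epsilon$ par le diagramme cart\'esien.

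La principale difficult\'e sera le pi\`ege dimensionnel dans le calcul du stabilisateur~: la condition $\alpha^2=1$ (qui r\'esulte de la pr\'esence du m\^eme $\alpha$ sur les deux coordonn\'ees de $\epsilon$, diff\'erence cruciale avec le cas orthogonal o\`u une seule coordonn\'ee intervenait) est facile \`a rater, et si on l'omet on trouve un stabilisateur de dimension $(i-2)^2+1$, d'o\`u une dimension relative erron\'ee.
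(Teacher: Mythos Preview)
Your proposal is correct and follows essentially the same route as the paper: the same cart\'esian diagram reducing to the orbit map $G_{i-1}\times\mg_m\to(\mathbb{A}^{i-1})^2$, and the same key assertion that the orbit of $({}^te_{i-1},{}^te_{i-1})$ is the open locus $\{{}^t\check z\,\check z'\neq 0\}$. The only difference is that the paper simply cites \cite[lemme 5.3.5]{N} for this orbit assertion, whereas you spell it out via the explicit stabilizer computation (yielding $\dim=(i-2)^2$); your extra detail and your warning about the $\alpha^2=1$ pitfall are sound and add nothing incompatible with the paper's argument.
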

\begin{proof}[Démonstration du lemme]
On note $Z$ l'image de l'immersion localement fermée $\epsilon$ dans $V^{\vee}$. Le morphisme composé s'écrit alors :
$$G_{i-1}\times Z\buildrel {\check{\pi}}\over{\ra} V^{\vee}.$$

En oubliant les composantes $Q_i$ et $R_i$ on obtient un diagramme cartésien évident
$$\xymatrix{
G_{i-1}\times Z\ar[r]^{\check{\pi}}\ar[d]&V^{\vee}\ar[d]\\
G_{i-1}\times \mg_m\times \mathfrak{gl}_{i-1}\ar[r]^-{\xi}&\mathfrak{gl}_{i-1}\times (\mathbb{A}^{i-1})^2
}$$
où $\xi(g_{i-1},\alpha_i,y_{i-1})=(g_{i-1}^{-1}y_{i-1}g_{i-1},g_{i-1}{}^t(0,\dots,0,\alpha_i),{}^tg_{i-1}^{-1}{}^t(0,\dots,0,\alpha_i))$.

En factorisant le morphisme $\zeta$ et oubliant le facteur $\mathfrak{gl}_{i-1}$, l'assertion se ram\`ene à démontrer que
le morphisme
$$G_{i-1}\times\mg_m\ra (\mathbb{A}^{i-1})^2,\ (g_{i-1},\alpha_i)\mapsto (g_{i-1}{}^t(0,\dots,0,\alpha_i),{}^tg_{i-1}^{-1}{}^t(0,\dots,0,\alpha_i))$$
est lisse et de dimension relative $(i-1)^2+1-2(i-1)$. Cela résulte de l'assertion suivante :
``L'orbite de l'\'el\'ement  ${}^t(0,\dots,0,1)\in \mathbb A^{{i-1}}$ sous l'action du  groupe $\mathrm{G}_{i-1}\times\mg_m$  sur $(\mathbb{A}^{i-1})^2$ d\'efinie par
$$ ((g_{i-1},\lambda),(z,z'))\mapsto  (\lambda g_{i-1}z,\lambda {}^tg_{i-1}^{-1}z')$$ est l'ouvert défini par l'équation ${}^tzz'\neq 0$."
Cette assertion est démontrée dans \cite[lemme 5.3.5]{N}.
\renewcommand{\qedsymbol}{}
\end{proof}
\begin{proof}[Fin de la démonstration]
D'après \cite[4.2.5]{BBP}, $\check{\pi}^*(\fsi(\iota_*\mathcal{J}))[(i-1)^2+1-2(i-1)]$ est un faisceau pervers sur $G_{i-1}\times Z$. Grâce à la $G_{i-1}$-équivariance on a un isomorphisme
  $$\check{\pi}^*(\fsi(\iota_*\mathcal{J}))=\mathrm{pr}_Z^*(\fsi(\iota_*\mathcal{J})_{|Z}).$$
D'après loc. cit. $\fsi(\iota_*\mathcal{I})_{|Z}[1-2(i-1)]$ est un faisceau pervers sur $Z$, la rojection $\mathrm{pr}_Z: G_{i-1}\times Z \to Z$ étant clairement un morphisme lisse de dimension relative $\dim(G_{i-1})=(i-1)^{2}$. Alors $\mathcal{J}'$ est un faisceau pervers sur $G_{i-1}\times R_{i-1}$ .

De la m\^eme mani\`ere, $\mathcal{J}'$ est le prolongement intermédiaire de sa restriction à $U_{i-1}$.
\end{proof}
\section{Facteur de tranfert}
\subsection{Rappels sur le symbole de Hilbert et les constantes de Weil}
\setcounter{theorem}{0}
\setcounter{subsubsection}{1}

\begin{proposition}(cf. \cite[proposition 1]{M})\label{hilbert}
Soient $a,b\in \fp$. On a :
\begin{enumerate}
\item Si $v(a)$ et $v(b)$ sont impaires, $[a,b]=[-ab,\varpi]$.
\item Si $v(a)$ est paire et $v(b)=1$ , $[a,b]=\zeta(a)$.
\end{enumerate}
\end{proposition}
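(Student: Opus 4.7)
La d\'emonstration de ces deux assertions repose enti\`erement sur la bimultiplicativit\'e du symbole de Hilbert et sur les formules standard pour le symbole mod\'er\'e en r\'esiduelle caract\'eristique $p\neq 2$. Le plan est de ramener les deux \'egalit\'es \`a la liste suivante de formules de base~: $[u,v]=1$ pour $u,v\in\op^*$ ; $[u,\varpi]=\zeta(\bar u)$ pour $u\in\op^*$ ; et $[\varpi,\varpi]=\zeta(-1)$ (laquelle se d\'eduit de $[\varpi,-\varpi]=1$ en \'ecrivant $[\varpi,\varpi]=[\varpi,-\varpi][\varpi,-1]=\zeta(-1)$). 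On convient par ailleurs que $\zeta(a):=\zeta(\overline{a/\varpi^{v(a)}})$ lorsque $v(a)$ est pair (bien d\'efini puisque $\zeta$ est trivial sur les carr\'es).

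Pour d\'emontrer (2), j'\'ecrirais $a=\varpi^{v(a)}a'$ et $b=\varpi b''$ avec $a',b''\in\op^*$. Puisque $v(a)$ est pair, $\varpi^{v(a)}=(\varpi^{v(a)/2})^2$ est un carr\'e, donc $a\equiv a'\pmod{(\fp^*)^2}$. Par bimultiplicativit\'e, on obtient alors
\[[a,b]=[a',\varpi b'']=[a',\varpi]\,[a',b'']=\zeta(\bar{a'})\cdot 1=\zeta(a).\]

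Pour (1), j'\'ecrirais $a=\varpi^{v(a)}a'$ et $b=\varpi^{v(b)}b'$ avec $a',b'\in\op^*$, puis je d\'evelopperais par bimultiplicativit\'e
\[[a,b]=[\varpi,\varpi]^{v(a)v(b)}\,[\varpi,b']^{v(a)}\,[a',\varpi]^{v(b)}\,[a',b'].\]
Comme $v(a)$ et $v(b)$ sont impairs, $v(a)v(b)$ est impair et tous les exposants intervenant se r\'eduisent modulo $2$ \`a $1$ ; en utilisant $[a',b']=1$, ceci devient $\zeta(-1)\,\zeta(\bar{b'})\,\zeta(\bar{a'})=\zeta(-\bar{a'}\bar{b'})$. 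Par ailleurs $-ab=\varpi^{v(a)+v(b)}(-a'b')$ avec $v(a)+v(b)$ pair, donc $-ab\equiv -a'b'\pmod{(\fp^*)^2}$, et la formule du cas (2) (appliqu\'ee \`a $(-a'b',\varpi)$) donne $[-ab,\varpi]=\zeta(-\overline{a'b'})$. Les deux membres co\"\i ncident, d'o\`u l'\'egalit\'e annonc\'ee.

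L'ensemble du raisonnement est purement formel une fois les trois formules de base admises ; la seule attention \`a porter concerne les parit\'es des exposants $v(a), v(b), v(a)v(b), v(a)+v(b)$ et la convention d\'efinissant $\zeta$ sur les \'el\'ements de valuation paire. Il n'y a pas ici d'obstacle s\'erieux~; la proposition a essentiellement le statut d'un rappel pr\'eparatoire au calcul du facteur de transfert effectu\'e dans la suite de la section~5.
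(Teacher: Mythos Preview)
Ta d\'emonstration est correcte et suit exactement l'approche que l'article sugg\`ere. L'article ne donne pas de preuve d\'etaill\'ee de cette proposition (c'est un rappel tir\'e de \cite{M}) mais indique simplement, dans la remarque qui suit, qu'on retrouve ces formules ``en consid\'erant le symbole de Hilbert comme le compos\'e du symbole mod\'er\'e et du caract\`ere $\zeta$'' ; c'est pr\'ecis\'ement ce que tu fais en partant des trois formules de base $[u,v]=1$, $[u,\varpi]=\zeta(\bar u)$ et $[\varpi,\varpi]=\zeta(-1)$, puis en d\'eveloppant par bimultiplicativit\'e.
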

On peut retrouver les formules de la proposition~\ref{hilbert} ci-dessus en considérant le symbole de Hilbert comme le composé du symbole modéré et du caractère $\zeta$.
\begin{proposition}\label{weil1}(cf. \cite[proposition 2]{M})
Soit $\psi$ un caractère additif d'ordre $0$. On a :
\begin{enumerate}
\item $\gamma(a,\psi)=1 $ si $v(a)$ est paire.
\item $\gamma(a,\psi)=q^{-1/2}\sum_{b\in\op/\varpi\op}\psi(a\varpi^{2r}b/2)[b,\varpi]$, si $|a|=q^{2r+1}$.
\item $\gamma(ab,\psi)=\gamma(a,\psi)\gamma(b,\psi)[a,b]$.
\end{enumerate}
\end{proposition}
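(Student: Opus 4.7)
The plan is to establish the three assertions in turn, using the defining integral identity
$$\int\Phi^{\vee}(x)\Psi\!\left(\tfrac{1}{2}ax^2\right)\!dx=|a|^{-1/2}\gamma(a,\Psi)\int\Phi(x)\Psi\!\left(-\tfrac{1}{2}a^{-1}x^2\right)\!dx$$
given in the introduction, tested on suitable Schwartz functions $\Phi$ (typically characteristic functions of lattices $\varpi^N\op$). In all three cases the strategy is to reduce the assertion to a finite Gauss-sum computation over the residue field $k$.

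For assertion (1), write $a=u\varpi^{2m}$ with $u\in\op^*$. The change of variable $x\mapsto\varpi^{-m}y$ in both integrals of the defining identity changes $ax^2$ into $uy^2$ and $a^{-1}x^2$ into $u^{-1}y^2$, while the Jacobian together with the self-dual property of the measure under the Fourier transform of $\Psi$ of order $0$ produces matching factors $|a|^{\pm 1/2}$. Hence $\gamma(a,\Psi)=\gamma(u,\Psi)$, and choosing $\Phi=\mathbb{I}_{\op}$ (whose Fourier transform is again $\mathbb{I}_{\op}$ because $\Psi$ is of order $0$) reduces both integrals to $\int_{\op}\Psi(\tfrac{1}{2}uy^2)dy=1$, since the integrand is constant equal to $1$ on $\op$. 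Therefore $\gamma(u,\Psi)=1$.

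For assertion (2), write $a=u\varpi^{-(2r+1)}$ with $u\in\op^*$. Take again $\Phi=\mathbb{I}_{\op}$, so $\Phi^{\vee}=\mathbb{I}_{\op}$. The left-hand side of the defining identity becomes $\int_{\op}\Psi(\tfrac{1}{2}ax^2)dx$, which we decompose along the filtration $\op\supset\varpi\op\supset\dots\supset\varpi^{r+1}\op$: on $\varpi^{r+1}\op$ the character $\Psi(\tfrac{1}{2}ax^2)$ is trivial, and on each intermediate coset we substitute $x=\varpi^{r}b+\varpi^{r+1}y$ with $b\in\op/\varpi\op$ and $y\in\op$. The integral in $y$ vanishes unless $b\in\op$, producing a finite Gauss sum. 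The right-hand side is evaluated in the same way on the dual side using $|a|^{-1/2}=q^{-(2r+1)/2}$. Comparing the two sides and using that the quadratic character of a representative $b$ of $\op/\varpi\op$ coincides with the tame symbol $[b,\varpi]$ (cf.\ Proposition~\ref{hilbert}) yields the announced formula.

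For assertion (3), the multiplicativity-up-to-Hilbert-symbol, I would argue that the function $\beta(a,b):=\gamma(ab,\Psi)\gamma(a,\Psi)^{-1}\gamma(b,\Psi)^{-1}$ factors through $F^*/(F^*)^2\times F^*/(F^*)^2$: this follows from (1) applied to $\gamma(ac^2,\Psi)=\gamma(a,\Psi)$, which itself is obtained by the change of variable $x\mapsto c^{-1}x$ in the defining identity. Since $F^*/(F^*)^2$ is of order $4$ generated by the classes of a non-square unit $u_0$ and of $\varpi$, it suffices to verify $\beta(a,b)=[a,b]$ for $a,b\in\{1,u_0,\varpi,u_0\varpi\}$, which reduces to a finite list of explicit identities directly computable from (1) and (2). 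The main obstacle will be this last step (3): the bookkeeping of signs and the matching of the Gauss-sum quadratic character with the Hilbert symbol must be done carefully, particularly in the mixed case where $v(a)$ is even and $v(b)$ is odd, in order to recover exactly Proposition~\ref{hilbert} without spurious factors of $\zeta(-1)$.
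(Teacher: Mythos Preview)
The paper does not prove this proposition; it is stated with a reference to \cite[proposition 2]{M} and used as a black box. So there is no ``paper's proof'' to compare against, and your sketch is essentially a reconstruction of the standard argument behind Mao's statement. Your overall approach (test against characteristic functions of lattices, reduce to a residue-field Gauss sum) is the right one, and your treatment of assertion~(1) and the square-invariance $\gamma(ac^2,\Psi)=\gamma(a,\Psi)$ via the substitution $x\mapsto c^{-1}x$ are correct.

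There is, however, a genuine gap in your sketch for assertion~(2). You decompose $\int_{\op}\Psi(\tfrac{1}{2}ax^2)\,dx$ along the filtration $\op\supset\varpi\op\supset\cdots\supset\varpi^{r+1}\op$, but the substitution $x=\varpi^{r}b+\varpi^{r+1}y$ only parametrises $\varpi^{r}\op$, not the annuli $\varpi^{k}\op^{*}$ for $0\le k<r$. These annuli contribute nontrivially to the decomposition and must be shown to integrate to zero; this is true, but requires a separate oscillatory argument (on $\varpi^{k}\op^{*}$ with $k<r$, writing $x=\varpi^{k}(c+\varpi y)$ with $c$ a unit representative, the linear term $u\varpi^{2k-2r}cy$ in the exponent gives a nontrivial additive character in $y$ whose integral vanishes). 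A cleaner route, which you already have in hand, is to invoke the square-invariance $\gamma(a,\Psi)=\gamma(a\varpi^{2r},\Psi)$ first and reduce to the case $v(a)=-1$; then only the single annulus $\op^{*}$ and the ideal $\varpi\op$ appear, and the Gauss sum drops out immediately. Note also that with $\Phi=\mathbb{I}_{\op}$ the right-hand side of the defining identity is simply $|a|^{-1/2}\gamma(a,\Psi)$ (the integrand $\Psi(-\tfrac{1}{2}a^{-1}x^2)$ is identically $1$ on $\op$ since $v(a^{-1})=2r+1\ge 1$), so no dual Gauss-sum computation is needed there.

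For assertion~(3), your strategy---show $\beta(a,b)=\gamma(ab,\Psi)\gamma(a,\Psi)^{-1}\gamma(b,\Psi)^{-1}$ descends to $F^{*}/(F^{*})^{2}\times F^{*}/(F^{*})^{2}$ and then check it against $[a,b]$ on a set of representatives---is valid and is essentially how Weil's original argument proceeds. Since $\beta$ is visibly symmetric and $\beta(a,1)=1$, only a handful of cases remain (those involving $u_0$, $\varpi$, $u_0\varpi$), each reducible to (1), (2) and Proposition~\ref{hilbert}. Your caveat about the $\zeta(-1)$ bookkeeping in the mixed-parity case is well placed: that is exactly where the identity $\gamma(\varpi,\Psi)^2=\zeta(-1)$ (equivalently $[\varpi,\varpi]=[-1,\varpi]$) enters, and it is the one nontrivial verification.
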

\begin{proposition}\label{weil}(c.f \cite[proposition 5, p. 179]{W}) Soient $a\in k(\varpi)$ et $\psi: \mathbb{A}/k(\varpi)\to \mathbb{C}$ un caractère additif. On a : $\prod_{v\in |\mathbb{P}^1|}\gamma_v(a,\psi_v)=1$.
\end{proposition}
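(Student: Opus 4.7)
Pour d�montrer la formule de produit $\prod_v \gamma_v(a,\psi_v) = 1$, je suivrais la strat�gie classique bas�e sur la formule sommatoire de Poisson ad�lique. L'id�e directrice est que les constantes $\gamma_v$ apparaissent naturellement comme les facteurs de transition dans la transform�e de Fourier locale d'une gaussienne $x\mapsto \psi_v(ax^2/2)$, et que la sommation sur les z�ros rationnels force ces facteurs � se compenser globalement.

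Plus pr�cis�ment, je fixerais $a\in F^*$ (o� $F=k(\varpi)$) et une fonction de Schwartz-Bruhat d�composable $\Phi = \otimes_v \Phi_v$ sur l'anneau des ad�les $\mathbb{A}$, avec $\Phi_v = \mathbb{I}_{\0_v}$ pour presque tout $v$ et telle que les deux membres des identit�s locales de d�finition de $\gamma_v$ soient non nuls. En faisant le produit infini (en fait presque tous les facteurs locaux valent $1$) des �galit�s
$$\int_{F_v}\widehat{\Phi}_v(x)\psi_v(ax^2/2)\,dx = |a|_v^{-1/2}\gamma_v(a,\psi_v)\int_{F_v}\Phi_v(x)\psi_v(-a^{-1}x^2/2)\,dx,$$
on obtient une identit� globale entre deux int�grales sur $\mathbb{A}$, portant le facteur $|a|_{\mathbb{A}}^{-1/2}\prod_v\gamma_v(a,\psi_v)$. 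Ensuite, en appliquant la formule sommatoire de Poisson � la fonction $x\mapsto\Phi(x)\psi(ax^2/2)$ sur $\mathbb{A}/F$, on relie ces deux int�grales � deux sommes de s�ries convergentes $\sum_{\xi\in F}$, cette formule �tant valable \emph{sans} constante parasite d�s que l'on utilise la mesure auto-duale sur $\mathbb{A}$. La combinaison avec la formule de produit classique $|a|_{\mathbb{A}} = \prod_v |a|_v = 1$ pour $a\in F^*$ force alors l'�galit� $\prod_v \gamma_v(a,\psi_v) = 1$.

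Le principal obstacle technique r�side dans la v�rification minutieuse des normalisations : il faut s'assurer que la mesure globale auto-duale sur $\mathbb{A}$ co\"\i ncide avec le produit tensoriel restreint des mesures locales auto-duales relatives aux $\psi_v$, et que le caract�re global $\psi$ se d�compose bien comme $\prod_v\psi_v$ � une constante multiplicative triviale pr�s, de sorte que les facteurs $|a|_v^{-1/2}$ locaux s'assemblent pr�cis�ment en $|a|_{\mathbb{A}}^{-1/2}$. Une fois ces compatibilit�s �tablies, le calcul est direct. Il serait �galement possible de reformuler la preuve via l'invariant de Weil d'une forme quadratique globale : pour la forme $q(x)=ax^2$ de rang un sur $F$, la formule de produit est alors un cas particulier de la trivialit� globale de l'invariant de Hasse, lequel s'obtient par r�ciprocit� de Hilbert.
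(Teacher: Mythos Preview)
Your proposal is correct and sketches precisely Weil's original argument via the ad\'elic Poisson summation formula. Note however that the paper does not give its own proof of this proposition: it is stated with a direct citation to \cite[proposition 5, p.~179]{W} and used as a black box, so there is no paper-specific proof to compare against---your sketch simply unpacks the cited reference.
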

\subsection{Géométrisation d'un calcul de Jacquet \cite[paragraphe 8]{J1}}
\setcounter{theorem}{0}
\setcounter{subsubsection}{1}

On commence cette partie en démontrant le cas très particulier du théorè\-me $A'$ de l'introduction où $r=2$ et $t=\D(t_1,t_2)$ avec $v_{\varpi}(t_1)=1, v_{\varpi}(t_2)=-1$. Plus précisement, on montre la proposition suivante :

\begin{proposition}\label{fatf}Pour $r=2$ et $t=\D(t_1,t_2)$ avec $v_{\varpi}(t_1)=1, v_{\varpi}(t_2)=-1$, on a
${\cal J}_{\varpi}(t)\simeq{\cal T}_{\varpi}(t)\otimes{\cal I}_{\varpi}(t)\simeq
{\cal T'}_{\varpi}(t)\otimes{\cal I}_{\varpi}(t),$ o\`u ${\cal T}_{\varpi}(t)$ et ${\cal T'}_{\varpi}(t)$ sont des $\overline{\mathbb{Q}}_{\ell}$-espaces vectoriels de rang $1$ plac\'es en degr\'e $1$ tels que $\mathrm{Tr}(\mathrm{Fr},{\cal T}_{\varpi}(t))=|t_1|^{-1/2}\gamma_\varpi(-t_1,\psi)$ et  $\mathrm{Tr}(\mathrm{Fr},{\cal T'}_{\varpi}(t))=|t_2|^{1/2}\gamma_\varpi(t_2,\psi)$. De plus ${\cal I}_{\varpi}(t)$ et
${\cal J}_{\varpi}(t)$ sont alors des $\overline{{\mathbb{Q}}}_{\ell}^*$-espaces vectoriels de rang $2$ plac\'es respectivement en degr\'e $0$ et $1$.
\end{proposition}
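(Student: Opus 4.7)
La strat\'egie est d'expliciter les vari\'et\'es $X_\va(t)$, $Y_\va(t)$ et les morphismes $h_{\underline\alpha}$, $h'_{\underline\alpha}$, $\underline\kappa$ qui interviennent, puis de g\'eom\'etriser le calcul direct de Jacquet \cite[p.~145]{J1}. \'Ecrivons $t_1 = A\va$, $t_2 = B\va^{-1}$ avec $A, B \in \op^*$, et posons $c = -\bar B/\bar A \in k^*$. On param\`etre $n = \left(\begin{smallmatrix}1 & x \\ 0 & 1\end{smallmatrix}\right)$ avec $x \in \fp/\op$ et, pour $Y_\va(t)$, $n'$ de m\^eme avec $y \in \fp/\op$ : les conditions d'int\'egralit\'e de ${}^tntn$ et ${}^tntn'$, combin\'ees aux valuations $v(t_1) = 1, v(t_2) = -1$, forcent $v(x) = -1$ (resp. $v(x) = v(y) = -1$). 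En posant $x = u/\va, y = v/\va$ avec $u, v \in k^*$, on obtient les descriptions explicites $X_\va(t)(\overline k) = \{u \in \overline k^* : u^2 = c\}$ et $Y_\va(t)(\overline k) \simeq \mg_m$ via $v \mapsto (c/v, v)$. Les applications deviennent $h_{\underline\alpha}(u) = \alpha_2 u$ et $h'_{\underline\alpha}(u, v) = (\alpha_2/2)(c/v + v)$ ; la formule de Kubota (proposition~\ref{kubo}) appliqu\'ee \`a la matrice $w_0 {}^tntn' \in \G_2(\op)$ donne $\underline\kappa(u, v) = \bar B/\bar v$, morphisme alg\'ebrique de $Y_\va(t)$ dans $\mg_m$.

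Le calcul de ${\cal I}_\va(t) = \mathrm R \Gamma_c(\overline X_\va(t), h_{\underline\alpha}^* \lsi)$ est alors imm\'ediat : deux points g\'eom\'etriques distincts (car $c \neq 0$ et $p \neq 2$), d'o\`u un $\overline{\mathbb Q}_\ell$-espace vectoriel de rang $2$ en degr\'e $0$. Pour ${\cal J}_\va(t) = \mathrm R \Gamma_c(\mg_m, \mathcal F)$ avec $\mathcal F = (h'_{\underline\alpha})^* \lsi \otimes \underline\kappa^* \lze$, le syst\`eme local $\mathcal F$ est de rang $1$, sauvagement ramifi\'e en $0$ et en $\infty$ avec conducteurs de Swan \'egaux \`a $1$ aux deux places (p\^oles simples de $(c/v + v)/2$). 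La formule de Grothendieck-Ogg-Shafarevich donne $\chi_c(\mg_m, \mathcal F) = -2$, et la non-trivialit\'e de $\mathcal F$ (ainsi que la non-propret\'e de $\mg_m$) entra\^\i nent $H^0_c = H^2_c = 0$ ; ${\cal J}_\va(t)$ est donc concentr\'e en degr\'e $1$ et de rang $2$.

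Pour obtenir la factorisation ${\cal J}_\va(t) \simeq \mathcal T_\va(t) \otimes {\cal I}_\va(t)$, on choisit une racine carr\'ee $u_1 \in \overline k$ de $c$ et on effectue sur $\overline Y_\va(t) \simeq \mg_m$ le changement de variable $v = u_1 w$ : le faisceau $\mathcal F$ devient (\`a multiplication par une constante pr\`es) $\mathcal L_\psi(\beta(w + 1/w)) \otimes \lze(w)$, avec $\beta = \alpha_2 u_1/2$. Les points critiques de $w \mapsto w + 1/w$ sont $w = \pm 1$, et correspondent bijectivement aux deux points $\pm u_1$ de $\overline X_\va(t)$. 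Un d\'eveloppement de Taylor au voisinage de chacun de ces deux points fait appara\^\i tre une phase quadratique $\mathcal L_\psi(\pm\beta (w \mp 1)^2)$ tordue par $\lze$ ; en appliquant la transformation de Fourier-Deligne \cite{L} \`a cette phase quadratique locale, chaque contribution se scinde en le produit d'un facteur ``de phase'' $\lsi(\pm\alpha_2 u_1)$ (qui correspond au point $\pm u_1$ de $\overline X_\va(t)$) par un faisceau gaussien de rang $1$ en degr\'e $1$ dont la trace de Frobenius s'identifie \`a $q^{1/2}\gamma_\va(-t_1, \psi)$. La seconde forme $\mathcal T'_\va(t)$ s'obtient par le m\^eme argument en utilisant la relation $\gamma(t_1 t_2, \psi) = \gamma(t_1, \psi)\gamma(t_2, \psi)[t_1, t_2]$ (proposition~\ref{weil1}) appliqu\'ee \`a $\det(t) = t_1 t_2$, combin\'ee \`a l'identit\'e $|t_1|^{-1/2} = |t_2|^{1/2}$ (puisque $v(t_1) + v(t_2) = 0$).

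Le point d\'elicat est le suivi pr\'ecis des constantes multiplicatives dans le changement de variable $v = u_1 w$, lequel n'est pas d\'efini sur $k$ lorsque $c$ n'est pas un carr\'e : il faut soigneusement identifier le facteur $\zeta(u_1)$ qui en r\'esulte et ajuster les normalisations des contributions gaussiennes aux deux points critiques $\pm u_1$, afin de retrouver la normalisation exacte $|t_1|^{-1/2} \gamma_\va(-t_1, \psi)$ et $|t_2|^{1/2} \gamma_\va(t_2, \psi)$ annonc\'ee dans l'\'enonc\'e.
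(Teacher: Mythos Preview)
Votre mise en place est correcte et co\"incide avec celle du papier~: identification de $X_\va(t)$ comme sch\'ema \`a deux points $\{\epsilon:\epsilon^2=c\}$, de $Y_\va(t)$ comme $\mg_m$ via $v\mapsto(c/v,v)$, et calcul de $\underline\kappa$ par Kubota. L'argument par Grothendieck--Ogg--Shafarevich pour le rang et le degr\'e de ${\cal J}_\va(t)$ est aussi valable (le papier ne le fait pas s\'epar\'ement car il obtient tout d'un coup).

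En revanche, votre argument de factorisation comporte une lacune r\'eelle. L'id\'ee de phase stationnaire --- d\'evelopper en Taylor autour des points critiques $w=\pm1$, isoler une phase quadratique locale, puis appliquer Fourier--Deligne --- ne constitue pas une preuve $\ell$-adique~: on ne sait pas ``localiser'' la cohomologie \`a support compact de $\mg_m$ au voisinage de deux points de cette mani\`ere, et la transformation de Fourier--Deligne s'applique \`a des faisceaux sur un espace vectoriel, pas \`a des germes quadratiques. Au mieux, ce raisonnement motive l'\'egalit\'e des traces de Frobenius (c'est le calcul de Jacquet au niveau des fonctions), mais il ne fournit pas l'isomorphisme de modules galoisiens ${\cal J}_\va(t)\simeq{\cal T}_\va(t)\otimes{\cal I}_\va(t)$ demand\'e. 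Vous le reconnaissez d'ailleurs vous-m\^eme en signalant le ``point d\'elicat'' du suivi des constantes et du choix de $u_1\in\overline k$, qui n'est pas r\'esolu.

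Le papier \'evite enti\`erement cette difficult\'e par une voie diff\'erente et purement alg\'ebrique. Il travaille en famille sur la base $Z=\{(c,d,u)\}$ et d\'emontre un lemme cl\'e (lemme~\ref{zeta})~: si $X'_1=\{\epsilon^2=c\}$ et $Y'_1=\{v^2-uv+c=0\}$ au-dessus de $Z$, alors
\[
R\pr_{X,!}\bigl(j_!\lze(u+2\epsilon)\bigr)\;\simeq\;R\pr_{Y,!}\bigl(j_!\lze(v)\bigr).
\]
La preuve passe par les rev\^etements doubles $X'_2=\{\xi^2=u+2\epsilon,\ {\xi'}^2=u-2\epsilon\}$ et $Y'_2=\{w^2=v,\ {w'}^2=u-v\}$~: le changement de variables explicite $\epsilon=ww'$, $\xi=w+w'$, $\xi'=w-w'$ fournit un isomorphisme $X'_2\simeq Y'_2$ au-dessus de $Z$, \'equivariant pour un isomorphisme des groupes de Galois $\mathbb Z/2\ltimes(\mathbb Z/2)^2$, et les deux syst\`emes locaux correspondent \`a des repr\'esentations conjugu\'ees de ce groupe. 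Ce lemme g\'eom\'etrise exactement l'identit\'e num\'erique $\sum_{v+c/v=u}\zeta(v)=\sum_{\epsilon^2=c}\zeta(u+2\epsilon)$ de Jacquet. Une fois le lemme acquis, quelques manipulations formelles (formule de projection, changement de variable $x=u+2\epsilon$) donnent ${\cal J}_\va\simeq\lze(-d)\otimes R(X_0\to Z_0)_!(\lsi(x/2)\otimes\lze(x))\otimes{\cal I}_\va$, d'o\`u ${\cal T}_\va$ explicitement et le calcul de sa trace comme somme de Gauss.

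L'avantage d\'ecisif de cette approche est que tout est d\'efini sur $k$ et en famille~: aucun choix de racine carr\'ee, aucun argument local heuristique. Si vous voulez sauver votre approche, il faudrait remplacer la phase stationnaire heuristique par un \'enonc\'e pr\'ecis de th\'eorie des faisceaux --- et l'\'enonc\'e naturel qui \'emerge est justement le lemme~\ref{zeta}.
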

\begin{proof}[Démonstration]
On a besoin du lemme suivant :
\begin{lemma} \label{zeta}
Soient $X'_1=\{(u,\epsilon,c)\in \mathbb{A}^2\times \mg_m|\epsilon^2=c\}$ munie du morphisme 
$\mathrm{pr}_X: X'_1\to \mg_a\times \mg_m$ défini par $(u,\epsilon,c)\mapsto (u,c)$, $Y'_1=\{(u,v,c)\in \mathbb{A}^2\times \mg_m|v^2-uv+c=0\}$ munie du morphisme $\mathrm{pr}_Y:Y'_1 \to \mg_a\times \mg_m$ défini par $(u,c,v)\mapsto (u,c)$ et  $j :\mg_m \hookrightarrow \mg_a$ l'immersion canonique. On a alors un isomorphisme canonique
$$R\mathrm{pr}_{X,!}(Rj_!\lze(u+2\epsilon))\simeq R\mathrm{pr}_{Y,!}(Rj_{!}\lze(v)),$$
où pour tout $k$-schéma $S$ muni d'un morphisme $S\to \mg_m$, on note $\lze(s)=s^*\lze$.

\end{lemma}
\begin{proof}
On note $$X'_2=\{(u,\xi,\xi',\epsilon,c)\in \mathbb{A}^4\times \mg_m|\epsilon^2=c,\,\xi^2=u+2\epsilon,\,{\xi'}^2=u-2\epsilon\},$$
$$Y'_2=\{(u,v,w,w',c)\in \mathbb{A}^4\times \mg_m|v^2-uv+c=0,\,w^2=v,\,{w'}^2=v'=u-v
\}$$
et $Z=\{(u,c)\in \mathbb{A}^1\times \mg_m\}$.
On considère le diagramme suivant
$$\xymatrix{
X'_2
\ar[d]^-\circlearrowright_-{\begin{smallmatrix}\mathbb{Z}/2\mathbb{Z}& x\mapsto -x\\
\times&\\ \mathbb{Z}/2\mathbb{Z}&y\mapsto -y\end{smallmatrix}}&&
Y'_2
\ar[d]_-{\circlearrowright}^-{\begin{smallmatrix} \mathbb{Z}/2\mathbb{Z}& w\mapsto -w\\
\times&\\ \mathbb{Z}/2\mathbb{Z}&w'\mapsto -w'\end{smallmatrix}}\\
X'_1
\ar[dr]^-{\circlearrowright}_-{\mathbb{Z}/2\mathbb{Z}\,\epsilon\mapsto -\epsilon}&&
Y'_1\ar[dl]_-{\circlearrowright }^-{\mathbb{Z}/2\mathbb{Z}\, v\mapsto v'=u-v}\\
&Z.&}$$

Soit $\buildrel\circ\over{Z}=Z-\{(u,c)|u^2-4c=0\}$. On ajoute un $\circ$ pour indiquer le changement de base de $Z$ à $\buildrel\circ\over{Z}$. On note $j_X :\buildrel\circ\over{X'}_1 \hookrightarrow X'_1$,
$j_Y :\buildrel\circ\over{Y'}_1 \hookrightarrow Y'_1$ et $j_Z:\buildrel\circ\over{Z}\to Z$ . On obtient les isomorphismes suivants :
$$j_{X,*}j_X^*(Rj_!\lze(u+2\epsilon))\simeq Rj_!\lze(u+2\epsilon),$$
$$j_{Y,*}j_Y^*(Rj_!\lze(v))\simeq Rj_!\lze(v),$$
desquels résultent les isomorphismes
$$pr_{X,*}j_!\lze(u+2\epsilon)\simeq j_{Z,*}j_Z^*(pr_{X,*}j_!\lze(u+2\epsilon)),$$
$$pr_{Y,*}j_!\lze(v)\simeq j_{Z,*}j_Z^*(pr_{Y,*}j_!\lze(v)),$$
de sorte qu'il suffira de construire l'isomorphisme au-dessus de $\buildrel\circ\over{Z}$.
Les morphismes $\buildrel\circ\over{X'}_2\to \buildrel\circ\over{X'}_1\to \buildrel\circ\over{Z}$ et $\buildrel\circ\over{Y'}_2\to \buildrel\circ\over{Y'}_1\to \buildrel\circ\over{Z}$
sont étales et les deux membres de l'isomorphisme qu'on veut obtenir sont donc des systèmes locaux lorsqu'on les restreint à $\buildrel\circ\over{Z}$. On considérera ces systèmes locaux comme des représentations des groupes de Galois des revêtements $\buildrel\circ\over{X'}_2/\buildrel\circ\over{Z}$ et $\buildrel\circ\over{Y'}_2/\buildrel\circ\over{Z}$.


On a un isomorphisme $\buildrel\circ\over{X'}_2\buildrel \thicksim\over{\to}\buildrel\circ\over{Y'}_2$ :
\begin{itemize}
\item $\buildrel\circ\over{X'}_2{\to}\buildrel\circ\over{Y'}_2$ :
$$\begin{matrix}v=\frac{1}{2}(u+\xi\xi'),\\
w=\frac{1}{2}(\xi+\xi'),\\
w'=\frac{1}{2}(\xi-\xi').
\end{matrix}$$
\item $\buildrel\circ\over{Y'}_2\buildrel \thicksim\over{\to}\buildrel\circ\over{X'}_2$ :
$$\begin{matrix}\epsilon=ww',\\
\xi=w+w',\\
\xi'=w-w'.
\end{matrix}$$
\end{itemize}
Cet isomorphisme est équivariant vis à vis de l'isomorphisme des groupes de Galois
$$\mathbb{Z}/2\mathbb{Z}\ltimes(\mathbb{Z}/2\mathbb{Z})^2\to \mathbb{Z}/2\mathbb{Z}\ltimes(\mathbb{Z}/2\mathbb{Z})^2$$ donné par
$$\begin{matrix}(\epsilon\mapsto -\epsilon, \xi\leftrightarrows\xi')&\mapsto&(w\mapsto w, w\mapsto w')\\
(\epsilon\mapsto \epsilon, \xi\mapsto \xi,\xi'\mapsto -\xi')&\mapsto&(w\leftrightarrows w').
\end{matrix}$$

Nos deux systèmes locaux correspondent respectivement aux représen\-tations des groupes de Galois
sur $\overline{\mathbb{Q}}_{\ell}^2$ données par

$$\begin{matrix}(\epsilon\mapsto -\epsilon, \xi\leftrightarrows\xi')&\mapsto& \left(\begin{smallmatrix}0&1\\1&0\end{smallmatrix}\right)\\
(\epsilon\mapsto \epsilon, \xi\mapsto \xi,\xi'\mapsto -\xi')&\mapsto &\left(\begin{smallmatrix}1&0\\0&-1
\end{smallmatrix}\right)\end{matrix}$$
et
$$\begin{matrix}(w\mapsto w, w\mapsto w')&\mapsto& \left(\begin{smallmatrix}1&0\\0&-1\end{smallmatrix}\right)\\
(w\leftrightarrows w')&\mapsto&\left(\begin{smallmatrix}0&1\\1&0\end{smallmatrix}\right).\end{matrix}$$


Par conséquent, le lemme résulte du diagramme commutatif suivant
$$\xymatrix{
\G_2(\overline{\mathbb{Q}}_{\ell}^2)\ar[r]^-{A}&\G_2(\overline{\mathbb{Q}}_{\ell}^2)\\
\mathbb{Z}/2\mathbb{Z}\ltimes(\mathbb{Z}/2\mathbb{Z})^2\ar[u]\ar[r]^{\thicksim} &\mathbb{Z}/2\mathbb{Z}\ltimes(\mathbb{Z}/2\mathbb{Z})^2\ar[u]}$$
où $A$ est l'isomorphisme défini par $M\mapsto AMA^{-1}$ avec $A=\left(\begin{smallmatrix}1&1\\1&-1\end{smallmatrix}\right)$.


\end{proof}
\begin{rmk}
En utilisant la formule des traces de Grothendieck-Lefsch\-etz pour les deux systèmes locaux dans le lemme ci-dessus sur $(c,u)$, on obtient :
$$\sum_{v+c/v=u}\zeta(v)=\sum_{\epsilon^2=c}\zeta(u-2\epsilon).$$

Pour $c=1$, on obtient la démonstration d'une formule de \cite[paragraphe 8, p.145 (entre les formules 127 et 128)]{J1}
$$\sum_{v+1/v=u}\zeta(v)=\zeta(u-2)+\zeta(u+2),$$
 à l'aide de laquelle Jacquet obtient essentiellement le cas particulier du lemme fondamental de Jacquet et Mao obtenu en prenant les traces de Frobenius dans la proposition \ref{fatf}.
\end{rmk}
\renewcommand{\qedsymbol}{}
\end{proof}
\begin{proof}[Démonstration de la proposition \ref{fatf}]
Soient $t_1=t_1[1]\varpi+t_1[2]\varpi^2+\dots$ et $t_2=t_2[-1]\varpi^{-1}+t_2[0]\varpi^0+\dots$. On note $c=-t_2[-1]/t_1[1]$. Soient $X(c)={\rm Spec}\,k[\epsilon]/(\epsilon^2-c)$ munie du morphisme $h :X(c)\to \mg_a$ défini par $h(\epsilon)=x$ et $Y(c)={\rm Spec}\,k[v,v']/(vv'-c)$ munie du morphisme $h': Y(c)\to \mg_a$ défini par $h'(v,v')=\frac{v+v'}{2}$ et du morphisme $\underline{\kappa} : Y(c)\to \mg_m$ défini par $\underline{\kappa}(v,v')=-vt_1[1]$ (la définition de $\underline{\kappa}$ vient de la formule de Kubota (cf. la proposition \ref{kubo})). D'après les définitions des variétés $X_\varpi(t)$ et $Y_\varpi(t)$ (voir les sections 2 et 4), $(X_\varpi(t),h_\alpha)$ est donc isomorphe à $(X(c),h)$ et $(Y_\varpi(t),h'_\alpha,\underline{\kappa})$ est isomorphe à $(Y(c),h',\underline{\kappa})$.
En faisant varier $c$ et $d:=t_1[1]$, on obtient le diagramme suivant :

$$\xymatrix{
X_1\ar[d]\ar[dr]&&Y_1\ar[dl]\\
X_0\ar[dr]&Z_1\ar[d]&\\
&Z_0
}$$ où
\begin{itemize}
\item $X_1=\{(c,d,u,\epsilon,x)\in \mg_m^2\times \mathbb{A}^3|\epsilon^2=c,x=u+2\epsilon\}$,
\item $Y_1=\{(c,d,u,v)\in \mg_m^2\times \mathbb{A}^2|v^2-uv+c=0\}$,
\item $X_0=\{(c,d,x)\in \mg_m^2\times\mathbb{A}\}$,
\item $Z_1=\{(c,d,u)\in \mg_m^2\times\mathbb{A}\}$,
\item $Z_0=\{(c,d)\in \mg_m^2\}$.
\end{itemize}
On a alors les isomorphismes canoniques suivants
\begin{eqnarray*}
\mathcal{J}_\varpi&\simeq& R(Y_1\to Z_0)_{!}\lze(-dv)\otimes \lsi(\frac{u}{2})\\
&\simeq&\lze(-d)\otimes R(Z_1\to Z_0)_!\lsi(\frac{u}{2})\otimes R(Y_1\to Z_1)_!\lze(v)
\end{eqnarray*}
En utilisant le lemme \ref{zeta}, on obtient
$$R(Y_1\to Z_1)_!\lze(v)\simeq R(X_1\to Z_1)_!\lsi(u+2\epsilon).$$
Par conséquent, on a :
\begin{eqnarray*}
\mathcal{J}_\varpi&\simeq&\lze(-d)\otimes R(Z_1\to Z_0)_!\lsi(\frac{u}{2})\otimes R(X_1\to Z_1)_!\lsi(u+2\epsilon)\\
&\simeq&\lze(-d)\otimes R(X_1\to Z_0)_!(\lsi(\frac{u}{2})\otimes \lsi(u+2\epsilon))\\
&\simeq&\lze(-d)\otimes R(X_1\to Z_0)_!(\lsi(-\epsilon)\otimes\lsi(\frac{x}{2})\otimes \lsi(x))\\
&\simeq&\lze(-d)\otimes R(X_0\to Z_0)_!(\lsi(\frac{x}{2})\otimes \lsi(x))\otimes R(X_1\to X_0)_!\lsi(-\epsilon)\\
&\simeq&\lze(-d)\otimes R(X_0\to Z_0)_!(\lsi(\frac{x}{2})\otimes \lsi(x))\otimes \mathcal{I}_\varpi
\end{eqnarray*}
On pose $\mathcal{T}_{\varpi}=\lze(-d)\otimes R(X_0\to Z_0)_!(\lsi(\frac{x}{2})\otimes \lsi(x))$ et $\mathcal{T}'_{\varpi}=\mathcal{T}_\varpi\otimes \lze(c)$. On a donc $\mathcal{J}_{\varpi}\simeq\mathcal{T}_{\varpi}\otimes \mathcal{I}_{\varpi}$. Comme par ailleurs, on a isomorphisme canonique $\mathcal{I}_\varpi\simeq \mathcal{I}_\varpi\otimes \lze(c)$.

En utilisant la formule des traces de Grothendieck-Lefschetz, on obtient bien :
\begin{eqnarray*}
\mathrm{Tr}(\mathrm{Fr}_{(c,d)},\mathcal{T}_\varpi)&=&\sum_{x\in k}\psi(x/2)\zeta(-t_1[1]x)\\
&=&\sum_{x\in k}\psi(-t_1[1]x/2)\zeta(x)\\
&=&|t_1|_{\varpi}^{-1/2}\gamma_\varpi(-t_1,\Psi)
\end{eqnarray*}
et
\begin{eqnarray*}
\mathrm{Tr}(\mathrm{Fr}_{(c,d)},\mathcal{T}'_\varpi)&=&\zeta(c)\mathrm{Tr}(\mathcal{T}_\varpi)\\
&=&\zeta(-t_2[-1]/t_1[1])|t_1|_{\varpi}^{-1/2}\gamma_\varpi(-t_1,\Psi)\\
&=&|t_2|^{1/2}\gamma_{\varpi}(t_2,\Psi).
\end{eqnarray*}

\end{proof}

La proposition suivante ramène le cas simple des matrices $t\in T_r(\fp)$ telles que $v(\prod_{i=1}^{r-1}a_i)=1$ et $v(a_r)=0$ à celui envisagé dans la proposition \ref{fatf}.
\begin{proposition}\label{ssp} Soient $t\in T_r(\fp)$ et $i$ un nombre entier compris entre $1$ et $r-1$, tels que $v(a_i)=1$ et $v(a_j)=0$ pour $j\neq i$. Alors, pour tout $\underline{\alpha}\in (k^*)^{r-1}$, on a un isomorphisme entre $(X_\varpi(t),h_{\underline{\alpha}})$ et $(X_\varpi(\D(\frac{a_i}{a_{i-1}},\frac{a_{i+1}}{a_i})),\break h_{\alpha_{i+1}})$ et un isomorphimse entre $(Y_\varpi(t),h'_{\underline{\alpha}},\underline{\kappa}_\varpi)$ et $(Y_\varpi(\D(\frac{a_i}{a_{i-1}},\frac{a_{i+1}}{a_i})),\break h'_{\alpha_{i+1}},\underline{\kappa})$.
\end{proposition}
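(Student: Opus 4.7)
Le plan est d'exhiber explicitement les deux isomorphismes en montrant que, sous l'hypoth\`ese faite sur $t$, tout repr\'esentant de $X_\varpi(t)$ (resp.\ de $Y_\varpi(t)$) est \'equivalent modulo $N_r(\op)$ (resp.\ $N_r(\op)\times N_r(\op)$) \`a une matrice dont le seul coefficient non-entier hors-diagonale est en position $(i,i+1)$, puis de v\'erifier que les donn\'ees suppl\'ementaires ($h_{\underline{\alpha}}$, $h'_{\underline{\alpha}}$, $\underline{\kappa}_\varpi$) se ram\`enent alors \`a celles du cas de rang $2$.

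La premi\`ere \'etape consiste \`a r\'eduire $n$. En appliquant la factorisation de Ngo $n=u_2u_3\cdots u_r$ avec $u_k=\mathrm{Id}_r+y_k\in U_k(\fp)$ et $y_k\in (a_{k-1}^{-1}\op)^{k-1}$ (cf.\ \cite[lemme 1.1.3]{N} et la d\'emonstration du th\'eor\`eme~\ref{ka7}), l'hypoth\`ese $v(a_j)=0$ pour $j\neq i$ force $a_{k-1}\in \op^*$ pour $k\neq i+1$, donc $y_k\in \op^{k-1}$ et $u_k\in N_r(\op)$. Seul $u_{i+1}$ peut contribuer modulo $N_r(\op)$, et l'argument de raffinement d\'ej\`a utilis\'e dans la d\'emonstration du th\'eor\`eme~\ref{ka7} (via \cite[proposition 2.4.1]{N}) montre que les coefficients $n_{k,i+1}$ pour $k=1,\dots,i-1$ peuvent \^etre ramen\'es dans $\op$, ne laissant que le coefficient en position $(i,i+1)$ comme classe non triviale modulo $\op$. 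Ainsi tout $n\in X_\varpi(t)$ est \'equivalent modulo $N_r(\op)$ \`a une unique matrice $\mathrm{Id}_r+xE_{i,i+1}$ avec $x\in \varpi^{-1}\op/\op$. Un calcul direct montre que la condition ${}^tntn\in S_r(\op)$ se r\'eduit exactement \`a la condition de rang $2$ d\'efinissant $X_\varpi(\D(a_i/a_{i-1},a_{i+1}/a_i))$, l'application $x\mapsto \mathrm{Id}_r+xE_{i,i+1}$ fournissant l'isomorphisme cherch\'e. Puisque $n_{k-1,k}\in \op$ pour $k\neq i+1$, la fonction $h_{\underline{\alpha}}(n)=\sum_{k=2}^r\alpha_k\,\mathrm{res}(n_{k-1,k}\,\mathrm{d}\varpi)$ se r\'eduit \`a $\alpha_{i+1}\,\mathrm{res}(x\,\mathrm{d}\varpi)$, ce qui co\"\i ncide avec $h_{\alpha_{i+1}}$ du cas de rang $2$. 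Le m\^eme argument appliqu\'e simultan\'ement \`a $n$ et $n'$ donne la r\'eduction correspondante pour $Y_\varpi(t)$, et ram\`ene $h'_{\underline{\alpha}}$ \`a $h'_{\alpha_{i+1}}$.

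Il reste l'\'etape cruciale : v\'erifier que $\underline{\kappa}_\varpi$ se ram\`ene aussi au cas de rang $2$. Pour les repr\'esentants r\'eduits $n=\mathrm{Id}_r+xE_{i,i+1}$, $n'=\mathrm{Id}_r+x'E_{i,i+1}$, un calcul explicite montre que, \`a une permutation $\sigma\in W_r$ pr\`es support\'ee sur $\{1,\dots,i+1\}$, la matrice $w_0{}^tntn'$ est bloc-diagonale $\D(g_1,M,g_2)$, o\`u $g_1\in\G_{i-1}(\op)$ et $g_2\in\G_{r-i-1}(\op)$ sont construits \`a partir des $a_j$ pour $j\neq i,i+1$ et $M=w_0^{(2)}{}^tn_{(2)}\D(a_i/a_{i-1},a_{i+1}/a_i)n'_{(2)}$ est la matrice $2\times 2$ correspondante. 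En combinant la multiplicativit\'e $\underline{\kappa}_\varpi(\D(g_1,g_2))=\underline{\kappa}_\varpi(g_1)\underline{\kappa}_\varpi(g_2)$, la trivialit\'e de $\underline{\kappa}_\varpi$ sur $T_r(\op)\cap\G_r(\op)$ et sur $W_r$, et les propri\'et\'es cocyclique de $\chi_\varpi$ (proposition~\ref{chi1}), on obtient $\underline{\kappa}_\varpi(w_0{}^tntn')=\underline{\kappa}_\varpi(M)$, exactement comme \`a la fin de la d\'emonstration du th\'eor\`eme~\ref{ka7} o\`u la r\'eduction au cas $\G_2$ est effectu\'ee. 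Le point d\'elicat sera de suivre avec soin les permutations et la d\'ecomposition en blocs pour extraire proprement la valeur de rang $2$ ; une variante conceptuelle consisterait \`a utiliser la construction g\'eom\'etrique de $\underline{\kappa}_\varpi$ en 3.1.4 et \`a d\'ecomposer canoniquement la droite $\Delta_{w_0{}^tntn'}$ en le facteur de rang $2$ aux positions $(i,i+1)$ et des facteurs canoniquement trivialis\'es ailleurs.
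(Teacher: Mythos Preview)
Your approach is essentially the one the paper takes: the paper's proof simply says to adapt \cite[proposition 2.4.1]{N} for the $X$- and $Y$-isomorphisms and, for the compatibility with $\underline{\kappa}_\varpi$, to invoke the block formula
\[
\underline{\kappa}\begin{pmatrix} & g_1\\ g_2 & \end{pmatrix}=\underline{\kappa}\begin{pmatrix} g_1 & \\ & g_2 \end{pmatrix}=\underline{\kappa}(g_1)\,\underline{\kappa}(g_2),
\]
which is exactly the reduction you carry out (and is precisely how the same reduction is performed inside the proof of Theorem~\ref{ka7}).

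One small imprecision is worth fixing. After reducing to $n=\mathrm{Id}_r+xE_{i,i+1}$ and $n'=\mathrm{Id}_r+x'E_{i,i+1}$, the matrix ${}^tntn'$ is block-diagonal of shape $\D(\mathrm{Id}_{i-1},M',\mathrm{Id}_{r-i-1})\cdot t_2$ with $t_2\in T_r(\op)$, but $w_0\,{}^tntn'$ is then \emph{block anti-diagonal} (a $3$-block anti-diagonal with outer blocks $w_0^{(i-1)}$, $w_0^{(r-i-1)}$ and middle block $w_0^{(2)}M'$), not block-diagonal up to a permutation supported on $\{1,\dots,i+1\}$. The clean way to conclude is to apply the anti-diagonal block formula twice (first peeling off the $w_0^{(r-i-1)}$ block, then the $w_0^{(i-1)}$ block), use $\underline{\kappa}|_{W_r}=1$, and absorb $t_2\in T_r(\op)$ via $\underline{\kappa}(gt_2)=\underline{\kappa}(g)$; this is exactly what the paper's one-line proof is pointing to.
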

\begin{proof}
On adapte la démonstration de \cite[proposition 2.4.1]{N} pour le premier isomorphisme. Pour le deuxième isomorphisme, on adapte encore cette démonstration en utilisant la formule : $$\underline{\kappa}\left(\begin{matrix} &g_1\\g_2&\end{matrix}\right)=\underline{\kappa}\left(\begin{matrix} g_1&\\&g_2\end{matrix}\right)=\underline{\kappa}(g_1)\underline{\kappa}(g_2).$$
\end{proof}
\subsection{Facteur de transfert}
\setcounter{theorem}{0}
\setcounter{subsubsection}{1}

\begin{proposition}\label{prop1}
Pour $\ud=(d_1,\dots,d_r)$, la restriction \`a $U_{\ud}$ du complexe ${\cal I}=\mathrm{R}f^X_{\ud,!}h_{\ud}^*\mathcal{L}_{\psi}$ (resp. ${\cal J}=\mathrm{R}f^Y_{\ud,!}({h'}_{\ud}^*\mathcal{L}_{\psi}\otimes\underline{\kappa}_{\ud}^*\mathcal{L}_{\zeta})$) est un syst\`eme local de rang $2^{\sum_{i=1}^rd_i}$ plac\'e en degr\'e $0$ (resp. plac\'e en degr\'e $\sum_{i=1}^rd_i$). De plus
${\cal J}_{|U_{\ud}}={\cal T}\otimes {\cal I}_{|U_{\ud}}$, o\`u $\cal T$ est un syst\`eme local de rang $1$ plac\'e en degr\'e $\sum_{i=1}^rd_i$ au-dessus de $U_{\ud}$, g\'eom\'etriquement constant et provenant d'un caract\`ere $\tau $ de ${\rm Gal}_{\overline{k}/k}$ tel que
$$\tau({\rm Fr}_q)=\left\{\begin{smallmatrix}(-1)^{\sum_{i=1}^rd_i}q^{\sum_{i=1}^rd_i/2}\zeta(-1)^{\sum_{i=0}^{s-1}d_{2i+1}}\gamma_\infty(\varpi,\Psi_\infty)^{-\sum_{i=1}^{s-1}p(d_{2i}-d_{2i+1})}\,\text{si $r=2s$},\\
(-1)^{\sum_{i=1}^rd_i}q^{\sum_{i=1}^rd_i/2}\zeta(-1)^{\sum_{i=1}^{s}d_{2i}}\gamma_\infty(\varpi,\Psi_\infty)^{-\sum_{i=1}^{s}p(d_{2i}-d_{2i-1})}\,\text{si $r=2s+1$}.
\end{smallmatrix}\right.$$
\end{proposition}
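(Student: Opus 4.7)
Le plan est de combiner la factorisation cohomologique de la proposition \ref{intro3} avec le calcul explicite de rang $2$ fait dans la proposition \ref{fatf}. Au-dessus de $U_{\ud}$, le polyn\^ome $\prod_{i=1}^r a_i$ n'a que des racines simples ; la proposition \ref{intro3} (et son analogue pour l'int\'egrale $I$ dans la section 2.2) donne, en chaque $(t,\underline{\alpha})\in U_{\ud}$, les factorisations
$$\mathcal{I}(t)\simeq\bigotimes_{\lambda\in\mathrm{supp}(t)}\mathcal{I}_{v_\lambda}(t)\quad\text{et}\quad \mathcal{J}(t)\simeq\bigotimes_{\lambda\in\mathrm{supp}(t)}\mathcal{J}_{v_\lambda}(t).$$
\`A chaque telle racine $\lambda$, racine simple d'un unique $a_i$ avec $i\leq r-1$, la matrice diagonale $t$ a pour valuations $v_\lambda(a_i)=1$ et $v_\lambda(a_j)=0$ pour $j\neq i$. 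La proposition \ref{ssp} ram\`ene alors la donn\'ee locale en $v_\lambda$ au cas $r=2$ associ\'e \`a $\mathrm{diag}(a_i/a_{i-1},a_{i+1}/a_i)$, dont les valuations en $v_\lambda$ sont $(1,-1)$.

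En appliquant la proposition \ref{fatf} \`a chaque tel $\lambda$, on obtient que $\mathcal{I}_{v_\lambda}(t)$ est un $\overline{\mathbb{Q}}_\ell$-espace vectoriel de rang $2$ en degr\'e $0$, que $\mathcal{J}_{v_\lambda}(t)$ est de rang $2$ en degr\'e $1$, et un isomorphisme local $\mathcal{J}_{v_\lambda}(t)\simeq \mathcal{T}_{v_\lambda}\otimes \mathcal{I}_{v_\lambda}(t)$ avec
$$\mathrm{Tr}(\mathrm{Fr}_{k_{v_\lambda}},\mathcal{T}_{v_\lambda})=|a_i/a_{i-1}|^{-1/2}_{v_\lambda}\gamma_{v_\lambda}(-a_i/a_{i-1},\Psi_{v_\lambda}),$$
ou de fa\c con \'equivalente $|a_{i+1}/a_i|^{1/2}_{v_\lambda}\gamma_{v_\lambda}(a_{i+1}/a_i,\Psi_{v_\lambda})$, ces deux expressions correspondant aux deux facteurs de transfert $\mathcal{T}$ et $\mathcal{T}'$ de loc. cit. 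En prenant le produit tensoriel sur toutes les racines $\lambda$, on obtient les rangs et degr\'es annonc\'es ainsi qu'un isomorphisme global $\mathcal{J}|_{U_{\ud}}\simeq\mathcal{T}\otimes \mathcal{I}|_{U_{\ud}}$ avec $\mathcal{T}=\bigotimes_\lambda \mathcal{T}_{v_\lambda}$, de rang $1$ dans le degr\'e appropri\'e.

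Pour \'etablir la constance g\'eom\'etrique de $\mathcal{T}$ et calculer $\tau(\mathrm{Fr}_q)$, on spécialise en un point $(t,\underline{\alpha})$ d\'efini sur $k$, de sorte que le Frobenius sur $\mathcal{T}$ est le produit des traces locales sur chaque orbite galoisienne de racines. Pour chaque $j\not\equiv r\pmod 2$ on choisit, \`a chaque racine $\lambda$ de $a_i$ avec $i\in\{j-1,j\}$, l'expression locale faisant intervenir $\gamma_{v_\lambda}(a_j/a_{j-1},\Psi_{v_\lambda})$. Aux autres places finies, $a_j/a_{j-1}$ est une unit\'e et la proposition \ref{weil1}(1) assure que le $\gamma$ local y vaut $1$ ; on peut donc \'etendre le produit \`a toutes les places finies. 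La formule du produit pour les constantes de Weil (proposition \ref{weil}) le transforme alors en $\gamma_\infty(a_j/a_{j-1},\Psi_\infty)^{-1}$. Puisque $v_\infty(a_j/a_{j-1})=d_j-d_{j-1}$, cette constante de Weil vaut $1$ lorsque $d_j-d_{j-1}$ est pair (proposition \ref{weil1}(1)) et fait appara\^itre le facteur $\gamma_\infty(\varpi,\Psi_\infty)^{p(d_j-d_{j-1})}$ multipli\'e par un symbole de Hilbert sinon. Le facteur $q^{\sum d_i/2}$ provient du regroupement des $|\cdot|_{v_\lambda}^{-1/2}$ via $\prod_{\lambda\mid a_i}q_{v_\lambda}=q^{d_i}$ ; les signes $(-1)^{\sum d_i}$ et les puissances $\zeta(-1)^{\sum \ldots}$ proviennent du suivi des symboles de Hilbert \`a travers la formule multiplicative $\gamma(ab,\Psi)=\gamma(a,\Psi)\gamma(b,\Psi)[a,b]$ (proposition \ref{weil1}(3)). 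Puisque $\tau(\mathrm{Fr}_q)$ ne d\'ependra alors que de $\ud$, le th\'eor\`eme de Chebotarev permet de conclure que $\mathcal{T}$ est g\'eom\'etriquement constant.

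La principale difficult\'e est la combinatoire de cette derni\`ere \'etape : organiser les signes, les symboles de Hilbert et les indicateurs de parit\'e entre les diff\'erents facteurs locaux, avec des expressions distinctes selon que $r=2s$ ou $r=2s+1$. En t\'elescopant le produit en $j$ (en exploitant que $j$ et $j-1$ n'ont pas la m\^eme parit\'e par rapport \`a $r$), de nombreux facteurs interm\'ediaires s'annulent, ne laissant finalement que la formule explicite annonc\'ee pour $\tau(\mathrm{Fr}_q)$.
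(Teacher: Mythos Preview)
Your proposal is correct and follows essentially the same approach as the paper's proof: factorisation via the product formula (proposition~\ref{intro3} and its analogue for $\mathcal{I}$), reduction to the $r=2$ case via proposition~\ref{ssp}, the local computation of proposition~\ref{fatf}, the Weil product formula (proposition~\ref{weil}) to pass to the place at infinity, the parity analysis there via proposition~\ref{weil1}, and finally Chebotarev for geometric constancy. The only difference is organisational: the paper groups the local factors directly according to the parity of $r$ (writing the product over $v\mid a_1$ and over the pairs $v\mid a_{2i}$, $v\mid a_{2i+1}$ when $r=2s$, and analogously for $r=2s+1$), whereas you describe a telescoping over the indices $j\not\equiv r\pmod 2$; one minor point is that the sign $(-1)^{\sum d_i}$ comes from the degree shift of $\mathcal{T}$ rather than from the Hilbert symbols.
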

\begin{proof}
En utilisant la proposition~\ref{fatf}, la proposition~\ref{ssp} et la formule de produit, on obtient les assertions sur les systèmes locaux $\mathcal{I}_{|U_{\ud}}$ et $\mathcal{J}_{|U_{\ud}}$. Pour le facteur de tranfert on envisage deux cas suivants :
\begin{enumerate}
\item Si $r=2s$, on écrit le facteur de transfert sous la forme :
\begin{multline*}{\bf T}=\prod_{v|a_1}(q^{1/2}\gamma_v(-a_1,\Psi_v))\times\\
\times\prod_{i=1}^{s-1}\left(\prod_{v|a_{2i}}(q^{1/2}\gamma_v(a_{2i+1}/a_{2i},\Psi_v))\prod_{v|a_{2i+1}}(q^{1/2}\gamma_v(-a_{2i+1}/a_{2i},\Psi_v))\right).
\end{multline*}
\item Si $r=2s+1$ , on écrit le facteur de transfert sous la forme :
$${\bf T}=\prod_{i=1}^{s}\left(\prod_{v|a_{2i-1}}(q^{1/2}\gamma_v(a_{2i}/a_{2i-1},\Psi_v))\prod_{v|a_{2i}}(q^{1/2}\gamma_v(-a_{2i}/a_{2i-1},\Psi_v))\right).$$
\end{enumerate}
On se contente de donner le calcul pour le premier cas où $r=2s$ (pour le cas où $r=2s+1$, on utilise le même argument). En fait,
\begin{eqnarray*}
\prod_{v|a_1}(q^{1/2}\gamma_v(-a_1,\Psi_v))&=&\prod_{v|a_1}(q^{1/2}\gamma_v(a_1,\Psi_v)\gamma_v(-1,\Psi_v)[-1,a_1]_v)\\
&=&\prod_{v|a_1}(q^{1/2}\gamma_v(a_1,\Psi_v)\zeta(-1))\\
&=&q^{d_1/2}\zeta(-1)^{d_1}\prod_{v|a_1}\gamma_v(a_1,\Psi_v)\\
&=&q^{d_1/2}\zeta(-1)^{d_1}\gamma_\infty(a_1,\Psi_\infty)^{-1}.
\end{eqnarray*}
On a aussi (comme $\gamma_v(-a_{2i+1}/a_{2i},\Psi_v)=\zeta(-1)\gamma_v(a_{2i+1}/a_{2i},\Psi_v)$, où la démonstration est la même que ci-dessus avec $a_1$)
\begin{eqnarray*}
&&\prod_{v|a_{2i}}(q^{1/2}\gamma_v(a_{2i+1}/a_{2i},\Psi_v))\prod_{v|a_{2i+1}}(q^{1/2}\gamma_v(-a_{2i+1}/a_{2i},\Psi_v))\\
&=&\prod_{v|a_{2i}}(q^{1/2}\gamma_v(a_{2i+1}/a_{2i},\Psi_v))\prod_{v|a_{2i+1}}(q^{1/2}\gamma_v(a_{2i+1}/a_{2i},\Psi_v)\zeta(-1)\\
&=&q^{d_{2i}+d_{2i+1}}\zeta(-1)^{d_{2i+1}}\prod_{v|a_{2i}a_{2i+1}}\gamma_v(a_{2i+1}/a_{2i},\Psi_v)\\
&=&q^{d_{2i}+d_{2i+1}}\zeta(-1)^{d_{2i+1}}\gamma_\infty(a_{2i+1}/a_{2i},\Psi_\infty)^{-1}
\end{eqnarray*}
Par ailleurs, les $a_i$ sont des polynôme unitaire, donc $a_{2i+1}/a_{2i}=(\varpi^{-1})^{d_{2i}-d_{2i+1}}+...$ (c'est à dire $v_\infty(a_{2i+1}/a_{2i})=d_{2i}-d_{2i+1}$). En utilisant les deux assertions premières la proposition ~\ref{weil1} on a alors
$$\gamma_\infty(a_{2i+1}/a_{2i},\Psi_\infty)=\begin{cases}1&\text{ si $d_{2i}-d_{2i+1}$ est paire,}\\
\gamma_\infty(\varpi,\Psi_\infty)& \text{ si $d_{2i}-d_{2i+1}$ est impaire.}\end{cases}$$
Par conséquent, on obtient la formule de trace de $\mathcal{T}_{|U_{\ud}}$. D'après le théorème de Chebotarev, on voit que $\mathcal{T}$ est géométriquement constant.
\end{proof}
Le facteur de tranfert est géométriquement constant au-dessus de $U_{\ud}$ et se prologne alors de manière évidente $\mathcal{T}$ à $V'_{\ud}\times \mg_m^{r-1}$.

En utilisant les théorèmes~\ref{px} et~\ref{py}, on obtient l'énoncé global pour $\ud=(1,2\dots,r)$, qui prolonge l'égalité de la proposition~\ref{prop1} ci-dessus au-dessus de  $V'_{\ud}\times \mg_m^{r-1}$ tout entier.
\begin{thmB}\label{thm4} Pour $\ud=(1,2\dots,r)$, $\mathrm{R}f^Y_{\ud,!}(h_{Y,\ud}^*\mathcal{L}_{\psi}\otimes\underline{\kappa}_{\ud}^*\mathcal{L}_{\zeta})={\cal T}\otimes \mathrm{R}f^X_{\ud,!}h_{X,\ud}^*\mathcal{L}_{\psi}$. Les deux membres de cette \'egalit\'e sont, \`a d\'ecalage pr\`es, des faisceaux pervers isomorphes au prolongement interm\'ediaire de leur restriction \`a $U_{\ud}\times\mg_m^{r-1}$.
\end{thmB}
\section{L'énoncé local résulte de l'énoncé global}
\setcounter{subsubsection}{1}
\setcounter{theorem}{0}
\setcounter{subsection}{1}
Dans cette partie, on va montrer que le théorème B entraine une version géométrique du théorème A.
\begin{thmA'} Soit $t'=\D(t'_1,\dots,t'_s)\in T_s(\fp)$. On a alors
${\cal J}_{\varpi}(t')\simeq{\cal T}_{\varpi}(t')\otimes{\cal I}_{\varpi}(t')\simeq
{\cal T'}_{\varpi}(t')\otimes{\cal I}_{\varpi}(t'),$ o\`u ${\cal T}_{\varpi}(t')$ et ${\cal T'}_{\varpi}(t)$ sont des $\overline{\mathbb{Q}}_{\ell}$-espaces vectoriels de rang $1$ plac\'es en degr\'e $v_{\varpi}(\prod_{i=1}^{r-1}a_i)$ tels que $\mathrm{Tr}(\mathrm{Fr},{\cal T}_{\varpi}(t'))=\mathfrak{t}_{\varpi}(t',\alpha)$ et $\mathrm{Tr}(\mathrm{Fr},{\cal T'}_{\varpi}(t'))=\mathfrak{ t'}_{\varpi}(t',\alpha)$ avec $\mathfrak{t}_\varpi$ et $\mathfrak{t}'_\varpi$ sont analogues celles du théorème $A$ (cf. la section d'introduction).
\end{thmA'}
D'apr\`es \cite[prop 3.5.1, p. 505]{N}, pour $r$ assez grand, il existe $t^{\circ}=\D(a_1^{\circ},\break a_2^{\circ}/a_1^{\circ}, \dots,a_r^{\circ}/a_{r-1}^{\circ})$, avec $(a_1^{\circ},\dots,a_r^{\circ})\in V_{(1,2,\dots,r)}(k)$ tel que ${\cal I}_\varpi(t^{\circ})\simeq {\cal I}_\varpi(t')$ et ${\cal J}_\varpi(t^{\circ})\simeq {\cal J}_\varpi(t')$. De plus, en utilisant la proposition~\ref{intro3} on a :
$${\cal I}(t^\circ)\simeq{\cal I}_\varpi(t')\otimes\bigotimes_{{\small \begin{smallmatrix} v\neq\varpi \\v\in{\rm supp}(t^\circ)\end{smallmatrix}}}{\cal I}_v(t^\circ)\text{ et }{\cal J}(t^\circ)\simeq{\cal J}_\varpi(t')\otimes\bigotimes_{{\small \begin{smallmatrix} v\neq \varpi\\v\in{\rm supp}(t^\circ)\end{smallmatrix}}}{\cal J}_v(t^\circ). $$
On a $a^{\circ}_i={a'}^{\circ}_i{a''}^{\circ}_i$, o\`u ${a'}^{\circ}_i$ est \`a racines simples non nulles et o\`u  ${a''}^{\circ}_i$ a toutes ses racines nulles. Soient alors $\ud'=(\deg({a'}_i^{\circ}))_i$ et $\ud''=(\deg({a''}_i^{\circ}))_i$. On fait varier $(a'_i)_i$ et $(a''_i)_i$ en introduisant l'ouvert $(V_{\ud'}\times V_{\ud''})^{{\rm dist}}$ de $V_{\ud'}\times V_{\ud''}$ au-dessus duquel ${\rm pgcd}(\prod_{i=1}^ra'_i,\prod_{i=1}^ra''_i)=1$ et les $a'_i$ sont \`a racines simples.  On a alors un morphisme \'etale $\mu:(V_{{\ud}^{ '}}\times V_{{\ud}^{''}})^{{\rm dist}}\ra V_{\ud}$.

Du côté de l'intégrale $I$, pour tout couple $(t',t'')=(\D(a'_1,\dots,a'_r),\break\D(a'_1,\dots,a'_r))$ où $((a'_1,\dots,a'_r),(a''_1,\dots,a''_r))\in (V_{\ud'}\times V_{\ud''})^{{\rm dist}}$, on introduit les deux données géométriques, $(X_1(t',t''),h_{1,\underline{\alpha}})$ et $(X_2(t',t''),h_{2,\underline{\alpha}})$, où
{\small $$X_1(t',t'')(k)=\{n\in N_r(F)/N_r(\0[\det(t'')^{-1}])|{}^tnt't''n\in \mathfrak{gl}_r(\0[\det(t'')^{-1}])\},$$
$$X_2(t',t'')(k)=\{n\in N_r(F)/N_r(\0[\det(t')^{-1}])|{}^tnt't''n\in \mathfrak{gl}_r(\0[\det(t')^{-1}])\}
$$}
et les morphismes $$h_{1,\underline{\alpha}}(n)=\sum_{i=2}^r\left(\alpha_i\sum_{v\nshortmid\prod_{i=1}^ra''_i}{\rm tr}_{k_v/k}{\rm res}_v(n_{i-1,i}{\rm d}\varpi)\right),$$ $$h_{2,\underline{\alpha}}(n)=\sum_{i=2}^r\left(\alpha_i\sum_{v\nshortmid\prod_{i=1}^ra'_i}{\rm tr}_{k_v/k}{\rm res}_v(n_{i-1,i}{\rm d}\varpi)\right).$$
Du côté de l'intégrale $J$, pour tout couple $(t',t'')=(\D(a'_1,\dots,a'_r),\break\D(a'_1,\dots,a'_r))$ où $((a'_1,\dots,a'_r),(a''_1,\dots,a''_r))\in (V_{\ud'}\times V_{\ud''})^{{\rm dist}}$, on introduit aussi les deux données géométriques  $(Y_1(t',t''),h'_{1,\underline{\alpha}},\underline{\kappa}_1)$ et $(Y_2(t',t''),h'_{2,\underline{\alpha}},\underline{\kappa}_2)$ où
{\small $$
Y_1(t',t'')(k)=\{(n,n')\in (N_r(F)/N_r(\0[\det(t'')^{-1}]))^2|{}^tnt't''n'\in \mathfrak{gl}_r(\0[\det(t'')^{-1}])\},
$$
$$
Y_2(t',t'')(k)=\{(n,n')\in (N_r(F)/N_r(\0[\det(t')^{-1}]))^2|{}^tnt't''n'\in \mathfrak{gl}_r(\0[\det(t')^{-1}])\}
$$}
et les morphismes
$$h'_{1,\underline{\alpha}}(n,n')=\frac{1}{2}\sum_{i=2}^r\left(\alpha_i\sum_{v\nshortmid\prod_{i=1}^ra''_i}{\rm tr}_{k_v/k}{\rm res}_v((n_{i-1,i}+n'_{i-1,i}){\rm d}\varpi)\right),$$
$$\underline{\kappa}_1=\prod_{v\nshortmid\, a'_r\prod_{i=1}^ra''_i}\underline{\kappa}_v,$$
$$h'_{2,\underline{\alpha}}(n,n')=\frac{1}{2}\sum_{i=2}^r\left(\alpha_i\sum_{v\nshortmid\prod_{i=1}^ra'_i}{\rm tr}_{k_v/k}{\rm res}_v((n_{i-1,i}+n'_{i-1,i}){\rm d}\varpi)\right),$$
$$\underline{\kappa}_2=\prod_{v\nshortmid\, a''_r\prod_{i=1}^ra'_i}\underline{\kappa}_v$$
(le morphisme $\underline{\kappa}_1$ (resp. $\underline{\kappa}_2$) a une présentation géométrique comme dans la sous-section \ref{glokap} où $S=\{v|v\setminus a'_r\prod_{i=1}^ra''_i\}$ (resp. où $S=\{v| v\setminus a''_r\prod_{i=1}^ra'_i\}$).

Ces données géométriques se mettent en familles (on va les nommer respectivement $(X_{1,\ud'},f^{X_1}_{\ud'},h_{1,\ud'})$, $(X_{2,\ud''},f^{X_2}_{\ud''},h_{2,\ud''})$, $(Y_{1,\ud'},f^{Y_1}_{\ud'},h'_{1,\ud'},\underline{\kappa}_{1,\ud'})$ et $(Y_{2,\ud''},f^{Y_2}_{\ud''},h'_{2,\ud''},\underline{\kappa}_{2,\ud''})$) quand $\ud'$ et $\ud''$ sont fixés. Plus précisément on a les lemmes suivants :
\begin{lemma}
\begin{enumerate}
\item Pour tout $\underline{d}',\ud''\in \mathbb{N}^r$ le foncteur $X_{1,\underline{d}'}$ qui associe à toute $k[(a'_{i,j})_{i,j},(a''_{i,j})_{i,j}]$-algèbre ($a'_{i,j}$ (resp. $a''_{i,j}$) sont les coefficients de $a'_i$ (resp. de $a''_i$) - c'est à dire $a'_i=\varpi^{d'_i}+a'_{i,d'_i-1}\varpi^{d'_i-1}+\dots+a'_{i,0}$) l'ensemble
    \begin{eqnarray*}X_{1,\underline{d}'}(R)&=&\{g\in S_r((\0\otimes_k R)[(\prod_{i=1}^r a''_i)^{-1}])|\\&&
    \det(g_i)=a'_ia''_i\}/N_r((\0\otimes_k R)[(\prod_{i=1}^r a''_i)^{-1}]),
    \end{eqnarray*}
    où $g_i$ est la sous-matrice de $g$ faite des $i$-premières lignes et des $i$-premières colonnes de $g$, est représenté par une variété affine de type fini sur $k$, qu'on note aussi $X_{1,\underline{d}'}$ (resp. $X_{2,\underline{d}''}$).
    Soient
    $$f^{X_1}_{\underline{d}'}: X_{1,\underline{d}'}\times \mg_m^{r-1}\ra (V_{\ud'}\times V_{\ud''})^{\rm dist}\times \mg_m^{r-1}$$
    les morphismes définis par $f^{X_1}_{\underline{d}'}(g,\underline{\alpha})=((a'_j)_{1\leq j\leq r},(a''_j)_{1\leq i\leq r},\underline{\alpha})$ où $a'_ja''_j=\det(g_j)$.
\item Pour tout $i $ avec $2\leq i \leq r$, l'application $h_{i}:S_r((\0\otimes_kR)[(\prod_{j=1}^ra''_j)^{-1}])\times (R^*)^{r-1}\ra R$ définie par
    $$    h_{1,i}=\mathrm{res}\left((a'_{i-1}a''_{i-1})^{-1}\left((g_{i,1},\dots,g_{i,i-1})(a'_{i-1}a''_{i-1})g_{i-1}^{-1}\left(\begin{matrix}0\\\vdots\\0\\\alpha_i\end{matrix}\right)\right)\right),
    $$
    où $(a'_ia''_i)g_i^{-1}$ est la matrice des cofacteurs de $g_i$, lesquels sont dans $(\0\otimes_kR)[(\prod_{i=1}^ra''_i)^{-1}]$ et où $\mathrm{res}((a'_{i-1}a''_{i-1})^{-1} \frac{b}{(\prod_{j=1}^r{a''_j})^s})$ est le coefficient de $\varpi^{(d'_i+d''_i)+s\sum_{j=1}^rd''_j-1}$ dans l'expression polynomiale en la variable $\varpi$ du reste de la
    division euclidienne de $b$ par $a'_{i-1}a''_{i-1}(\prod_{j=1}^ra''_j)^s$ (cette division euclidienne a un sens puisque le coefficient dominant des $a'_i$ et des $a''_i$ sont égals à $1$), induit un morphisme $h_{1,i} : X_{1,\underline{d}'}\times \mg_m^{r-1}\ra\mg_a$.
\item Soit $h_{1,\underline{d}'}=\sum_{i=2}^rh_{1,i}$. Alors pour tous $(t',t'')\in (V_{\ud'}\times V_{\ud''})^{\rm dist}(k)$ et $\underline{\alpha} \in \mg_m^{r-1}$, le couple $(X_1(t',t''),h_{1,\underline{\alpha}})$ est isomorphe à la fibre $(f^{X_1}_{\underline{d}'})^{-1}(t',t'',\underline{\alpha})$ munie de la restriction de $h_{1,\underline{d}'}$ à cette fibre.
\end{enumerate}

\end{lemma}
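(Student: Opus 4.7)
La strat\'egie consiste \`a adapter la d\'emonstration du lemme analogue pour $(X_{\ud},f^X_{\ud},h_{\ud})$ (cf.\ la section 2.2, qui suit \cite[proposition 3.3.1]{N}) au cadre localis\'e obtenu en inversant le polyn\^ome $\prod_{i=1}^r a''_i$. Les deux points critiques \`a v\'erifier sont, d'une part, que le passage \`a la forme canonique dans chaque $N_r$-orbite reste possible apr\`es cette localisation, et d'autre part, que le r\'esidu $h_{1,i}$ est bien d\'efini, polynomial en les coefficients de $g$ et invariant par l'action de $N_r$.

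Pour le point (1), on proc\'ederait par r\'ecurrence sur $r$ exactement comme dans la d\'emonstration de la proposition~\ref{ngo1}. \'Etant donn\'ee une matrice sym\'etrique $g$ \`a coefficients dans $(\0\otimes_kR)[(\prod_ia''_i)^{-1}]$ v\'erifiant $\det(g_i)=a'_ia''_i$, on supposerait par r\'ecurrence que $s_{r-1}(g)$ est sous forme canonique. Le caract\`ere unitaire de $a'_{r-1}a''_{r-1}$ garantit alors que le quotient de $(\0\otimes_kR)[(\prod_ia''_i)^{-1}]^{r-1}$ par l'image de $s_{r-1}(g)$ est localement libre de rang fini sur $R$; en faisant agir $N_r$ sur la derni\`ere colonne de $g$, on la ram\`enerait ainsi \`a un repr\'esentant canonique \`a coefficients dans un $R$-module libre de type fini, d'o\`u la repr\'esentabilit\'e de $X_{1,\ud'}$ par un sous-sch\'ema affine ferm\'e d'un espace affine convenable. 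Le morphisme $f^{X_1}_{\ud'}$ est alors d\'efini en envoyant $(g,\underline{\alpha})$ sur la famille $((a'_i,a''_i)_{1\leq i\leq r},\underline{\alpha})$, la d\'ecomposition $\det(g_i)=a'_ia''_i$ \'etant unique gr\^ace \`a la condition $\mathrm{dist}$ qui assure la coprimalit\'e entre $\prod_ia'_i$ et $\prod_ja''_j$.

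Pour le point (2), on v\'erifierait d'abord que l'expression d\'efinissant $h_{1,i}$ a un sens: la division euclidienne par $a'_{i-1}a''_{i-1}(\prod_ja''_j)^s$ (pour un entier $s$ assez grand, d\'ependant de $g$) donne un reste polynomial en $\varpi$ dont on extrait le coefficient voulu, et ce coefficient ne d\'ependrait pas du choix de $s$, gr\^ace au fait que $\prod_ja''_j$ est premier \`a $a'_{i-1}$. L'invariance par $N_r$ r\'esulterait du m\^eme argument que dans \cite[proposition 3.3.1]{N}: pour $g'={}^tngn$, la diff\'erence $h_{1,i}(g')-h_{1,i}(g)$ est le coefficient extrait du reste d'une combinaison lin\'eaire des $g_{i-1,j}$ d\'ej\`a divisible par $a'_{i-1}a''_{i-1}$, donc est nulle. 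Le point (3) r\'esulterait alors imm\'ediatement par sp\'ecialisation $(a'_i,a''_i)\mapsto$ polyn\^omes donn\'es.

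La principale difficult\'e technique se situe dans la conjonction de l'argument de forme canonique et de la localisation par $\prod_ja''_j$: il faudra s'assurer soigneusement que les $R$-modules de repr\'esentants canoniques sont bien de type fini malgr\'e la pr\'esence de $(\prod_ja''_j)^{-1}$, et que les formules explicites pour $h_{1,i}$ sont insensibles aux choix d'\'ecriture des fractions rationnelles obtenues apr\`es inversion. L'hypoth\`ese de coprimalit\'e fournie par la condition $\mathrm{dist}$ y joue un r\^ole clef, en garantissant que les polyn\^omes $a'_{i-1}$ et $\prod_ja''_j$ n'ont pas de facteur commun, ce qui permet d'effectuer les divisions euclidiennes requises de mani\`ere contr\^ol\'ee.
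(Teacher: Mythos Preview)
Your approach is correct and is in fact more detailed than what the paper provides: the paper states this lemma without proof, relying implicitly on the same adaptation of \cite[proposition 3.3.1]{N} that underlies the analogous lemma in Section~2.2. Your sketch of the inductive reduction to canonical form (as in Proposition~\ref{ngo1}) together with the verification that the residue $h_{1,i}$ is well-defined and $N_r$-invariant is exactly the expected argument, and your remark that the coprimality condition in $(V_{\ud'}\times V_{\ud''})^{\rm dist}$ is what makes the localisation by $\prod_j a''_j$ harmless is the only genuinely new ingredient relative to the unlocalised case.
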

\begin{lemma}(cf. \cite[proposition 3.3.1]{N})
\begin{enumerate}
\item Pour tout $\underline{d}',\ud''\in \mathbb{N}^r$ le foncteur $Y_{1,\underline{d}'}$ qui associe à toute $k[(a'_i)_i,(a''_i)_i]$-algèbre $R$ l'ensemble
{\small\begin{multline*}Y_{1,\underline{d}'}(R)={}^tN_r((\0\otimes_kR)[(\prod_{i=1}^ra''_i)^{-1}])\setminus\{g\in \mathfrak{gl}_r((\0\otimes_k R)[(\prod_{i=1}^ra''_i)^{-1}])|\\\det(g_i)=a'_ia''_i,
\mathrm{pgcd}(\prod_{i=1}^{r-1}\det(g_i),\det(g_r))=1\}/N_r((\0\otimes_k R)[(\prod_{i=1}^ra''_i)^{-1}]),
\end{multline*}}
    où $g_i$ est la sous-matrice de $g$ faite des $i$-premières lignes et des $i$-premières colonnes de $g$, est représenté par une variété affine de type fini sur $k$ qu'on note aussi $Y_{1,\underline{d}'}$.
    Soit
    $$f^{Y_1}_{\underline{d}'}: Y_{1,\underline{d}'}\times \mg_m^{r-1}\ra (V_{\underline{d}'}\times V_{\ud''})^{\rm dist}\times \mg_m^{r-1}$$
    le morphisme défini par $f^{Y_1}_{\underline{d}'}(g,\underline{\alpha})=((a'_i)_{1\leq i\leq r},(a''_i)_{1\leq i\leq r},\underline{\alpha})$ où $a'_ia''_i=\det(g_i)$
\item Pour tout $i$ avec $2\leq i \leq r$, l'application $h'_{1,i}:\mathfrak{gl}_r((\0\otimes_kR)[(\prod_{j=1}^ra''_j)^{-1}])\times (R^*)^{r-1}\ra R$ définie par
    \begin{eqnarray*}
    h'_{i}&=&\frac{1}{2}\,{\rm res}\left(a_{i-1}^{-1}\left((g_{i,1},\dots,g_{i,i-1})a_{i-1}g_{i-1}^{-1}\left(\begin{matrix}0\\\vdots\\0\\\alpha_i\end{matrix}\right)+\right.\right.\\
    &&\left.\left.+(0,\dots,0,\alpha_i)a_{i-1}g_{i-1}^{-1}\left(\begin{matrix}g_{1,i}\\\vdots\\g_{i-2,i}\\g_{i-1,i}\end{matrix}\right)\right)\right),
    \end{eqnarray*}
    où $(a'_ia''_i)g_i^{-1}$ est la matrice des cofacteurs de $g_i$, lesquels sont dans $(\0\otimes_kR)[(\prod_{i=1}^ra''_i)^{-1}]$ et où $\mathrm{res}((a'_{i-1}a''_{i-1})^{-1} \frac{b}{(\prod_{j=1}^r{a''_j})^s})$ est le coefficient de $\varpi^{(d'_i+d''_i)+s\sum_{j=1}^rd''_j-1}$ dans l'expression polynomiale en la variable $\varpi$ du reste de la
    division euclidienne de $b$ par $a'_{i-1}a''_{i-1}(\prod_{j=1}^ra''_j)^s$ (cette division euclidienne a un sens puisque le coefficient dominant des $a'_i$ et des $a''_i$ sont égals à $1$), induit un morphisme $h'_{1,i} : Y_{1,\underline{d}'}\times \mg_m^{r-1}\ra\mg_a$.
\item Soit $h'_{1,\underline{d}'}=\sum_{i=2}^rh'_{1,i}$. Il existe un morphisme naturel $\underline{\kappa}_{1,\ud'}: Y_{1,\ud'}\to \mg_m$ 
    tel que pour tous $(t',t'')\in (V_{\underline{d}'}\times V_{\ud''})^{\rm dist}(k)$ et $\underline{\alpha} \in \mg_m^{r-1}$, le triplet $(Y_1(t',t''),h'_{1,\underline{\alpha}},\underline{\kappa}_1)$ est isomorphe à la fibre $(f^{Y_1}_{\underline{d}'})^{-1}(t',t'',\underline{\alpha})$ munie de la restriction de $h'_{1,\underline{d}'}$ et de $\underline{\kappa}_{1,\ud'}$ à cette fibre.
\end{enumerate}
\end{lemma}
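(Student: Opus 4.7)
Le plan est d'adapter la d\'emonstration de \cite[proposition 3.3.1]{N} en la combinant avec la construction g\'eom\'etrique de $\underline{\kappa}_{\ud}$ de la section 4 et avec la version $S$-globale du groupe m\'etaplectique de la sous-section~\ref{glokap} (appliqu\'ee ici avec $S=\{v|v\mid\prod_{i=1}^r a''_i\}$). Pour le point (1), on reprend la strat\'egie de loc. cit. : on \'ecrit un \'el\'ement $g\in\mathfrak{gl}_r(\0_R[(\prod_i a''_i)^{-1}])$ de d\'eterminants partiels prescrits sous la forme ${}^tn\,m\,n'$ avec $n,n'\in N_r$ et $m$ triangulaire inf\'erieure, ce qui identifie le quotient \`a un sch\'ema affine explicite dont les coordonn\'ees sont les coefficients sous-diagonaux de $m$. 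La condition ${\rm pgcd}(\prod_{i<r}\det g_i,\det g_r)=1$ est une condition ouverte (comme dans la section 4) et reste donc repr\'esentable.

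Pour le point (2), il suffit de v\'erifier que l'expression entre parenth\`eses dans la d\'efinition de $h'_{1,i}$ est un \'el\'ement de $\0_R[(\prod_j a''_j)^{-1}]$ (ce qui r\'esulte de ce que $(a'_ia''_i)g_i^{-1}$ est la matrice des cofacteurs, donc \`a coefficients dans $\0_R[(\prod_j a''_j)^{-1}]$), et que la r\`egle de division euclidienne employ\'ee d\'efinit bien un morphisme de sch\'emas vers $\mg_a$. C'est exactement la v\'erification faite dans \cite[proposition 3.3.1]{N} et dans le premier lemme du couple ci-dessus, une fois observ\'e que le ``r\'esidu'' d'une fraction $b/(a'_{i-1}a''_{i-1}(\prod a''_j)^s)$ s'obtient comme coefficient polynomial du reste de la division euclidienne par $a'_{i-1}a''_{i-1}(\prod a''_j)^s$, lequel est un polyn\^ome unitaire.

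Le c\oe ur de la d\'emonstration, et le point d\'elicat, est la construction de $\underline{\kappa}_{1,\ud'}: Y_{1,\ud'}\to\mg_m$ globalement sur la famille. Pour cela on applique la construction~\ref{geokap} en rempla\c cant $\0_{\varpi,R}$ par l'anneau $(\0\otimes_k R)[(\prod_i a''_i)^{-1}]$ (qui joue le r\^ole d'un anneau d'ad\`eles restreint aux places ne divisant pas $\prod a''_i$) : \`a chaque $g\in Y_{1,\ud'}(R)$, de d\'ecomposition de Bruhat $g=nw_0tn'$ dans $R[\varpi,(\prod_i a'_ia''_i)^{-1}]$, on associe la droite $\Delta^R_g=D^R_{\det g}\otimes (D^R_g)^{\otimes -1}$ construite \`a l'aide des r\'eseaux de $R[\varpi,(\prod_i a'_ia''_i)^{-1}]^r$, et on d\'efinit une trivialisation $\delta_g$ via la proc\'edure de la construction~\ref{chi5} (scindages au-dessus de $N_r$ et base $\delta_R(w_0t)$ venant de la factorisation de $D^R_{w_0t}$ en produit tensoriel des $D^R_{t_i}$). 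La d\'efinition $\underline{\kappa}_{1,\ud'}(g)={\rm triv}/\delta_g$ commute au changement de base, donc fournit bien un morphisme de sch\'emas, et redonne en fibres le produit des fonctions locales $\underline{\kappa}_v$ pour $v\nmid a'_r\prod_i a''_i$ par la formule de produit de la proposition~\ref{glo4}. L'obstacle principal est la v\'erification de l'ind\'ependance par rapport au choix du r\'eseau auxiliaire $M$ et de l'\'ecriture de Bruhat dans le cadre relatif (anneau $R$ non corps) : on reprend les lemmes~\ref{n3} et~\ref{s1} en utilisant que les quotients consid\'er\'es sont des $R$-modules localement libres (lemme~\ref{morphisme1}) et que les automorphismes unipotents y induisent des d\'eterminants triviaux. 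Enfin, l'\'enonc\'e de compatibilit\'e aux fibres est imm\'ediat par fonctorialit\'e de la construction.
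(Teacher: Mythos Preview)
Your sketch is correct and follows the approach the paper itself implies: the paper gives no explicit proof of this lemma, stating it with a ``cf.\ \cite[proposition 3.3.1]{N}'' and relying on the detailed construction already carried out in Section~4.2 for the analogous lemma on $Y_{\underline d}$ (where the only nontrivial addition to Ngo's argument is the construction of $\underline\kappa_{\underline d}$). Your outline --- adapt Ngo's representability argument for points (1) and (2), then transport the construction~\ref{geokap} of $\underline\kappa$ to the ring $(\mathcal O\otimes_k R)[(\prod_i a''_i)^{-1}]$ using the $S$-global framework of~\ref{glokap} --- is exactly this.

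One small correction: the set $S$ should be $\{v\mid v\,\vert\, a'_r\prod_{i=1}^r a''_i\}$, not just $\{v\mid v\,\vert\,\prod_i a''_i\}$, as the paper indicates just before the two lemmas (and as you yourself implicitly acknowledge when you write that the fibrewise product is over $v\nmid a'_r\prod_i a''_i$). The point is that the construction of $\underline\kappa$ in Section~4.2 uses the ring $R[\varpi,a_r^{-1}]$; here $a_r=a'_r a''_r$ and $a''_r$ is already inverted in the ambient ring, so one must further invert $a'_r$ to make $g$ invertible and carry out the Bruhat-based construction of $\delta_g$.
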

Les données géométriques de $X_{2,\ud''}$ et de $Y_{2,\ud''}$ sont définies par des propositions analogues.

Ensuite, on d\'efinit des complexes ${\cal I}_1=\mathrm{R}f^{X_1}_{\ud',!}h_{1,\ud'}^*\mathcal{L}_{\psi},$
${\cal I}_2=\mathrm{R}f^{X_2}_{\ud'',!}h_{2,\ud'}^*\mathcal{L}_{\psi}$
${\cal J}_1=\mathrm{R}f^{Y_1}_{\ud',!}({h'}_{1,\ud'}^{*}\mathcal{L}_{\psi}\otimes\underline{\kappa}_{1,\ud'}^{*}\mathcal{L}_{\zeta})$ et
${\cal J}_2=\mathrm{R}f^{Y_2}_{\ud'',!}({h'}_{2,\ud''}^{*}\mathcal{L}_{\psi}\otimes\underline{\kappa}_{2,\ud''}^{*}\mathcal{L}_{\zeta})$ v\'erifiant $\mu^*{\cal I}={\cal I}_1\otimes^{{\mathbb L}}{\cal I}_2$ et $\mu^*{\cal J}={\cal J}_1\otimes^{{\mathbb L}}{\cal J}_2$.

Comme les $a'_i$ sont à racines simples, en utilisant la proposition~\ref{fatf} et la proposition~\ref{ssp}, on obtient que ${\cal I}_1$ et ${\cal J}_1$ sont des produits de syst\`emes locaux, donc sont eux mêmes des systèmes locaux. De plus, en utilisant la formule du produit des constantes de Weil (cf. la proposition \ref{weil}), on définit un système local $\mathcal{T}_1$ telle que $\mathcal{J}_1=\mathcal{T}_1\otimes\mathcal{I}_1$. Plus précisé
ent, on a la proposition suivante
 \begin{proposition} \label{fatfg1}(cf. la proposition~\ref{fatf}) ${\cal T}_1$ est un syst\`eme local de rang $1$ plac\'e en degr\'e $\sum_{i=1}^rd'_i$ au-dessus de $(V_{\ud'}\times V_{\ud''})^{\rm dist}$ et provenant d'un caract\`ere $\tau $ de ${\rm Gal}_{\overline{k}/k}$ tel que $(-1)^{\sum_{i=1}^rd'_i}\tau({\rm Fr}_q)$ est égal à $$\begin{cases}q^{\sum_{i=1}^rd'_i/2}\zeta(-1)^{\sum_{i=0}^{s-1}d'_{2i+1}}\gamma_\infty(\varpi,\Psi_\infty)^{-\sum_{i=1}^{s-1}p(d'_{2i}-d'_{2i+1})}&\text{si $r=2s$},\\
q^{\sum_{i=1}^rd'_i/2}\zeta(-1)^{\sum_{i=1}^{s}d'_{2i}}\gamma_\infty(\varpi,\Psi_\infty)^{-\sum_{i=1}^{s}p(d'_{2i}-d'_{2i-1})}&\text{si $r=2s+1$},
\end{cases}$$
où $p(x)=\begin{cases} 1&\text{si $x$ est impair}\\
0&\text{si $x$ est pair}.\end{cases}$

 \end{proposition}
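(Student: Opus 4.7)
The plan is to deduce Proposition~\ref{fatfg1} from the simple-root case by factorizing place-by-place, computing the transfer factor at each finite place via Proposition~\ref{fatf} together with Proposition~\ref{ssp}, and then invoking Weil's product formula for $\gamma$-constants (Proposition~\ref{weil}) to collapse the product of local contributions into a single factor involving only $\gamma_\infty$.

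First, by construction $\mathcal{I}_1$ and $\mathcal{J}_1$ are parameter families of local objects indexed by places $v$ dividing $\prod_{i=1}^{r-1} a'_i$ (the places where $a'_r \prod_{i<r} a''_i$ is a unit). Since the $a'_i$ have simple roots, at each such place $v$ the local matrix $t_v = \D(a'_1/a'_0,\dots,a'_r/a'_{r-1})_v$ satisfies the hypotheses of Proposition~\ref{ssp}: exactly one index $i$ has $v_v(a'_i/a'_{i-1}) = \pm 1$ and the others are units (on the open stratum $(V_{\ud'}\times V_{\ud''})^{\rm dist}$ where the relevant coprimality holds). This reduces the local data $(X_v,h_\alpha)$ and $(Y_v, h'_\alpha, \underline{\kappa}_v)$ to the $r=2$ case treated in Proposition~\ref{fatf}. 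Consequently each local contribution $\mathcal{J}_{1,v}/\mathcal{I}_{1,v}$ is a rank $1$ local system in degree $1$ whose Frobenius trace is $|a'_i/a'_{i-1}|_v^{-1/2}\gamma_v(-a'_i/a'_{i-1},\Psi_v)$ or $|a'_{i-1}/a'_i|_v^{1/2}\gamma_v(a'_{i-1}/a'_i,\Psi_v)$, depending on the parity of $i$ relative to $r$ as spelled out in the introduction's formulas for $\mathfrak{t}(t)$ and $\mathfrak{t}'(t)$.

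Second, I will multiply these local contributions over all $v \mid \prod_{i=1}^{r-1}a'_i$. Grouping the places by which $a'_i$ they divide and using the cocycle relation $\gamma_v(ab,\Psi_v) = \gamma_v(a,\Psi_v)\gamma_v(b,\Psi_v)[a,b]_v$ together with $\gamma_v(-x,\Psi_v) = \zeta_v(-1)\gamma_v(x,\Psi_v)$ (a consequence of Proposition~\ref{weil1} applied to $\gamma_v(-1,\Psi_v)$ and Proposition~\ref{hilbert}), one splits off powers of $\zeta(-1)$ counted with the multiplicities $d'_i$ appearing in the stated formula. The factor $q^{\sum_i d'_i/2}$ comes from the local absolute values $|a'_i/a'_{i-1}|_v^{\pm 1/2}$, each place contributing $q_v^{1/2}$ and $q_v = q^{[k_v:k]}$ combining over the $\deg(a'_i)$ points lying above each $a'_i$.

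Third, I invoke Weil's product formula $\prod_{v\in|\mathbb{P}^1|}\gamma_v(x,\Psi_v)=1$ (Proposition~\ref{weil}) applied successively to $x = a'_i/a'_{i-1}$ (viewed as a rational function on $\mathbb{P}^1$ after clearing the contribution of $a''_\bullet$, which is a unit at the $v$'s in question). The product over finite places $v \mid \prod a'_i$ equals the inverse of $\gamma_\infty(a'_i/a'_{i-1},\Psi_\infty)$. At infinity one has $v_\infty(a'_i/a'_{i-1}) = d'_{i-1}-d'_i$, so by the first two items of Proposition~\ref{weil1}, $\gamma_\infty(a'_i/a'_{i-1},\Psi_\infty)$ equals $1$ if $d'_i - d'_{i-1}$ is even and $\gamma_\infty(\varpi,\Psi_\infty)$ otherwise. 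Collecting signs and the parity function $p(\cdot)$, this yields exactly the formula for $\tau(\mathrm{Fr}_q)$ claimed in the proposition, where the case distinction $r=2s$ versus $r=2s+1$ simply reflects which indices $i$ get grouped with $-$ and which with $+$ in $\mathfrak{t}$ versus $\mathfrak{t}'$.

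Finally, having shown that the Frobenius trace of $\mathcal{T}_1$ at every $k$-rational point of $(V_{\ud'}\times V_{\ud''})^{\rm dist}$ is the constant $\tau(\mathrm{Fr}_q)$ (independent of the point chosen), and since $\mathcal{T}_1$ is a priori a rank $1$ local system of Frobenius weight $\sum d'_i$, the Chebotarev density theorem forces $\mathcal{T}_1$ to be geometrically constant, coming from the character $\tau$ of $\mathrm{Gal}_{\overline{k}/k}$. The main obstacle in this program is the bookkeeping of the third step: keeping track of which index $i$ contributes $\zeta(-1)^{d'_i}$ versus nothing, and correctly pairing consecutive $a'_i, a'_{i-1}$ in the product formula while the support of places shifts, so that the resulting exponents of $\zeta(-1)$ and $\gamma_\infty(\varpi,\Psi_\infty)$ agree with the claimed formula in both the even and odd $r$ cases.
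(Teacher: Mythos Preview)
Your strategy is the one the paper intends: it follows the template of the proof of Proposition~\ref{prop1} (reduction to $r=2$ via Propositions~\ref{ssp} and~\ref{fatf}, then Weil's product formula~\ref{weil} and evaluation at~$\infty$ via Proposition~\ref{weil1}, then Chebotarev), which is exactly what the paper's ``(cf.~la proposition~\ref{fatf})'' directs the reader to do.

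There is, however, a genuine gap at your step where you write ``after clearing the contribution of $a''_\bullet$, which is a unit at the $v$'s in question''. Being a unit makes $\gamma_v(a''_j/a''_{j-1},\Psi_v)=1$, but it does not kill the Hilbert-symbol cross term in the cocycle relation~\ref{weil1}(3): at a place $v\mid a'_j$ the local diagonal entry is $a_j/a_{j-1}=(a'_j/a'_{j-1})(a''_j/a''_{j-1})$, so
\[
\gamma_v\bigl(a_j/a_{j-1},\Psi_v\bigr)=\gamma_v\bigl(a'_j/a'_{j-1},\Psi_v\bigr)\cdot\bigl[a'_j/a'_{j-1},\,a''_j/a''_{j-1}\bigr]_v,
\]
and by Proposition~\ref{hilbert} the symbol equals $\zeta$ of the value of $a''_{j-1}/a''_j$ at the root of $a'_j$. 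Multiplying over $v\mid a'_j$ yields $\zeta$ of a ratio of resultants $\mathrm{result}(a'_j,a''_{j\pm 1})$, which genuinely varies over $(V_{\ud'}\times V_{\ud''})^{\rm dist}$. Hence $\mathrm{Tr}(\mathrm{Fr},\mathcal{T}_1)$ is not constant and your Chebotarev conclusion does not go through. The paper itself flags this tension: both in the introduction and in the paragraph immediately following the proposition it states that $\mathcal{T}_1$ ``n'est plus g\'eom\'etriquement constant'', in apparent conflict with the proposition's own phrasing ``provenant d'un caract\`ere de $\mathrm{Gal}_{\overline{k}/k}$''. What the downstream argument in Section~6 actually uses is only the \emph{pointwise} trace of $\mathcal{T}_1$ (to compute $\mathrm{Tr}(\mathrm{Fr}_{t^\circ},\mathcal{T}_2)=\mathrm{Tr}(\mathrm{Fr}_{t^\circ},\mathcal{T})/\mathrm{Tr}(\mathrm{Fr}_{t^\circ},\mathcal{T}_1)$), so your computation of the $q^{1/2}$, $\zeta(-1)$ and $\gamma_\infty$ pieces is the correct core; but the claim of geometric constancy and the ``clearing of $a''_\bullet$'' are not justified as written.
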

 En utilisant le fait que la perversit\'e et le prolongement interm\'ediaire sont stables par changement de base \'etale et que le produit tensoriel d'un complexe avec un syst\`eme local est pervers et prolongement interm\'ediaire de sa restriction \`a un ouvert si et seulement si ce complexe l'est d\'ej\`a, on obtient que ${\cal I}_2$ et ${\cal J}_2$ sont pervers et prolongement interm\'ediaire de leur restriction \`a l'ouvert $\mu^*U_{\ud}$.

Le syst\`eme local ${\cal T}$ s'\'ecrit lui aussi comme un produit ${\cal T}_1\otimes {\cal T}_2$ (${\cal T}_1$ et ${\cal T}_2$ ne sont plus g\'eom\'etriquement constants, on d\'efinit en fait ${\cal T}_2={\cal T}\otimes {\cal T}_1^{\otimes -1}$) et à l'aide des propositions \ref{fatf} et \ref{fatfg1}, la formule du produit pour les constantes de Weil permet de calculer ${\rm Tr}({\rm Fr}_t,{\cal T}_2)={\rm Tr}({\rm Fr}_t,{\cal T})/{\rm Tr}({\rm Fr}_t,{\cal T}_1)$. En sp\'ecialisant en $t=t^{\circ}$ on obtient alors le th\'eor\`eme A$'$.

\end{document}